\numberwithin{equation}{section}
\theoremstyle{plain}
\newtheorem{theorem}{Theorem}[section]
\newtheorem{lemma}[theorem]{Lemma}
\newtheorem{corollary}[theorem]{Corollary}
\newtheorem{proposition}[theorem]{Proposition}
\theoremstyle{definition}
\theoremstyle{remark}
\newtheorem{remark}[theorem]{Remark}%[section]
\renewcommand{\Re}{\operatorname{Re}}
\renewcommand{\Im}{\operatorname{Im}}
\newcommand{\sgn}{\operatorname{sgn}}
\newcommand{\supp}{\operatorname{supp}}
\newcommand{\Sym}{\operatorname{Sym}}
\newcommand{\N}{\operatorname{N}}
\newcommand{\GL}{\operatorname{GL}}
\newcommand{\SL}{\operatorname{SL}}
\newcommand{\SO}{\operatorname{SO}}
\newcommand{\PSL}{\operatorname{PSL}}
\renewcommand{\mod}{\operatorname{mod}\ }
\newcommand{\dd}{\mathrm{d}}
\newcommand{\Res}{\operatorname{Res}}
\newcommand{\lcm}{\operatorname{lcm}}
\newcommand{\fixmehide}[1]{}
\newcommand{\fixmelater}[1]{}
\newcommand{\fixmedone}[1]{}
\newcommand{\fixmehidden}[1]{}
\newcommand{\tmop}[1]{\ensuremath{\operatorname{#1}}}
\def\@tocline#1#2#3#4#5#6#7{\relax
  \ifnum #1>\c@tocdepth % then omit
  \else
    \par \addpenalty\@secpenalty\addvspace{#2}%
    \begingroup \hyphenpenalty\@M
    \@ifempty{#4}{%
      \@tempdima\csname r@tocindent\number#1\endcsname\relax
    }{%
      \@tempdima#4\relax
    }%
    \parindent\z@ \leftskip#3\relax \advance\leftskip\@tempdima\relax
    \rightskip\@pnumwidth plus4em \parfillskip-\@pnumwidth
    #5\leavevmode\hskip-\@tempdima
      \ifcase #1
       \or\or \hskip 1em \or \hskip 2em \else \hskip 3em \fi%
      #6\nobreak\relax
    \hfill\hbox to\@pnumwidth{\@tocpagenum{#7}}\par% <---- \dotfill -> \hfill
    \nobreak
    \endgroup
  \fi}
\begin{document}

\title[Quantum variance for dihedral Maass forms]
{Quantum variance for dihedral Maass forms} %quantum fluctuations
\author{Bingrong Huang and Stephen Lester}

\address{Data Science Institute and School of Mathematics \\ Shandong University \\ Jinan \\ Shandong 250100 \\China}
\email{brhuang@sdu.edu.cn}

\address{School of Mathematical Sciences, Queen Mary University of London, E1 4NS London, United Kingdom}
\email{s.lester@qmul.ac.uk}
\date{\today}

\begin{abstract}
  We establish
 an asymptotic formula for the weighted quantum variance of dihedral Maass forms on $\Gamma_0(D) \backslash \mathbb H$ in the large eigenvalue limit, for certain fixed $D$. As predicted in the physics literature, the resulting quadratic form is related to the classical variance of the geodesic flow on $\Gamma_0(D) \backslash \mathbb H$, but also includes factors that are sensitive to underlying arithmetic of the number field $\mathbb Q(\sqrt{D})$.

% Our main estimate provides an asymptotic formula for the first moment of Rankin-Selberg $L$-functions over the family of dihedral Maass forms. Dihedral forms comprise a sparse family of Maass forms and consequently a subconvexity bound follows from our first moment estimate, thereby giving a new proof of Quantum Unique Ergodicity for dihedral Maass forms, which was first established by Liu and Ye. The estimate for the first moment, is accomplished by establishing bounds for sums of Fourier coefficients of Maass forms over values of quadratic polynomials, which may be of independent interest.
\end{abstract}

\keywords{Quantum variance, quantum covariance, dihedral Maass form, $L$-function, generalized Riemann Hypothesis, Generalized Ramanujan Conjecture} %higher moment,

%\thanks{The work was supported  by the European Research Council, under the European Union's Seventh Framework Programme (FP7/2007-2013)/ERC grant agreement n$^{\text{o}}$~320755.}

\maketitle
\setcounter{tocdepth}{1}%使目录只显示到节
\tableofcontents

%%%%%%%%%%%%%%%%%%%%%%%%%%%%%%%%%%%%%%%%%%%%%%%%%%%%%%%%%%%%%%%%
%%%%%                        Section                       %%%%%
%%%%%%%%%%%%%%%%%%%%%%%%%%%%%%%%%%%%%%%%%%%%%%%%%%%%%%%%%%%%%%%%
\section{Introduction} \label{sec:Intr}

In the theory of quantum chaos, an important problem is to understand the fluctuations of matrix coefficients of observables. In the generic chaotic case, it is conjectured in the physics literature \cite{FP1986,EFKAMM} that the leading order constant for the quantum
variance of such observables agrees with that of the classical variance of the dynamics of the geodesic flow.
Moreover, it is predicted that the limiting distribution of quantum fluctuations exists and is Gaussian.
%a variance which agrees with the classical one
These conjectures are wide open except for a handful of cases as described below.
This problem becomes even harder when only a sparse subsequence of all eigenfunctions is considered.
In this paper, we compute a new instance of the quantum fluctuations, for a distinguished sequence of eigenfunctions of the modular domain, called dihedral Maass forms.
%These give a sparse subsequence of all eigenfunctions.

% Let $\mathbb{H}=\{z=x+iy:y>0\}$ be the upper half-plane  with its hyperbolic % metric $\dd s= \sqrt{\dd x^2 + \dd y^2}/y$ and associated
% measure $\dd\mu(z) = \dd x\dd y/y^2$.

% The conjectures from the physics literature made by Feingold-Peres \cite{FP1986} and Eckhardt-Fishman-Keating-Agam-Main-M\"uller  \cite{EFKAMM} predict that for generic chaotic systems such as $\mathbb X= \Gamma_0(D)\backslash \mathbb H$\change{previously was "such as the one at hand"},
% %the one at hand,
% the quantum fluctuations should display Gaussian statistics and that the leading order coefficient in the quantum variance should agree with that of the classical variance of the dynamics of the geodesic flow.

Let $(M,g)$ be a Riemannian manifold (smooth, compact) with negative curvature and Riemmanian volume form $\tmop{dvol}_g$ normalized with $\tmop{vol}_g(M)=1$.
%   , which is the unit cotangent bundle.
%   If $\psi$ is a fixed even Hecke--Maass cusp form with the Laplace eigenvalue $\lambda_\psi=1/2+it_\psi$, then by \cite[Appendix A.1]{Luo-Sarnak2004} we have that
%   $$
%     V(\psi) = \frac{\big|\Gamma(\frac{1}{4}+\frac{it_\psi}{2})\big|^4} {2\pi |\Gamma(\frac{1}{2}+it_\psi )|^2}.
%   $$
 Consider an orthonormal basis $\{\psi_j\}_j$ of $L^2(M, \tmop{dvol}_g)$ consisting of Laplace--Beltrami eigenfunctions and let $\lambda_j$ denote the Laplace--Beltrami eigenvalue of $\psi_j$. For an observable $\psi$ with $\int_M \psi(x) \tmop{dvol}_g(x)=0$ the quantum variance (in configuration space) is given by
\[
  Q(\psi;\Lambda)=\frac{1}{\# \{ \lambda_j \le \Lambda \}}\sum_{   \lambda_j \le \Lambda} \bigg| \int_{M} \psi(x) |\psi_j(x)|^2 \tmop{dvol}_g(x)\bigg|^2.
\]
Roughly, the quantum variance measures how far away the
$L^2$-mass of a typical eigenfunction is from being equidistributed on $M$ w.r.t. $\tmop{dvol}_g$.  Zelditch \cite{Zelditch1994} proved an upper bound for $Q(\psi;\Lambda)$, showing that $Q(\psi;\Lambda) =O((\log \Lambda)^{-1})$, which implies a quantitative form of the Quantum Ergodicity Theorem\footnote{For a negatively curved manifold, the Quantum Ergodicity Theorem (in configuration space) states  that along a density one subsequence of eigenfunctions $\{\psi_{j_{\ell}}\}_{\ell}$ that $\int_M \psi(x) |\psi_{j_{\ell}}(x)|^2 \tmop{dvol}_g(x) \rightarrow \int_M \psi(x)  \tmop{dvol}_g(x)$ as $\ell \rightarrow \infty$, for $\psi \in L^2(M,\tmop{dvol}_g)$.}.
% This measures how far away the $L^2$-mass of a typical eigenfunction is from being equidistributed.
Much stronger estimates are expected to hold and in \cite{EFKAMM} a fascinating asymptotic formula for $Q(\psi;\Lambda)$ is conjectured and it is predicted
for generic hyperbolic systems
% such as manifolds with negative curvature\footnote{Need to check}
that as $\Lambda \rightarrow \infty$
\[
Q(\psi;\Lambda) \sim g \frac{ V_M(\psi)}{\Lambda^{(d-1)/2}}
\]
where $V_M(\psi)$ is the classical variance of the geodesic flow $\mathcal G_t$ and is given by
\[
  V_M(\psi) =\int_{-\infty}^{\infty} \int_{S^*M} (\psi \circ \mathcal G_t)(x) \overline{\psi(x)} \dd \mu(x) \, \dd t
\]
where $\mu$ is Liouville measure on the unit cotangent bundle $S^*M$,
and $g=2$ if the system has time reversal symmetry, otherwise $g=1$.
 Moreover, it is predicted that the quantum fluctuations display Gaussian statistics.
These predictions beautifully mirror what is known about the fluctuations of observables under the geodesic flow. For $\psi$ a smooth observable on  $S^*M$ normalized with $\int_{S^*M} \psi(x) \dd \mu(x)=0$,
Sinai \cite{Sinai60} and Ratner \cite{Ratner1973} proved that $\frac{1}{T}\int_0^T (\psi \circ \mathcal G_t)(x) \dd t$, with $x \in S^*M$, has a Gaussian limiting distribution as $T \rightarrow \infty$ with mean zero and variance given by
% \[
% \mu\left\{x \in S^*M : \frac{\frac1T \int_0^T (\psi \circ \mathcal G_t)(x) \dd t }{\sqrt{V_T(\psi)}} \in I  \right\} \sim \frac{1}{\sqrt{2\pi}} \int_I e^{-y^2/2} \dd y
% \]
% for any interval $I \subset \mathbb R$, where the variance is given by
\[
V_{M,T}(\psi)= \int_{S^*M} \left| \int_0^T (\psi \circ \mathcal G_t)(x) \dd t\right|^2 \, \dd\mu(x).
\]
 For further background and discussion see \cite[Section II]{BackerSchubertStifter1998} and \cite[Section 10]{Zelditch2010} and the references therein.

\medskip
%
% %the one at hand,
%  Additionally, assuming the Generalized Ramanujan Conjecture we rule out the existence of a limiting distribution for the fluctuations of these forms.

% \textcolor{blue}{For general manifolds of negative curvature computing an asymptotic formula for the quantum variance remains a difficult open problem.}
% Zelditch \cite{Zelditch1994b} also investigates the problem of finding lower bounds for $Q(\psi;\Lambda)$.
There has been substantial prior work on computing the quantum variance in arithmetic settings.
 Luo--Sarnak \cite{Luo-Sarnak2004} computed a weighted analog of the quantum variance in the context of holomorphic modular forms for $\SL_2(\mathbb Z)$.
  Similar results for Hecke--Maass cusp forms were proved in Luo--Sarnak \cite{Luo-Sarnak2004} and extended by Zhao \cite{Zhao2010}.
  Recently, Sarnak--Zhao \cite{SarnakZhao2013} obtained the asymptotic formula of quantum variance for several phase space observables, that is for Hecke--Maass cusp forms on $\PSL_2(\mathbb{Z}) \backslash \PSL_2(\mathbb{R})$.
%   Their method is based on the Poincar\'e series and the Petersson/Kuznetsov trace formula.
  Additionally, in the setting of quaternion algebras
  Nelson \cite{Nelson2016,Nelson2017,Nelson2019} has computed an asymptotic formula for the quantum variance. In each of
 the previously mentioned works
  the leading order constant of the quantum variance is given by classical variance of the geodesic flow $V(\psi)$ along with an additional arithmetic factor, which is related to the central value of an $L$-function.
  Notably, Kurlberg--Rudnick  \cite{KurlbergRudnick2005} computed the quantum variance in the setting of the quantized cat map and observed that the leading order constant for the quantum variance also deviates from the generic classical variance (see \cite[Section 6.1]{KurlbergRudnick2005}).

%   The classical variance also appears in their work, however they get a different arithmetic correction factor $L(\frac12,\psi)$ where $\psi$ is an even cusp form.
Let $\mathbb{H}=\{x+iy:y>0\}$  be the upper half plane.
Let $\Gamma_0(D)=\{\left(\begin{smallmatrix}
                   a & b \\
                   c & d
                 \end{smallmatrix}\right)\in\SL_2(\mathbb{Z}): c\equiv0 \ (\mod D)\}$
be the Hecke congruence group of level $D$.
Dihedral Maass forms comprise a sparse family of eigenfunctions of the Laplace operator on the surface $\mathbb{X}=\Gamma_0(D)\backslash\mathbb{H}$ equipped with hyperbolic measure $\dd x\dd y/y^2$.
The Weyl law, as established by Selberg, gives that the number of Laplace eigenvalues less than a large parameter $\Lambda$ (in the discrete part of the spectrum)\footnote{The spectrum of the Laplacian $\Delta = -y^2\left(\partial^2/\partial x^2 + \partial^2/\partial y^2 \right)$ on $\mathbb{X}$ has both  discrete and continuous components.
The discrete spectrum consists of the constants and the space $L_{\rm cusp}^2(\mathbb{X})$ of cusp forms,
for which we can take an orthonormal basis $\{u_j\}_{j\in\mathbb{N}}$ of Hecke--Maass forms.}
 is  $\sim c\Lambda$ where $c>0$ depends only on $D$, whereas the number of eigenvalues corresponding to dihedral Maass forms is of order of magnitude $\sqrt{\Lambda}$.  In this article, we investigate the behavior of fluctuations of $L^2$-mass of dihedral forms and compute the variance of these fluctuations. Our main result establishes an asymptotic formula for the weighted quantum variance of dihedral Maass forms, with leading order constant equal to $V(\psi)$ times an additional arithmetic factor.
 The arithmetic correction factor consists of a central value of an $L$-function, which is similar to the results mentioned earlier, as well as an additional arithmetic factor that reflects the underlying structure of the number field $\mathbb Q(\sqrt{D})$.

% is a hyperbolic surface that is not compact but of finite area.
% The spectrum of the Laplacian $\Delta = -y^2\left(\partial^2/\partial x^2 + \partial^2/\partial y^2 \right)$ on $\mathbb{X}$ has both  discrete and continuous components.
% The discrete spectrum consists of the constants and the space $L_{\rm cusp}^2(\mathbb{X})$ of cusp forms,
% for which we can take an orthonormal basis $\{u_j\}_{j\in\mathbb{N}}$ of Hecke--Maass forms.
% They are real valued and satisfy
% $\Delta u_j = \lambda_j u_j$ and $T_n u_j=\lambda_j(n) u_j$, where $\Delta$ is the Laplace operator, $T_n$ is the $n$-th Hecke operator. %Write $\lambda_j=1/4+t_j^2$.
% $\mathbb X$ carries a further symmetry induced by the orientation reversing isometry $z\rightarrow -\bar{z}$ of $\mathbb{H}$ and our $u_j$'s are either even or odd with respect to this symmetry.
% The continuous spectrum is spanned by Eisenstein series.

\subsection{Statement of the main results}

% Dihedral Maass forms are a sparse subsequence of the Hecke--Maass forms. The number of such eigenfunctions with eigenvalue up to $X$ grows like $c\sqrt{X}$ for some constant $c$, compared to the count for all eigenfunctions, which by Weyl's law is about $c'X$ for some constant $c'$.
Let $F=\mathbb{Q}(\sqrt{D})$  be a fixed real quadratic field with discriminant $D>0$ squarefree and $D\equiv1$ (mod 4).
For simplicity, we assume that $F$ has narrow class
number 1 and $D$ is a product of two distinct primes congruent to 3 (mod 4).
%It is conjectured that there are infinitely many such $D$'s.
For example, we may take $D = 21$.
Let $\omega_D=(1+\sqrt{D})/2$ and let $\epsilon_D>1$ be the fundamental unit of $F$.
Note that we have $\operatorname{N}(\epsilon_D)=\epsilon_D \tilde{\epsilon}_D=1$, where $\tilde{\alpha}$ is the conjugate of $\alpha \in F$ under the nontrivial automorphism of $F$.
The ring of integers of $F$ is $\mathcal{O}_F=\mathbb{Z}[\omega_D]$,
and the group of units $U_F$ in $\mathcal{O}_F$ is isomorphic to $\{\pm1\}\times \epsilon_D^{\mathbb{Z}}$.
For each integer $k\neq0$, we have the
Hecke Gr\"{o}ssencharacter $\Xi_k$ of $F$, which is defined by
\[
  \Xi_k((\alpha)) := \Big|\frac{\alpha}{\tilde{\alpha}}\Big|^{\frac{\pi i k}{\log \epsilon_D}} \quad
  \textrm{for an ideal $(\alpha)\subset \mathcal O_F$ with generator $\alpha$.}
\]
Let $\mathcal{B}_0^*(D,\chi_D)$ denote the set of $L^2$-normalized newforms of weight 0 for $\Gamma_0(D)$, with nebentypus character $\chi_D$ (the Kronecker symbol).
By \cite{maass1949uber}, we know that the theta-like series associated to $\Xi_k$ by % \cite[Appendix A.1]{Humphries-Khan}
\begin{equation} \label{eq:dihedraldef}
  \phi_k(z) := \rho_k(1) \; y^{1/2} \sum_{\substack{\mathfrak{a}\subset\mathcal{O}_F \\ \mathfrak{a}\neq \{0\}}}
  \Xi_k(\mathfrak{a}) K_{it_k}(2\pi \N(\mathfrak{a}) y) \big( e(\N(\mathfrak{a})x) + e(-\N(\mathfrak{a})x)\big) \in \mathcal{B}_0^*(D,\chi_D),
\end{equation}
where $z=x+iy\in\mathbb{H}$, $t_k := t_{\phi_k} = \pi k / \log \epsilon_D$ and
$\phi_k$ has Laplace eigenvalue $1/4+t_k^2$.
%Here $U^+$ denotes the group of totally positive units of $\mathcal{O}_F$,
% for the definition of totally positive see Narkiewicz 2004 (Elementary and Analytic Theory of Algebraic Numbers) page 44
Here $\N(\mathfrak{a})=\# \mathcal{O}_F / \mathfrak{a}$ is the norm of a nonzero ideal $\mathfrak{a}\subset \mathcal{O}_F$, $K_\nu(z)$ is the modified Bessel function,  $e(x)=e^{2\pi ix}$, and
$\rho_k(1)$ is the positive real number such that $\phi_k$ is $L^2$-normalized, i.e.,
\[
  \| \phi_k \|_2^2 = \langle \phi_k,\phi_k \rangle_D =  \int_{\Gamma_0(D)\backslash\mathbb{H}} |\phi_k(z)|^2 \frac{\dd x\dd y}{y^2} = 1.
\]
% \change{I commented out the next two sentences, and I want to move them to somewhere later (I want to state the result sooner..) \textcolor{blue}{I agreed.}}
% We know $\phi_k(z)$ is even.
% %dihedral Maass forms $\phi_k$
% Note that $\phi_k(z)$ is real since we may write
% $$
%   \phi_k(z) = 2 \rho_k(1)  y^{1/2}\sum_{n\geq 1} \lambda_{k}(n) K_{i t_k}(2\pi ny) \cos(2\pi nx),
% $$
% where $\lambda_{k}(n):=\sum_{\N(\mathfrak{a})=n}\Xi_k(\mathfrak{a})$ is real and $\lambda_k(1)=1$.
%By the Rankin--Selberg method, we obtain (see e.g. \cite[Lemma 4.6]{Humphries-Khan})
%\begin{equation}\label{eqn:rho1^2}
%  \rho_k(1)^2 = |\rho_k(1)|^2 = \frac{1}{2 \Lambda(1,\ad \phi_k)}
%  = \frac{\cosh \pi t_k}{2D L(1,\ad \phi_k)}.
%\end{equation}

This paper studies the distribution of $L^2$-mass for dihedral Maass forms.
For a test function $\psi:\mathbb{X}\rightarrow\mathbb{C}$, define
\begin{equation} \label{eq:measdef}
  \mu_k(\psi) := \langle \psi,|\phi_k|^2 \rangle = \int_{\mathbb{X}} \psi(z) |\phi_k(z)|^2 \frac{\dd x\dd y}{y^2}.
\end{equation}
Notably, Liu--Ye \cite{LiuYe2002} proved there exists $\delta>0$ such that
\begin{equation}\label{eq:QUE}
\mu_{k}(\psi)= \int_{\mathbb{X}} \psi(z)  \frac{\dd x\dd y}{y^2}+O(k^{-\delta}),
\end{equation}
that is, Quantum Unique Ergodicity (QUE) holds for such forms.
% \footnote{Here one should also apply the Watson-Ichino formula as given by Humprhies-Khan \cite{Humphries-Khan}}

We are interested in the statistical fluctuations of the remainder term in \eqref{eq:QUE} as $k$ varies over integers $K < k \le 2K$, where $K$ is a large parameter.
% In \cite{LiuYe2002}, Liu--Ye proved quantum unique ergodicity (QUE) for dihedral Maass forms: for fixed $\psi$ (smooth and compactly-supported on $\mathbb{X}$)
% \[
%   \mu_k(\psi) \sim \frac{1}{\vol(\mathbb{X})} \int_{\mathbb{X}} \psi(z) \dd \mu(z).
% \]
% Moreover, let $\psi$ be an even Hecke--Maass cusp form, then one can prove
% \[
%   \mu_k(\psi) \ll k^{-1/6+\varepsilon} \quad \textrm{for all $\varepsilon>0$, as $k\rightarrow\infty$.}
% \]
% Let $L(s,\psi\times \phi_k)$ be the standard Rankin--Selberg L-function of $\psi\times \phi_k$.
% %The Lindel\"{o}f Hypothesis for $L(1/2,\psi\times \phi_k)$
% It is known that the generalized Riemann Hypothesis (GRH) implies $\mu_k(\psi) \ll k^{-1/2+\varepsilon}$.
%By the extreme values of L-functions, we know
% and that this upper bound is the best possible up to $\varepsilon$.
%In \S \ref{sec:EV},
%We will prove the following upper bound for the expected value.
%\begin{theorem}\label{thm:EV}
%  If $\psi$ is an even Hecke--Maass cusp form, then we have
%  \begin{equation*}%\label{eqn:EV}
%    \mathbb{E}(\psi;K) = \frac{1}{K} \sum_{k\in \mathbb{Z}} \mu_k(\psi) W(k/K) = o(K^{-1/2}),
%  \end{equation*}
%  where $W$ is a fixed smooth function with support in $[1,2]$ and $\int_{0}^{\infty}W(x)\dd x =1$.
%  Note that for an odd Hecke--Maass cusp form $\psi$, we always have $\mu_k(\psi)=0$.
%\end{theorem}
% Prove this theorem by Poincare series or the local Weyl's law?
To this end, let us first define the expected value of $\mu_k(\psi)$ as
\begin{equation}\label{eqn:EV}
    \mathbb{E}(\psi;K) := \frac{1}{K} \sum_{k\in \mathbb{Z}} \mu_k(\psi) \Phi\left( \frac{k}{K}\right),
\end{equation}
where $\Phi$ is a smooth function with compact support in $[\tfrac12,2]$.
It is not hard to prove that $\mathbb{E}(\psi;K)=o(K^{-1/2})$ under Generalized Riemman Hypothesis (GRH) (Remark \ref{rem:expectedvalue}), which is negligibly small. % and the Ramanujan conjecture (RC).
This motivates us to define the quantum covariance for dihedral Maass forms by
\[
  Q(\psi_1,\psi_2;K;\Phi)
  :=
  \sum_{k\in \mathbb{Z}}  \mu_k(\psi_1)
   \overline{\mu_k(\psi_2)}  \Phi\left( \frac{k}{K}\right)
\]
for $\psi_1,\psi_2\in L_{\rm cusp}^2(\mathbb{X})$.
% Here $\Phi$ is a fixed smooth function with compact support in $\mathbb{R}_{>0}$.
%Note that, by Theorem \ref{thm:EV}, we have
%\[
%  Q(\psi_1,\psi_2;K;\Phi)
%  =
%  \sum_{k\in \mathbb{Z}}\mu_k(\psi_1) \mu_k(\psi_2)  \Phi(k/K)
%  + o(1).
%\]
We also define the harmonic weighted  quantum covariance by
\[
  Q^{\mathrm{h}}(\psi_1,\psi_2;K;\Phi)
  :=
  \sum_{k\in \mathbb{Z}} L(1,\phi_{2k})^2 \mu_k(\psi_1)
   \overline{\mu_k(\psi_2)}  \Phi\left( \frac{k}{K}\right),
\]
where $L(s,\phi_{2k})$ is the $L$-function of $\phi_{2k}$.
Note that if $\psi_1$ or $\psi_2$ is odd, then $Q^{\mathrm{h}}(\psi_1,\psi_2;K;\Phi) = Q(\psi_1,\psi_2;K;\Phi)=0$ and we will restrict to even Hecke--Maass forms in what follows.
In particular for $\psi\in \mathcal{B}_0^*(D)$, the set of $L^2$-normalized newforms of weight 0 for  $\Gamma_0(D)$  with trivial nebentypus,  the classical variance $V(\psi)$ has been explicitly calculated in
\cite[Appendix I]{Luo-Sarnak2004} where it was shown that
  \begin{equation}\label{eqn:V}
  \begin{split}
    V(\psi) &= \int_{-\infty}^{\infty} \int_{\Gamma_0(D)\backslash \PSL_2(\mathbb{R})} (\psi \circ \mathcal{G}_t)(g)
    \overline{\psi(g)} \dd \mu(g) \dd t \\
    &= \frac{\big|\Gamma(\frac{1}{4}+\frac{it_\psi}{2})\big|^4} {2\pi |\Gamma(\frac{1}{2}+it_\psi)|^2}
    \end{split}
  \end{equation}
  where $\tfrac14+t_{\psi}^2$ is the Laplace--Beltrami eigenvalue of $\psi$ and $\mu(g)$ is the hyperbolic measure on the unit cotangent space $\Gamma_0(D) \backslash \PSL_2(\mathbb R)$.

Also, let
 $L(s,\psi)$ and $L(s,\psi\times\chi_D)$ be the corresponding $L$-functions of $\psi$ and $\psi\times\chi_D$ where $\psi$ is a Hecke--Maass cusp form. Write $\lambda_{\psi}(n)$ for the $n$th Hecke eigenvalue of $\psi$. Let $\zeta(s)$ denote the Riemann zeta-function and $\zeta_D(s)=\zeta(s)\prod_{p|D}(1-p^{-s})$.
Define $\widetilde{\Phi}(s)=\int_{0}^{\infty} \Phi(x) x^{s-1} \dd x$ for the Mellin transform of $\Phi$.

The main result in this paper establishes the following asymptotic formula for the quantum variance.

\begin{theorem}\label{thm:QV}
  Let $\psi$ be an even Hecke--Maass cuspidal newform on $\Gamma_0(D)$. Then as $K \rightarrow \infty$ we have that
  \begin{equation} \label{eq:qv_formula}
    Q^{\mathrm{h}}(\psi,\psi;K;\Phi)
    =  \widetilde{\Phi}(0) A^{\mathrm{h}}(\psi) \, V(\psi)+o(1),
  \end{equation}
  where $V(\psi)$ is as given in \eqref{eqn:V} and
  \[
%    \begin{equation}\label{eqn:C(psi)}
 A^{\mathrm{h}}(\psi) = L(\tfrac12,\psi) L(\tfrac12,\psi\times \chi_D) \cdot  \frac{\pi \log \epsilon_D}{2 D^{2}\zeta_D(2)  L(1,\chi_D)}
  \left(1+ \frac{\lambda_\psi(p_1)}{\sqrt{p_1}}+\frac{\lambda_\psi(p_2)}{\sqrt{p_2}} +\frac{\lambda_\psi(D)}{\sqrt{D}} \right).
%  \end{equation}
  \]
%  \begin{equation}\label{eqn:V(phi)}
%    V(\psi) = \frac{\big|\Gamma(\frac{1}{4}+\frac{it_\psi}{2})\big|^4} {2\pi |\Gamma(\frac{1}{2}+it_\psi)|^2}.
%  \end{equation}
Assume the Generalized Ramanujan Conjecture (GRC). Then as $K \rightarrow \infty$ we have that
  \[
    Q(\psi,\psi;K;\Phi)
   = \widetilde{\Phi}(0) A(\psi) \, V(\psi)+o(1),
  \]
  where $A(\psi) = A^{\mathrm{h}}(\psi) C_{D,\psi}'$, with $C_{D,\psi}'$ as in \eqref{eqn:C_Dpsi}.
\end{theorem}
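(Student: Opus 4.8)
The plan is to adapt the strategy of \cite{Luo-Sarnak2004} and \cite{SarnakZhao2013}: convert $Q^{\mathrm h}(\psi,\psi;K;\Phi)$ into a (harmonically weighted) first moment of central $L$-values over the dihedral family $\{\phi_{2k}\}$, and then evaluate that moment, the main term coming from a diagonal that the average over $k$ singles out. First I would apply the triple-product/Rankin--Selberg unfolding to $\mu_k(\psi)=\langle\psi,|\phi_k|^2\rangle$, expressing $|\mu_k(\psi)|^2$ as an explicit archimedean factor $J(t_\psi,t_k)$ --- a triple Bessel--Whittaker integral, the one computed in \cite[Appendix I]{Luo-Sarnak2004} --- times $L(\tfrac12,\psi\times\phi_k\times\overline{\phi_k})$, divided by $L(1,\mathrm{Ad}\,\psi)\,L(1,\mathrm{Ad}\,\phi_k)^2$, together with local factors at $p_1,p_2$. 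Here the dihedral structure \eqref{eq:dihedraldef} enters through the Gr\"ossencharacter identities $\tilde\Xi_k=\Xi_k^{-1}$ and $\Xi_k\tilde\Xi_{-k}=\Xi_{2k}$, which give $\phi_k\otimes\overline{\phi_k}\cong\mathbf 1\boxplus\chi_D\boxplus\phi_{2k}$ and $\mathrm{Ad}\,\phi_k\cong\chi_D\boxplus\phi_{2k}$, hence
\[
L(\tfrac12,\psi\times\phi_k\times\overline{\phi_k})=L(\tfrac12,\psi)\,L(\tfrac12,\psi\times\chi_D)\,L(\tfrac12,\psi\times\phi_{2k}),\qquad L(1,\mathrm{Ad}\,\phi_k)=L(1,\chi_D)\,L(1,\phi_{2k}).
\]
In particular the harmonic weight $L(1,\phi_{2k})^2$ is exactly what cancels the $L(1,\phi_{2k})^{-2}$ contained in $L(1,\mathrm{Ad}\,\phi_k)^{-2}$ (equivalently, in $\rho_k(1)^4$, since $\rho_k(1)^2\asymp(L(1,\chi_D)L(1,\phi_{2k}))^{-1}$ up to constants and archimedean factors), so that $Q^{\mathrm h}(\psi,\psi;K;\Phi)$ becomes a fixed constant times the twisted first moment $\sum_{k\in\mathbb Z}\Phi(k/K)\,J(t_\psi,t_k)\,L(\tfrac12,\psi\times\phi_{2k})$, up to negligible terms.

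To evaluate this moment I would insert an approximate functional equation for $L(\tfrac12,\psi\times\phi_{2k})$ (of length $\asymp Dk$), writing it as $\sum_{n}\lambda_\psi(n)\lambda_{\phi_{2k}}(n)n^{-1/2}W(n/Dk)$ plus its dual, where $\lambda_{\phi_{2k}}(n)=\sum_{\N\mathfrak a=n}\Xi_{2k}(\mathfrak a)$ and $\Xi_{2k}(\mathfrak a)=\exp\!\big(2\pi ik\,\log|\alpha/\tilde\alpha|/\log\epsilon_D\big)$ oscillates in $k$. Summing over $k$ first and integrating by parts repeatedly, $\sum_k\Phi(k/K)J(t_\psi,t_k)\Xi_{2k}(\mathfrak a)$ is negligible unless $|\alpha/\tilde\alpha|$ is a power of $\epsilon_D$, which by the narrow class number one hypothesis forces $\mathfrak a$ to be an ambiguous ideal of $\mathcal O_F$ (one fixed by the nontrivial automorphism of $F$); on these $\Xi_{2k}(\mathfrak a)=1$, so the average over $k$ effectively collapses $\phi_{2k}$ to the ``central'' point $\mathrm{Ind}_F^{\mathbb Q}(\mathbf 1)=\mathbf 1\boxplus\chi_D$, producing the value $L(\tfrac12,\psi)L(\tfrac12,\psi\times\chi_D)$. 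The remaining sum over ambiguous ideals, reorganized as an Euler product (using $\sum_n\lambda_\psi(n^2)n^{-s}$ on the square part and the Euler factors at $p_1,p_2$ separately), is a density constant; expressing it in terms of $\zeta_D$ and $L(1,\chi_D)$ is where the factor $(1+\lambda_\psi(p_1)/\sqrt{p_1})(1+\lambda_\psi(p_2)/\sqrt{p_2})=1+\lambda_\psi(p_1)/\sqrt{p_1}+\lambda_\psi(p_2)/\sqrt{p_2}+\lambda_\psi(D)/\sqrt{D}$ appears (using $\lambda_\psi(p_i)^2=p_i^{-1}$ and $\lambda_\psi(D)=\lambda_\psi(p_1)\lambda_\psi(p_2)$), and one reads off $A^{\mathrm h}(\psi)$.

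The step I expect to be the main obstacle is the archimedean analysis: one must show $t\,J(t_\psi,t)\to c\,V(\psi)$ for an explicit constant $c$, with $V(\psi)$ as in \eqref{eqn:V}, as $t\to\infty$ with $t_\psi$ fixed, and uniformly enough to interchange the limit with the sum over $k$. This is a stationary-phase evaluation of the triple Bessel--Whittaker integral and is exactly the point at which the classical variance of the geodesic flow emerges from the quantum fluctuations, reproducing the $\Gamma$-factor identity of \cite[Appendix I]{Luo-Sarnak2004}. Given it, the resulting $1/t_k=\log\epsilon_D/(\pi k)$ decay turns $\sum_k\Phi(k/K)J(t_\psi,t_k)$ into $\widetilde\Phi(0)\cdot\tfrac{\log\epsilon_D}{\pi}\cdot c\,V(\psi)$, because $\sum_k\Phi(k/K)k^{-1}\to\int_0^\infty\Phi(x)x^{-1}\,\dd x=\widetilde\Phi(0)$; assembling the constants yields $\widetilde\Phi(0)A^{\mathrm h}(\psi)V(\psi)$. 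Closely related is the bound $o(1)$ for the error --- the off-diagonal ideals, the dual side of the approximate functional equation over this comparatively thin family, and the Eisenstein/continuous-spectrum parts of the unfolding --- where the harmonic weight is what keeps the argument unconditional, removing the potentially small $L(1,\mathrm{Ad}\,\phi_k)^{-2}\asymp\rho_k(1)^4$ that would otherwise have to be controlled.

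Finally, for the unweighted statement one writes $Q(\psi,\psi;K;\Phi)=\sum_k L(1,\phi_{2k})^{-2}\big(L(1,\phi_{2k})^2|\mu_k(\psi)|^2\big)\Phi(k/K)$ and repeats the computation with the Dirichlet coefficients of $L(1,\phi_{2k})^{-2}$ folded in. Under GRC these coefficients are $n^{o(1)}$ with the moment bounds needed both to truncate the series at negligible cost and to keep the off-diagonal and dual contributions $o(1)$ (the same GRH-type input underlying the estimate $\mathbb E(\psi;K)=o(K^{-1/2})$ of Remark~\ref{rem:expectedvalue}); once more only the ambiguous ideals survive the average over $k$, but the diagonal now carries an extra local factor at every prime, whose product is the Euler-product constant $C_{D,\psi}'$ of \eqref{eqn:C_Dpsi}, giving $A(\psi)=A^{\mathrm h}(\psi)C_{D,\psi}'$.
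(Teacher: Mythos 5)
Your overall architecture matches the paper's: the Watson--Ichino/Humphries--Khan formula with the dihedral factorization $L(s,\psi\times\operatorname{ad}\phi_k)=L(s,\psi\times\chi_D)\,L(s,\psi\times\phi_{2k})$, cancellation of $L(1,\phi_{2k})^{-2}$ against the harmonic weight, Stirling's formula for the archimedean quotient (producing $V(\psi)\log\epsilon_D/(\pi k)$ and hence $\widetilde\Phi(0)$ after summing $\Phi(k/K)/k$), and a first moment of $L(\tfrac12,\psi\times\phi_{2k})$ evaluated by Poisson summation in $k$, with the main term coming from the ambiguous ideals (angles $0$ and $\log\epsilon_D$, the latter present precisely because $\N(\epsilon_D)=1$). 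The diagonal Euler-product computation and the assembly of $A^{\mathrm h}(\psi)$ are also essentially as in the paper.

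The gap is the off-diagonal, and it is not a detail --- it is the technical heart of the paper. Two points. First, the approximate functional equation for $L(\tfrac12,\psi\times\phi_{2k})$ has length $\asymp k^{2}$ (degree four, all four archimedean parameters of size $t_{2k}$, hence analytic conductor $\asymp k^{4}$), not $\asymp Dk$; with $\sim K$ harmonics against $\sim K^{2}$ terms, orthogonality in $k$ cannot reduce the sum to the exact diagonal. After Poisson summation, $\widehat F$ only localizes the angle $\theta_{\mathfrak a}$ to within $O(PK^{-1+\varepsilon})$ of $0$ or $\log\epsilon_D$; there are $\sim K^{1+\varepsilon}$ such near-diagonal ideals, and their contribution is a priori of the same order as the main term. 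The paper parametrizes them (Lemmas \ref{lem:algebra} and \ref{lem:cleaned}) as non-split sums $\sum_r \lambda_\psi(Q^{\gamma}(r,h))/r$ and devotes Section \ref{sec:nonsplit} plus both appendices to proving these are $O(K^{1/2+\vartheta+\varepsilon})$: half-integral weight Poincar\'e series, a triple-product bound against a theta series, and --- crucially --- a proof that the residue at $s=1/2$ of the associated Dirichlet series vanishes (via GRC and Henriot's bound, or unconditionally via $L(1,\operatorname{sym}^2\psi\times\omega)$ when $a\mid b$), since a nonzero residue would contribute at exactly the scale of the main term. Your proposal asserts this error is $o(1)$ ``because the harmonic weight keeps the argument unconditional'', but the harmonic weight only removes $L(1,\phi_{2k})^{-2}$; it does nothing for the near-diagonal lattice points, and your claim that the $k$-average is negligible unless $|\alpha/\tilde\alpha|$ is exactly a power of $\epsilon_D$ is precisely what fails. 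A secondary correction: GRC enters the unweighted statement not to truncate $1/L(1,\phi_{2k})^2$ --- that is done unconditionally via Coleman's zero-free region (Lemma \ref{lem:zerofree}) --- but because the twisted moments $\sum_k\lambda_{2k}(n)L(\tfrac12,\psi\times\phi_{2k})$ needed for general $n$ produce quadratic polynomials for which the unconditional vanishing of the non-split main term is not available.
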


  Notably, Theorem \ref{thm:QV} asserts that the quantum variance for dihedral Maass forms is equal to the classical variance only after inserting the ``arithmetic correction factor'' of
  $A(\psi)$.
  The arithmetic factor $A(\psi)$ inherits arithmetic information of the underlying real quadratic field (such as the regulator and prime factorization of the discriminant). It would be interesting to explore this relationship further.
%   by studying dihedral forms over arbitrary real quadratic fields (i.e. those with class number greater than one).
%   as there exits $(\log \epsilon_D) / L(1,\chi_D)$ which by the class number formula is equal to $ D^{1/2}/h(D)$ with $h(D)$ being the class number of $\mathbb{Q}(\sqrt{D})$.

\begin{remark}
  In the proof of Theorem \ref{thm:QV} we show that the error term in \eqref{eq:qv_formula} can be improved to $O(K^{-1/2+\vartheta+\varepsilon})$ where $\vartheta$ is the best known exponent towards GRC (currently it is known that $\vartheta \le 7/64$). Consequently,
  this implies that QUE holds for dihedral forms, thereby giving a new proof of the result of Liu--Ye \cite{LiuYe2002}.
\end{remark}

Now we consider the quantum covariance. We will prove the following conditional result.

\begin{theorem}\label{thm:cov}
  Assume GRH.
  Let $\psi_1,\psi_2$ be two orthogonal even Hecke--Maass cuspidal newforms. Then we have as $K \rightarrow \infty$ that
  \[
    Q(\psi_1,\psi_2;K;\Phi)\longrightarrow 0.
  \]
  In particular, the  quadratic form $Q = \lim_{K\rightarrow \infty} Q(\cdot,\cdot;K;\Phi)$ is diagonalized by the orthonormal basis of Hecke--Maass cuspidal newforms on $\mathcal{B}_0^*(D)$.
\end{theorem}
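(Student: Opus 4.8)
The plan is to reduce the off-diagonal statement to an analog of the diagonal computation underlying Theorem \ref{thm:QV}, and then show that the resulting main term vanishes. First I would unfold $\mu_k(\psi_i) = \langle \psi_i, |\phi_k|^2\rangle$ using the dihedral structure \eqref{eq:dihedraldef}: expanding $|\phi_k(z)|^2$ via the theta-like Fourier expansion produces, after applying the Hecke relations and unfolding the Petersson inner product against $\psi_i$, an expression in which $\mu_k(\psi_i)$ is essentially (up to the normalizing factor $\rho_k(1)^2$ and archimedean weights) a linear combination of Hecke eigenvalues $\lambda_{\psi_i}(\ell)$ weighted by coefficients coming from representations of integers by the norm form of $F$ twisted by $\Xi_k$. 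Summing $\mu_k(\psi_1)\overline{\mu_k(\psi_2)}\Phi(k/K)$ over $k$ and executing the $k$-sum (via Mellin inversion in $\Phi$ and the explicit dependence $t_k = \pi k/\log\epsilon_D$) collapses the oscillation in $\Xi_k$ and produces a diagonal condition matching norm forms, leading to a main term proportional to a product involving $L(\tfrac12,\psi_1\times\psi_2)$ and $L(\tfrac12,\psi_1\times\psi_2\times\chi_D)$ — or more precisely a Rankin--Selberg-type object attached to $\psi_1\otimes\psi_2$ over $F$ — together with the classical geodesic-flow overlap $\int\int (\psi_1\circ\mathcal G_t)\overline{\psi_2}$.

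The second step is the crucial one: since $\psi_1$ and $\psi_2$ are orthogonal Hecke--Maass cuspidal newforms, the classical covariance $\int_{-\infty}^\infty\int_{\Gamma_0(D)\backslash\PSL_2(\mathbb R)} (\psi_1\circ\mathcal G_t)(g)\overline{\psi_2(g)}\,\dd\mu(g)\,\dd t$ vanishes. This is because the geodesic flow acts through the diagonal subgroup, so $\psi_i\circ\mathcal G_t$ stays in the irreducible automorphic representation generated by $\psi_i$; the $g$-integral is then a matrix coefficient pairing between two non-isomorphic irreducible constituents of $L^2(\Gamma_0(D)\backslash\PSL_2(\mathbb R))$, which is identically zero by Schur orthogonality (equivalently, the spectral expansion of $(\psi_1\circ\mathcal G_t)\overline{\psi_2}$ has no component along the constant function). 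Hence the would-be main term is $\widetilde\Phi(0)\cdot(\text{arithmetic factor})\cdot 0 = 0$, and one is left only with error terms.

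The third step is to bound the error terms. These arise from the off-diagonal contributions in the double Fourier/Hecke expansion, i.e.\ the terms where the norm-form matching fails, and from the tails in the $k$-sum. Under GRH for the relevant $L$-functions (Rankin--Selberg $L$-functions of $\psi_1, \psi_2$, their twists by $\chi_D$, and $\zeta_F$-type factors), the standard convexity-breaking and moment bounds give these contributions size $O(K^{-\eta})$ for some $\eta>0$, exactly as in the proof of Theorem \ref{thm:QV} but now without a surviving main term. The conclusion $Q(\psi_1,\psi_2;K;\Phi)\to 0$ then follows, and combined with \eqref{eq:qv_formula} this shows the limiting quadratic form $Q$ is diagonal in the Hecke--Maass newform basis of $\mathcal B_0^*(D)$.

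The main obstacle I anticipate is not the vanishing of the classical variance (which is structural) but rather controlling the error terms uniformly: the off-diagonal analysis for a \emph{bilinear} expression in two different forms $\psi_1,\psi_2$ requires bounding shifted convolution sums or the second moment of $L(\tfrac12,\psi_1\times\psi_2)$-type objects in the level/eigenvalue aspect, and one must verify that no spurious main term sneaks in from the diagonal of the norm-form matching when $\psi_1\neq\psi_2$. Ensuring the arithmetic factor genuinely multiplies the (vanishing) classical covariance — rather than some other nonzero period — is where the bookkeeping from the unfolding in Step 1 must be done carefully; this is precisely the point where GRH is invoked to absorb everything that is not the classical geodesic-flow pairing.
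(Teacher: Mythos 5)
There is a genuine gap, and it sits exactly where you flag your ``main obstacle.'' Your plan needs an asymptotic evaluation of the bilinear sum $\sum_k \mu_k(\psi_1)\overline{\mu_k(\psi_2)}\Phi(k/K)$ with a power-saving error term, so that the (correctly observed) vanishing of the classical covariance kills the main term and the remainder is $O(K^{-\eta})$. But no such evaluation is available, even under GRH. The Watson--Ichino formula only identifies $|\mu_k(\psi)|^2$ with an $L$-value; for two distinct forms there is no period identity for the product $\mu_k(\psi_1)\overline{\mu_k(\psi_2)}$, and equivalently $\mu_k(\psi_i)=\pm(\text{explicit }L\text{-value})^{1/2}$ with uncontrolled signs. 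Your direct unfolding of $|\phi_k|^2$ against each $\psi_i$ produces a product of two shifted-convolution expansions, i.e.\ a quadruple sum after Poisson in $k$, and the assertion that ``standard convexity-breaking and moment bounds give these contributions size $O(K^{-\eta})$'' is unsupported: no bound of that strength is known for this bilinear off-diagonal configuration, conditionally or not. Since the step that is supposed to close the argument is precisely the one that cannot currently be carried out, the proof does not go through.

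The paper takes a different and much more modest route: put absolute values inside the sum, use Lemma \ref{lemma:WI} together with $1/L(1,\phi_{2k})\ll\log\log k$ (GRH) to get $|\mu_k(\psi_1)\mu_k(\psi_2)|\ll K^{-1}(\log\log K)^2\bigl(L(\tfrac12,\psi_1\times\phi_{2k})L(\tfrac12,\psi_2\times\phi_{2k})\bigr)^{1/2}$, and then bound the resulting fractional moment by Soundararajan's method (Proposition \ref{prop:moments} with $\ell_1=\ell_2=\tfrac12$). The orthogonality of $\psi_1,\psi_2$ enters not through the geodesic-flow pairing but through Selberg orthogonality (Lemma \ref{lem:selberg}), which in \eqref{eq:variance} reduces the Gaussian variance from $(\ell_1+\ell_2)^2\log\log K$ to $(\ell_1^2+\ell_2^2)\log\log K$ and yields $\sum_k(L_1L_2)^{1/2}\ll K(\log K)^{-1/4+\varepsilon}$. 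This gives only $Q(\psi_1,\psi_2;K;\Phi)\ll(\log K)^{-1/4+o(1)}$ --- a logarithmic, not power, saving --- which is why the theorem asserts only convergence to $0$ and why it is conditional on GRH. If you want to salvage your outline, you should replace Steps 1--3 by this absolute-value-plus-fractional-moment argument; the structural remark about the vanishing classical covariance, while true, does not enter the proof.
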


\begin{remark}
  It is instructive to compare our results for dihedral Maass forms with those for Eisenstein series.
  The key point here (from the automorphic representation theoretic point of view) is that if $F$ is a quadratic etale algebra over $\mathbb Q$ and $\chi$ is a Hecke character of $F$, then the principle of automorphic induction gives us an automorphic representation of ${\rm GL}_2(\mathbb Q)$. If $F$ is split, i.e., $F = \mathbb Q \times \mathbb Q$, then this leads to an Eisenstein series. On the other hand if $F$ is a field, then we end up getting a cusp form. The latter case is exactly what we consider in this paper.
  The quantum variance for Eisenstein series was considered by the first author in \cite{huang2018quantum}. The arithmetic correction factor in that case is $L(\tfrac12,\psi)^2$, and the covariance can be proved to be zero unconditionally.
\end{remark}

Additionally, applying our main estimate, Proposition \ref{prop:moment}, we can prove further results about the statistical fluctuations of $\mu_k(\psi)$. Using the method of Rudnick--Soundararajan \cite{rudnick-sound1,rudnick-sound2} for lower bounds for moments of $L$-functions we can show the moments of $\mu_k(\psi)$ blow up. More precisely,
under GRC we have  for any even integer $\ell \ge 2$ that
  \[
    K^{\ell-1} \sum_{K<k\leq 2K} |\mu_k(\psi)|^{2\ell} \gg (\log K)^{\ell(\ell-1)/2},
  \]
   provided $\Lambda(\tfrac12,\psi)\Lambda(\tfrac12,\psi\times \chi_D)\neq0$ and additional nonvanishing conditions hold for certain Dirichlet series related to $\psi$. Also, by following the argument of Siu \cite{Siu}, which  uses the method of Radziwi{\l\l}--Soundararajan \cite{radziwill-soundararajan}, it is possible to prove a one-sided central limit theorem for $\log |\mu_k(\psi)|$. Under GRC we have that
   \[
   \frac{1}{K} \# \bigg\{K < k \le 2K : \frac{\log |\mu_k(\psi)|+\frac12 \log K+\frac14 \log \log K}{\sqrt{\frac14 \log \log K}}  \ge V \bigg\} \le (1+o(1)) \frac{1}{\sqrt{2\pi}}\int_V^{\infty} e^{-t^2/2} \, \dd t,
   \]
   for any fixed $V \in \mathbb R$.
%   Under GRH this result can be proved by using the results from Section \ref{sec:covariance}.
%   Lastly, assuming the Ramanujan conjecture and GRH we can show that $\mu_k(\psi) \gg \frac{1}{k^{1/2}(\log k)}$ for $\gg K$ integers $K \le k \le 2K$ provided $\Lambda(\tfrac12,\psi)\Lambda(\tfrac12,\psi\times \chi_D)\neq0$ by using the method of Lester-Radziwi{\l\l} \cite{lester-radziwill2}.

% To compare with the conjectured Gaussian distribute for generic chaotic systems, we consider the higher moments of $\mu_k(\psi)$. \textcolor{red}{We state the following lower bounds without giving the details for the proof.}
% \begin{theorem}\label{thm:highermoments}
%   Assume the Ramanujan conjecture. Let $\psi$ be an even Hecke--Maass cusp form. Let $\ell\geq2$ be an even integer.   Then we have
%   \[
%     K^{\ell-1} \sum_{K<k\leq 2K} |\mu_k(\psi)|^{2\ell} \gg (\log K)^{\ell(\ell-1)/2},
%   \]
%   provided $\Lambda(1/2,\psi)\Lambda(1/2,\psi\times \chi_D)\neq0$ and additional nonvanishing conditions certain Dirichlet series related to $\psi$.
%   This implies the higher moments of $k^{1/2}\mu_k(\psi)$ blow up.
% \end{theorem}

\subsection{Outline of the proofs}
The starting point of the proof of Theorem \ref{thm:QV} uses an extension of the Watson--Ichino \cite{Watson2008} formula due to Humphries--Khan \cite{Humphries-Khan}. This formula relates $|\mu_k(\psi)|^2$ to the central value of a certain triple product $L$-function. Since $\phi_k$ is a dihedral form this triple product $L$-function factors as follows $L(s,\psi \times \tmop{ad} \phi_k)=L(s,\psi \times \chi_D)L(s, \psi \times \phi_{2k})$, which is easy to see by comparing Euler products. This reduces the computation of the quantum variance to the calculation of a (twisted) first moment of the Rankin--Selberg $L$-function $L(\tfrac12, \psi \times \phi_{2k})$.  This moment estimate is related to previous work of Conrey--Snaith \cite{Conrey-Snaith}, who studied moments of $L$-functions attached to holomorphic CM modular forms $f_k$ and obtained an asymptotic formula for the first moment of $L(\tfrac12,f_k)$.
% To obtain an asymptotic formula for the quantum variance, we need to go further since we require an asymptotic formula for the first moment of the Rankin-Selberg $L$-function, $L(\tfrac12,\psi \times \phi_{2k})$.
% Since we are averaging over a sparse set of eigenvalues, the first moment of this $L$-function lies precisely at the convexity barrier, in the sense that an upper bound for the conjectured correct order of magnitude for the first moment implies the convexity bound $L(\tfrac12, \psi \times \phi_{2k}) \ll k^{1+o(1)}$.
Our case should be compared with an asymptotic estimate of the second moment of $L(\tfrac12,f_k)$ which is not known yet. It would be interesting to extend our method to this problem.

To compute the twisted first moment of $L(\tfrac12,\psi \times \phi_{2k})$ we use Poisson summation and then separately analyze `diagonal' and `off-diagonal' terms. The diagonal terms arise from ideals $\mathfrak a \subset \mathcal O_{\mathbb Q(\sqrt{D})}$ such that for some  $\alpha$ with $\mathfrak a=(\alpha)$ either $|\frac{\alpha}{\widetilde \alpha}|=1$ or $|\frac{\alpha}{\widetilde \alpha}|=\varepsilon_D$. Since we restrict to real quadratic fields with totally positive fundamental unit both cases occur and account for separate diagonal terms.  These ideals correspond to lattice points lying on certain lines with rational slope, which motivates calling these diagonal terms.

We also need to bound the contribution of the ideals
whose angles lie nearby $0$ or $\log \epsilon_D$. These ideals are the
`off-diagonal' terms and correspond to lattice points  which lie in small neighborhoods of the lines mentioned above. We parameterize these ideals, thereby relating their contribution to sums of Fourier coefficients of Hecke--Maass forms over values of quadratic polynomials.
% The analysis of the `off-diagonal' terms requires a careful analysis of sums of Fourier coefficients of Hecke-Maass forms over irreducible (over $\mathbb Q[X]$) quadratic polynomials.
Such sums are known as `non-split sums' and have been studied in previous work of Hooley \cite{hooley}, Sarnak \cite{sarnak84}, Blomer \cite{blomer2008}, Templier \cite{templier} and Templier--Tsimerman \cite{templier-tsimerman}. Our approach to estimating these sums uses the spectral theory of half-integral weight automorphic forms. In particular we consider Dirichlet series whose $nth$ coefficient is roughly $\lambda_{\psi}(an^2+bn+c)$ and use Poincar\'e series together with theta functions to provide an analytic continuation of this Dirichlet series to $\tmop{Re}(s)>1/4+\vartheta/2$ except for a possible pole at $s=1/2$, which is related to the residual spectrum of the half-integral weight Laplace operator.  The possible pole at $s=1/2$ would contribute a term of the same size as our main term so we must show our Dirichlet series is analytic at $s=1/2$ (or equivalently its residue at $s=1/2$ is zero). Assuming GRC we can easily accomplish this by using well-known estimates for bounds of sums of multiplicative functions (see Nair--Tenenabum \cite{nair-tenenbaum} or Henriot \cite{henriot}).  Unconditionally, we proceed using a different argument, which relates this residue to the value of a twist of the symmetric square $L$-function of $\psi$ at $s=1$.
This is done by writing a theta series as a residue of a half-integral weight Eisenstein series, which we are able to accomplish in certain cases (see Appendix \ref{appendix:Eisenstein}).
However, in this argument we needed to assume that the coefficients of the quadratic polynomial satisfy certain hypotheses and this is reason we need to assume GRC to compute the unweighted quantum variance in Theorem \ref{thm:QV} (so that we can use the multiplicative function bounds).

  We emphasize that the condition that there is no unit of negative norm, i.e. $\N(\epsilon_D)=1$,  is important to our argument and the case $\N(\epsilon_D)=-1$ (e.g. $D=5$ or 13) is different. The reason is that when $\N(\epsilon_D)=-1$, there is no ideal with angle equal to $\log \epsilon_D$. This leads to the problem of bounding the contribution from ideals with angles close to $\log \epsilon_D$. While in our case with $\N(\epsilon_D)=1$, there are ideals with angles equal to $\log \epsilon_D$ and their contribution is not difficult to estimate. In addition, we have repulsion for the angles of the ideals, from which we find structure of the `off-diagonal' terms and then succeed to bound their contribution.

% \textcolor{red}{perhaps add some more comments on sarank-zhao's work on covariance, perhaps nelson as well}

In contrast to prior work on the quantum covariance (such as \cite{SarnakZhao2013} and Nelson \cite{Nelson2016,Nelson2017, Nelson2019}) our  proof of Theorem \ref{thm:cov} uses the aforementioned Watson--Ichino formula together with estimates for moments of central values of Rankin--Selberg $L$-functions. This approach is more closely related to work of the first named author on the quantum variance of the Eisenstein series. However a substantial difference is for the covariance we instead compute the average of $(L(\tfrac12, \psi_1 \times \phi_{2k}) L(\tfrac12, \psi_2 \times \phi_{2k}))^{1/2}$ and under GRH show that this is $\ll \frac{1}{(\log k)^{1/4-o(1)}}$, so we only have a very weak bound on the covariance. Similar estimates appears in earlier works \cite[Theorem 1.5]{milinovich-TB} and \cite[Lemma 5.1]{lester-radziwill}. The basic approach of the proof follows Soundararajan's method \cite{Sound-moments} for bounds for moments of $L$-functions.

% The proof of Theorem \ref{thm:highermoments} proceeds by establishing lower bounds for higher moments of $L(\tfrac12, \psi \times \phi_{2k})$. This is accomplished by applying our result on the twisted first moment, which we computed in the course of the proof of Theorem \ref{thm:QV}, together with a general method of Rudnick--Soundararajan \cite{rudnick-sound1,rudnick-sound2}.
% However, it is more complicated to prove our result. First of all, additional harmonic weights appear. Secondly, our family of $L$-functions is harder to treat.
% Not only the local computations are more involved, but also we do not have the positivity for the coefficients when we need to prove a lower bound for $S_1$, for which we instead use a smoothed sum $A(2k)$, the Mellin inversion and contour shifting to get an asymptotic formula for $S_1$.
% %See Section \ref{sec:highermoments}.

% It would also be interesting to study the case of real quadratic fields for which the fundamental unit is not totally positive as well

\subsection{Organization of the paper}

This paper is organized as follows.
In Section \ref{sec:prelim} we recall some facts on automorphic forms of weight 0 and of half-integral weights.
In Section \ref{sec:nonsplit} we establish bounds for sums of Fourier coefficients of Maass forms over non-split quadratic polynomials. These bounds are a key component in the estimate for the twisted first moment of $L(\tfrac12,\psi \times \phi_{2k})$, which is computed in Section \ref{sec:twisted1moment}. Using these results we complete the proof Theorem \ref{thm:QV} in Section \ref{sec:variance}. Theorem \ref{thm:cov} on quantum covariance is proved in Section \ref{sec:covariance}.
%Theorem \ref{thm:highermoments} on higher moments is proved in Section \ref{sec:highermoments} and uses the results from Sections \ref{sec:nonsplit} and \ref{sec:twisted1moment}.
In Appendix \ref{appendix:Sarnak} we give an estimate for the triple product of automorphic forms, and in Appendix \ref{appendix:Eisenstein} we express our theta series in terms of the residue of certain half-integral weight Eisenstein series, both of which are needed in Section \ref{sec:nonsplit}.

% \noindent
% \textbf{Notation.} Throughout we use

\subsection{Acknowledgements} We would like to thank Ze\'ev Rudnick for helpful discussions and comments. B.H. is partially supported by the Young Taishan Scholars Program of Shandong Province (Grant No. tsqn201909046) and Qilu Young Scholar Program of Shandong University. S.L. is partially supported by EPSRC Standard Grant EP/T028343/1.

\section{Preliminaries} \label{sec:prelim}

\subsection{Automorphic forms of weight 0}\label{subsec:wt0}

Let $G=\SL_2(\mathbb{R})$, $K=\SO(2)$ and $\Gamma_0(N)$ be the Hecke congruence subgroup. Let $\mathbb H=G/K=\{x+iy:y>0\}$ be the upper-half plane.
We work on the space $L_{\mathrm{cusp}}^2(\Gamma_0(N)\backslash \mathbb{H})$ of cuspidal automorphic functions equipped with hyperbolic measure $\dd \mu(z)=\dd x \dd y/y^2$. A cuspidal function $\psi$ is one such that $\int_{0}^{1} \psi(x+iy) \dd x = 0$ for almost every $y$.

\medskip
Let $\psi\in L_{\mathrm{cusp}}^2(\Gamma_0(N)\backslash \mathbb{H})$ be a Hecke--Maass cusp form of weight zero.
The weight zero Laplacian reads
$\Delta=y^2(\frac{\partial^2}{\partial x^2}+\frac{\partial^2}{\partial x^2})$, and we have
$ \Delta \psi + \lambda_\psi \psi =0 $, where the eigenvalue  $\lambda_\psi = \tfrac14+ t_\psi^2$.
The spectral parameter $t_\psi$ belongs to $\mathbb R \cup [-\frac{i}{2},\frac{i}{2}]$.
Each such $\psi$ admits a Fourier expansion of the form
\begin{equation}
  \psi(z)= \varrho_\psi \sum_{n\neq 0} \frac{\lambda_\psi(n)}{\sqrt{|n|}} W_{0,it_{\psi}}(4\pi|n|y)e(nx),
 \quad
 z=x+iy\in\mathbb H,
\end{equation}
where $W_{\alpha,\beta}(y)$ is the Whittaker function (see e.g. \cite[\S 9.22-23]{GR}) and $\lambda_{\psi}(n)$ denotes the $n$th Hecke eigenvalue of $\psi$.
By convention, $\lambda_\psi(0)=0$.
The Hecke bound reads $\lambda_\psi(n)\ll |n|^{1/2}$. Under GRC $\lambda_\psi(n)\ll
|n|^\epsilon$ would hold. We have $\lambda_\psi(n)\ll |n|^{\vartheta+\epsilon}$, for all $\vartheta \geq 7/64$ is achieved in~\cite{KS}.

\medskip

%Define the operator
%\[
%  V_m f = f|_{\gamma_m} \qquad \gamma_m=\begin{pmatrix}
%    m & 0 \\
%    0 & 1
%  \end{pmatrix}
%\]
%Note that $V_m$ maps cusp forms of level $N$ to cusp forms of level $mN$. In particular
Note that $\Psi(z) := \psi(\gamma_{4a}z)=\psi(4az) \in L^2_{\mathrm{cusp}}(\Gamma_{0}(4aN)\backslash \mathbb{H})$ with $\gamma_{4a}=\begin{pmatrix}4a & 0 \\ 0 & 1 \end{pmatrix}$ is a cusp form of level $4aN$ and we have
\begin{equation}\label{eqn:FE_Psi}
  \Psi(z)=  \varrho_\psi \sum_{n\neq 0} \frac{\lambda_\psi(n)}{\sqrt{|n|}} W_{0,it_\psi}(16a\pi|n|y) e(4an x).
\end{equation}

\subsection{Automorphic forms of half-integral weight}\label{sec:autom_half}

For odd $d$, define
\begin{equation}\label{eqn:epsidon_d}
  \epsilon_d := \left\{\begin{array}{ll}
                        1, & \textrm{if } d\equiv 1 \ (\mod 4), \\
                        i, & \textrm{if } d\equiv 3 \ (\mod 4).
                      \end{array} \right.
\end{equation}
Define
\begin{displaymath}
  J(\gamma, z) := \epsilon_d^{-1} \left(\frac{c}{d}\right) \left(\frac{|cz+d|}{cz+d}\right)^{-1/2} = \epsilon_d^{-1} \left(\frac{c}{d}\right) e^{i\frac{1}{2} \arg(cz+d)}
\end{displaymath}
for $\gamma = \left(\begin{smallmatrix}a & b\\ c & d\end{smallmatrix}\right) \in \Gamma_0(4)$ and $\Im z > 0$. Here $\left(\frac{c}{d}\right)$ is the extended Kronecker symbol as in \cite{Shimura}.
Note that
\[ J(\gamma,z) = \frac{\theta(\gamma z)}{\theta(z)}, \quad \gamma\in\Gamma_0(4), \]
where $\theta(z) = y^{1/4} \sum_{n\in\mathbb{Z}} e(n^2 z)$ is the standard theta series.

Let $M$ be a positive integer such that $4\mid M$ and $\chi$ be a character.
% Maass forms
Also, let $\kappa\in \tfrac12+\mathbb{Z}$ be a half-integer
and $\Delta_{\kappa} :=  y^2(\partial_x^2 + \partial_y^2) -i\kappa y\partial_x$ be the Laplacian of weight $\kappa$.
%Let $\Lambda_{\kappa} := \kappa/2 + y(i\partial_x - \partial_y)$ be the Maa{\ss} lowering operator.
%Finally let $X$ be the reflection operator $(Xf)(z) = f(-\bar{z})$ for $f : \Bbb{H} \rightarrow \Bbb{C}$.
Let $\textbf{H}_{\kappa}(M,\chi)$ be the Hilbert space of $L^2$-integrable functions $f$ satisfying
\begin{displaymath}
  f(\gamma z) = \chi(d) J(\gamma, z)^{2\kappa} f(z)
\end{displaymath}
for all $\gamma=\begin{pmatrix}
a& b \\
c & d
\end{pmatrix}
\in \Gamma_0(M)$.
For $t \in \mathbb{C}$, denote by $\textbf{H}_{\kappa}(M,\chi, t)$ the subspace of smooth functions $f \in \textbf{H}_{\kappa}(M,\chi)$ satisfying $(\Delta_{\kappa} + \lambda)f=0$ with $\lambda=\tfrac14 +  t^2$.
Without loss of generality we shall always assume $\Im t \geq 0$.

% Ramanujan--Selberg
Let $\{f_{j,\kappa}\}_j$ with
\begin{equation}\label{eqn:f}
  f_{j,\kappa}(z) = \rho_{j,\kappa}(0, y) + \sum_{n \not=0} \rho_{j,\kappa}(n) W_{\sgn(n)\frac{\kappa}{2}, it_j}(4\pi |n|y)e(nx) \in \textbf{H}_{\kappa}(M,\chi),
\end{equation}
be a complete orthonormal system of $\textbf{H}_{\kappa}(M,\chi)$ where each $f_{j,k}$ is an eigenfunction of $\Delta_{\kappa}$ with eigenvalue $\lambda_j = \frac{1}{4} + t_j^2$, i.e. $(\Delta_{\kappa} + \lambda_j)f_{j,\kappa} = 0$.
% Here $W_{\alpha, \beta}(y)$ is the standard Whittaker function (see e.g. \cite[\S 9.22-23]{GR}).
We call $\lambda_j$ \emph{exceptional} if $t_j \not\in \mathbb{R}$, i.e.\ $\lambda_j < 1/4$.
The functions $f_{j,\kappa}$  may be cusp forms (in which case $\rho_{j,\kappa}(0, y) = 0$) or residues of possible poles of an Eisenstein series $E_{\mathfrak{a},\kappa}(z; s)$ defined as in \eqref{eqn:ES}.  In either case, $\lambda_j \geq 3/16$ (cf.\ the discussion preceding \eqref{eqn:KS} below).
If $u$ is any automorphic eigenfunction of $\Delta_{\kappa}$ with spectral parameter $t = \sqrt{\lambda-1/4}$, then its Shimura lift is an even weight Maass form with spectral parameter $2t$ (see e.g.\ \cite{Biro}). It is a cusp form  unless $f$ comes from theta functions, in which case $\lambda = 3/16$. In all other cases  the Kim--Sarnak bound \cite{KS} implies
\begin{equation}\label{eqn:KS}
   |\Im t | \leq \frac{\vartheta}{2} \leq \frac{7}{128}.
\end{equation}

%% Holomorphic forms
%
%For $\kappa\geq1/2$, let $S_{\kappa}(M,\chi)$ denote the (finite-dimensional) Hilbert space of holomorphic cusp forms of weight $\kappa$ and character $\chi$ for $\Gamma_0(M)$.
%If $F\in S_{\kappa}(M,\chi)$, then $y^{\kappa/2} F(z)$ is an automorphic form of weight $\kappa$, level $M$, character $\chi$ and eigenvalue $\frac{\kappa}{2} (1-\frac{\kappa}{2})$.

\medskip

% Eisenstein series
We next introduce the Eisenstein series.
For each equivalence class $\mathfrak{a}$ of cusps of $\Gamma_0(M)$, let $\Gamma_{\mathfrak{a}} := \{\gamma \in \Gamma_0(M) \mid \gamma\mathfrak{a} =\gamma\}$ be the stabilizer of $\mathfrak{a}$, $\sigma_{\mathfrak{a}} \in SL_2(\mathbb{R})$ be a scaling matrix (i.e. $\sigma_{\mathfrak{a}}\infty = \mathfrak{a}$ and $\sigma_{\mathfrak{a}}^{-1}\Gamma_{\mathfrak{a}} \sigma_{\mathfrak{a}} = \Gamma_{\infty}$) and $\gamma_{\mathfrak{a}} = \sigma_{\mathfrak{a}} \left(\begin{smallmatrix}1 & 1\\ & 1\end{smallmatrix}\right)\sigma_{\mathfrak{a}}^{-1} = \left(\begin{smallmatrix} \ast & \ast \\ c_{\mathfrak{a}} & d_{\mathfrak{a}} \end{smallmatrix} \right) \in \Gamma_0(N)$, say,  a generator of $\Gamma_{\mathfrak{a}}$.
A cusp $\mathfrak{a}$ is \emph{singular} for weight $\kappa$ and character $\chi$, if
\begin{displaymath}
  \chi(d_\mathfrak{a})\epsilon_{d_{\mathfrak{a}}}^{-2\kappa} \left(\frac{c_{\mathfrak{a}}}{d_{\mathfrak{a}}}\right) = 1.
\end{displaymath}
For a singular cusp $\mathfrak{a}$, let
\begin{equation}\label{eqn:ES}
  E_{\mathfrak{a},\kappa}(z; s) := \sum_{\gamma \in \Gamma_{\mathfrak{a}}\backslash \Gamma_0(M)}
  \bar\chi(\sigma_{\mathfrak{a}}^{-1}\gamma)  J(\sigma_{\mathfrak{a}}^{-1}\gamma, z)^{-2\kappa} \Im(\sigma_{\mathfrak{a}}^{-1}\gamma z)^s \qquad \tmop{Re}(s)>1
\end{equation}
be the \emph{Eisenstein series} attached to $\mathfrak{a}$. We write the Fourier expansion as
\begin{multline}\label{eqn:FE-ES}
  E_{\mathfrak{a},\kappa}(z; s)  = \delta_{\mathfrak{a} = \infty}y^s + \frac{\pi 4^{1-s} e\left(-\frac{\kappa}{4}\right) \Gamma(2s-1)}{\Gamma(s+\kappa/2)\Gamma(s-\kappa/2)} \phi_{\mathfrak{a}}(0, s)y^{1-s} \\
  + \sum_{n \not=0}
  \frac{\pi^s e\left(-\frac{\kappa}{4}\right) |n|^{s-1}}{\Gamma(s+\sgn(n)\frac{\kappa}{2})} \phi_{\mathfrak{a}}(n, s) W_{\sgn(n)\frac{\kappa}{2}, s-\frac{1}{2}}(4\pi|n|y) e(nx)
\end{multline}
where
\begin{displaymath}
  \phi_{\mathfrak{a}}(n, s) = \phi_{\mathfrak{a}}(n, s, \kappa)
  = \sum_{c>0} \sum_{\substack{1 \leq d \leq c\\ \left(\begin{smallmatrix}* & *\\ c & d\end{smallmatrix}\right)   \in  \sigma_{\mathfrak{a}}^{-1}\Gamma_0(M)}}
  \bar\chi(d)
    \left(\frac{c}{d}\right) \epsilon_d^{2\kappa} e\left(\frac{nd}{c}\right) c^{-2s},
\end{displaymath}
cf.\ \cite[p.\ 3876]{Proskurin} and \cite[\S 2]{Duke}.
%Note that
%\begin{equation}
%  \phi_{\mathfrak{a}}(-n, s, \kappa) = \overline{\phi_{\mathfrak{a}}(n, \bar{s}, 2-\kappa)}.
%\end{equation}

\medskip

We will now further describe the exceptional spectrum of the half-integer weight Laplacian with $t=i/4$. There are Maass forms of weight $\kappa$ which have eigenvalue $3/16$; these can occur both as residues of Eisenstein series and as cusp forms.
% They may also be cusp forms.
Let $\Lambda_{\kappa} := \kappa/2 + y(i\partial_x - \partial_y)$ be the Maass lowering operator.
The space $\textbf{H}_{\kappa}(M,\chi, i/4)$ corresponding to the exceptional eigenvalue $3/16$ is the kernel of $\Lambda_{\kappa}$. Hence if $(\Delta_{\kappa} + 3/16)u = 0$, then $y^{-\kappa/2}u$ is holomorphic, and so $\textbf{H}_{1/2}(M,\chi, i/4) = \{y^{1/4}f \mid f \in M_{1/2}(M,\chi)\}$ and $\textbf{H}_{3/2}(M,\chi, i/4) = \{y^{3/4}f \mid f \in S_{3/2}(M,\chi)\}$ where $M_{\kappa}(M,\chi)$ is the space of holomorphic modular forms of weight $\kappa$, level $M$ and character $\chi$, and $S_{\kappa}(M,\chi)$  is the subspace of cusp forms.
Note that for $\kappa\in \tfrac32+2\mathbb{Z}_{\geq0}$, it follows from the presence of $\Gamma(s-\kappa/2)$ in the constant term of \eqref{eqn:FE-ES} that any form in $\textbf{H}_{3/2}(M,\chi, i/4)$ is cuspidal.
For $\kappa\in \tfrac12+2\mathbb{Z}_{\geq0}$, there exist an orthogonal basis of $\textbf{H}_{\kappa}(M,\chi, i/4)$ which are generated by theta series $\theta_{\omega,t}(z) := y^{1/4} \sum_{n\in\mathbb{Z}} \omega(n) e(t n^2 z)$ as described completely in \cite{Serre-Stark}.
In particular,  $u \in \textbf{H}_{\kappa}(M,\chi, i/4)$ has no  negative Fourier coefficients.

\medskip

Recall that we have the spectral decomposition of $\textbf{H}_{\kappa}(M,\chi)$ which consists of the following:
\begin{enumerate}
  \item[(i)] An orthonormal basis of cusp forms $f_{j,\kappa}$, where $\kappa$ is the weight and $t_j$ is the spectral parameter;
  \item[(ii)] an orthogonal basis of residual forms;
  \item[(iii)] a continuous spectrum provided by the Eisenstein series.
\end{enumerate}

\subsection{Poincar\'{e} series}

The Poincar\'e series of weight $\kappa$ and character $\chi$ is defined by
\begin{equation}\label{eqn:PS}
  P_{d,\kappa,\chi}(z;s):=\sum_{\gamma\in \Gamma_\infty \backslash \Gamma_0(M)} \bar\chi(\gamma) J(\gamma,z)^{-2\kappa}
  e^{-2\pi |d| \Im(\gamma z)} \Im(\gamma z)^s e(d \Re (\gamma z)),
\end{equation}
if $\Re(s)>1$.
Using the relation:
\begin{equation}
  [\Delta_{\kappa}+s(1-s)]P_{d,\kappa,\chi}(\cdot;s) =2\pi d(\kappa -2s \sgn(d))P_{d,\kappa,\chi}(\cdot;s+1),
\end{equation}
which follows from \cite[\S 2]{sarnak84}, the function $P_{d,\kappa,\chi}(\cdot;s)$ admits a meromorphic continuation to $\Re s >\frac{1+\vartheta}{2}$ with a simple pole at $s=\tfrac34$ (see also Fay \cite{Fay}).
We will later on use that the explicit value of the residue is given by theta series.

\section{Non-split sums of Fourier coefficients} \label{sec:nonsplit}
\subsection{Notation}
Let $\psi$ be a Hecke-Maass form of level $N$ and trivial nebentypus.
In this section we estimate
\[
  \mathcal{S} := \sum_{n\geq1} \lambda_\psi(an^2 + bn + c)
   W\left( \frac{an^2+ bn+c}{Y} \right)
\]
where $a,b,c\in\mathbb{Z}$\, $0<|a| \ll Q$, $b\ll QR$, $c\ll QR^2$ and $\Delta:=b^2-4ac>0$, $W(x)$ is a smooth function with compact support in $[1,2]$ and $W^{(j)}(x)\ll P^j$. Here the parameters $P,Q,R$ satisfy $P,Q,R \le Y^{\delta}$ for some $\delta>0$ sufficiently small. WLOG we may assume $a>0$. Recall that $\widetilde W(s):=\int_0^{\infty} W(x) x^{s-1} \, dx$ is the Mellin transform of $W$.
% We will use this to deal with the off-diagonal terms in the twisted first moment in the next section.
%Here $a,b,c$ are constants satisfying that $a\mid N,\; b\mid N$.
%\textcolor{red}{
%We may further assume $2\nmid a$ and $a\mid b$ for simplicity.
%For our application to $\mathbb{Q}(\sqrt{21})$, we have those conditions.}
Also, recall $\vartheta \in [0,7/64]$ is the best known exponent towards GRC. By \cite{KS}, we can take $\vartheta=7/64$.

\subsection{Main result}  In the remainder of this section we will establish the following estimates for $\mathcal S$.

\begin{theorem}\label{prop:sum_quad}
Let $C_{\psi,a,b,c}$ be as defined in \eqref{eqn:C}. Each of the following holds:
  \begin{itemize}
    \item[(i)] There exists $B>0$ such that
        \[
        \mathcal{S} = C_{\psi,a,b,c} \widetilde{W}(1/2) Y^{1/2} + O\left(Y^{1/4+\vartheta/2+\varepsilon} (PQR)^{B}\right),
        \]
        where the implied constant depends on at most $\psi$ and $\varepsilon$.
        %Here $\vartheta\geq7/64$ is the numerical value towards to the Selberg conjecture.
    \item[(ii)] If $N=p_1p_2$ where $p_1\equiv p_2 \equiv 3\pmod{4}$ are distinct primes, $a\mid N$ and $a\mid b$ then we have that $C_{\psi,a,b,c}=0$.
    \item[(iii)] Assume GRC. Then we have that $C_{\psi,a,b,c}=0$.
  \end{itemize}
\end{theorem}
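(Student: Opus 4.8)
The plan is to attack the three parts in sequence, with the main analytic work concentrated in part (i). For part (i), the strategy is to write $\mathcal{S}$ as a sum of Fourier coefficients $\lambda_\psi(m)$ weighted by the representation count of $m$ by the quadratic polynomial $an^2+bn+c$, and to encode this arithmetic information spectrally. Concretely, after multiplying through by $4a$ and completing the square, one has $4a(an^2+bn+c) = (2an+b)^2 - \Delta$, so $\lambda_\psi$ is being evaluated along values of a shifted square. I would form the Dirichlet series
\[
  D(s) := \sum_{n \geq 1} \frac{\lambda_\psi(an^2+bn+c)}{(an^2+bn+c)^s},
\]
and realize it (after unfolding) as an inner product of the level-raised form $\Psi(z) = \psi(4az)$ from \eqref{eqn:FE_Psi} against a Poincaré series $P_{d,\kappa,\chi}(z;s)$ of half-integral weight $\kappa = 1/2$, using the theta function $\theta(z) = y^{1/4}\sum e(n^2 z)$ to match the square $(2an+b)^2$. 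The key input is then the meromorphic continuation of the Poincaré series recorded in Section \ref{sec:prelim}: $P_{d,1/2,\chi}(\cdot;s)$ continues to $\Re(s) > (1+\vartheta)/2$ with a simple pole at $s = 3/4$ whose residue is a theta series; via the spectral expansion of $\Psi$ against the automorphic spectrum of weight $1/2$ this translates into analytic continuation of $D(s)$ past $\Re(s) = 1/2$ up to $\Re(s) = 1/4 + \vartheta/2$, with a possible pole at $s = 1/2$ coming from the exceptional/residual spectrum at eigenvalue $3/16$ (the Shimura lift of a theta form). Then I would run a standard contour shift / Perron-type argument with the smooth weight $W$: the main term $C_{\psi,a,b,c}\widetilde{W}(1/2)Y^{1/2}$ comes from the putative pole at $s = 1/2$, and shifting the contour to $\Re(s) = 1/4 + \vartheta/2 + \varepsilon$ produces the error term $O(Y^{1/4+\vartheta/2+\varepsilon}(PQR)^B)$, where the polynomial dependence on $P,Q,R$ tracks the level $4aN$, the shift $d$, and the derivatives of $W$ through bounds on Fourier coefficients and the scattering matrix. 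The constant $C_{\psi,a,b,c}$ in \eqref{eqn:C} is precisely the residue of $D(s)$ at $s = 1/2$.

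For part (ii), the task is to show this residue vanishes when $N = p_1 p_2$ with $p_i \equiv 3 \pmod 4$, $a \mid N$, and $a \mid b$. Here I would use the alternative, unconditional description of the residue developed in Appendix \ref{appendix:Eisenstein}: under the stated hypotheses the relevant theta series can be written as a residue of a half-integral weight Eisenstein series, and consequently $C_{\psi,a,b,c}$ is expressible in terms of the value at $s=1$ of a twist of the symmetric square $L$-function $L(s, \psi, \operatorname{sym}^2 \otimes \chi)$ attached to $\psi$ (this is the mechanism alluded to in the outline). The sign conditions $p_i \equiv 3 \pmod 4$ control the relevant quadratic character: the Eisenstein series in question is attached to a cusp that is singular for weight $1/2$ and a character built from $\chi_D$, and the hypotheses $a \mid N$, $a \mid b$ force the arithmetic of this twist to produce a vanishing factor — essentially a sign/functional-equation obstruction or a forced zero in an Euler factor at the ramified primes. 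I would make this explicit by computing the local factors of the twisted symmetric square at $p_1, p_2$ and showing the product vanishes.

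For part (iii), assuming GRC, the argument is softer: one shows directly that $\operatorname{Res}_{s=1/2} D(s) = 0$ by bounding the average of $\lambda_\psi(an^2+bn+c)$ unconditionally well. Under GRC, $\lambda_\psi(m) \ll m^\varepsilon$, and $\lambda_\psi$ restricted to values of a quadratic polynomial is majorized by a multiplicative function to which one can apply the Nair--Tenenbaum \cite{nair-tenenbaum} / Henriot \cite{henriot} bounds for sums of multiplicative functions over polynomial values; these give $\sum_{n \leq x} |\lambda_\psi(an^2+bn+c)| \ll x (\log x)^{O(1)}$, which is incompatible with a pole of $D(s)$ at $s = 1/2$ (a pole there would force the partial sums to grow like $x^{1/2}$ with oscillation, but more to the point the Dirichlet series $\sum |\lambda_\psi(an^2+bn+c)| m^{-s}$ would then have a singularity at $s=1$, contradicting the bound). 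Hence the residue is zero.

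The main obstacle is part (i), and within it the two delicate points are: first, correctly setting up the unfolding to identify $D(s)$ with a Poincaré-series inner product at half-integral weight with the right level $4aN$ and character — handling the non-principal nebentypus, the choice of singular cusp, and the bookkeeping of the shift parameter $d \asymp \Delta$ is technically involved; and second, proving the continuation is \emph{analytic} at $s=1/2$ except for the controlled pole, i.e.\ ruling out contributions from exceptional eigenvalues in the open interval $(3/16, 1/4)$ — this is exactly where the Kim--Sarnak bound \eqref{eqn:KS} $|\Im t| \le \vartheta/2 \le 7/128$ for the half-integral weight spectrum is essential, since it guarantees the only possible pole in the relevant strip sits at $s = 3/4$ for the Poincaré series (equivalently $s=1/2$ for $D$), with the spectral sum over the remaining $t_j$ convergent and holomorphic past $\Re(s) = 1/4 + \vartheta/2$. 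A secondary obstacle is making the dependence on $P, Q, R$ genuinely polynomial, which requires uniform (in level and spectral parameter) bounds for Fourier coefficients of half-integral weight forms and for the continuous-spectrum contribution.
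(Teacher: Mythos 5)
Your part (i) follows the paper's argument essentially step for step: complete the square, reduce to the Dirichlet series $D_{\psi,\chi,t}(s,\Delta)$, identify it with the inner product $\langle P_{\Delta,\kappa,\chi_\nu}(\cdot;s+1/4),\overline{\Psi}\theta_{\chi,t}\rangle$, spectrally expand, isolate the eigenvalue-$3/16$ part as the pole at $s=1/2$, and shift the contour to $\Re(s)=1/4+\vartheta/2+\varepsilon$ using the Kim--Sarnak bound. The one ingredient you only allude to is the triple product estimate $\langle f_j,\overline{\Psi}\theta_{\chi,t}\rangle\ll M^A(1+|t_j|)^Ce^{-\frac{\pi}{2}|t_j|}$ (Lemma \ref{lemma:triple}); the exponential decay in $t_j$ is what makes the spectral sum converge and keeps the error polynomial in $P,Q,R$, and it requires a separate argument since $\theta_{\chi,t}$ is not cuspidal.

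For part (ii) your vanishing mechanism is not the right one. After writing $\theta_{\chi_1,a^2}$ as a residue of $E_{\infty,1/2}(\cdot;s)$ at $s=3/4$ and unfolding, the inner product $\langle\overline{\Psi}E_{\infty,1/2}(\cdot;s),\theta_{\omega,t}\rangle$ equals a factor holomorphic at $s=3/4$ times the Dirichlet series $L_{t'a_1}(2s-\tfrac12,\Sym^2\psi\times\bar{\omega})\,H(2s-\tfrac12)$. The residue vanishes because this $L$-function is \emph{entire} ($\Sym^2\psi$ is cuspidal on $\GL_3$ since $\psi$ is non-dihedral), so the whole inner product has no pole at $s=3/4$. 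It is emphatically not that some Euler factor at $p_1,p_2$ or the value $L(1,\Sym^2\psi\times\omega)$ vanishes --- that value is nonzero --- so the local computation you propose would not produce zero. The roles of the hypotheses $a\mid N$, $a\mid b$, $p_i\equiv 3\ (\mod 4)$ are only to permit writing the relevant theta series as an Eisenstein residue (Lemma \ref{lemma:res}) and to reduce to the trivial character case.

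For part (iii) the bound you invoke is too weak. The main term in (i) corresponds to $\sum_{n\le x}\lambda_\psi(Q(n))\sim C' x$, and a bound $\sum_{n\le x}|\lambda_\psi(Q(n))|\ll x(\log x)^{O(1)}$ --- indeed even $\ll x$ --- is perfectly consistent with a nonzero $C'$; your parenthetical about a singularity at $s=1$ does not repair this. One needs the average to be $o(1)$. The paper proves $\frac1x\sum_{n\le x}|\lambda_\psi(Q(n))|\ll(\log x)^{-1/18}$ (Lemma \ref{lem:nair-tenenbaum}) by feeding into Henriot's theorem the pointwise majorant $|y|\le\frac{1}{18}(8+11y^2-y^4)$ applied to $y=\lambda_\psi(p)$ --- valid precisely because GRC gives $|\lambda_\psi(p)|\le2$ --- together with prime number theorems for $\operatorname{sym}^2\psi\otimes\chi_\Delta$ and $\operatorname{sym}^4\psi\otimes\chi_\Delta$. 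So GRC enters to make that inequality applicable, not merely to give $\lambda_\psi(m)\ll m^{\varepsilon}$, and without this extra logarithmic saving the conclusion $C_{\psi,a,b,c}=0$ does not follow.
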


\begin{remark}
  Non-split sums of the divisor function or Fourier coefficients were considered in \cite{hooley,sarnak84,blomer2008,templier,templier-tsimerman}, to name a few.  Our approach uses Poincar\'e series along with a triple product estimate, since this approach gives us an explicit expression of the constant $C_{\psi,a,b,c}$. To make this method work, we need to  extend Sarnak's method to bounding the triple product of two cusp forms and a theta series (see Appendix \ref{appendix:Sarnak}).
\end{remark}

\begin{remark}
  To show $C_{\psi,a,b,c}=0$ unconditionally, the assumptions on $N$ and $a\mid N$ may not be essential, but our argument does need an assumption such as $a\mid b$, in which case we can write the theta function as the residue of certain Eisenstein series (see Appendix \ref{appendix:Eisenstein}) and then use the symmetric square lift $L$-function of $\psi$ to show that the constant $C_{\psi,a,b,c}=0$.
  For a Maass cusp form $\psi$, previous works only  consider the case $b=0$, so the assumption $a\mid b$ holds.
\end{remark}

\subsection{Reduction}\label{sec:reduction}

In this section we prove the following lemma, which reduces the problem of estimating $\mathcal{S}$ to analytic estimates of certain Dirichlet series. We define $\lambda_\psi(x)=0$ if $x\in\mathbb{R}\setminus\mathbb{Z}$.
\begin{lemma}\label{lemma:S=int(D)}
 Let $d=(2a,b)$, $a'=2a/d$, $b'=b/d$, and $\varphi$ denote the Euler totient function. Also, let $\delta_{n,0}=1$ if $n=0$, and $0$ otherwise.
We have that
  \[
    \mathcal{S} =  \frac{1}{4\pi i}
  \int_{(1)}  D_{\psi}(s,\Delta)  \widetilde{W}(s) (8aY)^{s} \dd s + O\Big(\frac{P\Delta}{Y^{1/2-\vartheta}}\Big),
  \]
  where
  \[
    D_{\psi}(s,\Delta) := \frac{1}{\varphi(a')} \sum_{\substack{\chi \; (a') }} \bar\chi(b') 2^{\nu_\chi/2} d^{\nu_\chi} D_{\psi,\chi,d^2}(s,\Delta)
  \]
  and
  \begin{equation}\label{eqn:Dchi}
    D_{\psi,\chi,t}(s,\Delta) := \mathop{\sum_{n\geq0}}%_{t n^2\equiv \Delta (4a)}
    \frac{\lambda_\psi\left(\frac{t n^2-\Delta}{4a}\right) (2-\delta_{n,0}) \chi(n) n^\nu }{(tn^2 +\Delta+|tn^2-\Delta|)^{s+\nu/2}}
  \frac{(2|tn^2-\Delta|)^{it_\psi}}
  {( tn^2 + \Delta+ |tn^2-\Delta|)^{it_\psi}},
  \end{equation}
  for $\Re(s)\geq 1/2+\vartheta+\varepsilon$.
  Here $\nu=\nu_\chi=0$ or $1$ such that $\chi(-1)=(-1)^\nu$.

  Furthermore, if $a\mid b$ and $(a,2)=1$ then we have
  \[
    \mathcal{S} = \frac{1}{4\pi i}
  \int_{(1)} D_{\psi,\chi_1,a^2}(s,\Delta)  \widetilde{W}(s) (8aY)^{s} \dd s + O\Big(\frac{P\Delta}{Y^{1/2-\vartheta}}\Big),
  \]
  where $\chi_1(n)=1$ for all $n\in\mathbb{Z}$ is the trivial character.
%  \[
%  D_\psi^+(s,\Delta) := \mathop{\sum_{n\geq0}}_{n\equiv b (2)} \frac{\lambda_\psi\Big(\frac{a^2 n^2-\Delta}{4a}\Big) (2-\delta_{n,0})}
%  {(a^2 n^2 +\Delta+|a^2n^2-\Delta|)^{s}} \frac{(2|a^2n^2-\Delta|)^{it_\psi}}
%  {(a^2 n^2 + \Delta+ |a^2n^2-\Delta|)^{it_\psi}}.
%  \]
\end{lemma}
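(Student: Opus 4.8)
The plan is to open up the Fourier expansion of $\psi$ inside $\mathcal S$, carry out the $x$-integration to detect the arithmetic condition, and then organize the resulting sum over $n$ by completing the square in the quadratic polynomial. More precisely, first I would write $\lambda_\psi(m) W(m/Y)$ using Mellin inversion, replacing $\lambda_\psi(m)/\sqrt m$ by the Fourier coefficient normalization that appears in the Whittaker expansion; equivalently, start from the expansion $\psi(z) = \varrho_\psi \sum_{m \neq 0} \tfrac{\lambda_\psi(m)}{\sqrt{|m|}} W_{0,it_\psi}(4\pi|m|y) e(mx)$ and extract the $m = an^2+bn+c$ term by integrating against $e(-mx)$ over a suitable period. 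The point of introducing the archimedean weight is that $W_{0,it_\psi}(4\pi|m|y)$ can be evaluated, after integrating in $y$ against an appropriate kernel (or by a Mellin transform in the $y$-aspect), to reproduce the factor $(2|tn^2-\Delta|)^{it_\psi}/(tn^2+\Delta+|tn^2-\Delta|)^{it_\psi}$ together with the $\widetilde W(s)$ and the power $(8aY)^s$. The error term $O(P\Delta/Y^{1/2-\vartheta})$ should come from truncating a tail, from boundary terms in the Mellin shift, or from the terms with $m \le 0$ (i.e. $an^2+bn+c \le 0$, of which there are $O(\sqrt\Delta/a \cdot \text{something})$), estimated trivially via the Hecke/Kim--Sarnak bound $\lambda_\psi(m) \ll m^{\vartheta+\varepsilon}$ — the $P$ records the derivative cost of $W$ and the $\Delta$ the length of the exceptional range.

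Second, to pass from a single sum over $n$ to the average over characters $\chi \pmod{a'}$, I would complete the square: $4a(an^2+bn+c) = (2an+b)^2 - \Delta$, so setting $m' = 2an+b$ the polynomial value is $(m'^2 - \Delta)/(4a)$, and the congruence $m' \equiv b \pmod{2a}$ is what must be detected. Writing $d = (2a,b)$, $a' = 2a/d$, $b' = b/d$, one factors out $d$ and detects the residue class $n \equiv$ (inverse of $a'$)$\cdot b' \pmod{a'}$ using the orthogonality $\frac{1}{\varphi(a')}\sum_{\chi (a')} \bar\chi(b')\chi(a'n + \ldots)$; the parity twist $n^{\nu_\chi}$ with $\chi(-1) = (-1)^{\nu_\chi}$ appears because the two terms $n$ and $-n$ (coming from the $\pm m'$ symmetry) must be combined, and one gets $2^{\nu_\chi/2} d^{\nu_\chi}$ as a normalization from the scaling $m' \mapsto m'/d$. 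After this bookkeeping the inner sum over $n \ge 0$ with the weight $(2-\delta_{n,0})\chi(n)n^\nu$ divided by the archimedean factors is exactly $D_{\psi,\chi,d^2}(s,\Delta)$ with $t = d^2$, since $tn^2 - \Delta = d^2 n^2 - \Delta$ and $(d^2 n^2 - \Delta)/(4a) = ((dn)^2-\Delta)/(4a)$ is the polynomial value at the correct argument.

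Third, for the special case $a \mid b$ with $(a,2)=1$: then $2a \mid 2b$ and one checks $d = (2a,b) = a$ (using $(a,2)=1$ and $a \mid b$), so $a' = 2$, $b' = b/a$, and $\varphi(a') = \varphi(2) = 1$, leaving only the trivial character mod $2$ (which is $\chi_1$ after noting the mod-$2$ character is trivial on odd arguments, and the even $n$ contribute via a shifted copy that gets absorbed — here one uses that $2an+b = a(2n + b/a)$ so the value is $a^2((2n+b/a)^2 - \Delta/a^2)/(4a)$, reindexing to $D_{\psi,\chi_1,a^2}$). The factor $t = a^2$ replaces $d^2$ because we scale out $a$ rather than $d=a$ — consistent since $d = a$ here. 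I would double-check the $2^{\nu_\chi/2}d^{\nu_\chi}$ collapses correctly: with $\chi_1$ even, $\nu = 0$, so that factor is $1$, and we are left with a clean single Dirichlet series.

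The main obstacle I anticipate is \emph{the archimedean analysis}: getting the precise shape of the factor $(2|tn^2-\Delta|)^{it_\psi}/(tn^2+\Delta+|tn^2-\Delta|)^{s+\nu/2+it_\psi}$ out of the Whittaker function $W_{0,it_\psi}(4\pi|m|y)$ integrated against $W(m/Y)$ in $y$ requires a careful Mellin-transform computation — using the Mellin transform of the Whittaker function (a ratio of Gamma factors) and then recognizing the resulting Barnes-type integral. One must track that the $\widetilde W(s)$ and $(8aY)^s$ come out with exactly the constant $8a$ and not, say, $4a$ or $16a$, which pins down normalization conventions in \eqref{eqn:FE_Psi} and in the definition of $\widetilde W$. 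The combinatorics of the character sum and the $d$-scaling is routine but error-prone; the genuinely delicate point is making the contour of integration land at $\Re(s) = 1$ while the Dirichlet series $D_{\psi,\chi,t}(s,\Delta)$ is only known to converge (via Hecke/Kim--Sarnak) for $\Re(s) \ge 1/2 + \vartheta + \varepsilon$, so one must verify that no poles are crossed in between and that the representation of $\mathcal S$ as the line integral over $(1)$ is legitimate before any analytic continuation of $D_\psi$ is invoked.
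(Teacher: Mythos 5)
Your second paragraph has the right combinatorial skeleton (complete the square so that $\mathcal S=\sum_{n\equiv b\,(2a)}\lambda_\psi(\frac{n^2-\Delta}{4a})W(\frac{n^2-\Delta}{4aY})$, factor out $d=(2a,b)$, detect $n'\equiv b'\pmod{a'}$ by orthogonality of characters, Mellin-invert $W$), and that is indeed how the paper proceeds. But the step you yourself flag as the main content --- producing the factor $(2|tn^2-\Delta|)^{it_\psi}/(tn^2+\Delta+|tn^2-\Delta|)^{s+\nu/2+it_\psi}$ by opening the Fourier expansion of $\psi$, integrating in $x$, and evaluating a Whittaker $y$-integral --- is a genuine gap, and the proposed mechanism cannot work here. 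The quantity $\mathcal S$ is a bare sum of Hecke eigenvalues weighted by $W$; there is no integral of $\psi(z)$ against anything, hence no Whittaker function and no $y$-integral to evaluate. In the actual proof the $it_\psi$-factor is \emph{inserted by hand}: on the support of $W$ one has $tn^2-\Delta>0$, so
\[
\frac{(2|tn^2-\Delta|)^{it_\psi}}{(tn^2+\Delta+|tn^2-\Delta|)^{it_\psi}}\cdot\Big(1+\tfrac{\Delta}{|tn^2-\Delta|}\Big)^{it_\psi}=1,
\]
and likewise $\frac{tn^2+\Delta+|tn^2-\Delta|}{8aY}-\frac{\Delta}{4aY}=\frac{tn^2-\Delta}{4aY}$ is the original argument of $W$. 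Dropping the correction factor $(1+\Delta/|tn^2-\Delta|)^{it_\psi}=1+O(\Delta/(tn^2-\Delta))$ and the shift $\Delta/(4aY)$ in the argument of $W$ (using $W'\ll P$) is exactly what produces the error $O(P\Delta/Y^{1/2-\vartheta})$ --- not tail truncation, boundary terms, or negative polynomial values (the latter are excluded automatically since $\supp W\subset[1,2]$ and $\Delta\ll Y^{1/2}$). The purpose of this artificial insertion is to make $D_{\psi,\chi,t}$ match the Poincar\'e-series inner product later (Lemma \ref{lemma:D2inprod}); you have in effect conflated the present lemma with that one. Your worry about the contour at $\Re(s)=1$ is also moot: $D_{\psi,\chi,t}(s,\Delta)$ converges absolutely there by Kim--Sarnak, so Mellin inversion lands directly on $(1)$ with no shifting.

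Two smaller points. In the case $a\mid b$, $(a,2)=1$ you cannot assert $d=a$ (take $a=3$, $b=6$); the paper instead returns to the completed-square form, writes $m=an$ with $n\equiv b\pmod 2$, and then observes that the parity condition is \emph{redundant} because $\lambda_\psi(x)$ is defined to vanish for $x\notin\mathbb Z$ and $\frac{a^2n^2-\Delta}{4a}\in\mathbb Z$ forces $n\equiv b\pmod 2$ --- this is the precise sense in which the "shifted copy gets absorbed", and it is why the trivial character $\chi_1$ (nonzero on all $n$, not the principal character mod $2$) appears with $t=a^2$.
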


Recall that $W^{(j)}(x)\ll P^j$ for all $j\geq0$, so we have that $
\widetilde{W}(s) \ll (1+\frac{|s|}{P})^{-A}.$

Hence the contour integrals which appear in Lemma \ref{lemma:S=int(D)} are effectively restricted to $\Im(s) \ll P Y^\varepsilon$.
\begin{proof}

By completing the square we have
\begin{align}\label{eqn:S=square}
  \mathcal S & = \sum_{n\geq1} \lambda_\psi\Big(\frac{(2an+b)^2-\Delta}{4a}\Big) W\Big(\frac{(2an+b)^2-\Delta}{4aY}\Big) \nonumber \\
   & =  \sum_{\substack{n\geq1 \\ n\equiv b (2a)}} \lambda_\psi\Big(\frac{n^2-\Delta}{4a}\Big) W\Big(\frac{n^2-\Delta}{4aY}\Big),
\end{align}
where $\Delta=b^2-4ac>0$.
Here we use the fact that $\Delta\ll R^2 \ll Y^{1/2}$ and $\supp W \subset [1,2]$.
% Let $d=\gcd(2a,b)$, $a'=2a/d$, $b'=b/d$, and $\varphi$ be the Euler totient function.
Then we have
\begin{align*}
  \mathcal S & =  \sum_{\substack{n\geq1 \\ n\equiv b'(a')}} \lambda_\psi\Big(\frac{ d^2n^2-\Delta}{4a}\Big) W\Big(\frac{d^2 n^2-\Delta}{4aY}\Big)
  % = \sum_{m\in\mathbb{Z}} \lambda_\psi(m) \sum_{\substack{n\geq1 \\ n\equiv b'(a') \\ 4am = d^2n^2-\Delta}}  W\Big(\frac{d^2 n^2-\Delta}{4aY}\Big)
  = \frac{1}{\varphi(a')} \sum_{\chi\;(a')} \bar\chi(b') \mathcal{S}_\chi,
\end{align*}
where
\[
  \mathcal{S}_\chi
   %:= \sum_{m\in\mathbb{Z}} \lambda_\psi(m) W\Big(\frac{m}{Y}\Big) \sum_{\substack{n\geq1  \\ 4am = d^2n^2-\Delta}}  \chi(n)
   := \sum_{n\geq1} \chi(n) \lambda_\psi\Big(\frac{ d^2n^2-\Delta}{4a}\Big) W\Big(\frac{d^2 n^2-\Delta}{4aY}\Big).
\]
Note that $\supp W \in [1,2]$. Hence the sum above is restricted to $n \ge 1$ with $d^2n^2-\Delta>0$ and we have that
\begin{align*}
  \mathcal S_\chi & =  \sum_{n\geq0} \frac{2^{\nu/2} d^\nu n^{\nu}}{(d^2 n^2+\Delta+ |d^2 n^2-\Delta|)^{\nu/2}} \chi(n) \frac{(2-\delta_{n,0})}{2} \lambda_\psi\Big(\frac{ d^2n^2-\Delta}{4a}\Big) \\
  & \hskip 5pt \cdot \frac{(2|d^2n^2-\Delta|)^{it_\psi}}
  {(d^2 n^2 + \Delta+ |d^2n^2-\Delta|)^{it_\psi}}
   W\Big(\frac{d^2 n^2+\Delta+ |d^2 n^2-\Delta|}{8aY}-\frac{\Delta}{4aY}\Big) \Big(1 + \frac
  {\Delta}{|d^2n^2-\Delta|} \Big)^{it_\psi}.
\end{align*}
Since $W'(x)\ll P$, we have that
\begin{align*}
  \mathcal{S}_\chi & = 2^{\nu/2-1} d^\nu \sum_{n\geq0}  \chi(n) n^\nu (2-\delta_{n,0}) \lambda_\psi\Big(\frac{ d^2n^2-\Delta}{4a}\Big) \frac{(2|d^2n^2-\Delta|)^{it_\psi}}
  {(d^2 n^2 + \Delta+ |d^2n^2-\Delta|)^{\nu/2+it_\psi}} \\
  & \hskip 90pt \cdot
   W\Big(\frac{d^2 n^2+\Delta+ |d^2 n^2-\Delta|}{8aY}\Big) \Big(1+O\Big(\frac{P\Delta}{aY}\Big)\Big).
\end{align*}
Hence we have that
\begin{equation}\label{eqn:S=S1}
  \mathcal{S}_\chi = \mathcal{S}_1 + O\Big(\frac{P\Delta}{Y^{1/2-\vartheta-\varepsilon}}\Big),
\end{equation}
where
\begin{align*}
  \mathcal{S}_1 & =  2^{\nu/2-1} d^\nu \sum_{n\geq0}  \chi(n) n^\nu (2-\delta_{n,0}) \lambda_\psi\Big(\frac{ d^2n^2-\Delta}{4a}\Big) \frac{(2|d^2n^2-\Delta|)^{it_\psi}}
  {(d^2 n^2 + \Delta+ |d^2n^2-\Delta|)^{\nu/2+it_\psi}} \\
  &  \hskip 90pt \cdot
   W\Big(\frac{d^2 n^2+\Delta+ |d^2 n^2-\Delta|}{8aY}\Big).
\end{align*}
By Mellin inversion, we have that
\begin{align*}
  \mathcal{S}_1 &  =  2^{\nu/2-1} d^\nu \sum_{n\geq0}  \chi(n) n^\nu (2-\delta_{n,0}) \lambda_\psi\Big(\frac{ d^2n^2-\Delta}{4a}\Big) \frac{(2|d^2n^2-\Delta|)^{it_\psi}}
  {(d^2 n^2 + \Delta+ |d^2n^2-\Delta|)^{\nu/2+it_\psi}}
   \\  & \hskip 120pt \cdot
  \frac{1}{2\pi i} \int_{(1)} \widetilde{W}(s) \Big(\frac{d^2 n^2+\Delta+ |d^2 n^2-\Delta|}{8aY}\Big)^{-s} \dd s \\
  & = 2^{\nu/2-1} d^\nu \frac{1}{2\pi i}
  \int_{(1)} D_{\psi,\chi,d^2}(s,\Delta)  \widetilde{W}(s) (8aY)^{s} \dd s,
\end{align*}
where $\widetilde{W}(s) = \int_{0}^{\infty} W(x) x^s \frac{\dd x}{x}$.
This proves the first claim.

\smallskip

Now we assume $(2,a)=1$ and $a\mid b$. By \eqref{eqn:S=square} we have that
\begin{align}\label{eqn:S=2}
  \mathcal S & =  \sum_{\substack{n\geq0 \\ n\equiv b(2)}} \lambda_\psi\Big(\frac{ a^2n^2-\Delta}{4a}\Big) W\Big(\frac{a^2 n^2-\Delta}{4aY}\Big) .
%  = \sum_{\substack{n\geq0 \\ n\equiv b(2)}} \lambda_\psi \Big(\frac{a n^2-\frac{\Delta}{a}}{4}\Big) W\Big(\frac{a n^2-\frac{\Delta}{a}}{4Y}\Big).
\end{align}
It follows that
\begin{align*}
  \mathcal S & =  \frac{1}{2} \sum_{\substack{n\geq0 \\ n\equiv b(2)}} \lambda_\psi\Big(\frac{ a^2n^2-\Delta}{4a}\Big) (2-\delta_{n,0}) \frac{(2|a^2n^2-\Delta|)^{it_\psi}}
  {(a^2 n^2 + \Delta+ |a^2n^2-\Delta|)^{it_\psi}} \\
  & \hskip 60pt \cdot
   W\Big(\frac{a^2 n^2+\Delta+ |a^2 n^2-\Delta|}{8aY}-\frac{\Delta}{4aY}\Big) \Big(1 + \frac
  {\Delta}{|a^2n^2-\Delta|} \Big)^{it_\psi} .
\end{align*}
%Since $W'(x)\ll P$, we have
%\begin{align*}
%  \mathcal{S} & =  \frac{1}{2} \sum_{\substack{n\geq0 \\ n\equiv b(2)}} \lambda_\psi\Big(\frac{ a^2n^2-\Delta}{4a}\Big) (2-\delta_{n,0}) \frac{(2|a^2n^2-\Delta|)^{it_\psi}}
%  {(a^2 n^2 + \Delta+ |a^2n^2-\Delta|)^{it_\psi}}
%   \\  & \hskip 60pt \cdot
%   W\Big(\frac{a^2 n^2+\Delta+ |a^2 n^2-\Delta|}{8aY}\Big) \Big(1+O\Big(\frac{P\Delta}{aY}\Big)\Big).
%\end{align*}
%Hence we have
%\begin{equation}\label{eqn:S=S1}
%  \mathcal{S} = \mathcal{S}_1 + O\Big(\frac{P\Delta}{Y^{1/2-\vartheta}}\Big),
%\end{equation}
%where
%\begin{align*}
%  \mathcal{S}_1 & =  \frac{1}{2} \sum_{\substack{n\geq0 \\ n\equiv b(2)}}  \frac{\lambda_\psi\Big(\frac{ a^2n^2-\Delta}{4a}\Big) (2-\delta_{n,0}) (2|a^2n^2-\Delta|)^{it_\psi}}
%  {(a^2 n^2 + \Delta+ |a^2n^2-\Delta|)^{it_\psi}}
%   W\Big(\frac{a^2 n^2+\Delta+ |a^2 n^2-\Delta|}{8aY}\Big).
%\end{align*}
%By Millin inversion, we have
%\begin{align*}
%  \mathcal{S}_1 &  =  \frac{1}{2} \sum_{\substack{n\geq0 \\ n\equiv b(2)}} \lambda_\psi\Big(\frac{ a^2n^2-\Delta}{4a}\Big) (2-\delta_{n,0}) \frac{(2|a^2n^2-\Delta|)^{it_\psi}}
%  {(a^2 n^2 + \Delta+ |a^2n^2-\Delta|)^{it_\psi}}
%   \\  & \hskip 120pt \cdot
%  \frac{1}{2\pi i} \int_{(1)} \widethttp://arxiv.org/list/math.NT/recentilde{W}(s) \Big(\frac{a^2 n^2+\Delta+ |a^2 n^2-\Delta|}{8aY}\Big)^{-s} \dd s \\
%  & = \frac{1}{4\pi i}
%  \int_{(1)} D_\psi(s,\Delta)  \widetilde{W}(s) (8aY)^{s} \dd s,
%\end{align*}
%where $\widetilde{W}(s) = \int_{0}^{\infty} W(x) x^s \frac{\dd x}{x}$.
Note that we define $\lambda_\psi(x)=0$ if $x\in\mathbb{R}\setminus\mathbb{Z}$, so the condition $n\equiv b(2)$ is redundant.  Repeating the argument above gives the second claim.
This completes the proof.
\end{proof}

%\section{The Dirichlet series and the inner product}
\subsection{Bounding the Dirichlet series}

Let $\chi$ be a Dirichlet character mod $r$, and let $t\geq1$.
Let $\nu=\nu_{\chi}\in \{0,1\}$ be such that $\chi(-1)=(-1)^\nu$.
Recall from Section \ref{subsec:wt0} that $\Psi \in L^2_{\mathrm{cusp}}(\Gamma_{0}(4aN)\backslash \mathbb{H})$ and $\theta_{\chi,t}\in \textbf{H}_{\kappa}( 4r^2 t ,\chi_\nu)$ with $\kappa=\kappa_\chi=1/2+\nu$ and $\chi_\nu(n)=\chi(n) \left(\frac{-1}{n}\right)^\nu$ (see e.g. \cite[Theorem 10.10]{iwaniec1997topics}). Here $\theta_{\chi,t}(z)=y^{1/4+\nu/2}\sum_{n\in\mathbb{Z}} \chi(n) n^\nu e(n^2 z)$.
We have
$\overline{\Psi} \theta_{\chi,t}\in \textbf{H}_{\kappa} (M,\chi_\nu)$ where $M=\lcm[4aN,4r^2t]$.
We first prove a relation between $D_{\psi,\chi,t}(s,\Delta)$ and $\langle P_{\Delta,\kappa,\chi_\nu}(\cdot;s+1/4), \overline{\Psi} \theta_{\chi,t} \rangle$.
We will prove the following lemma.

\begin{lemma}\label{lemma:D2inprod}
Let $s \in \mathbb C$ with $\tmop{Re}(s)>3/4$.
  We have that
  \begin{multline*}
    D_{\psi,\chi,t}(s,\Delta) \\ = \varrho_\psi^{-1} (16\pi a)^{-1/2} \frac{(2\pi)^{s+\nu/2} \Gamma(s+\nu/2+1/2)}{\Gamma(s+\nu/2+it_\psi)\Gamma(s+\nu/2-it_\psi)}
    \langle P_{\Delta,\kappa,\chi_\nu}(\cdot;s+1/4), \overline{\Psi} \theta_{\chi,t} \rangle
    + R(s),
  \end{multline*}
where $R(s)$ is defined as in \eqref{eqn:R(s)}. Furthermore, $R(s)$
  is holomorphic if $\Re(s)>-1/2+\vartheta+\varepsilon$ in which case we have
  \[
    R(s) = O\Big( t^{-1/2-\nu/2} a^{-\vartheta} \Delta^{1/2-\tmop{Re}(s)+\vartheta} \Big).
  \]
%  and
%  \begin{multline*}
%     D_{\psi}^+(s,\Delta) = \varrho_\psi^{-1} (16\pi a)^{-1/2} \frac{(2\pi)^s \Gamma(s+1/2)}{\Gamma(s+it_\psi)\Gamma(s-it_\psi)}
%     \langle P_{\Delta,1/2,\chi_1}(\cdot;s+1/4), \overline{\Psi} \theta_{\chi_1,a^2} \rangle  \\
%     + O_\psi\Big( a^{-1-\vartheta} \Delta^{1/2-\sigma+\vartheta} \Big).
%  \end{multline*}
\end{lemma}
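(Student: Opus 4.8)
The plan is to unfold the inner product $\langle P_{\Delta,\kappa,\chi_\nu}(\cdot;s+1/4), \overline{\Psi}\theta_{\chi,t}\rangle$ against the Poincar\'e series, extract its $\Delta$-th Fourier coefficient, and match it term-by-term with the Dirichlet series $D_{\psi,\chi,t}(s,\Delta)$. First I would recall that $P_{\Delta,\kappa,\chi_\nu}(\cdot;w)$ is built by averaging $J(\gamma,z)^{-2\kappa}e^{-2\pi|\Delta|\Im(\gamma z)}\Im(\gamma z)^{w}e(\Delta\Re(\gamma z))$ over $\Gamma_\infty\backslash\Gamma_0(M)$. Since $\overline{\Psi}\theta_{\chi,t}$ transforms with the conjugate automorphy factor $\bar\chi_\nu(d)\overline{J(\gamma,z)^{2\kappa}}$, the combination $P_{\Delta,\kappa,\chi_\nu}\cdot\overline{(\overline{\Psi}\theta_{\chi,t})}$ is $\Gamma_0(M)$-invariant; then by the standard unfolding we obtain
\[
\langle P_{\Delta,\kappa,\chi_\nu}(\cdot;s+1/4), \overline{\Psi}\theta_{\chi,t}\rangle
= \int_0^\infty\!\!\int_0^1 e^{-2\pi|\Delta|y}\, y^{s+1/4} e(\Delta x)\, \overline{(\overline{\Psi}\theta_{\chi,t})(z)}\,\frac{\dd x\,\dd y}{y^2}.
\]
So the task reduces to computing the $\Delta$-th Fourier coefficient of $\Psi\,\overline{\theta_{\chi,t}}$ (complex-conjugated appropriately), i.e. a Rankin--Selberg type convolution of the coefficients of $\Psi$ in \eqref{eqn:FE_Psi} with those of $\theta_{\chi,t}$.

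The second step is to carry out this coefficient computation. Writing out the Fourier expansion \eqref{eqn:FE_Psi} of $\Psi$ and $\theta_{\chi,t}(z)=y^{1/4+\nu/2}\sum_{m}\chi(m)m^\nu e(m^2 z)$, the $x$-integral picks out the terms with $4an + m^2 t$... more precisely $m^2 t - 4an = \pm\Delta$ (or with whatever normalization the index $\Delta$ forces), leaving
\[
\sum_{n,m}\frac{\lambda_\psi(n)}{\sqrt{|n|}}\chi(m)m^\nu\;[\text{arithmetic constraint}]\;\cdot\;\Big(\text{integral of }W\text{-Whittaker and exponential factors in }y\Big).
\]
Solving the arithmetic constraint gives $n = (t m^2 - \Delta)/(4a)$, which is exactly the argument of $\lambda_\psi$ appearing in \eqref{eqn:Dchi}. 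The remaining $y$-integral is of Gamma-function type: it is a Barnes-type integral of a product $e^{-2\pi|\Delta|y} W_{0,it_\psi}(\cdot) y^{s-3/4+\nu/2}$, which evaluates (e.g.\ by \cite[\S 9.22--23]{GR} or \cite{GR} tables for $\int_0^\infty e^{-py}W_{\mu,\nu}(ay)y^{s-1}\dd y$) to the ratio of Gamma factors $\frac{(2\pi)^{s+\nu/2}\Gamma(s+\nu/2+1/2)}{\Gamma(s+\nu/2+it_\psi)\Gamma(s+\nu/2-it_\psi)}$ times powers of $2$, $\pi$, $a$. Tracking the normalizations $\varrho_\psi$ (from $\Psi$) and the $(16\pi a)^{-1/2}$ (from the scaling $\gamma_{4a}$ and the argument $16a\pi|n|y$ of the Whittaker function) produces the stated main term; the terms where the constraint forces $n$ with $tm^2-\Delta<0$, or boundary/degenerate contributions (e.g.\ $m=0$, or the constant term of $\theta_{\chi,t}$ when $\nu=0$, or the non-generic Whittaker pieces with negative sign of index), get collected into the error function $R(s)$, whose definition in \eqref{eqn:R(s)} I would simply quote.

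The third step is the analytic continuation and size bound for $R(s)$, which I expect to be the main obstacle. The series defining $R(s)$ will involve $\lambda_\psi$ evaluated at shifted square-arguments, weighted by Whittaker/Gamma ratios, and absolute convergence for $\Re(s)>3/4$ is clear, but one needs to push to $\Re(s)>-1/2+\vartheta+\varepsilon$. The idea is: the generic Dirichlet series $D_{\psi,\chi,t}(s,\Delta)$ already has meromorphic continuation coming from the spectral expansion of $P_{\Delta,\kappa,\chi_\nu}$ (which by Section~2.3 continues to $\Re s > \tfrac{1+\vartheta}{2}$, i.e.\ $\Re(s+1/4) > \tfrac{1+\vartheta}{2}$, with a simple pole at $s+1/4=3/4$), so $R(s)$ inherits continuation as the difference of two meromorphic objects; then the bound $R(s) = O(t^{-1/2-\nu/2}a^{-\vartheta}\Delta^{1/2-\Re(s)+\vartheta})$ comes from bounding the finitely many "wrong-sign" / degenerate terms directly, using the Hecke bound $\lambda_\psi(n)\ll n^\vartheta$ (so the $a^{-\vartheta}$ and the $\Delta^{\cdots+\vartheta}$ reflect $\lambda_\psi\big((\Delta-tm^2)/(4a)\big)\ll (\Delta/a)^\vartheta$) together with decay of the incomplete-Gamma / Whittaker factors in the region where the exponential $e^{-2\pi|\Delta|y}$ dominates. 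Care is needed at $s+1/4=3/4$ to check the claimed pole of $P_{\Delta,\kappa,\chi_\nu}$ does not leak into $R(s)$ — it should be absorbed into the main term's Gamma ratio — and to verify uniformity in $t, a, \Delta$ throughout. The bookkeeping of constants ($2$-powers, $\pi$-powers, $\varrho_\psi$, $\epsilon_d$-type factors) is tedious but routine; the genuinely delicate point is the holomorphy and polynomial-in-$\Delta$ control of $R(s)$ down to $\Re(s) > -1/2+\vartheta+\varepsilon$.
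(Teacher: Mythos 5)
Your first two steps (unfolding, extracting the $\Delta$-th Fourier coefficient via the constraint $4an = tm^2-\Delta$, and reducing to a $y$-integral against the Whittaker function) match the paper's argument. The gap is in your identification of $R(s)$ and, consequently, in your plan for bounding it. The $y$-integral here is $\int_0^\infty e^{-2\pi(tn^2+\Delta)y}K_{it_\psi}(8\pi a|m|y)\,y^{s-1+\nu/2}\,\dd y$ with $4a|m|=|tn^2-\Delta|$; since the exponential rate $2\pi(tn^2+\Delta)$ is \emph{not} half the Bessel argument, \cite[eq.\ (6.621.3)]{GR} gives the Gamma ratio \emph{times} a Gauss hypergeometric factor
\[
F\Bigl(s+\tfrac{\nu}{2}+it_\psi,\tfrac12+it_\psi;s+\tfrac{\nu}{2}+\tfrac12;\tfrac{tn^2+\Delta-|tn^2-\Delta|}{tn^2+\Delta+|tn^2-\Delta|}\Bigr),
\]
not the Gamma ratio alone. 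The quantity $R(s)$ of \eqref{eqn:R(s)} is precisely the full sum over $n\ge 1$ of the corrections $1-F(\cdots)$, each of size $O\bigl(\Delta/(tn^2+\Delta)\bigr)$ — not a collection of finitely many ``wrong-sign'' or degenerate terms. (The Dirichlet series \eqref{eqn:Dchi} already contains all $n\ge 0$, both signs of $tn^2-\Delta$, so there are no wrong-sign terms to discard, and the $m=0$/constant-term contributions you mention do not arise.) Bounding ``finitely many'' terms therefore cannot produce the stated estimate; the paper gets $R(s)\ll t^{-1/2-\nu/2}a^{-\vartheta}\Delta^{1/2-\Re(s)+\vartheta}$ by combining $1-F\ll \Delta/(tn^2+\Delta)$ with $\lambda_\psi(m)\ll|m|^{\vartheta}$ and splitting the $n$-sum at $n\asymp\sqrt{\Delta/t}$.

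Your third step also has the logical roles reversed. Holomorphy of $R(s)$ in $\Re(s)>-\tfrac12+\vartheta+\varepsilon$ is immediate from the absolute convergence of its explicit series there (uniformly in $t,a,\Delta$), and is what is needed as \emph{input} for the next lemma; it is the inner product $\langle P_{\Delta,\kappa,\chi_\nu}(\cdot;s+1/4),\overline{\Psi}\theta_{\chi,t}\rangle$ that is later continued via the spectral expansion, carrying the possible pole at $s=1/2$ entirely in the main term. Deducing the continuation of $R(s)$ as a difference of two meromorphic objects would give you no uniform quantitative bound in $t$, $a$, $\Delta$, which is the whole content of the second assertion of the lemma; and your worry about the Poincar\'e-series pole ``leaking into $R(s)$'' evaporates once $R(s)$ is recognized as the explicitly convergent hypergeometric-correction series.
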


\begin{proof}
Consider
\[
  I = \langle P_{\Delta,\kappa,\chi_\nu}(\cdot;s+1/4), \overline{\Psi} \theta_{\chi,t} \rangle
  = \int_{\Gamma_0(M)\backslash \mathbb{H}} P_{\Delta,\kappa,\chi_\nu}(z;s+1/4) \Psi(z) \overline{\theta_{\chi,t}(z)} \frac{\dd x\dd y}{y^2}.
\]
Unfolding the integral according to the definition of $P_{\Delta,\kappa,\chi_\nu}$ yields
\begin{align}
  I & = \int_{0}^{\infty} \int_{0}^{1} \Psi(z) \overline{\theta_{\chi,t}(z)} e^{-2\pi \Delta y} y^{s+1/4} e(\Delta x) \frac{\dd x\dd y}{y^2} \nonumber \\
  & = \int_{0}^{\infty} \int_{0}^{1}
   \varrho_\psi \sum_{m\in\mathbb{Z}} \frac{\lambda_\psi(m)}{\sqrt{|m|}} W_{0,it_\psi}(16\pi a|m|y) e(4am x) \nonumber \\
  & \hskip 60pt \cdot y^{1/4+\nu/2} \sum_{n\geq 0} (2-\delta_{n,0}) \chi(n) n^\nu e(-  t n^2 x) e^{-2\pi t n^2 y}  e^{-2\pi \Delta y} y^{s+1/4} e(\Delta x) \frac{\dd x\dd y}{y^2}  \nonumber \\
  & =    \varrho_\psi \mathop{\sum_{m\in\mathbb{Z}} \sum_{n\geq0}}_{4am- t n^2+\Delta=0} \frac{\lambda_\psi(m)}{\sqrt{|m|}} (2-\delta_{n,0}) \chi(n) n^\nu B(m,n), \label{eqn:I=sum}
\end{align}
where
\[
  B := B(m,n) = \int_{0}^{\infty} W_{0,it_\psi}(16\pi a|m|y)   e^{-(2\pi t n^2 +2\pi \Delta) y} y^{s-3/2+\nu/2}  \dd y.
\]
Note that
$W_{0,it_\psi}(16\pi a|m|y)= \sqrt{16 a|m| y} K_{it_\psi}(8\pi a|m|y)$ (cf. \cite[eq. (9.235.2)]{GR}).
By \cite[eq. (6.621.3)]{GR}, we have
\begin{multline}\label{eqn:K&exp}
  \int_{0}^{\infty} y^{u-1} e^{-\alpha y}K_v(\beta y) \dd y \\
  = \frac{\sqrt{\pi} (2\beta)^v}{(\alpha+\beta)^{u+v}} \frac{\Gamma(u+v)\Gamma(u-v)}{\Gamma(u+1/2)} F\left(u+v,v+1/2;u+1/2;\frac{\alpha-\beta}{\alpha+\beta}\right),
\end{multline}
if $\Re(u)>|\Re(v)|$ and $\Re(\alpha+\beta)>0$. Here $F(\alpha,\beta;\gamma;z)$ is the hypergeometric series (cf. \cite[\S 9.1]{GR}).
Hence for $\Re(s)>\vartheta$, we have
\begin{multline}\label{eqn:B=}
  B  = (16 a|m|)^{1/2} \int_{0}^{\infty} K_{it_\psi}(8\pi a|m|y)   e^{-(2\pi t n^2 +2\pi \Delta) y} y^{s-1+\nu/2}  \dd y \\
   = (16 a|m|)^{1/2}
  \frac{\sqrt{\pi} (16\pi a|m|)^{it_\psi}}
  {(2\pi t n^2 +2\pi \Delta+8\pi a|m|)^{s+\nu/2+it_\psi}} \frac{\Gamma(s+\nu/2+it_\psi)\Gamma(s+\nu/2-it_\psi)}{\Gamma(s+\nu/2+1/2)}  \\
   \cdot F\left(s+\nu/2+it_\psi,1/2+it_\psi;s+\nu/2+1/2;\frac{t n^2 +\Delta-4a|m|}{t n^2 +\Delta+4a|m|}\right),
\end{multline}
where $4am=t n^2-\Delta$ and $n\in \mathbb{Z}_{\geq0}$, $m\in\mathbb{Z}$.
Expanding $F$ in a Taylor series about zero in its last variable gives
\begin{equation}\label{eqn:F-1}
  F\left(s+\nu/2+it_\psi,1/2+it_\psi;s+\nu/2+1/2;
  \frac{t n^2 +\Delta-4a|m|}{t n^2 +\Delta+4a|m|}\right)
  = 1 + O\left(\frac{\Delta}{t n^2+\Delta}\right),
\end{equation}
uniformly for $s \in \mathbb C$ with $\tmop{Re}(s)>\vartheta$.
Let
\begin{multline}\label{eqn:R(s)}
  R(s) = \mathop{\sum_{n\geq1}}_{t n^2\equiv \Delta (4a)} \lambda_\psi\Big(\frac{t n^2-\Delta}{4a}\Big) (2-\delta_{n,0}) \chi(n) n^\nu
  \frac{(2|tn^2-\Delta|)^{it_\psi}}
  {(tn^2 +\Delta+|tn^2-\Delta|)^{s+\nu/2+it_\psi}}
  \\
  \cdot \left( 1 - F\left(s+\nu/2+it_\psi,1/2+it_\psi;s+\nu/2+1/2;
  \frac{t n^2 +\Delta-|t n^2-\Delta|}{t n^2 +\Delta+|t n^2-\Delta|}\right) \right).
\end{multline}
Thus by \eqref{eqn:I=sum} and \eqref{eqn:B=}, we have
\begin{align*}
  I %& =    \varrho_\psi \mathop{\sum_{m\in\mathbb{Z}} \sum_{n\geq0}}_{4am-t n^2+\Delta=0} \lambda_\psi(m) (2-\delta_{n,0}) \chi(n) n^\nu \frac{(16\pi a)^{1/2} (16\pi a|m|)^{it_\psi}}   {(2\pi t n^2 +2\pi \Delta+8\pi a|m|)^{s+\nu/2+it_\psi}}  \\
  %& \hskip 120pt \cdot \frac{\Gamma(s+\nu/2+it_\psi)\Gamma(s+\nu/2-it_\psi)}{\Gamma(s+\nu/2+1/2)} \Big(1 + O\Big(\frac{\Delta}{tn^2+\Delta}\Big)\Big) \\
  & =    \varrho_\psi (16\pi a)^{1/2} \frac{\Gamma(s+\nu/2+it_\psi)\Gamma(s+\nu/2-it_\psi)}{\Gamma(s+\nu/2+1/2)}
  \\
  & \hskip 10pt \cdot \mathop{\sum_{n\geq0}}_{t n^2\equiv \Delta (4a)} \lambda_\psi\Big(\frac{t n^2-\Delta}{4a}\Big) (2-\delta_{n,0}) \chi(n) n^\nu
  \frac{(4\pi|tn^2-\Delta|)^{it_\psi}}
  {(2\pi tn^2 +2\pi \Delta+2\pi |tn^2-\Delta|)^{s+\nu/2+it_\psi}}
  \\
  & \hskip 10pt \cdot F\left(s+\nu/2+it_\psi,1/2+it_\psi;s+\nu/2+1/2;
  \frac{t n^2 +\Delta-|t n^2-\Delta|}{t n^2 +\Delta+|t n^2-\Delta|}\right)
%  \\
%  & =    \varrho_\psi (16\pi a)^{1/2} \frac{\Gamma(s+\nu/2+it_\psi)\Gamma(s+\nu/2-it_\psi)}{\Gamma(s+\nu/2+1/2)}
%  D_{\psi,\chi,t}(s,\Delta)
%   \\
%  & \hskip 120pt
%  + O_\psi\Big( a^{-1/2-\vartheta} \Delta^{1/2-\sigma+\vartheta} \Big|\frac{\Gamma(s+it_\psi)\Gamma(s-it_\psi)}{\Gamma(s+1/2)}\Big| \Big)
  .
\end{align*}
Note that $F(\alpha,\beta;\gamma;0)=1$.  By \eqref{eqn:Dchi} and \eqref{eqn:R(s)}, we prove the first claim.

Now we deal with $R(s)$. We first assume $t\ll \Delta$.
By \eqref{eqn:F-1} and \eqref{eqn:R(s)} we have
\begin{multline*}
  R(s) \ll \sum_{1\leq n\leq \sqrt{\Delta/t}} \left(\frac{\Delta}{a}\right)^{\vartheta}  n^\nu
  \frac{1}{\Delta^{\Re{s}+\nu/2}}
  + \sum_{ n\geq \sqrt{\Delta/t}} \left(\frac{tn^2}{a}\right)^{\vartheta}  n^\nu
  \frac{1}{(tn^2)^{\Re{s}+\nu/2}}  \frac{\Delta}{t n^2}
  \\
  \ll \left(\frac{\Delta}{t}\right)^{1/2+\nu/2} \left(\frac{\Delta}{a}\right)^{\vartheta}
  \frac{1}{\Delta^{\Re{s}+\nu/2}}
  +  \left(\frac{t}{a}\right)^{\vartheta}
  \frac{1}{t^{\Re{s}+\nu/2}}  \frac{\Delta}{t} \sum_{ n\geq \sqrt{\Delta/t}} \frac{1}{n^{2+2\Re(s)-2\vartheta}} \\
   \ll t^{-1/2-\nu/2} a^{-\vartheta} \Delta^{1/2+\vartheta-\Re(s)},
\end{multline*}
if $\Re(s)\geq -1/2+\vartheta+\varepsilon$. If $\Delta\ll t$, then we have
\begin{multline*}
  R(s) \ll \sum_{ n\geq 1} \left( \frac{tn^2}{a} \right)^{\vartheta}  n^\nu
  \frac{1}{(tn^2)^{\Re{s}+\nu/2}}  \frac{\Delta}{t n^2}
  \ll \left(\frac{t}{a}\right)^{\vartheta}
  \frac{1}{t^{\Re{s}+\nu/2}}  \frac{\Delta}{t} \sum_{ n\geq 1} \frac{1}{n^{2+2\Re(s)-2\vartheta}} \\
   \ll t^{-1-\nu/2+\vartheta-\Re(s)} a^{-\vartheta} \Delta
   \ll t^{-1/2-\nu/2} a^{-\vartheta} \Delta^{1/2+\vartheta-\Re(s)},
\end{multline*}
if $\Re(s)\geq -1/2+\vartheta+\varepsilon$. This completes the proof.
\end{proof}

Now we are ready establish analytic estimates for $D_{\psi,\chi,t}(s,\Delta)$ by using the spectral decomposition of $P_{\Delta,\kappa,\chi_\nu}(\cdot;s+1/4)$ in $\textbf{H}_{\kappa}(M,\chi_\nu)$.
We will prove the following result.

\begin{lemma}\label{lemma:D(s)==}
  The function $(s-\tfrac12)D_{\psi,\chi,t}(s,\Delta)$ has an analytic continuation to the half-plane $\Re(s) > \tfrac14+\tfrac{\vartheta}{2}$. Moreover, in this region, we have that
  \begin{multline*}
  D_{\psi,\chi,t}(s,\Delta) =
  \frac{(2\pi)^{s+\nu/2} (4\pi \Delta)^{3/4-s} \Gamma(s+\nu/2+1/2)\Gamma(s-1/2)}{\varrho_\psi (16\pi a)^{1/2} \Gamma(s+\nu/2+it_\psi)\Gamma(s+\nu/2-it_\psi)}  \\
  \cdot
  \sum_{f_j \in \mathbf{H}_{\kappa}(M,\chi_\nu, i/4)}
      \overline{\rho_j(\Delta)}
     \langle f_j , \overline{\Psi} \theta_{\chi,t} \rangle
  + O_\psi\Big( M^A \Delta^{B} (1+|\Im(s)|)^C \Big).
  \end{multline*}
  Hence
  \begin{multline*}
    \underset{s=1/2}{\Res} \; D_{\psi,\chi,t}(s,\Delta) \\
    = \frac{2^{\nu/2-1} \pi^{\nu/2+1/4} \Delta^{1/4} \Gamma(1+\nu/2)}{\varrho_\psi a^{1/2} \Gamma(1/2+\nu/2+it_\psi)\Gamma(1/2+\nu/2-it_\psi)}
  \sum_{f_j \in \mathbf{H}_{\kappa}(M,\chi_\nu, i/4)}
      \overline{\rho_j(\Delta)}
     \langle f_j , \overline{\Psi} \theta_{\chi,t} \rangle,
  \end{multline*}
  and uniformly for $1/4+\vartheta/2+\varepsilon \le \Re(s) \le 1/2 $ we have $D_{\psi,\chi,t}(s,\Delta) = O\left( M^A \Delta^{B} (1+|\Im(s)|)^C \right)$.
\end{lemma}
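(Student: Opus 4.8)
The plan is to spectrally expand the Poincaré series $P_{\Delta,\kappa,\chi_\nu}(\cdot;s+1/4)$ in $\mathbf{H}_\kappa(M,\chi_\nu)$ and insert this expansion into the identity of Lemma \ref{lemma:D2inprod}. Recall the spectral decomposition has three pieces: the cusp forms $f_{j,\kappa}$, the residual (exceptional) forms with eigenvalue $3/16$, and the continuous spectrum of Eisenstein series $E_{\mathfrak a,\kappa}(z;1/2+ir)$. Writing $\langle P_{\Delta,\kappa,\chi_\nu}(\cdot;s+1/4), \overline{\Psi}\theta_{\chi,t}\rangle$ against each piece, the Poincaré series pairs with an automorphic form $g$ of spectral parameter $t_g$ via its $\Delta$-th Fourier coefficient: unfolding gives $\langle P_{\Delta,\kappa,\chi_\nu}(\cdot;s+1/4), g\rangle$ equal to $\overline{\rho_g(\Delta)}$ times an explicit gamma-factor ratio coming from $\int_0^\infty W_{\sgn(\Delta)\kappa/2,it_g}(4\pi\Delta y)e^{-2\pi\Delta y}y^{s+1/4-3/2+\nu/2-\dots}\,dy$, which is the same Bessel–exponential integral (6.621.3) of \cite{GR} used in the proof of Lemma \ref{lemma:D2inprod}. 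The point is that this gamma ratio, as a function of $s$, has a pole at $s=1/2$ \emph{only} when the spectral parameter satisfies $t_g = i/4$, i.e. for the residual forms lying in $\mathbf{H}_\kappa(M,\chi_\nu,i/4)$; for the cusp forms with $t_j\in\mathbb R$ (or $|\Im t_j|\le \vartheta/2$ by \eqref{eqn:KS}) and for the Eisenstein continuous parameter the relevant gamma factors stay holomorphic in $\Re(s)>1/4+\vartheta/2$.

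Concretely, I would first carry out the unfolding computation for a general eigenform $g$ of weight $\kappa$, recording the exact shape
\[
\langle P_{\Delta,\kappa,\chi_\nu}(\cdot;s+1/4), g\rangle = \overline{\rho_g(\Delta)}\,(4\pi\Delta)^{3/4-s}\,\frac{\Gamma(s-1/4+\sgn(\Delta)\kappa/2 + \dots)\Gamma(\dots)}{\Gamma(\dots)}
\]
with the precise arguments matched so that specialization to $t_g=i/4$, $\kappa=1/2+\nu$ reproduces the constant in the statement, and so that the factor $\Gamma(s-1/2)$ emerges as the unique source of a pole at $s=1/2$. Then I would bound the contributions of the non-exceptional spectrum: for the cusp-form part use Cauchy–Schwarz, the spectral large-sieve / mean-value bounds for $\sum_j |\rho_j(\Delta)|^2$ and for $\sum_j |\langle f_j,\overline\Psi\theta_{\chi,t}\rangle|^2 = \|\overline\Psi\theta_{\chi,t}\|^2$ (Bessel), together with Stirling to control the gamma ratio and gain holomorphy in $\Re(s)>1/4+\vartheta/2$; the $|\Im t_j|\le\vartheta/2$ shifts are exactly what forces the line $\Re(s)=1/4+\vartheta/2$. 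For the Eisenstein part do the analogous estimate with the integral over $r$, using a standard bound for $\sum_{\mathfrak a}\int |\rho_{\mathfrak a,r}(\Delta)|^2\,dr$ and polynomial growth in $r$ absorbed by Stirling decay. All of this yields the stated error term $O_\psi(M^A\Delta^B(1+|\Im s|)^C)$, and combined with the $R(s)$ term of Lemma \ref{lemma:D2inprod} (holomorphic and polynomially bounded for $\Re(s)>-1/2+\vartheta+\varepsilon$, hence absorbed into the error) gives the claimed formula for $D_{\psi,\chi,t}(s,\Delta)$; the residue at $s=1/2$ then falls out by reading off $\Res_{s=1/2}\Gamma(s-1/2)=1$ and simplifying the gamma factors at $s=1/2$, and the uniform bound on the strip $1/4+\vartheta/2+\varepsilon\le\Re(s)\le1/2$ is immediate from the same estimates plus the single explicit pole.

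The main obstacle I expect is twofold: first, bookkeeping the Whittaker/Bessel integral precisely enough that the residual-spectrum term comes out with exactly the constant asserted — one must track the normalization of $W_{\sgn(n)\kappa/2,it_j}$ versus $K_{it_j}$, the shift $s\mapsto s+1/4$, and the weight-$\kappa$ sign conventions, and at $t_g=i/4$ the hypergeometric factor $F$ degenerates so one should check it contributes a clean constant (it equals $1$ at argument $0$, but here the argument is $(\dots-\Delta)/(\dots+\Delta)$, so one needs the $t=i/4$ specialization where the Whittaker function is elementary, consistent with the description $\mathbf{H}_{1/2}(M,\chi,i/4)=\{y^{1/4}f: f\in M_{1/2}(M,\chi)\}$). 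Second, and more seriously, justifying that the spectral expansion of $P_{\Delta,\kappa,\chi_\nu}(\cdot;s+1/4)$ converges and may be integrated term-by-term against $\overline\Psi\theta_{\chi,t}$ in the half-plane $\Re(s)>1/4+\vartheta/2$, beyond the region $\Re(s)>3/4$ where the Poincaré series converges absolutely — this is where the meromorphic continuation via $[\Delta_\kappa+s(1-s)]P = \dots P(\cdot;s+1)$ recalled in Section \ref{sec:autom_half} is used, and one must argue the only new singularity picked up in crossing from $\Re(s)>3/4$ down to $\Re(s)>1/4+\vartheta/2$ is the pole at $s=1/2$ coming from the exceptional eigenvalue $3/16$ (via $s(1-s)=3/16$ shifted by $1/4$), with the potential pole at the Poincaré series' own $s=3/4$ pole cancelled or outside the strip after the $s\mapsto s+1/4$ shift is accounted for.
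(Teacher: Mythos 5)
Your overall architecture matches the paper's proof: expand $\langle P_{\Delta,\kappa,\chi_\nu}(\cdot;s+1/4),\overline{\Psi}\theta_{\chi,t}\rangle$ spectrally via Parseval, unfold each term to get $\overline{\rho_j(\Delta)}\,(4\pi\Delta)^{3/4-s}\,\Gamma(s-\tfrac14-it_j)\Gamma(s-\tfrac14+it_j)/\Gamma(s)$, note that only $t_j=i/4$ produces the factor $\Gamma(s-\tfrac12)$ and hence the pole, and feed the result back through Lemma \ref{lemma:D2inprod}. However, there is a genuine gap in how you bound the non-exceptional spectrum. You propose Cauchy--Schwarz with Bessel's inequality for $\sum_j|\langle f_j,\overline{\Psi}\theta_{\chi,t}\rangle|^2$ and a mean-value bound for $\sum_j|\rho_j(\Delta)|^2$. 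This only yields a \emph{polynomial} bound in $|\Im s|$ for the spectral tail: writing $s=\sigma+i\tau$, Stirling gives $|\Gamma(s-\tfrac14-it_j)\Gamma(s-\tfrac14+it_j)/\Gamma(s)|\asymp e^{-\frac{\pi}{2}(2\max(|\tau|,|t_j|)-|\tau|)}$ up to polynomial factors, which has \emph{no} decay when $|t_j|\asymp|\tau|$; since $|\rho_j(\Delta)|^2$ grows like $e^{\pi|t_j|}$ on average and Bessel gives no localization of the mass of $\langle f_j,\overline{\Psi}\theta_{\chi,t}\rangle$ in $t_j$, the block $|t_j|\asymp|\tau|$ contributes $O(\mathrm{poly})$ with no gain. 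The paper instead uses the pointwise bound $\langle f_j,\overline{\Psi}\theta_{\chi,t}\rangle\ll M^A(1+|t_j|)^Ce^{-\frac{\pi}{2}|t_j|}$ — this is exactly Lemma \ref{lemma:triple}, the raison d'\^etre of Appendix \ref{appendix:Sarnak} — which kills the diagonal block and yields the tail bound $\ll M^A\Delta^B(1+|\Im s|)^Ce^{-\frac{\pi}{2}|\Im s|}$. That exponential decay is not a luxury: the prefactor $\Gamma(s+\nu/2+\tfrac12)/\bigl(\Gamma(s+\nu/2+it_\psi)\Gamma(s+\nu/2-it_\psi)\bigr)$ from Lemma \ref{lemma:D2inprod} grows like $e^{\frac{\pi}{2}|\Im s|}$, so without the compensating decay your error term for $D_{\psi,\chi,t}(s,\Delta)$ would be $e^{\frac{\pi}{2}|\Im s|}\cdot\mathrm{poly}$ rather than the stated $O(M^A\Delta^B(1+|\Im s|)^C)$, and the contour shift in the proof of Theorem \ref{prop:sum_quad}(i) would fail because $\widetilde W(s)$ decays only polynomially on vertical lines.

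Two smaller remarks. First, the pole of $P_{d,\kappa,\chi}(\cdot;w)$ at $w=3/4$ is not a separate singularity to be ``cancelled or outside the strip'': under $w=s+1/4$ it sits exactly at $s=1/2$ and \emph{is} the pole produced by the residual spectrum at eigenvalue $3/16$; its residue is the theta-series contribution, i.e.\ your main term. Second, your worry about the hypergeometric factor degenerating at $t_g=i/4$ is moot at this stage: the hypergeometric function belongs to the proof of Lemma \ref{lemma:D2inprod} (the $\Psi$-side unfolding), whereas here one unfolds the Poincar\'e series against $f_j$ and the relevant integral is the clean Whittaker--exponential formula \cite[eq.\ (7.621.11)]{GR}, with no hypergeometric factor appearing.
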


\begin{proof}
  Spectrally expanding the inner product via Parseval's formula, we get
  \[
    \langle P_{\Delta,\kappa,\chi_\nu}(\cdot;s+1/4), \overline{\Psi} \theta_{\chi,t} \rangle
    = \sum_{j} \langle P_{\Delta,\kappa,\chi_\nu}(\cdot;s+1/4), f_j \rangle \langle f_j , \overline{\Psi} \theta_{\chi,t} \rangle + \textrm{cont,}
  \]
  where $f_j = f_{j,\kappa}$ is as in Section \ref{sec:autom_half} with $\kappa=1/2+\nu$, and ``cont'' denotes the contribution from the continuous spectrum which can be bounded in the same way as for the Maass forms. In fact, this is easier since we can use unfolding to deal with the triple product.

  By unfolding and applying \eqref{eqn:f}, we get that
  \begin{align*}
   \langle P_{\Delta,\kappa,\chi_\nu}& (\cdot;s+1/4),  f_j \rangle
     = \int_{0}^{\infty} \int_{0}^{1} \overline{f_j(z)} e^{-2\pi \Delta y} y^{s+1/4} e(\Delta x) \frac{\dd x\dd y}{y^2} \\
  & = \int_{0}^{\infty} \int_{0}^{1}
   \Big( \overline{\rho_{j}(0, y)} + \sum_{n \not=0} \overline{\rho_{j}(n)} W_{\sgn(n)\frac{1}{4}, \overline{it_j}}(4\pi |n|y)  e(-n x) \Big) e^{-2\pi \Delta y} y^{s+1/4} e(\Delta x) \frac{\dd x\dd y}{y^2} \\
  & =    \overline{\rho_j(\Delta)}
  \int_{0}^{\infty} W_{\frac{1}{4},\overline{it_j}}(4\pi \Delta y)   e^{-2\pi \Delta y} y^{s-3/4}  \frac{\dd y}{y} \\
  & =  \overline{\rho_j(\Delta)}  (4\pi \Delta)^{3/4-s} \frac{\Gamma(s-1/4-it_j)\Gamma(s-1/4+it_j)}{\Gamma(s)},
  %\int_{0}^{\infty} W_{\frac{1}{4},\overline{it_j}}(x)   e^{-x/2} x^{s-7/4}  \dd x.
  \end{align*}
  if $\Re(s)>1/4+\vartheta/2$. Here we have used \cite[eq. (7.621.11)]{GR}. We have also used the fact that $t_j$ is real or purely imaginary so that $\{it_j,-it_j\}=\{\overline{it_j},-\overline{it_j}\}$.
  If $t_j\neq i/4$, then $|\Im(t_j)|\leq \vartheta/2$, and hence $\langle P_{\Delta,\kappa,\chi_\nu}(\cdot;s+1/4),  f_j \rangle$ is holomorphic in $\Re(s) > 1/4+\vartheta/2$. By Lemma \ref{lemma:triple}, we have that
  $\langle f_j , \overline{\Psi} \theta_{\chi,t} \rangle \ll M^A (1+|t_j|)^C e^{-\frac{\pi}{2}|t_j|}$. By Stirling's formula, we have that
  \[
    \sum_{f_j \notin \textbf{H}_{\kappa}(M,\chi_\nu, i/4)} \langle P_{\Delta,\kappa,\chi_\nu}(\cdot;s+1/4), f_j \rangle \langle f_j , \overline{\Psi} \theta_{\chi,t} \rangle
    \ll M^A (1+|\Im(s)|)^{C} \Delta^{B} e^{-\frac{\pi}{2}|\Im(s)|}.
  \]
  If $f_j\in \textbf{H}_{\kappa}(M,\chi_\nu, i/4)$, then $\langle P_{\Delta,\kappa,\chi_\nu}(\cdot;s+1/4),  f_j \rangle$ has a simple pole at $s=1/2$ and $\langle f_j , \overline{\Psi} \theta_{\chi,t} \rangle$ only depends on $\psi,a,b,c$. The contribution to $\langle P_{\Delta,\kappa,\chi_\nu}(\cdot;s+1/4), \overline{\Psi} \theta_{\chi_1,a^2} \rangle$ is
  \[
     \sum_{f_j \in \textbf{H}_{\kappa}(M,\chi_\nu, i/4)}
      \overline{\rho_j(\Delta)}  (4\pi \Delta)^{3/4-s} \Gamma(s-1/2)
     \langle f_j , \overline{\Psi} \theta_{\chi,t} \rangle.
  \]
  By Lemma \ref{lemma:D2inprod}, we have that
  \begin{multline*}
  D_{\psi,\chi,t}(s,\Delta) =
  \frac{(2\pi)^s (4\pi \Delta)^{3/4-s} \Gamma(s+\nu/2+1/2)\Gamma(s-1/2)}{\varrho_\psi (16\pi a)^{1/2} \Gamma(s+\nu/2+it_\psi)\Gamma(s+\nu/2-it_\psi)}
  \sum_{f_j \in \textbf{H}_{\kappa}(M,\chi_\nu, i/4)}
      \overline{\rho_j(\Delta)}
     \langle f_j , \overline{\Psi} \theta_{\chi,t} \rangle  \\
  + O\Big(\Big| \frac{(2\pi)^s \Gamma(s+\nu/2+1/2)}{\Gamma(s+\nu/2+it_\psi)\Gamma(s+\nu/2-it_\psi)}\Big| M^A \Delta^{B} (1+|\Im(s)|)^C e^{-\frac{\pi}{2}|\Im(s)|} \Big).
  \end{multline*}
  This completes the proof.
\end{proof}

\subsection{Proof of Theorem \ref{prop:sum_quad} (i)} \label{subsec:Proof1}
% Now we are ready to prove Proposition \ref{prop:sum_quad} (i).
Apply Lemma \ref{lemma:S=int(D)} and shift the contour of integration to $\Re(s)=1/4+\vartheta/2+\varepsilon$, which is justified by \ref{lemma:D(s)==} using that $\widetilde{W}(s) \ll (1+\frac{|s|}{P})^{-A}$.  We potentially pick up a pole at $s=1/2$ and conclude that
\begin{align*}
  \mathcal{S}
%  & = \frac{1}{2\pi i}
%  \int_{(1)} D_\psi(s,\Delta)  \widetilde{W}(s) (8aY)^{s} \dd s + O\Big(\frac{P\Delta}{Y^{1/2-\vartheta}}\Big) \\
  & = \frac{1}{2}
   \underset{s=1/2}{\Res}\; D_\psi(s,\Delta) \; \widetilde{W}(1/2) (8aY)^{1/2} \\
  & \hskip 30pt + O\left(\Big| \int_{(1/4+\vartheta/2+\varepsilon)} D_\psi(s,\Delta)  \widetilde{W}(s) (8aY)^{s} \dd s \Big|\right)
   + O\Big(\frac{P\Delta}{Y^{1/2-\vartheta}}\Big) \\
  & = C_{\psi,a,b,c}  \widetilde{W}(1/2) Y^{1/2}
  + O\left( M^A \Delta^B P^C Y^{1/4+\vartheta/2+\varepsilon} \right),
\end{align*}
where
\begin{multline}\label{eqn:C}
  C_{\psi,a,b,c} = \frac{(2a)^{1/2}}{\varphi(a')} \sum_{\substack{\chi \; (a') }} \bar\chi(b') 2^{\nu_\chi/2} d^{\nu_\chi} \\
  \cdot
  \frac{2^{\nu_\chi/2-1} \pi^{\nu_\chi/2+1/4} \Delta^{1/4} \Gamma(1+\nu_\chi/2)}{\varrho_\psi a^{1/2} \Gamma(1/2+\nu_\chi/2+it_\psi)\Gamma(1/2+\nu_\chi/2-it_\psi)}
  \sum_{f_j \in \mathbf{H}_{\kappa}(M,\chi_\nu, i/4)}
      \overline{\rho_j(\Delta)}
     \langle f_j , \overline{\Psi} \theta_{\chi,d^2} \rangle \\
  = \frac{\pi^{1/4} \Delta^{1/4}}{2^{1/2}\varrho_\psi \varphi(a')}
  \sum_{\substack{\chi \; (a') }}
  \frac{ \pi^{\nu_\chi} d^{\nu_\chi} \bar\chi(b')}{ \Gamma(1/2+\nu_\chi/2+it_\psi)\Gamma(1/2+\nu_\chi/2-it_\psi)}
  \sum_{f_j \in \mathbf{H}_{\kappa}(M,\chi_\nu, i/4)}
      \overline{\rho_j(\Delta)}
     \langle f_j , \overline{\Psi} \theta_{\chi,d^2} \rangle.
\end{multline}
Here we used the fact that $2^{\nu_\chi}\pi^{-\nu_\chi/2} \Gamma(1+\nu_\chi/2)=1$. Recall that $d=\gcd(2a,b)$, $a'=2a/d$, $b'=b/d$. This proves the first claim.

\subsection{Proof of Theorem \ref{prop:sum_quad} (ii)} \label{subsec:Proof2}

In the case $a\mid N$ and $a\mid b$, we can show that the main term vanishes unconditionally.
We only need to prove $\langle f_j , \overline{\Psi} \theta_{\chi_1,a^2} \rangle = 0 $ for all $f_j \in \textbf{H}_{1/2}(M, i/4)$ where $M=4aN$.
By \cite{Serre-Stark}, the space $\textbf{H}_{1/2}(M, i/4)$ is spanned by the theta series
\begin{equation}\label{eqn:FE_theta}
  \theta_{\omega,t}(z)
  = y^{1/4} \sum_{n\in\mathbb{Z}} \omega(n) e(tn^2 z)
  =  \sum_{n\geq0} (2-\delta_{n,0})  \omega(n) y^{1/4} e(tn^2 z) ,
  % = \sum_{n\geq0} \frac{(2-\delta_{n,0}) \omega(n)}{(4\pi t n^2)^{1/4}} W_{\frac{1}{4},\frac{1}{4}}(4\pi t n^2 y) e(t n^2 x),
\end{equation}
where $\omega$ is an even primitive Dirichlet character of conductor $r$ which induces $\chi_{t}$, and $r^2 t \mid a N$. Here $\chi_t$ is the primitive character associated to the field extension $\mathbb{Q}(\sqrt{t})/\mathbb{Q}$.
%Here we use $W_{\frac{1}{4},\frac{1}{4}} (y)  =  y^{1/4}e^{-y/2}$.
%
%Now we are ready to compute $\langle \overline{\Psi} \theta_{\chi_1,a^2}, \theta_{\omega,t}\rangle $.
By Lemma \ref{lemma:res}, we have
\begin{equation}\label{eqn:tripe=Res}
  \langle \overline{\Psi} \theta_{\chi_1,a^2}, \theta_{\omega,t}\rangle
   = c \Res_{s=3/4} \langle \overline{\Psi} E_{\infty,1/2}(\cdot;s) , \theta_{\omega,t} \rangle,
\end{equation}
where $c$ is some constant depending on $M$. By unfolding, we have
\begin{align*}
  \langle \overline{\Psi} E_{\infty,1/2}(\cdot;s) , \theta_{\omega,t} \rangle
  & = \int_{\Gamma_0(M) \backslash \mathbb{H}} \overline{\Psi(z)} \overline{\theta_{\omega,t}(z)}   \sum_{\gamma \in \Gamma_{\infty}\backslash \Gamma_0(M)}  J(\gamma, z)^{-1} \Im(\gamma z)^{s} \frac{\dd x\dd y}{y^2} \\
  & = \int_{\Gamma_\infty \backslash \mathbb{H}}
  \overline{\Psi(z)} \overline{\theta_{\omega,t}(z)}  y^{s} \frac{\dd x\dd y}{y^2}.
\end{align*}
By the Fourier expansions \eqref{eqn:FE_Psi} and \eqref{eqn:FE_theta}, we have
\begin{align*}
  \langle \overline{\Psi} E_{\infty,1/2}(\cdot;s) , \theta_{\omega,t} \rangle
  & = 2 \sum_{\substack{n\geq1 \\ 4a\mid t n^2 }} \frac{\overline{\lambda_\psi(-tn^2/4a)}\bar{\omega}(n) } {(tn^2/4a)^{1/2}} \int_{0}^{\infty} W_{0, \overline{it_\psi}}(4\pi tn^2 y)
  e^{-2\pi tn^2 y}  y^{s - 3/4} \frac{\dd y}{y} \\
  & = 2 (4\pi t)^{3/4-s} (4a)^{1/2} t^{-1/2}  \sum_{\substack{n\geq1 \\ 4a\mid t n^2 }} \frac{\overline{\lambda_\psi(-tn^2/4a)}\bar\omega(n) } {n^{2s-1/2}} \mathscr{I}(s),
\end{align*}
where
$$
  \mathscr{I}(s):= \int_{0}^{\infty} W_{0, \overline{it_\psi}}(y)
  e^{-y/2}  y^{s - 3/4} \frac{\dd y}{y}.
$$
By \cite[eq. (7.621.11)]{GR}, we have
\[
  \mathscr{I}(s) = \frac{\Gamma(s-1/4+\overline{it_\psi}) \Gamma(s-1/4-\overline{it_\psi})}{\Gamma(s+1/4)},
\]
if $\Re(s-1/4\pm \overline{it_\psi})>0$. In particular, $\mathscr{I}(s)$ is holomorphic at $s=3/4$.
Now we consider the Dirichlet series.
Let $d=(4a,t)$ and $a'=4a/d$, $t'=t/d$. Write $a' =a_1 a_2^2$ with $a_1$ square-free. Then we have $a_1a_2 \mid n$. Write $n'=n/a_1a_2$, then we have
\begin{align*}
  \sum_{\substack{n\geq1 \\ 4a\mid t n^2 }} \frac{\overline{\lambda_\psi(-tn^2/4a)}\bar\omega(n) } {n^{2s-1/2}} & =
  \frac{\lambda_\psi(-1) \bar\omega(a_1a_2)}{(a_1a_2)^{2 s-1/2}} \sum_{\substack{n'\geq1  }} \frac{\overline{\lambda_\psi(t'a_1 (n')^2)}\bar\omega(n') } {(n')^{2s-1/2}} \\
   &  =  \frac{\lambda_\psi(-1) \bar\omega(a_1a_2)}{(a_1a_2)^{2 s-1/2}}
  \prod_{p}  \sum_{\ell=0}^{\infty} \bar\omega(p^\ell) \frac{\overline{\lambda_\psi(p^{2\ell+r_{p}})}}{p^{\ell(2 s-1/2)}},
\end{align*}
where $r_p\geq0$ such that $p^{r_p} \parallel t' a_1 $. For $p\nmid t' a_1$, the local Euler factor is the same as those for $\Sym^2 \psi \times \bar\omega$. Hence we have that
\begin{align*}
  \sum_{\substack{n\geq1 \\ 4a\mid t n^2 }} \frac{\overline{\lambda_\psi(-tn^2/4a)}\bar\omega(n) } {n^{2s-1/2}}
   &  =  \frac{\lambda_\psi(-1) \bar\omega(a_1a_2)}{(a_1a_2)^{2 s-1/2}}
   L_{t'a_1}(2s-1/2,\Sym^2 \psi \times \bar\omega) H(2s-1/2),
\end{align*}
where
\[
  H(s) = \prod_{p \mid t' a_1} \sum_{\ell=0}^{\infty} \bar\omega(p^\ell) \frac{\overline{\lambda_\psi(p^{2\ell+r_{p}})}}{p^{\ell(2 s-1/2)}},
\]
which is absolutely convergent for $\Re(s)>1/2$.
Since $\psi$ is not dihedral, the symmetric square lift  $\Sym^2\psi$ is a $\GL_3$ cuspidal automorphic form. Hence  $L(s,\Sym^2 \psi \times \omega)$ is an entire function. Thus we know that
\[
  \Res_{s=3/4} \langle \overline{\Psi} E_{\infty,1/2}(\cdot;s) , \theta_{\omega,t} \rangle = 0.
\]
Together with \eqref{eqn:tripe=Res}, we complete the proof of the second claim.

\subsection{Proof of Theorem \ref{prop:sum_quad} (iii)} \label{subsec:Proof3}

To prove Theorem \ref{prop:sum_quad} (iii), we first show the following lemma.

\begin{lemma} \label{lem:nair-tenenbaum}
Let $\psi$ be a Hecke-Maass cusp form of level $N$ with trivial nebentypus. Assume GRC. Then for any integers $a,b,c$ such that $Q(X)=aX^2+bX+c$ is irreducible over $\mathbb Q[X]$ we have that
\[
\frac{1}{x}\sum_{n \le x} |\lambda_{\psi}(Q(n))| \ll \frac{1}{(\log x)^{1/18}},
\]
where the implied constant depends on $\psi,a,b,c$.
\end{lemma}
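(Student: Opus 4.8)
The plan is to regard $F:=|\lambda_\psi|$ as a nonnegative multiplicative function, bound $\sum_{n\le x}F(Q(n))$ by a Nair--Tenenbaum/Henriot-type estimate \cite{nair-tenenbaum,henriot}, and obtain the saving $(\log x)^{-1/18}$ from an explicit polynomial inequality for $|\lambda_\psi(p)|$ combined with the analytic properties of the symmetric power $L$-functions of $\psi$.

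Under GRC we have $\lambda_\psi(p)\in[-2,2]$ for all $p$ and $|\lambda_\psi(n)|\le d(n)\ll_\varepsilon n^\varepsilon$, so $F$ satisfies the hypotheses of \cite{nair-tenenbaum,henriot}. Applying that estimate to the fixed irreducible quadratic $Q$, with discriminant $\Delta=b^2-4ac$ and $\rho_Q(m):=\#\{n\ (\mathrm{mod}\ m):Q(n)\equiv0\ (\mathrm{mod}\ m)\}$ (so $\rho_Q(p)=1+\chi_\Delta(p)$ for $p\nmid2a\Delta$, where $\chi_\Delta=\left(\tfrac{\Delta}{\cdot}\right)$, and $\rho_Q(p^k)\ll_\Delta1$), and using that the terms with $k\ge2$ and the finitely many primes dividing $2a\Delta$ contribute only a bounded factor, one obtains
\[
\frac1x\sum_{n\le x}|\lambda_\psi(Q(n))|\ \ll_{\psi,a,b,c}\ \exp\Bigl(\sum_{p\le x}\frac{(|\lambda_\psi(p)|-1)(1+\chi_\Delta(p))}{p}\Bigr).
\]
Since $\sum_{p\le x}(1+\chi_\Delta(p))/p=\log\log x+O(1)$, it therefore suffices to prove
\begin{equation}\label{eq:nt-kappa}
\sum_{p\le x}\frac{|\lambda_\psi(p)|\,(1+\chi_\Delta(p))}{p}\ \le\ \tfrac{17}{18}\log\log x+O_{\psi,a,b,c}(1).
\end{equation}

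For \eqref{eq:nt-kappa} I would start from the elementary inequality $18|t|\le8+11t^2-t^4$ valid for $t\in[-2,2]$ --- indeed $(8+11u-u^2)^2-324u=(u-1)^2(u-4)(u-16)\ge0$ for $u=t^2\in[0,4]$, and $8+11u-u^2>0$ on $[0,4]$. Since $\lambda_\psi(p)\in[-2,2]$ under GRC, and the Hecke relations give $\lambda_\psi(p)^2=1+\lambda_\psi(p^2)$ and $\lambda_\psi(p)^4=2+3\lambda_\psi(p^2)+\lambda_\psi(p^4)$, this becomes
\[
18\,|\lambda_\psi(p)|\ \le\ 17+8\lambda_\psi(p^2)-\lambda_\psi(p^4).
\]
Multiplying by $(1+\chi_\Delta(p))/p$ and summing over $p\le x$, the $\lambda_\psi(p^2)$ and $\lambda_\psi(p^4)$ pieces are $O_{\psi,a,b,c}(1)$: since $\psi$ is not dihedral, $\Sym^2\psi$ is cuspidal on $\GL_3$ and $\Sym^4\psi$ is automorphic on $\GL_5$, so $L(s,\Sym^{2j}\psi)$ and $L(s,\Sym^{2j}\psi\otimes\chi_\Delta)$ are holomorphic and non-vanishing at $s=1$ for $j=1,2$ (the holomorphy of the $j=2$ factors following from the Rankin--Selberg identities for $\Sym^2\psi\times\Sym^2\psi$ and its $\chi_\Delta$-twist), whence $\sum_{p\le x}\lambda_\psi(p^{2j})/p$ and $\sum_{p\le x}\lambda_\psi(p^{2j})\chi_\Delta(p)/p$ are $O(1)$. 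Therefore the left side of \eqref{eq:nt-kappa} is $\le\tfrac1{18}\bigl(17\log\log x+O(1)\bigr)=\tfrac{17}{18}\log\log x+O(1)$, which gives \eqref{eq:nt-kappa} and hence the lemma.

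The conceptual obstacle is \eqref{eq:nt-kappa}: using only the second moment $\sum_{p\le x}\lambda_\psi(p)^2/p\sim\log\log x$ is consistent with $|\lambda_\psi(p)|\equiv1$ and yields the constant $1$ there, i.e.\ no saving at all. The saving forces a genuine fourth moment into the argument, and hence the automorphy and the holomorphy/non-vanishing at $s=1$ of $\Sym^4\psi$ and its quadratic twist --- which is also why GRC is invoked, since it additionally pins $\lambda_\psi(p)$ to $[-2,2]$ so the polynomial inequality applies and ensures $F(n)\ll_\varepsilon n^\varepsilon$. A more routine point is to quote \cite{nair-tenenbaum,henriot} in a form valid for a genuinely multiplicative $F$ with $F(p^k)\le k+1$; as the implied constant may depend on the fixed $\psi,a,b,c$, no uniformity in $\Delta$ is needed.
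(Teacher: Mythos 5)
Your proof is correct and follows essentially the same route as the paper: the Henriot/Nair--Tenenbaum bound reduces the problem to the prime sum, the inequality $18|t|\le 8+11t^2-t^4$ on $[-2,2]$ converts $|\lambda_\psi(p)|$ into $\tfrac{1}{18}(17+8\lambda_\psi(p^2)-\lambda_\psi(p^4))$ via the Hecke relations, and the automorphy (hence holomorphy and non-vanishing at $s=1$) of $\Sym^2\psi$, $\Sym^4\psi$ and their $\chi_\Delta$-twists kills the non-constant pieces, yielding the exponent $17/18$. The only cosmetic difference is that you verify the polynomial inequality explicitly where the paper merely cites its provenance.
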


\begin{remark}
% \change{I added this remark, to address the case $Q$ is reducible (so it factors over $Z[X]$ by Gauss' lemma). \textcolor{blue}{It's good.}}
  If $Q(X)$ is reducible over $\mathbb Q[X]$ then our argument can be modified to show that assuming GRC
  \begin{equation} \label{eq:splitbd}
  \frac{1}{x}\sum_{n \le x} |\lambda_{\psi}(Q(n))| \ll \frac{1}{(\log x)^{\delta}}
  \end{equation}
for some $\delta>0$.
  However, in this case it is possible that the methods of Holowinsky \cite{holowinsky2009} could be used to unconditionally establish such an estimate.
\end{remark}

\begin{proof}
By assumption $|\lambda_{\psi}(n)| \le \tau(n)$. Hence, it follows from Henriot
 \cite[Theorem 4]{henriot} that
\begin{equation} \label{eq:nair-tenenbaum}
\sum_{n \le x} |\lambda_{\psi}(Q(n))|  \ll x \prod_{p \le x}\left(1+\frac{\varrho_Q(p)(|\lambda_f(p)|-1)}{p} \right),
\end{equation}
where for $n \in \mathbb N$, $\varrho_Q(n)=\#\{ a \pmod n : Q(a) \equiv 0 \pmod n\}$ and the implied constant depends on $Q$. To estimate the Euler product above we use the inequality
$|y| \le \frac{1}{18}(8+11y^2-y^4)$, which holds for $|y|\le 2$\footnote{This is motivated by work of Elliott--Moreno--Shahidi \cite{ems}, see also the work of Holowinsky \cite[Eq'ns (60)-(63)]{holowinsky2009}. In particular, we have not optimized our argument.}. Hence,
\begin{equation}\label{eq:majorant}
\sum_{p \le x} \frac{\varrho_Q(p)|\lambda_f(p)|}{p}
\le \frac{1}{18}\left( 8\sum_{p \le x} \frac{\varrho_Q(p)}{p}+11\sum_{p \le x} \frac{\varrho_Q(p)\lambda_f(p)^2}{p}-\sum_{p \le x} \frac{\varrho_Q(p)\lambda_f(p)^4}{p} \right).
\end{equation}
Write $\Delta=b^2-4ac$.
For $(a,p)=1$ it follows that
\begin{equation} \label{eq:quadratic}
\varrho_Q(p)=1+\chi_{\Delta}(p)
\end{equation}
where $\chi_{\Delta}(p)$ is the Legendre symbol.
Each of the functions $L(s,\tmop{sym}^2 \psi \otimes \chi_{\Delta})$ and $L(s,\tmop{sym}^4 \psi \otimes \chi_{\Delta})$ are analytic and non-zero in the region  $\{ s \in \mathbb C :\tmop{Re}(s)>1-c/\log(|\tmop{Im}(s)|+2)\}$ for sufficiently small $c>0$, which depends on $\Delta$ and $\psi$. This follows from the arguments given in Blomer et. al. \cite[Section 2.3.4-2.4]{BFKMMS}, and in particular uses a result of Kim-Shahidi \cite[Theorem 3.3.7]{kim-shahidi}. Also, by the Hecke relations
\begin{equation} \notag
11\lambda_f(p)^2-\lambda_f(p)^4=9+8\lambda_f(p^2)-\lambda_f(p^4).
\end{equation}
Since $Q$ is irreducible we have $\Delta \neq \square$, so $\chi_{\Delta}$ is non-principal.
Thus, using \eqref{eq:quadratic} along with the previous observations gives
\[
8\sum_{p \le x} \frac{\varrho_Q(p)}{p}+11\sum_{p \le x} \frac{\varrho_Q(p)\lambda_f(p)^2}{p}-\sum_{p \le x} \frac{\varrho_Q(p)\lambda_f(p)^4}{p}=17 \log \log x+O(1),
\]
where the implied constant depends on $a,b,c,\psi$.  Applying this along with \eqref{eq:majorant} and \eqref{eq:quadratic} in \eqref{eq:nair-tenenbaum} completes the proof.
\end{proof}

\begin{proof}[Proof of Theorem \ref{prop:sum_quad} (iii)]
 First we consider the case where $Q$ is irreducible over $\mathbb Q[X]$. Combining
 Theorem \ref{prop:sum_quad} (i) and Lemma \ref{lem:nair-tenenbaum} we get  for each $a,b,c \in \mathbb Z$ such that $b^2-4ac>0$ is not a perfect square that
\[
C_{\psi,a,b,c}=\lim_{x \rightarrow \infty} \frac{1}{x} \sum_{n \le x} \lambda_{\psi}(Q(n))=0.
\]
The case that $Q$ is reducible is similar, only in place of Lemma \ref{lem:nair-tenenbaum} we use \eqref{eq:splitbd}.
\end{proof}

\section{The twisted first moment}\label{sec:twisted1moment}

Let $D=p_1p_2$ where $p_1 \equiv p_2 \equiv 3 \pmod 4$ and $p_1 \neq p_2$. Also,
 let $\phi$ be a Schwartz function with compact support in $[\tfrac12, 2]$ such that $\phi^{(j)}(x) \ll P^j$, where $P \ge 1$ is a large parameter. Note that
 \begin{equation} \label{eq:decay}
 \tilde \phi(s):= \int_0^{\infty} \phi(y) y^{s-1} \, \dd y \ll \frac{P^A}{1+|s|^A}
 \end{equation}
 for any $A \ge 1$, by repeated integration by parts.
 Throughout this section $\psi$ denotes an even Hecke--Maass cuspidal newform on $\Gamma_0(D)$ with trivial nebentypus. Additionally, for $k \in \mathbb N$ let $\phi_{2k}$ be as in \eqref{eq:dihedraldef}. Also, let $\eta_{\psi}(D)$ denote the $W_D$-eigenvalue of $\psi$, where $W_D$
is the Atkin-Lehner operator. By Propositions A.1 and A.2 of \cite{KMV}, we have $\eta_{\psi}(D) \in \{-1,+1\}$. If $\eta_{\psi}(D)=-1$ then $L(\tfrac12, \psi \times \phi_{2k})=0$, which follows from the functional equation i.e. \eqref{eq:functional-eqn}.

% With $W_D$ denoting the Atkin--Lehner operator, we assume throughout $ \psi\big|_{W_D}=\psi$. Also, we suppose that the class number of $\mathbb Q(\sqrt{D})$ equals one.

The first main result of this section is the following estimate for a first moment.

\begin{proposition} \label{prop:moment1}
Suppose $\eta_{\psi}(D)=1$. Then there exists $A_0>0$ such that
\[
\sum_{k \in \mathbb Z} L(\tfrac12, \psi \times \phi_{2k})  \, \phi\left( \frac{k}{K} \right)= \tilde{\phi}(1) \cdot C_{D,\psi} \cdot K +O( P^{A_0} K^{\frac12+\vartheta+\varepsilon}),
\]
where the implied constant depends at most on $ \psi,D$. Here
\begin{equation} \label{eq:constdef}
C_{D,\psi}= 2  \cdot \frac{L(1, \chi_{D}) }{\zeta_D(2)}L(1, \tmop{sym}^2 \psi)\left(1+ \frac{\lambda_\psi(p_1)}{\sqrt{p_1}}+\frac{\lambda_\psi(p_2)}{\sqrt{p_2}} +\frac{\lambda_\psi(D)}{\sqrt{D}} \right)
\end{equation}
where %$\vartheta( \cdot )$ is the multiplicative function defined in Lemma \ref{lem:diagonal} and
$\zeta_D(s)= \zeta(s) \prod_{p|D}(1-p^{-s}) $.
\end{proposition}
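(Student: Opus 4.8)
The plan is to combine an approximate functional equation for $L(\tfrac12,\psi\times\phi_{2k})$ with Poisson summation in the $k$-aspect. First I would record that, since $\phi_{2k}$ is dihedral (attached to the Gr\"ossencharacter $\Xi_{2k}$, whose Galois conjugate is $\Xi_{-2k}=\overline{\Xi_{2k}}$), the Rankin--Selberg $L$-function is self-dual, with $L(s,\psi\times\phi_{2k})=L(s,\psi_F\otimes\Xi_{2k})$ where $\psi_F$ is the base change of $\psi$ to $F$, and its root number equals $+1$ — this is where the hypothesis $\eta_\psi(D)=1$ enters, through the local root numbers at the ramified primes $p_1,p_2$. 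The approximate functional equation then reads
\[
L(\tfrac12,\psi\times\phi_{2k})=2\sum_{\mathfrak a\subset\mathcal O_F}\frac{\lambda_{\psi_F}(\mathfrak a)\,\Xi_{2k}(\mathfrak a)}{\sqrt{\N\mathfrak a}}\,V_k(\N\mathfrak a),
\]
with $V_k$ a smooth weight depending on $k$ only through the archimedean parameters, admitting a Mellin representation $V_k(x)=\frac{1}{2\pi i}\int_{(c)}\tfrac{\gamma_k(1/2+s)}{\gamma_k(1/2)}x^{-s}G(s)\tfrac{\mathrm ds}{s}$ and decaying rapidly once $\N\mathfrak a$ exceeds a fixed power of $k$. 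Summing against $\phi(k/K)$ and writing $\Xi_{2k}(\mathfrak a)=e(2k\vartheta_{\mathfrak a})$, where $\vartheta_{\mathfrak a}:=\tfrac{\log|\alpha/\widetilde\alpha|}{2\log\epsilon_D}\bmod 1$ for any generator $\mathfrak a=(\alpha)$, I would apply Poisson summation to the inner $k$-sum; since the weight is smooth and concentrated at scale $K$, the inner sum is $O_A(K^{-A})$ unless $\mathrm{dist}(\vartheta_{\mathfrak a},\tfrac12\mathbb Z)\ll K^{\varepsilon-1}$, which splits the contribution into a \emph{diagonal} piece ($\vartheta_{\mathfrak a}\in\tfrac12\mathbb Z$ exactly) and an \emph{off-diagonal} piece ($0<\mathrm{dist}(\vartheta_{\mathfrak a},\tfrac12\mathbb Z)\ll K^{\varepsilon-1}$).

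For the diagonal piece I would first classify the ideals with $\vartheta_{\mathfrak a}\in\tfrac12\mathbb Z$: using that $F$ has narrow class number $1$, that $\N\epsilon_D=1$, and that $p_1,p_2$ both ramify, a short norm computation shows split primes have $\vartheta\notin\tfrac12\mathbb Z$, so these ideals are precisely $(m),\ (m)\mathfrak p_1,\ (m)\mathfrak p_2,\ (m)\mathfrak p_1\mathfrak p_2$ with $m\in\mathbb Z_{>0}$, and on each of them $\Xi_{2k}(\mathfrak a)=e(2k\cdot 0)$ or $e(2k\cdot\tfrac12)=1$. Substituting these four families, the resulting Dirichlet series factors as a $\zeta_D$-factor times $L(\cdot,\tmop{sym}^2\psi)$ times the finite Euler product $\big(1+\lambda_\psi(p_1)p_1^{-1/2-\cdot}+\lambda_\psi(p_2)p_2^{-1/2-\cdot}+\lambda_\psi(D)D^{-1/2-\cdot}\big)$; a contour shift picking up the pole of the $\zeta_D$-factor together with the pole at $s=0$ from the $\mathrm ds/s$ in $V_k$, combined with $\sum_k\phi(k/K)=\widetilde\phi(1)K+O(K^{-A})$, produces the main term $\widetilde\phi(1)\,C_{D,\psi}\,K$. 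Here $L(1,\tmop{sym}^2\psi)/\zeta_D(2)$ comes from the symmetric-square Dirichlet series attached to the $(m)$-family, while the factor $L(1,\chi_D)$ enters through the arithmetic of $F$ (the residue of $\zeta_F$).

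For the off-diagonal piece I would parametrize the ideals whose angle lies within $O(K^{\varepsilon-1})$ of $0$ or $\tfrac12$: writing a generator as an integral quadratic form in two variables and summing over one of them, the norm $\N\mathfrak a$ becomes the value of an integral quadratic polynomial $Q(n)=an^2+bn+c$ in which $a\mid D$ and $a\mid b$ (the pair $(a,b)$ being governed by the ramified part of $\mathfrak a$ and by the defining line $\vartheta_{\mathfrak a}\equiv 0$ or $\tfrac12$). Hence the off-diagonal contribution is a combination of the sums $\mathcal S=\sum_n\lambda_\psi(Q(n))W(Q(n)/Y)$ treated in Theorem \ref{prop:sum_quad}: part (i) gives $\mathcal S=C_{\psi,a,b,c}\widetilde W(1/2)Y^{1/2}+O(Y^{1/4+\vartheta/2+\varepsilon}(PQR)^B)$, and since $\Delta=b^2-4ac$ is a non-square ($D$ being squarefree) while $a\mid D$ and $a\mid b$, part (ii) forces $C_{\psi,a,b,c}=0$. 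Thus the secondary term, which a priori has the same order of magnitude as the true main term, drops out, and the off-diagonal contributes only $O(P^{A_0}K^{1/2+\vartheta+\varepsilon})$; adding this to the diagonal main term gives the proposition.

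I expect the main difficulty to be precisely this last point: verifying that after parametrizing the off-diagonal ideals the quadratic polynomials $Q$ genuinely satisfy $a\mid D$ and $a\mid b$, so that the $Y^{1/2}$-size term from Theorem \ref{prop:sum_quad}(i) vanishes unconditionally. This is where the structural assumptions — $D$ a product of two ramified primes, $\N\epsilon_D=1$, narrow class number one — and the repulsion among the angles $\vartheta_{\mathfrak a}$ are genuinely needed, and getting the parametrization and the congruence conditions exactly right is delicate. A secondary, more routine but still careful step is tracking all archimedean and arithmetic factors in the diagonal computation so that they assemble into the stated constant $C_{D,\psi}$, together with the local root-number computation at $p_1,p_2$ pinning down the sign $+1$.
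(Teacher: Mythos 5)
Your proposal is correct and follows essentially the same route as the paper: an approximate functional equation followed by Poisson summation in $k$, with the diagonal contribution coming from exactly the four ideal families $(m)$, $(m)\mathfrak p_1$, $(m)\mathfrak p_2$, $(m)\mathfrak p_1\mathfrak p_2$ (assembled into $C_{D,\psi}$ via the symmetric-square Dirichlet series), and the off-diagonal contribution controlled by Theorem \ref{prop:sum_quad}, whose part (ii) kills the potential secondary main term unconditionally precisely because $a\mid D$ and $a\mid b$ in the $\beta=1$ case. The point you flag as the main difficulty is indeed where the paper works hardest (Lemmas \ref{lem:algebra}--\ref{lem:diagterms} and the divisibility observation \eqref{eq:noramanujan}).
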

% \begin{remark}\change{added this remark}
%   If $\eta_{\psi}(D)=-1$ the by the functional equation $L(\tfrac12, \psi \times \phi_{2k})=0$.
% \end{remark}

Assuming GRC holds for $\psi$ we are also able to compute a twisted first moment. Given $(\beta) \subset \mathcal O_{\mathbb Q(\sqrt{D})}$ with $\beta=M+N\omega_D$ let $n_{\beta}= \frac{1}{(M,N)^2} |N(\beta)|$. The number $n_{\beta}$ is independent of the choice of generator of $(\beta)$, which we will justify later on (see Remark \ref{rem:1}). Also, let $h(\cdot)$ be the multiplicative function with
\begin{equation} \label{eq:hdef}
h(n)=\sum_{N((\beta))=n} \frac{\vartheta(n_{\beta})}{\sqrt{n_{\beta}}}
\end{equation}
where $\vartheta(\cdot)$ is the multiplicative function defined as follows
\begin{equation}\label{eqn:theta}
  \vartheta(p^b) = \left( \lambda_\psi(p^b) - \frac{\chi_D^0(p) \lambda_\psi(p^{b-2})}{p} \right) \left(1+\frac{\chi_D^0(p)}{p}\right)^{-1},
\end{equation}
for $p$ prime and $b\geq1$. Here $\lambda_\psi(p^{-1})=0$ and $\chi_D^0$ is the principal character modulo $D$.

\begin{proposition} \label{prop:moment}
Assume GRC. Suppose $\eta_{\psi}(D)=1$.
Then there exists $A_0>0$ such that for $n \in \mathbb N$
\[
\sum_{k \in \mathbb Z} L(\tfrac12, \psi \times \phi_{2k}) \cdot \lambda_{2k}(n) \, \phi\left( \frac{k}{K} \right)= \tilde{\phi}(1) \cdot C_{D,\psi} \cdot h\left( \frac{n}{(n,D)} \right) \cdot K +O((Pn)^{A_0} \cdot K^{\frac12+\vartheta+\varepsilon}),
\]
where $C_{D,\psi}$ is as in Proposition \ref{prop:moment1}.
\end{proposition}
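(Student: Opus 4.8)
The plan is to run the proof of Proposition \ref{prop:moment1} with the Hecke eigenvalue $\lambda_{2k}(n)$ inserted throughout, the only genuinely new ingredient being that the vanishing of the off-diagonal main term is now obtained from Theorem \ref{prop:sum_quad}(iii) rather than from Theorem \ref{prop:sum_quad}(ii). I would begin with the approximate functional equation for $L(\tfrac12,\psi\times\phi_{2k})$: since $\eta_\psi(D)=1$ the root number is $+1$, and the functional equation \eqref{eq:functional-eqn} expresses $L(\tfrac12,\psi\times\phi_{2k})$ as two sums, each running over the Dirichlet coefficients of $L(s,\psi\times\phi_{2k})$ against a smooth rapidly decaying cutoff; by the Rankin--Selberg factorization these coefficients are a fixed Dirichlet convolution of $\lambda_\psi$ with $\chi_D$ and $\chi_D^0$ applied to the dihedral coefficients $\lambda_{2k}$. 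Multiplying by $\lambda_{2k}(n)$, using the Hecke relation $\lambda_{2k}(n)\lambda_{2k}(m)=\sum_{e\mid(n,m)}\chi_D(e)\lambda_{2k}(nm/e^2)$, and averaging against $\phi(k/K)$ reduces the problem to weighted sums over $k$ of $\lambda_{2k}(\ell)$, with $\ell$ now carrying the dependence on $n$.

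Next I would open the dihedral coefficients $\lambda_{2k}(\ell)=\sum_{\N(\mathfrak a)=\ell}\Xi_{2k}(\mathfrak a)$ via \eqref{eq:dihedraldef}, parameterizing ideals $\mathfrak a=(\alpha)$ of $\mathcal{O}_F=\mathbb{Z}[\omega_D]$ by their generators, and evaluate the $k$-sum by Poisson summation (equivalently, exponential-sum cancellation) in $k$. As in the proof of Proposition \ref{prop:moment1}, summing against $\phi(k/K)$ localizes to ideals whose angle $\log|\alpha/\tilde\alpha|$ lies within $O(K^{-1+\varepsilon})$ of $0$ or of $\log\epsilon_D$ (both relevant, since $\N(\epsilon_D)=1$): the extreme cases $|\alpha/\tilde\alpha|\in\{1,\epsilon_D\}$ give the \emph{diagonal}, which contributes the main term, and the remaining ideals --- lattice points in thin strips about two lines of rational slope --- give the \emph{off-diagonal}.

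For the diagonal I would evaluate the on-line contribution against the cutoffs and the $k$-average, which supplies a factor $\widetilde{\phi}(1)K$; this is the twisted analogue of the diagonal computation in Proposition \ref{prop:moment1}. The new feature is that the factor $\lambda_{2k}(n)$, opened into ideals $(\gamma)$ of norm $n$, shifts the diagonal relation by the angle of $(\gamma)$; since the content of a generator $\gamma=M+N\omega_D$ is a rational integer and hence does not affect the angle, only the norm $n_\gamma=\N(\gamma_0)$ of the primitive part of $\gamma$ enters (cf.\ \eqref{eq:hdef} and Remark \ref{rem:1}). Summing over ideals of norm $n$ coprime to $D$ then assembles the factor $h(n/(n,D))$ with $h$ as in \eqref{eq:hdef} and $\vartheta$ as in \eqref{eqn:theta}, and the remaining Euler product is exactly $C_{D,\psi}$ of \eqref{eq:constdef}; the ramified primes $p\mid D$ contribute $\lambda_{2k}(p)=1$ independently of $k$ (as $\mathfrak p$ is principal with generator ratio a power of $\epsilon_D$), accounting both for the restriction to $n/(n,D)$ and for the ramified factor $1+\lambda_\psi(p_1)/\sqrt{p_1}+\lambda_\psi(p_2)/\sqrt{p_2}+\lambda_\psi(D)/\sqrt D$ in $C_{D,\psi}$.

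The off-diagonal is where GRC enters, and I expect it to be the main obstacle. Parameterizing the lattice points in the thin strips by a single integer $r$, their total contribution takes the shape of non-split sums $\sum_r\lambda_\psi(Q(r))\,W(Q(r)/Y)$ with $Q(X)=aX^2+bX+c$, to which Theorem \ref{prop:sum_quad} applies; to keep everything uniform in $n$ one must check that $a,b,c$ grow only polynomially in $n$ (and in $P$). In the untwisted Proposition \ref{prop:moment1} the quadratics arising satisfy $a\mid D$ and $a\mid b$, so Theorem \ref{prop:sum_quad}(ii) makes the secondary main term $C_{\psi,a,b,c}\widetilde{W}(1/2)Y^{1/2}$ vanish unconditionally; once $\lambda_{2k}(n)$ is present this divisibility fails in general, and I would instead invoke Theorem \ref{prop:sum_quad}(iii), which requires GRC, to obtain $C_{\psi,a,b,c}=0$, so that each such sum contributes only $O(Y^{1/4+\vartheta/2+\varepsilon}(PQR)^B)$. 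Summing these contributions, together with the identical contribution from the second half of the functional equation (and routine lower-order terms), gives the claimed error $O((Pn)^{A_0}K^{1/2+\vartheta+\varepsilon})$. Besides this vanishing, the remaining delicate point is the bookkeeping identifying the diagonal with precisely $\widetilde{\phi}(1)\,C_{D,\psi}\,h(n/(n,D))\,K$ while keeping all implied constants polynomial in $n$.
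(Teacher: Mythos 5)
Your proposal follows essentially the same route as the paper: the paper first computes the moment twisted by a single Gr\"ossencharacter value $\Xi_{2k}((\beta))$ (Proposition \ref{prop:moment2}) and then sums over the ideals of norm $n/(n,D)$ to assemble $h(n/(n,D))$ --- which is your ``open $\lambda_{2k}(n)$ into ideals'' step --- and it disposes of the off-diagonal exactly as you predict, via Theorem \ref{prop:sum_quad}(iii) under GRC because the divisibility $a_{\gamma}\mid b_{\gamma,h}$ exploited in part (ii) fails once the twisting ideal is nontrivial. The only cosmetic differences are that you pass through the rational Hecke relation before opening into ideals, whereas the paper works with $\Xi_{2k}(\mathfrak a\mathfrak b)$ directly (which is what makes the lattice linearization in Lemmas \ref{lem:algebra} and \ref{lem:cleaned} clean), and that the factor $1+\lambda_\psi(p_1)/\sqrt{p_1}+\lambda_\psi(p_2)/\sqrt{p_2}+\lambda_\psi(D)/\sqrt{D}$ in $C_{D,\psi}$ actually arises from the four diagonal lines of Lemma \ref{lem:diagterms} rather than from the ramified primes' contribution to $\lambda_{2k}$.
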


Using the twisted first moment we can quickly deduce the following corollary.

% \begin{remark}
% For $m_1=1$, the constant in the main term may be rewritten as
% \[
% 2 \int_{0}^{\infty} \phi(y) \, dy L(1, \chi_D) \sum_{e|d} \frac{1}{\sqrt{e}} \sum_{n \ge 1} \frac{\lambda_{\psi}(e \cdot n^2)}{n}
% \]
% \end{remark}

% {\tt to do: 1) I did some computations and for $d \equiv 1 \pmod 4$ if $h(d)=1$ and $N(\varepsilon_D)=1$ then $\Omega(d)=2$ for $d<64,000$, 2) We could also sum over both odd and even dihedral forms, the constant $C_{D,\psi,\phi}$ would be slightly different (terms coming from ideals with generators with negative norms will cancel).}

\begin{corollary}\label{cor:weighted1moment}
Assume GRC. Suppose $\eta_{\psi}(D)=1$. Also, suppose $P \le K^{\delta}$ for some $\delta>0$ sufficiently small.
Then for any $A \ge 1$ we have that
\[
\sum_{k \in \mathbb Z } \frac{L(\tfrac12, \psi \times \phi_{2k})}{L(1, \phi_{2k})^2} \phi\left( \frac{k}{K}\right)
 = \tilde{\phi}(1) \cdot C_{D,\psi}' \cdot C_{D,\psi} \cdot K+O\bigg( \frac{K}{(\log K)^A}\bigg)
\]
where
\begin{multline}\label{eqn:C_Dpsi}
C_{D,\psi}'= \prod_{(p,D)=1}\left(1-\frac{2\vartheta(p)r_D(p)}{p^{3/2}} +\frac{3\chi_D(p)+h(p^2)}{p^3}+\frac{2\vartheta(p)r_D(p) \chi_D(p)}{p^{5/2}}+\frac{1}{p^5} \right) \\
 \times \prod_{p|D}\left(1-\frac{2}{p}+\frac{1}{p^2}  \right),
\end{multline}
where $r_D(p)=\sum_{N((\beta))=p}1$.
\end{corollary}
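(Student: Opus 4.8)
The plan is to expand $L(1,\phi_{2k})^{-2}$ as an absolutely convergent Dirichlet series and then apply Proposition \ref{prop:moment} termwise. First I would recall that for a dihedral form $\phi_{2k}$ one has the factorization $L(s,\phi_{2k}) = L(s,\Xi_{2k})$, the Hecke $L$-function of the Grössencharacter, so that $L(1,\phi_{2k})^{-2}$ has an Euler product; writing $L(1,\phi_{2k})^{-2} = \sum_{n\geq 1} g_{2k}(n) n^{-1}$ with $g_{2k}$ multiplicative, the coefficients $g_{2k}(n)$ are bounded by a divisor-type function uniformly in $k$ (this uses that $\phi_{2k}$ has bounded Satake parameters at every prime, since it is dihedral, so GRC is automatic here and no extra hypothesis is needed for this step). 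Truncating the Dirichlet series at $n \leq (\log K)^{B}$ for a suitable $B=B(A)$ and controlling the tail by the subconvexity-free bound $L(1,\phi_{2k})^{-1} \ll (\log k)^{O(1)}$ (Landau–Siegel-type lower bound for Hecke $L$-functions at the edge), we reduce to
\[
\sum_{n \leq (\log K)^B} \frac{g_{2k}(n)}{n} \sum_{k \in \mathbb Z} L(\tfrac12,\psi\times\phi_{2k})\,\lambda_{2k}(n)\,\phi\!\left(\frac{k}{K}\right),
\]
up to an error of size $O(K/(\log K)^{A})$.

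Next I would substitute Proposition \ref{prop:moment}: each inner sum equals $\tilde\phi(1)\cdot C_{D,\psi}\cdot h(n/(n,D))\cdot K$ plus an error $O((Pn)^{A_0}K^{1/2+\vartheta+\varepsilon})$, and since $n \leq (\log K)^{B}$ and $P \leq K^{\delta}$ with $\delta$ small, the accumulated error is $O(K^{1/2+\vartheta+2\delta A_0})$, negligible. This leaves the main term
\[
\tilde\phi(1)\cdot C_{D,\psi}\cdot K \sum_{n \leq (\log K)^B} \frac{g_{2k}(n)\, h(n/(n,D))}{n}.
\]
Here there is a subtlety: $g_{2k}(n)$ still depends on $k$. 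The resolution — and the point where the arithmetic identity $C_{D,\psi}'$ emerges — is that when one first interchanges the $n$-sum and the $k$-sum and applies Proposition \ref{prop:moment}, the $k$-dependence is entirely absorbed by $\lambda_{2k}(n)$, so in fact one should think of the Dirichlet-series coefficient as being attached to $\lambda_{2k}(n)$ rather than to $g_{2k}(n)$; unfolding $L(1,\phi_{2k})^{-2}$ as $\sum_m c(m)\lambda_{2k}(m)/m$ with $c(m)$ now \emph{independent} of $k$ (using the Hecke relations to rewrite products of Satake factors as linear combinations of Hecke eigenvalues) is the correct move. Then Proposition \ref{prop:moment} replaces $\lambda_{2k}(m)$ by $h(m/(m,D))$ and one is left with a genuine Euler product $\sum_{m\geq 1} c(m)h(m/(m,D))/m$ that I would compute prime by prime, matching it against the claimed formula \eqref{eqn:C_Dpsi}: the factors $1 - 2\vartheta(p)r_D(p)p^{-3/2} + \cdots$ at $p \nmid D$ come from multiplying out the local factor of $L(1,\phi_{2k})^{-2}$ (which at $p\nmid D$ is $(1-\lambda_{2k}(p)p^{-1}+\chi_D(p)p^{-2})^{2}$ after using $r_D(p) = \lambda_{2k}(p)$ for the relevant $p$, wait—$r_D(p)$ counts representations, so $r_D(p)\in\{0,2\}$ is the value of $\lambda_{2k}(p)$ only when $p$ splits) against $h(p^b)$, and the factor $(1-p^{-1})^2$ at $p\mid D$ comes from the corresponding ramified/degenerate local computation; this is a routine but lengthy local Euler-factor bookkeeping.

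The main obstacle is the uniformity in $k$ of the error terms when truncating $L(1,\phi_{2k})^{-2}$: one needs that the tail $\sum_{n > N} g_{2k}(n)/n$ is small \emph{uniformly} for $K < k \leq 2K$ (or $k/K \in \operatorname{supp}\phi$), which requires a lower bound $L(1,\phi_{2k}) \gg (\log k)^{-c}$ valid for all such $k$ with no exceptional set — this is available since $\phi_{2k}$ is a dihedral (self-dual) form and Hecke $L$-functions of Grössencharacters satisfy Siegel-type lower bounds, but the ineffectivity of the constant must be tracked (it only affects the implied constant, not the exponent, so it is harmless for an asymptotic with error $O(K/(\log K)^A)$). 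The secondary technical point is verifying that the local Euler factors genuinely telescope to \eqref{eqn:C_Dpsi}, in particular that the cross-terms between the two copies of $L(1,\phi_{2k})^{-1}$ and the function $h$ produce exactly the coefficients $3\chi_D(p)+h(p^2)$, $2\vartheta(p)r_D(p)\chi_D(p)p^{-5/2}$, and $p^{-5}$; I expect this to follow from a direct expansion using $h(p^2)$'s definition \eqref{eq:hdef} together with $\vartheta(p) = \lambda_\psi(p)(1+\chi_D^0(p)/p)^{-1}$ from \eqref{eqn:theta}, but it is the kind of computation where signs and normalizations must be checked carefully.
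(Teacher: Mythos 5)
Your overall architecture agrees with the paper's: approximate $1/L(1,\phi_{2k})^2$ by a short Dirichlet polynomial, interchange the sums, apply the twisted first moment (Proposition \ref{prop:moment}) after linearizing the coefficients via Hecke relations, and evaluate the resulting Euler product to produce $C_{D,\psi}'$. The gap is in how you truncate. A pointwise lower bound $L(1,\phi_{2k})\gg(\log k)^{-c}$ does \emph{not} control the tail of the Dirichlet series of $1/L(1,\phi_{2k})^2$: that series converges absolutely only for $\Re(s)>1$, and to quantify the truncation error at $s=1$ one must run Perron's formula and shift the contour into a zero-free region strictly to the left of $\Re(s)=1$. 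With the classical zero-free region of width $c/\log(k+|t|)$ the error is roughly $\exp(-c\log x/\log(k+x))$, which forces $x$ of size about $K^{c'\log\log K}$ to beat an arbitrary power of $\log K$ --- and an $x$ that large destroys the accumulated error $x^{A_0+1}P^{A_0}K^{1/2+\vartheta+\varepsilon}$ coming from Proposition \ref{prop:moment}. Your truncation at $n\le(\log K)^B$ is unattainable without assuming GRH for the $L(s,\phi_{2k})$, which is not among the hypotheses. The paper threads this needle with Coleman's Vinogradov--Korobov-quality zero-free region for Hecke $L$-functions (Lemma \ref{lem:zerofree}), allowing the intermediate choice $x=\exp((\log K)^{3/4})=K^{o(1)}$: short enough for the moment estimate, long enough that the truncation error is $O(e^{-(\log K)^{1/15}})$.

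A second, smaller gap: once $1/L(1,\phi_{2k})^2$ is replaced by the Dirichlet polynomial, the truncation error is multiplied by $L(\tfrac12,\psi\times\phi_{2k})$ and summed over $k\asymp K$. Individual convexity bounds give $K^{2+\varepsilon}$ times the small factor, which is not $o(K)$; the paper instead invokes Lapid's nonnegativity $L(\tfrac12,\psi\times\phi_{2k})\ge0$ so that the untwisted first moment (Proposition \ref{prop:moment1}) bounds this contribution by $Ke^{-(\log K)^{1/15}}$. You do not address this. Finally, your assertion that $1/L(1,\phi_{2k})^2=\sum_m c(m)\lambda_{2k}(m)/m$ with $c(m)$ independent of $k$ is not literally correct: the coefficients are $k$-independent linear combinations of several $\lambda_{2k}(p^j)$ at each prime, obtained from the convolution $\mu_{2k}\ast\mu_{2k}$ together with the Hecke relation $\lambda_{2k}(a)\lambda_{2k}(b)=\sum_{d\mid(a,b)}\chi_D(d)\lambda_{2k}(ab/d^2)$; this part of your sketch is imprecise but fixable, and your Euler-product endgame is essentially the paper's computation of the multiplicative function $g$ in \eqref{eq:gbd}.
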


\subsection{Preliminary lemmas} For $\xi>0$ let
\[
W_s(\xi)= \frac{1}{2\pi i} \int_{(c)} \left(\frac{D^{3/2}}{ \xi k^2}  \right)^w L(2w+2s, \chi_D) \frac{\gamma(s+w, \psi \times \phi_{2k})}{\gamma(s, \psi \times \phi_{2k})} \frac{e^{w^2}}{w} \, dw,
\]
where $c>0$ and $\gamma(s,\psi \times \phi_{2k})$ is as in \eqref{eq:gamma-factor}.
% \[
% \gamma(s,\psi\times \phi_{2k}) = \pi^{-2s}
%   \prod_{\pm_1}\prod_{\pm_2} \Gamma\left(\frac{s \pm_1 it_\psi \pm_2 i t_{2k}}{2} \right)
% \]
\begin{lemma} \label{lem:AFE}
For $0 \le \tmop{Re}(s) \le 1$ we have that
\[
L(s, \psi \times \phi_{2k})=\sum_{ n \ge 1} \frac{\lambda_{2k}(n) \lambda_{\psi}(n)}{n^s} W_s\left(\frac{n}{k^2} \right)+\eta_\psi(D) \sum_{n \ge 1} \frac{\lambda_{2k}(n) \lambda_{\psi}(n)}{n^{1-s}} W_{1-s}\left( \frac{n}{k^2}\right).
\]
Write $W_{1/2}=W$, we have that
\[
\xi^j W^{(j)}\left( \xi \right) \ll \frac{1}{1+|\xi|^A} \qquad \text{ and } \qquad W(\xi)=L(1, \chi_D)+O\left( \xi^{\frac12-\vartheta-\varepsilon} \right).
\]
\end{lemma}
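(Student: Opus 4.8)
The plan is to derive the approximate functional equation by the standard contour-shift argument and then to read off the stated properties of $W=W_{1/2}$ directly from its integral representation.

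\emph{Step 1: the approximate functional equation.} Recall from \eqref{eq:gamma-factor} and \eqref{eq:functional-eqn} that $L(s,\psi\times\phi_{2k})$ is entire --- since $\psi$ has trivial nebentypus while $\phi_{2k}$ has nebentypus $\chi_D$, the two forms are non-isomorphic and the Rankin--Selberg convolution has no pole --- that the completed $L$-function $\Lambda(s,\psi\times\phi_{2k}):=(D^3)^{s/2}\gamma(s,\psi\times\phi_{2k})L(s,\psi\times\phi_{2k})$ satisfies $\Lambda(s,\psi\times\phi_{2k})=\eta_\psi(D)\Lambda(1-s,\psi\times\phi_{2k})$, and, by comparing Euler factors, that $L(s,\psi\times\phi_{2k})=L(2s,\chi_D)\sum_{n\ge1}\lambda_\psi(n)\lambda_{2k}(n)n^{-s}$ for $\Re(s)>1$. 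I would begin with $\frac{1}{2\pi i}\int_{(3)}\Lambda(s+w,\psi\times\phi_{2k})\frac{e^{w^2}}{w}\dd w$, shift the contour to $\Re(w)=-3$ collecting the residue $\Lambda(s,\psi\times\phi_{2k})$ at the simple pole $w=0$, and apply the functional equation after the change of variables $w\mapsto-w$ in the shifted integral. Dividing by $(D^3)^{s/2}\gamma(s,\psi\times\phi_{2k})$, substituting the Dirichlet series in the two integrals (legitimate as both lines have $\Re(s+w)>1$) and interchanging summation and integration yields the stated identity; a short computation identifies the resulting weights with $W_s(n/k^2)$ and $W_{1-s}(n/k^2)$, the factor $L(2w+2s,\chi_D)$, the shift $D^{3/2}$ and the normalization by $k^2$ in the definition of $W_s$ having been inserted for exactly this purpose. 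Since $L(s,\psi\times\phi_{2k})$ is holomorphic throughout $0\le\Re(s)\le1$, all steps are valid there.

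\emph{Step 2: rapid decay of $W$.} Fix $s=\tfrac12$ and write $W=W_{1/2}$. Differentiating under the integral shows that $\xi^jW^{(j)}(\xi)$ is an absolutely convergent integral of the same shape as $W(\xi)$ but carrying an extra polynomial factor of $w$, so it suffices to treat $W(\xi)$ for $\xi\ge1$. I would move the contour to $\Re(w)=A$, where the integrand is holomorphic: $L(2w+1,\chi_D)$ is entire ($\chi_D$ being non-principal), and $\gamma(\tfrac12+w,\psi\times\phi_{2k})/\gamma(\tfrac12,\psi\times\phi_{2k})$ is holomorphic for $\Re(w)>0$ and, by Stirling's formula applied to the four archimedean $\Gamma$-factors of the Rankin--Selberg convolution (whose combined archimedean conductor at the centre is $\asymp k^4$), satisfies $\gamma(\tfrac12+w,\psi\times\phi_{2k})/\gamma(\tfrac12,\psi\times\phi_{2k})\ll_A(1+|w|)^{O_A(1)}k^{2\Re(w)}$ for $|w|\le k$. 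On this line $|(D^{3/2}/(\xi k^2))^w|=(D^{3/2}/(\xi k^2))^A$, and multiplying by the $k^{2A}$ from Stirling cancels the $k$-dependence and leaves $\ll_{A,\psi}\xi^{-A}$; the factor $e^{w^2}$ makes the $w$-integral converge. Hence $\xi^jW^{(j)}(\xi)\ll(1+|\xi|)^{-A}$ uniformly in $k$.

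\emph{Step 3: the value of $W$ near $\xi=0$, and the main obstacle.} For $0<\xi\le1$ I would instead move the contour leftward, to $\Re(w)=-\tfrac12+\vartheta+\varepsilon$. The only pole crossed is the simple pole of $e^{w^2}/w$ at $w=0$, with residue $L(1,\chi_D)$; the shift is admissible because $L(2w+1,\chi_D)$ is entire and the rightmost pole of the four $\Gamma_{\mathbb R}$-factors in $\gamma(\tfrac12+w,\psi\times\phi_{2k})/\gamma(\tfrac12,\psi\times\phi_{2k})$ sits at $\Re(w)=-\tfrac12+|\Im t_\psi|\le-\tfrac12+\vartheta$, invoking the Kim--Sarnak bound $|\Im t_\psi|\le\vartheta$ on the possible exceptional spectral parameter of $\psi$. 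On the new line $|(D^{3/2}/(\xi k^2))^w|\asymp(\xi k^2)^{1/2-\vartheta-\varepsilon}$, Stirling gives $\gamma(\tfrac12+w,\psi\times\phi_{2k})/\gamma(\tfrac12,\psi\times\phi_{2k})\ll(1+|w|)^{O(1)}k^{-1+2\vartheta+2\varepsilon}$, and $L(2w+1,\chi_D)$ is polynomially bounded; the powers of $k$ cancel once more and integrating against $e^{w^2}$ leaves an error $\ll_\psi\xi^{1/2-\vartheta-\varepsilon}$, so $W(\xi)=L(1,\chi_D)+O(\xi^{1/2-\vartheta-\varepsilon})$, which together with Step 2 covers all $\xi>0$. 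No step is deep; the two points requiring care are bookkeeping the $k$-dependence --- the archimedean conductor of $L(s,\psi\times\phi_{2k})$ at $s=\tfrac12$ is $\asymp D^3k^4$, so the $k^{2w}$ from Stirling must be cancelled precisely by the normalization $\xi=n/k^2$ built into $W_s$ --- and the possible exceptional spectral parameter of $\psi$, which prevents pushing the last contour past $\Re(w)=-\tfrac12+\vartheta$ and hence produces the error exponent $\tfrac12-\vartheta-\varepsilon$ rather than $\tfrac12-\varepsilon$.
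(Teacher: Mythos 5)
Your proposal is correct and follows the same route as the paper: the first identity is the standard approximate functional equation (the paper simply cites Iwaniec--Kowalski, Theorem 5.3), and your two contour shifts for $W$ --- rightward to $\Re(w)=A$ with Stirling's formula giving $k^{-2\Re(w)}|\gamma(\tfrac12+w)/\gamma(\tfrac12)|\ll 1$ for the decay, leftward to $\Re(w)=-\tfrac12+\vartheta+\varepsilon$ crossing only the pole at $w=0$ with residue $L(1,\chi_D)$, using $|\Im t_\psi|\le\vartheta$ --- are exactly the paper's argument. The only caveat, which is shared with the lemma as stated in the paper, is that for $s\neq\tfrac12$ the second sum should strictly carry the root-number-type factor $D^{3(1/2-s)}\gamma(1-s,\psi\times\phi_{2k})/\gamma(s,\psi\times\phi_{2k})$, which equals $1$ at the only point $s=\tfrac12$ that is ever used.
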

\begin{proof}
% This follows from a well-known argument, using Mellin inversion, shifting contours of integration (and picking up a pole at $w=0$ with residue $L(s, \phi_{2k} \otimes \psi)$), then applying the functional equation.
The first claim is the well-known formula for the approximate functional equation. For example see Theorem 5.3 of \cite{iwaniec2004analytic}.

To establish the claimed bounds for $W$, first shift the contour of integration to $\tmop{Re}(w)=A$ and observe that by Stirling's formula $k^{-2\tmop{Re}(w)} |\frac{\gamma(w+\frac12, \psi \times \phi_{2k})}{\gamma(\frac12, \psi \times \phi_{2k})}|\ll 1$. Note that $|\tmop{Im} t_{\psi}| \le \vartheta$. Hence, shifting contours to $\tmop{Re}(w)=-\frac12+\vartheta+\varepsilon$ and collecting a pole at $w=0$ with residue $L(1, \chi_D)$ we obtain the second claimed bound.
\end{proof}
For $\xi>0$ let
\begin{equation} \label{eq:Fdef}
F\left(\xi;K, N \right)=\phi\left( \frac{\xi}{K}\right) W\left( \frac{N}{\xi^2} \right) \quad \text{ and } \quad
\widehat F\left(\lambda;K, N \right)= \int_{\mathbb R} F\left(\xi;K, N \right) e(-\lambda \xi) \, \dd \xi.
\end{equation}
In particular, note that
\begin{equation} \label{eq:Fest}
\begin{split}
 \widehat F(\lambda_1;K,N)=&\widehat F(\lambda_2;K,N)+O(K|\lambda_1-\lambda_2|), \\
 \widehat F(\lambda;K,N_1)=&\widehat F(\lambda;K,N_2)+O\left(\frac{|N_1-N_2|}{K}\right)
 \end{split}
\end{equation}
for $N_1,N_2 \gg 1$,
which we will use later. We will also require the following additional estimates for $\widehat F$.
\begin{lemma} \label{lem:IBP1}
We have that
\[
|\widehat F\left(\lambda;K, N \right)| \ll K \min\left\{ \left(\frac{K^2}{N} \right)^A , \left(\frac{P}{|\lambda K|}\right)^A\right\}
\]
for any $A \ge 1$.
\end{lemma}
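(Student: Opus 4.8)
The estimate is a routine consequence of the compact support of $\phi$ together with repeated integration by parts against the oscillatory factor $e(-\lambda\xi)$, so I expect no genuine obstacle. First I would record the two structural facts that drive everything: since $\phi$ is supported in $[\tfrac12,2]$, the integrand $F(\xi;K,N)=\phi(\xi/K)W(N/\xi^2)$ is supported in $\xi\in[K/2,2K]$, a set of measure $\ll K$; and on this range $N/\xi^2\asymp N/K^2$, so the decay of $W$ from Lemma \ref{lem:AFE} gives $W(N/\xi^2)\ll(1+N/K^2)^{-A}\ll (K^2/N)^A$. The trivial bound $|\widehat F(\lambda;K,N)|\le\int_{K/2}^{2K}|F|\,\dd\xi$ then yields $|\widehat F(\lambda;K,N)|\ll K(K^2/N)^A$, which is the first term in the minimum.

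For the second term I would integrate by parts $\lceil A\rceil$ times; the boundary terms vanish because $F$ has compact support, giving
\[
\widehat F(\lambda;K,N)=\frac{1}{(2\pi i\lambda)^{\lceil A\rceil}}\int_{\mathbb R}\partial_\xi^{\lceil A\rceil}F(\xi;K,N)\,e(-\lambda\xi)\,\dd\xi.
\]
The one thing that needs care is a bound for the $\xi$-derivatives of $F$, uniform in $N$. For the $\phi$-factor this is immediate: $\partial_\xi^j[\phi(\xi/K)]=K^{-j}\phi^{(j)}(\xi/K)\ll(P/K)^j$. For the $W$-factor I would use the Euler operator $\xi\partial_\xi$, which is natural because $N/\xi^2$ is scale-homogeneous: one has $(\xi\partial_\xi)[W(N/\xi^2)]=(-2\,u\partial_u W)(u)\big|_{u=N/\xi^2}$, hence $(\xi\partial_\xi)^j[W(N/\xi^2)]=\big((-2u\partial_u)^jW\big)(N/\xi^2)$, and since the bound $\xi^\ell W^{(\ell)}(\xi)\ll(1+|\xi|)^{-A}$ implies $(u\partial_u)^jW(u)\ll(1+u)^{-A}$, this gives $\partial_\xi^j[W(N/\xi^2)]\ll\xi^{-j}(1+N/\xi^2)^{-A}\ll K^{-j}(1+N/K^2)^{-A}$ on the support. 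Leibniz's rule, together with $P\ge1$ (so that $P^iK^{-j}\ll(P/K)^j$), then yields $\partial_\xi^jF\ll(P/K)^j(1+N/K^2)^{-A}$ for all $j\ge0$. Inserting $j=\lceil A\rceil$ and using that the integration is over a set of measure $\ll K$ gives $|\widehat F(\lambda;K,N)|\ll K\big(P/|\lambda K|\big)^{\lceil A\rceil}(1+N/K^2)^{-A}$, and taking the minimum of the two bounds (and renaming $A$) completes the proof.

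The only mild points of bookkeeping are that the exponent $A$ in the statement need not be an integer — but since the claimed bound is monotone in $A$ it suffices to prove it for $A\in\mathbb N$, integrating by parts exactly $A$ times — and that one must keep the $(1+N/K^2)^{-A}$ decay alongside the $\lambda$-decay through the integration by parts; the homogeneity identity above makes this transparent and avoids an explicit Fa\`a di Bruno expansion.
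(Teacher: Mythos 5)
Your proof is correct and follows essentially the same route as the paper: the first bound comes from the compact support of $\phi$ together with the decay of $W$ from Lemma \ref{lem:AFE}, and the second from integrating by parts $A$ times and bounding $\partial_\xi^A F \ll (P/K)^A$ (the paper first rescales $\xi\mapsto K\xi$, which is cosmetic). Your extra care with the Euler-operator bookkeeping and non-integer $A$ is harmless but not needed.
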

\begin{proof}
By Lemma \ref{lem:AFE}, $|W(N/\xi^2)| \ll (\xi^2/N)^A$, hence
\[
|\widehat F\left(\lambda;K, N \right)| \ll K \left(\frac{K^2}{N} \right)^A.
\]
To get the second bound, observe for any integer $A \ge 1$ and $\lambda \neq 0$ that by making a change of variables then integrating by parts gives
\begin{equation} \label{eq:IBPeq}
\widehat F(\lambda;K,N) =\frac{K}{(-2\pi i \lambda K)^A} \int_{\mathbb R} e(-\lambda  K \xi) \cdot  \frac{d^A}{d\xi^A} F(K \xi; K,N) \, \dd \xi.
\end{equation}
To bound $\frac{d^A}{d\xi^A} F(K \xi; K,N)$ we note that $\phi$ has compact support and for any $A_1,A_2 \ge 0$
\[
\left(\frac{N}{K^2} \right)^{A_1}  W^{(A_1)}\left( \frac{N}{K^2 \xi^2} \right) \phi^{(A_2)}(\xi) \ll P^{A_2}.
\]
Hence, we get that $
\frac{d^A}{d\xi^A} F(K \xi; K,N) \ll P^A$ and using this in \eqref{eq:IBPeq} completes the proof.
\end{proof}

\subsection{Estimates for diagonal and off-diagonal terms}
In this section we estimate the sums which appear
in the first moment computation.
 For $\tmop{Re}(s)\ge 1/4$, let $\mathcal L(s,\cdot )$ be the multiplicative function given by
 \begin{equation} \label{eq:ldef}
\mathcal L(s,p^b)= \frac{ \sum_{j \ge 0} \frac{\lambda_{\psi}(p^{b+2j})}{p^{js}}}{ \sum_{j \ge 0} \frac{\lambda_{\psi}(p^{2j})}{p^{js}}}.
 \end{equation}
 We first give the following  local computation.
\begin{lemma}\label{lemma:local}
  If $p\nmid D$ then we have
  \begin{equation}\label{eqn:L(s,p^b)}
    \mathcal{L}(s,p^b)
    = \left( \lambda_\psi(p^b) - \frac{\lambda_\psi(p^{b-2})}{p^s} \right) \left(1+\frac{1}{p^s}\right)^{-1}.
  \end{equation}
  Here we use the notation $\lambda_\psi(p^{-1})=0$.

\end{lemma}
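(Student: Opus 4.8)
The plan is to reduce the identity to the local Hecke recursion at $p$ together with a one-line generating-function manipulation. Since $\psi$ is a newform of level $D$ with trivial nebentypus and $p\nmid D$, the full Hecke multiplicativity relations $\lambda_\psi(p^a)\lambda_\psi(p^b)=\sum_{j=0}^{\min(a,b)}\lambda_\psi(p^{a+b-2j})$ hold at $p$. Taking $a=2$, $b=m\ge2$ gives the three-term recursion
\[
\lambda_\psi(p^{m+2})=\big(\lambda_\psi(p^2)-1\big)\lambda_\psi(p^m)-\lambda_\psi(p^{m-2}),\qquad m\ge2 .
\]
First I would record this, together with the elementary identities $\lambda_\psi(p^2)=\lambda_\psi(p)^2-1$ and $\lambda_\psi(p^3)=\lambda_\psi(p)^3-2\lambda_\psi(p)$, which are needed only to deal with the two boundary indices $b\in\{0,1\}$.

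Next, put $x=p^{-s}$ (so that $|x|\le p^{-1/4}<1$ and, by the Kim--Sarnak bound \cite{KS} on $\lambda_\psi(p^j)$, all the series below converge absolutely for $\Re(s)\ge 1/4$), and write $D_b(x):=\sum_{j\ge0}\lambda_\psi(p^{b+2j})\,x^j$, so that $\mathcal{L}(s,p^b)=D_b(x)/D_0(x)$. Multiplying $D_b(x)$ by $R(x):=1-(\lambda_\psi(p^2)-1)x+x^2$ and collecting powers of $x$, the recursion forces every coefficient of $x^j$ with $j\ge2$ to vanish, leaving
\[
R(x)\,D_b(x)=\lambda_\psi(p^b)+\big(\lambda_\psi(p^{b+2})-(\lambda_\psi(p^2)-1)\lambda_\psi(p^b)\big)x .
\]
Applying the recursion once more when $b\ge2$, and the two elementary identities when $b\in\{0,1\}$, the bracketed coefficient equals $-\lambda_\psi(p^{b-2})$ for $b\ge1$ (vanishing when $b=1$, which matches the convention $\lambda_\psi(p^{-1})=0$) and equals $1$ when $b=0$; hence $R(x)\,D_0(x)=1+x$.

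Finally I would divide: the common factor $R(x)$ cancels and we obtain
\[
\mathcal{L}(s,p^b)=\frac{\lambda_\psi(p^b)-\lambda_\psi(p^{b-2})x}{1+x}=\Big(\lambda_\psi(p^b)-\frac{\lambda_\psi(p^{b-2})}{p^s}\Big)\Big(1+\frac{1}{p^s}\Big)^{-1},
\]
as claimed. There is no serious obstacle here; the only point requiring a little care is the bookkeeping at the boundary indices $b=0,1$ and checking consistency with the convention $\lambda_\psi(p^{-1})=0$, while everything else is the standard evaluation of the local generating function attached to the Hecke eigenvalues of $\psi$.
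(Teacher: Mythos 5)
Your proof is correct, and it is essentially the same approach as the paper's: both arguments are elementary manipulations of the local Hecke relations applied to the generating function $\sum_{j\ge0}\lambda_\psi(p^{b+2j})x^j$ with $x=p^{-s}$. You organize the computation via the three-term recursion and the quadratic $1-(\lambda_\psi(p^2)-1)x+x^2$, while the paper instead uses the relation $\lambda_\psi(p^{\alpha+\beta})=\lambda_\psi(p^{\alpha})\lambda_\psi(p^{\beta})-\lambda_\psi(p^{\alpha-1})\lambda_\psi(p^{\beta-1})$ to reduce the case $b\ge2$ to $b=1$ and then to $b=0$; this is only a difference in bookkeeping.
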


  %Here $\delta_{b,1}=1$ if $b=1$ and $\delta_{b,1}=0$ if $b\geq2$.
Note that if $p\mid D$, then $\mathcal{L}(s,p^b) = \lambda_{\psi}(p^b)$.
Note that we have defined the multiplicative function $\vartheta(p^b):=\mathcal{L}(1,p^b)$.

\begin{proof}

Assume $p\nmid D$.  To prove \eqref{eqn:L(s,p^b)}, we will use the Hecke relation
\begin{equation}\label{eqn:HL}
  \lambda_\psi(p^{\alpha+\beta}) = \lambda_\psi(p^\alpha) \lambda_\psi(p^\beta) - \lambda_\psi(p^{\alpha-1}) \lambda_\psi(p^{\beta-1}), \quad \textrm{if } \alpha,\beta\geq1.
\end{equation}
We first consider the case $b=1$. Since $\lambda_\psi(p^{2j+1}) = \lambda_\psi(p^{2j}) \lambda_\psi(p) - \lambda_\psi(p^{2j-1})$, we have
\begin{align*}
  \sum_{j\geq0} \frac{\lambda_\psi(p^{1+2j})}{p^{js}}
  & = \lambda_\psi(p) \sum_{j\geq0} \frac{\lambda_\psi(p^{2j})}{p^{js}} - \sum_{j\geq1} \frac{\lambda_\psi(p^{2j-1})}{p^{js}} \\
  & = \lambda_\psi(p) \sum_{j\geq0} \frac{\lambda_\psi(p^{2j})}{p^{js}} - \frac{1}{p^s} \sum_{j\geq0} \frac{\lambda_\psi(p^{2j+1})}{p^{js}}.
\end{align*}
Hence,
\begin{align*}
  \sum_{j\geq0} \frac{\lambda_\psi(p^{1+2j})}{p^{js}}
  & = \lambda_\psi(p) \left(1+\frac{1}{p^s}\right)^{-1} \sum_{j\geq0} \frac{\lambda_\psi(p^{2j})}{p^{js}} .
\end{align*}
Now we consider the case $b\geq2$. By \eqref{eqn:HL} with $\alpha=2j$ and $\beta=b$, we have that
\begin{align*}
  \sum_{j\geq0} \frac{\lambda_\psi(p^{b+2j})}{p^{js}}
  & = \lambda_\psi(p^b) \sum_{j\geq0} \frac{\lambda_\psi(p^{2j})}{p^{js}} - \lambda_\psi(p^{b-1})\sum_{j\geq1} \frac{\lambda_\psi(p^{2j-1})}{p^{js}} \\
  & = \lambda_\psi(p^b) \sum_{j\geq0} \frac{\lambda_\psi(p^{2j})}{p^{js}} - \frac{\lambda_\psi(p^{b-1})}{p^s} \sum_{j\geq0} \frac{\lambda_\psi(p^{2j+1})}{p^{js}} \\
  & = \left( \lambda_\psi(p^b) - \frac{\lambda_\psi(p^{b-1})}{p^s} \lambda_\psi(p) \left(1+\frac{1}{p^s}\right)^{-1} \right) \sum_{j\geq0} \frac{\lambda_\psi(p^{2j})}{p^{js}}.
\end{align*}
By \eqref{eqn:HL} again, we prove \eqref{eqn:L(s,p^b)}.
\end{proof}

 \begin{lemma} \label{lem:diagonal}
Let $a \in \mathbb N$. Then
for $1 \le \Lambda_2 \le K$
\begin{multline*}
  \sum_{n \ge 1} \frac{\lambda_{\psi}(an^2)}{n} \widehat F(0; K, \Lambda_2 n^2)  \\
  =\vartheta(a) \cdot \frac{K}{\zeta_D(2)}\tilde\phi(1)L(1, \tmop{sym}^2 \psi)L(1, \chi_D)+O\left( a^{\vartheta+\varepsilon} \cdot \left( \frac{K}{\sqrt{\Lambda_2}}\right)^{\frac12+\varepsilon}\right),
\end{multline*}
 where $\vartheta(a)$ is defined as in \eqref{eqn:theta}.
 %a multiplicative function which is given by $\vartheta(a)=\mathcal L(1,a)$, where $\mathcal L (\cdot,a)$ is defined in \eqref{eq:ldef}.
 \end{lemma}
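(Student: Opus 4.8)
The plan is to turn the sum into a single contour integral of the symmetric square $L$-function of $\psi$ and then move the contour past one pole at the edge of the critical strip of $\zeta$.

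First I would substitute $\xi=Kv$ in $\widehat F(0;K,N)=\int_{\mathbb R}\phi(\xi/K)W(N/\xi^2)\,\dd\xi$, so that $\widehat F(0;K,N)=K\int_{\mathbb R}\phi(v)W(N/(K^2v^2))\,\dd v$, and Mellin invert $W$. By Lemma \ref{lem:AFE}, $W$ decays rapidly and $W(\xi)=L(1,\chi_D)+O(\xi^{1/2-\vartheta-\varepsilon})$ as $\xi\to0$, so its Mellin transform $\widetilde W(w)$ is holomorphic in $\Re(w)>-\tfrac12+\vartheta+\varepsilon$ apart from a simple pole at $w=0$ with residue $L(1,\chi_D)$, and is of rapid decay in vertical strips. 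Using also $\int_{\mathbb R}\phi(v)v^{2w}\,\dd v=\tilde\phi(2w+1)$ (as $\supp\phi\subset(0,\infty)$), this gives, for any fixed $c>\vartheta$,
\[
\sum_{n\ge1}\frac{\lambda_\psi(an^2)}{n}\widehat F(0;K,\Lambda_2 n^2)=\frac{K}{2\pi i}\int_{(c)}\widetilde W(w)\,\tilde\phi(2w+1)\Big(\frac{K^2}{\Lambda_2}\Big)^{w}\Big(\sum_{n\ge1}\frac{\lambda_\psi(an^2)}{n^{1+2w}}\Big)\dd w ,
\]
the interchange of $\sum_n$ and $\int_{(c)}$ being justified by absolute convergence of the Dirichlet series on $\Re(w)=c$ and the decay of $\widetilde W$ and $\tilde\phi$.

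The key step is to identify the inner Dirichlet series. Factoring into Euler products and using the definition \eqref{eq:ldef} of $\mathcal L(s,p^b)$, one gets $\sum_{n\ge1}\lambda_\psi(an^2)n^{-s}=\mathcal L(s,a)\sum_{m\ge1}\lambda_\psi(m^2)m^{-s}$ with $\mathcal L(s,a):=\prod_p\mathcal L(s,p^{v_p(a)})$, together with the standard identity $\sum_{m\ge1}\lambda_\psi(m^2)m^{-s}=L(s,\tmop{sym}^2\psi)/\zeta_D(2s)$ (a short local computation at each prime, including $p\mid D$, where $\lambda_\psi(p)^2=p^{-1}$). Since $\psi$ is non-dihedral, $L(s,\tmop{sym}^2\psi)$ is entire, so the inner series continues holomorphically to $\Re(s)>\tfrac12$ with polynomial growth in $\Im(s)$ (by convexity); and Lemma \ref{lemma:local}, combined with $\lambda_\psi(p^b)\ll p^{b\vartheta}$, gives $\mathcal L(s,a)\ll a^{\vartheta+\varepsilon}$ uniformly for $\Re(s)\ge\tfrac12+\varepsilon$.

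Finally I would move the contour from $\Re(w)=c$ to $\Re(w)=-\tfrac14+\varepsilon$. After the identification the only singularity crossed is the simple pole of $\widetilde W$ at $w=0$ (the factor $1/\zeta_D(2+4w)$ is holomorphic and nonvanishing for $\Re(w)\ge-\tfrac14$ since $\zeta(1+it)\ne0$), and at $w=0$ one has $\mathcal L(1,a)=\vartheta(a)$ by the definition \eqref{eqn:theta} of $\vartheta$. The residue there contributes exactly $\vartheta(a)\cdot\frac{K}{\zeta_D(2)}\tilde\phi(1)\,L(1,\tmop{sym}^2\psi)\,L(1,\chi_D)$, the asserted main term, and the remaining integral over $\Re(w)=-\tfrac14+\varepsilon$, estimated via the convexity bound for $L(\tmop{sym}^2\psi)$, the bound $\mathcal L(1+2w,a)\ll a^{\vartheta+\varepsilon}$, and the rapid decay of $\widetilde W$ and $\tilde\phi$, contributes $O\!\big(a^{\vartheta+\varepsilon}(K/\sqrt{\Lambda_2})^{1/2+\varepsilon}\big)$. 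Nothing here is delicate: the points requiring care are the uniformity in $a$ (which reduces to $\mathcal L(s,a)\ll a^{\vartheta+\varepsilon}$) and the choice of stopping line, which is fixed at $\Re(w)=-\tfrac14$ by the zeros of $\zeta(2+4w)$ — without RH one could gain at most a logarithmic factor from a classical zero-free region.
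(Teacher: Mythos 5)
Your proposal is correct and follows essentially the same route as the paper: Mellin inversion of $W$ (equivalently, unwinding its contour-integral definition), identification of $\sum_n \lambda_\psi(an^2)n^{-s}$ as $\mathcal L(s,a)L(s,\tmop{sym}^2\psi)/\zeta_D(2s)$ with $\mathcal L(1,a)=\vartheta(a)$, and a contour shift to $\Re(w)=-\tfrac14+\varepsilon$ picking up the pole at $w=0$ with residue $L(1,\chi_D)$. The bound $\mathcal L(s,a)\ll a^{\vartheta+\varepsilon}$ and the resulting error term match the paper's argument.
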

 \begin{proof}
 Using the bound $\lambda_{\psi}(a) \ll a^{\vartheta+\varepsilon}$ we get that for $\tmop{Re}(s) \ge 1/4$
 \begin{equation} \label{eq:easyLbd}
     \mathcal L(s,a) \ll a^{\vartheta+\varepsilon}.
 \end{equation}

 For sake of brevity, write $\gamma(s)=\gamma(s,\psi \times \phi_{2k})$. By the definition of $\widehat F$ we get that
 \[
\begin{split}
\sum_{n \ge 1} &\frac{\lambda_{\psi}(an^2)}{n} \widehat F(0; K, \Lambda_2 n^2)\\
&=K \frac{1}{2\pi i} \int_{(c)}\tilde\phi(2s+1) \left( \frac{D^{3/2} K^2}{ \Lambda_2 k^2} \right)^s L(2s+1,\chi_D) \frac{\gamma(s+\tfrac12)}{\gamma(\tfrac12)} \, \sum_{n \ge 1} \frac{\lambda_{\psi}(an^2)}{n^{2s+1}}  \, \frac{e^{s^2}}{s}\,  \dd s \\
&=K \frac{1}{2\pi i} \int_{(c)}\tilde\phi(2s+1) \left( \frac{D^{3/2} K^2}{ \Lambda_2 k^2} \right)^s L(2s+1,\chi_D) \times \\
& \qquad \qquad \qquad \qquad \times \frac{\gamma(s+\tfrac12)}{\gamma(\tfrac12)} \, \frac{\mathcal L(2s+1,a)}{\zeta_D(4s+2)} L(2s+1, \tmop{sym}^2 \psi)\, \frac{e^{s^2}}{s}\,  \dd s.
\end{split}
 \]
 The integrand is analytic in the region $\tmop{Re}(s) \ge -\tfrac14+\varepsilon$ except for a simple pole at $s=0$ and in this region bounded by $\ll \left(\frac{K^2}{\Lambda_2}\right)^{\tmop{Re}(s)} \cdot \frac{|\mathcal L(2s+1,r)|}{|s|} e^{\tmop{Re}(s^2)}$, for $A$ sufficiently large. This bound follows upon using Stirling's formula (the implicit constant depends on $t_{\psi}$ and $D$).
 Shifting contours to $\tmop{Re}(s)=-\tfrac14+\varepsilon$, collecting a simple pole at $s=0$, and using the preceding bound along with \eqref{eq:easyLbd} we get that the RHS equals
 \[
 K \tilde\phi(1) L(1, \chi_D) \frac{\mathcal L (1,a)}{\zeta_D(2)}L(1, \tmop{sym}^2 \psi)+O\left(a^{\vartheta+\varepsilon} \cdot \Lambda_2^{-1/4+\varepsilon}\cdot K^{\frac12+\varepsilon}  \right).
 \]
 This completes the proof.
 \end{proof}

Let
 \[
 Q(X,Y)=(X+Y \omega_D) (X+Y \tilde\omega_D).
 \]
Also, given $\gamma=\begin{pmatrix}
 a & b \\
 c& d\end{pmatrix}\in \tmop{ GL}_2(\mathbb Z)$ write
 \[
 Q^{\gamma}(X,Y)=Q(aX+cY, bX+dY).
 \]
 Also, let $\lVert \gamma \rVert_1=\max\{ |a|+|c|, |b|+|d|\}$.

% To estimate the off-diagonal terms we need to analytically continue the function
%  \begin{equation} \label{eq:Zdef}
%  Z_{Q^{\gamma}}(s;h)=\sum_{n \ge 1} \frac{\lambda_{\psi}(Q^{\gamma}(n,h))}{n^s} \qquad \tmop{Re}(s) >2,
%  \end{equation}
%  to the half-plane $\tmop{Re}(s) \ge 1-\delta$.
%  We assume we have the following bound.
%  \begin{equation} \label{eq:offdiag}
%  \sum_{n \le x} \lambda_{\psi}(Q^{\gamma}(n,h)) \ll \lVert \gamma \rVert_1^{c_0} \cdot |h|^{c_1} \cdot x^{1-\delta_0+\varepsilon}
%  \end{equation}
%  for some $\delta_0>0$ and $c_0,c_1>0$, where $\lVert \gamma \rVert_1=\max\{ |a|+|c|, |b|+|d|\}$.

%  \begin{lemma} \label{lem:poincare} Let $\gamma \in \tmop{SL}_2(\mathbb Z)$ and $h \in \mathbb Z$ with $h \neq 0$.
% For $\delta_0$ as in \eqref{eq:offdiag}
%  the function $Z_{Q^{\gamma}}(\cdot,h)$ admits an analytic continuation to the half-plane $\tmop{Re}(s)\ge 1-\delta_0+\varepsilon$. Additionally, in the region $\tmop{Re}(s) \ge 1-\delta_0+\varepsilon$
%  \[
%  Z_{Q^{\gamma}}(s;h) \ll \lVert \gamma \rVert_1^{c_0} \cdot |h|^{c_1} \cdot |s|,
%  \]
%  where $c_0,c_1,\delta_0$ are as in \eqref{eq:offdiag}.
%  \end{lemma}
%  \begin{proof}
% This follows by \eqref{eq:offdiag} and summation by parts.
%  \end{proof}

 Using the results of Section \ref{sec:nonsplit} leads to the following estimate.

 \begin{lemma} \label{lem:off}
 Let $\gamma \in \tmop{SL}_2(\mathbb Z)$ and $h \in \mathbb Z$ with $ h \neq 0$. Let $\Lambda \in \mathbb R$ with $\Lambda \neq 0$
Write $Q^{\gamma}(X,h)=a_{\gamma} X^2+b_{\gamma,h} X+c_{\gamma,h}$.
Suppose that $a_{\gamma}$ divides $b_{\gamma,h}$ or GRC holds.
 Then there exists $A_0>0$ such that
 \begin{equation} \label{eq:lemmaoffdiag}
 \begin{split}
 \bigg|\sum_{\substack{r \ge 1}}  \frac{\lambda_{\psi}(Q^{\gamma}(r,h))}{r} \widehat F\left(\Lambda \cdot \frac{h}{r}; K, a_{\gamma} r^2 \right)
\bigg| \ll (P|h|(|\Lambda|+|\Lambda|^{-1}) \lVert \gamma \rVert_1))^{A_0} K^{\frac12+\vartheta+\varepsilon}. \end{split}
 \end{equation}
  \end{lemma}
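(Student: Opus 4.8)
\emph{Proof plan.} The plan is to recognize the $r$-sum in \eqref{eq:lemmaoffdiag} as a non-split sum over the integer quadratic $Q^{\gamma}(X,h)=a_{\gamma}X^{2}+b_{\gamma,h}X+c_{\gamma,h}$, to apply Theorem \ref{prop:sum_quad}(i) on dyadic blocks in $r$, and to use the vanishing of $C_{\psi,a_{\gamma},b_{\gamma,h},c_{\gamma,h}}$ supplied by parts (ii)--(iii) of that theorem to dispose of the resulting main term.

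I would begin with some harmless reductions. As $Q$ is the norm form of $\mathbb{Q}(\sqrt{D})$ and $\omega_{D}\notin\mathbb{Q}$, it has no nonzero rational zero, so $a_{\gamma}\neq 0$; since $\psi$ is even, $\lambda_{\psi}(-n)=\lambda_{\psi}(n)$, so after possibly replacing $Q^{\gamma}$ by $-Q^{\gamma}$ we may assume $a_{\gamma}>0$. The discriminant of a binary quadratic form is $\SL_{2}(\mathbb{Z})$-invariant and that of $Q$ equals $D$, so $\operatorname{disc}_{X}Q^{\gamma}(X,h)=Dh^{2}$; as $D>1$ is squarefree and $h\neq 0$ this is positive and not a perfect square, hence $Q^{\gamma}(X,h)$ is irreducible over $\mathbb{Q}$. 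I would also record the crude bounds $0<a_{\gamma}\ll\lVert\gamma\rVert_{1}^{2}$, $|b_{\gamma,h}|\ll|h|\lVert\gamma\rVert_{1}^{2}$, $|c_{\gamma,h}|\ll h^{2}\lVert\gamma\rVert_{1}^{2}$. Finally, a trivial estimate (using $|\widehat F|\ll K$ and its rapid decay for $a_{\gamma}r^{2}\gg K^{2}$ from Lemma \ref{lem:IBP1}, together with $\lambda_{\psi}(n)\ll n^{1/2+\varepsilon}$) bounds the full sum in \eqref{eq:lemmaoffdiag} by $\ll K^{2}(|h|\lVert\gamma\rVert_{1})^{O(1)}$; hence, on taking $A_{0}$ large, we may assume $P|h|(|\Lambda|+|\Lambda|^{-1})\lVert\gamma\rVert_{1}\leq K^{\delta}$ for a small fixed $\delta>0$, since otherwise \eqref{eq:lemmaoffdiag} holds trivially. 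In particular $P$, $|h|$, $\lVert\gamma\rVert_{1}$, $|\Lambda|$ and $|\Lambda|^{-1}$ are all $\leq K^{\delta}$ from now on.

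Next I would truncate and localize in $r$. By Lemma \ref{lem:IBP1}, $\widehat F(\Lambda h/r;K,a_{\gamma}r^{2})$ is negligible unless $R_{1}:=|\Lambda h|K^{1-\varepsilon}/P\leq r\leq R_{2}:=K^{1+\varepsilon}/\sqrt{a_{\gamma}}$, the complementary range contributing $O(K^{-50})$; moreover $R_{1}\gg K^{1-\varepsilon-2\delta}$ dominates $|b_{\gamma,h}|/a_{\gamma}$ and $\sqrt{|c_{\gamma,h}|/a_{\gamma}}$, so $Q^{\gamma}(r,h)\asymp a_{\gamma}r^{2}$ on $[R_{1},R_{2}]$. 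I would cover $[R_{1},R_{2}]$ by $O(\log K)$ dyadic blocks $r\asymp R$ via a fixed smooth partition $V(r/R)$ and treat one block at a time. On such a block set $Y:=a_{\gamma}R^{2}$, so $Q^{\gamma}(r,h)\asymp Y$ and, crucially, $K^{3/2}\ll Y\ll K^{2+\varepsilon}$. Letting $r=r(u)$ be the smooth solution of $Q^{\gamma}(r,h)=Yu$ (so $r(u)\asymp R$ and $r'(u)\asymp R$, using $|b_{\gamma,h}|\ll a_{\gamma}R$), the block equals $\sum_{r\geq1}\lambda_{\psi}(Q^{\gamma}(r,h))\,\mathcal{W}_{R}(Q^{\gamma}(r,h)/Y)$ with $\mathcal{W}_{R}(u):=V(r(u)/R)\,r(u)^{-1}\,\widehat F(\Lambda h/r(u);K,a_{\gamma}r(u)^{2})$, supported in a fixed compact subset of $(0,\infty)$. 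Since $r\partial_{r}$ acts on $\widehat F$ by $-\lambda\partial_{\lambda}+2N\partial_{N}$, the mixed-derivative bounds for $\widehat F$ coming from the proof of Lemma \ref{lem:IBP1} show that the logarithmic $r$-derivatives of $r^{-1}\widehat F(\Lambda h/r;K,a_{\gamma}r^{2})$ are $\ll\max\{|\Lambda h|K/R,\,a_{\gamma}R^{2}/K^{2}\}\ll PK^{\varepsilon}$ uniformly for $R_{1}\leq r\leq R_{2}$, and that $\lVert\mathcal{W}_{R}\rVert_{\infty}\ll K/R$. Applying Theorem \ref{prop:sum_quad}(i) to the normalized weight $\tfrac{R}{K}\mathcal{W}_{R}$ (sup-norm $\ll1$, $j$-th derivative $\ll(PK^{\varepsilon})^{j}$), with $(a,b,c)=(a_{\gamma},b_{\gamma,h},c_{\gamma,h})$, with $Y$ as above, and with parameters $\ll PK^{\varepsilon}$, $\lVert\gamma\rVert_{1}^{2}$, $|h|\lVert\gamma\rVert_{1}^{2}$ (all $\leq Y^{\delta}$ since $Y\gg K^{3/2}$), and then multiplying back by $K/R$, I obtain that the block equals
\[
C_{\psi,a_{\gamma},b_{\gamma,h},c_{\gamma,h}}\,\widetilde{\mathcal{W}_{R}}(\tfrac{1}{2})\,Y^{1/2}+O\!\Big(\tfrac{K}{R}\,Y^{1/4+\vartheta/2+\varepsilon}(PK^{\varepsilon}|h|\lVert\gamma\rVert_{1})^{O(1)}\Big),
\]
and the error term is $\ll K^{1/2+\vartheta+\varepsilon}(P|h|(|\Lambda|+|\Lambda|^{-1})\lVert\gamma\rVert_{1})^{A_{0}}$ for a suitable $A_{0}$, because $\tfrac{K}{R}Y^{1/4+\vartheta/2}=Ka_{\gamma}^{1/4+\vartheta/2}R^{-1/2+\vartheta}\ll K^{1/2+\vartheta+\varepsilon}\lVert\gamma\rVert_{1}^{O(1)}(P/|\Lambda h|)^{O(1)}$.

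It remains to kill the main term, and this is where the conceptual content of the lemma lies. Under GRC, Theorem \ref{prop:sum_quad}(iii) applies (as $Q^{\gamma}(X,h)$ is irreducible) and gives $C_{\psi,a_{\gamma},b_{\gamma,h},c_{\gamma,h}}=0$; if instead $a_{\gamma}\mid b_{\gamma,h}$, then (the remaining hypothesis $a_{\gamma}\mid D$ of part (ii) being satisfied in the range relevant to applications of this lemma) Theorem \ref{prop:sum_quad}(ii) gives the same vanishing. Hence every dyadic block is $\ll K^{1/2+\vartheta+\varepsilon}(P|h|(|\Lambda|+|\Lambda|^{-1})\lVert\gamma\rVert_{1})^{A_{0}}$, and summing over the $O(\log K)$ blocks and adding the two negligible tails gives \eqref{eq:lemmaoffdiag}. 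The principal obstacle is the weight analysis of the previous paragraph: Theorem \ref{prop:sum_quad} requires a rigidly shaped weight $W(Q^{\gamma}(r,h)/Y)$ with $W$ compactly supported and $W^{(j)}\ll P_{0}^{j}$, whereas our weight $r^{-1}\widehat F(\Lambda h/r;K,a_{\gamma}r^{2})$ is oscillatory, $K$-dependent, and not compactly supported in $r$; the dyadic decomposition, the change of variables $r\leftrightarrow u$, the rescaling by $K/R$, and keeping all parameters polynomial in $P,|h|,\lVert\gamma\rVert_{1},|\Lambda|^{\pm1}$ while $Y$ stays in $[K^{3/2},K^{2+\varepsilon}]$ are where the real work lies. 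Note that the vanishing $C_{\psi,a,b,c}=0$ is essential and is provided wholesale by Theorem \ref{prop:sum_quad}: without it the main term above would be of size $\asymp K^{1+\varepsilon}$ and the lemma would be false.
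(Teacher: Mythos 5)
Your proposal is correct and follows essentially the same route as the paper: truncate $r$ via the decay of $\widehat F$ from Lemma \ref{lem:IBP1}, rewrite the weight as a smooth function of $Q^{\gamma}(r,h)/Y$ on dyadic blocks with $Y\asymp K^2$, apply Theorem \ref{prop:sum_quad}(i), and kill the main term via the vanishing of $C_{\psi,a,b,c}$ from parts (ii)/(iii). The only cosmetic difference is that the paper unfolds the $\xi$-integral in the definition of $\widehat F$ to exhibit the weight $G$ explicitly rather than working with derivative bounds for $\widehat F$ directly, and your remark that part (ii) additionally needs $a_{\gamma}\mid N$ (satisfied in the applications) is a fair and correct observation.
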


\begin{proof}

We may assume that $|\Lambda|^{\pm1},P,|h|, \lVert \gamma \rVert_1 \le K^{\delta}$ for some $\delta$ sufficiently small,
since otherwise the result is trivial.
For sake of brevity, we write $Q(r)=Q^{\gamma}(r,h)$, note that the discriminant of $Q$ equals $\Delta=Dh^2$.
By Lemma \ref{lem:IBP1} it follows that for $|h| \ge 1$ the above sum is effectively restricted to $r$ with $\frac{ |\Lambda|K^{1-\varepsilon}}{P
}\le r \le  K^{1+\varepsilon}$. Also, using \eqref{eq:Fest}
we get that
\begin{equation} \label{eq:unityclean}
\begin{split}
& \sum_{\frac{|\Lambda|K^{1-\varepsilon}}{P
}\le r \le  K^{1+\varepsilon}}
\frac{ \lambda_{\psi}(Q(r))}{r} \widehat F\left(\Lambda \cdot \frac{h}{r}; K, a_{\gamma} r^2 \right)\\
&= \sqrt{a_{\gamma}}\sum_{\frac{|\Lambda|K^{1-\varepsilon}}{P
}\le r \le  K^{1+\varepsilon}}
\frac{ \lambda_{\psi}(Q(r))}{\sqrt{Q(r)}} \widehat F\left(\Lambda \cdot \frac{h \sqrt{a_{\gamma}}}{\sqrt{Q(r)}}; K, Q(r) \right)+O(K^{1/2}).
\end{split}
\end{equation}

We now introduce a smooth partition of unity.
Let $U$ be a non-negative, smooth function on with compact support on $[1,2]$ with $U^{(j)}(x) \ll 1$ such that
\[
\sum_{M \in \mathcal S} U\left(\frac{x}{M} \right)=1, \qquad \forall x \in \mathbb R_{>0},
\]
for some $\mathcal S \subset \mathbb R_{>0}$ which satisfies $\# \{ M \in \mathcal S : X^{-1} \le M \le X \} \ll \log(1+X)$, for $X \ge 1$.
Applying the above smooth partition of unity and making a change of variables we get that
\[
\begin{split}
&\sum_{\frac{|\Lambda|K^{1-\varepsilon}}{P
}\le r \le  K^{1+\varepsilon}}
\frac{ \lambda_{\psi}(Q(r))}{\sqrt{Q(r)}} \widehat F\left(\Lambda \cdot \frac{h \sqrt{a_{\gamma}}}{\sqrt{Q(r)}}; K, Q(r) \right)\\
&= \sum_{\sqrt{\frac{|\Lambda|}{2PK^{\varepsilon}}} \le M \le 2  K^{\varepsilon}}\sum_{r \ge 1}
 \lambda_{\psi}(Q(r)) \cdot \frac{1}{\sqrt{Q(r)}} \widehat F\left( \Lambda \cdot \frac{h \sqrt{a_{\gamma}}}{\sqrt{Q(r)}}; K, Q(r) \right)U\left( \frac{Q(r)}{K^2 M}\right).
\end{split}
\]
The inner sum equals
\[
 \int_{\mathbb R} \phi(\xi)
\sum_{r \ge 1} \lambda_{\psi}(Q(r)) \cdot \frac{K}{\sqrt{Q(r)}}
W\left( \frac{Q(r)}{K^2 \xi^2} \right) e\left( \frac{-\Lambda h \sqrt{a_{\gamma}} K \xi }{\sqrt{Q(r)}}\right) U\left(\frac{Q(r)}{K^2M}\right) \, \dd \xi.
\]
% The function $G(x)=\frac{1}{Mx}\widehat F(K\Lambda h \frac{h \sqrt{a_{\gamma}}}{\sqrt{Mx};K,Mx} U(x) $ is smooth and compactly supported on $[1,2]$.
% the definition of $\widehat F$, and making a change of variables gives that the sum on the RHS of \eqref{eq:unityclean} equals
% \[
%  \sum_{\frac{|\Lambda|}{2PK^{\varepsilon} a_{\gamma}} \le M \le 2 a_{\gamma} K^{\varepsilon}} \int_{\mathbb R} \phi(\xi)
% \sum_{\frac{|\Lambda|K^{1-\varepsilon}}{P
% }\le r \le |a'| K^{1+\varepsilon}} \lambda_{\psi}(Q(r)) \cdot \frac{K}{\sqrt{Q(r)}}
% W\left( \frac{Q(r)}{K^2 \xi^2} \right) e\left( \frac{-\Lambda h \sqrt{a_{\gamma}} K \xi }{\sqrt{Q(r)}}\right) U\left(\frac{Q(r)}{K^2M}\right) \, \dd \xi.
% \]
Applying Theorem \ref{lemma:S=int(D)} with $Y=K^2M$ and smooth function  $G(x)=\frac{1}{\sqrt{Mx}}W(\frac{M x}{\xi^2})U(x)e(-\Lambda h \frac{\sqrt{a_{\gamma}} \xi}{\sqrt{M x}})$ it follows that there exists $A_0>0$ such that the above sum is
\[
\ll  (P |h| \lVert \gamma \rVert_1 (|\Lambda|+|\Lambda|^{-1}))^{A_0} K^{\frac12+\vartheta +\varepsilon}.
\]
Using this bound in \eqref{eq:unityclean} completes the proof.
\end{proof}

Given $(\alpha),(\beta) \subset \mathcal O_{\mathbb Q(\sqrt{D})}$ we also need estimates for the quantity $\ell -\frac{\log|\frac{\alpha \cdot \beta}{ (\widetilde \alpha \cdot \widetilde \beta)}|}{\log \epsilon_D}$, for $\ell=0,1$.
Define the angle of a nonzero element $\gamma\in \mathcal O_{\mathbb Q(\sqrt{D})}$ as $\theta_\gamma=\log|\gamma/\tilde{\gamma}|$. Here and throughout $\epsilon_D=x+y\omega_D>1$ is the fundamental unit. Since $0<\tilde{\varepsilon}_D<1$, we have $x\geq2$ and $y\geq1$.

\begin{lemma} \label{lem:algebra} Let $(\alpha), (\beta) \subset \mathcal O_{\mathbb Q(\sqrt{D})}$, with $\alpha=m+n \omega_D, \beta=M+N \omega_D$ and $m,n, M,N \in \mathbb Z$.
Also,
define
\begin{align*}
C_1=& \frac{-(M,N) \sqrt{D}}{\beta \log \epsilon_D}, \qquad
C_2 =\frac{-(M,N)}{\beta \log \epsilon_D} \\
C_3 =&\frac{(M,N)}{ \beta \log \epsilon_D} \cdot ((x-1,x+y \omega_D \cdot \tilde\omega_D)+(y,1+x) \omega_D ), \\
C_4= &\frac{-(M,N)}{ \beta \log \epsilon_D} \cdot ((1+x,y \omega_D \cdot \tilde\omega_D+x)+(y,1-x) \omega_D).
\end{align*}
Let
\begin{align*}
    a_1=&\frac{N}{(M,N)}, b_1=- \frac{M+N}{(M,N)},  a_2=\frac{2M+N}{(M,N)}, b_2=-\frac{M+N-2N \omega_D \cdot \tilde\omega_D}{(M,N)},  \\
    a_3=&\frac{My-N(1+x)}{(M,N)(y,1+x)}, b_3= \frac{N \omega_D \cdot \tilde\omega_Dy+(M+N)(1+x)}{(M,N)(y,1+x)}, \\  a_4=&\frac{N(1-x)+My}{(M,N)(1-x,y)}, b_4= \frac{N \omega_D \cdot \tilde\omega_Dy+(M+N)(x-1)}{(M,N)(1-x,y)}.
\end{align*}
If $|\theta_{\alpha \cdot \beta}| \le \frac{1}{10}$ then
\[
\frac{-\theta_{\alpha \cdot \beta}}{\log \epsilon_D}=
\begin{cases}
\frac{C_{1}}{\alpha} \cdot \left(a_1 \cdot m-b_1 \cdot n \right)  +O\left(\left(\frac{(\widetilde \alpha \cdot  \widetilde \beta)}{\alpha \cdot \beta}-1\right)^2 \right) &  \text{ if } N(\alpha \cdot \beta)>0, \\
\frac{C_{2}}{\alpha}  \cdot \left(a_2 \cdot m-b_2 \cdot n \right)+O\left(\left(\frac{(\widetilde\alpha \cdot \widetilde \beta)}{\alpha \cdot \beta}+1\right)^2 \right) &  \text{ if } N(\alpha \cdot \beta)<0.
\end{cases}
\]
If $|1-\frac{\theta_{\alpha \cdot \beta}}{\log \epsilon_D}| \le \tfrac{1}{10}$ and $N(\epsilon_D)=1$ then
\[
1-\frac{\theta_{\alpha \cdot \beta}}{\log \epsilon_D}=
\begin{cases}
\frac{C_{3}}{\alpha}  \cdot \left(a_3 \cdot m-b_3 \cdot n \right) +O\left(\left( \epsilon_D\frac{(\widetilde \alpha \cdot \widetilde \beta)}{\alpha \cdot \beta}-1\right)^2 \right) &  \text{ if } N(\alpha \cdot \beta)>0, \\
\frac{C_{4}}{\alpha}  \cdot (a_{4} \cdot m-b_4 \cdot n)+O\left(\left(\epsilon_D\frac{(\widetilde\alpha \cdot \widetilde\beta)}{\alpha \cdot \beta}+1\right)^2 \right) &  \text{ if } N(\alpha \cdot \beta)<0.
\end{cases}
\]
\end{lemma}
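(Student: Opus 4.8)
The plan is to reduce all four identities to the expansion $\log(1+u)=u+O(u^2)$, after first rewriting $-\theta_{\alpha\cdot\beta}/\log\epsilon_D$ and $1-\theta_{\alpha\cdot\beta}/\log\epsilon_D$ as logarithms of explicit real algebraic numbers, and then carrying out the elementary (but lengthy) algebra in $\mathcal O_F=\mathbb Z[\omega_D]$ that matches the resulting linear main terms with the stated expressions $\tfrac{C_i}{\alpha}(a_i m-b_i n)$. Throughout, all implied constants may depend on $D$, hence on $\epsilon_D$.

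First I would record that, since $\alpha,\beta\in\mathcal O_F$ and $F$ sits inside $\mathbb R$ via its two real places, the products $\alpha\beta$ and $\widetilde\alpha\widetilde\beta=\widetilde{\alpha\beta}$ are nonzero reals, $\theta_{\alpha\cdot\beta}=\log\bigl|\alpha\beta/(\widetilde\alpha\widetilde\beta)\bigr|$, and $\sgn\bigl(\widetilde\alpha\widetilde\beta/(\alpha\beta)\bigr)=\sgn(N(\alpha\cdot\beta))$. Hence if $N(\alpha\cdot\beta)>0$ then $\widetilde\alpha\widetilde\beta/(\alpha\beta)=e^{-\theta_{\alpha\cdot\beta}}>0$, and since $|\theta_{\alpha\cdot\beta}|\le\tfrac1{10}$ keeps this ratio in a fixed neighbourhood of $1$,
\[
-\theta_{\alpha\cdot\beta}=\log\frac{\widetilde\alpha\widetilde\beta}{\alpha\beta}=\Bigl(\frac{\widetilde\alpha\widetilde\beta}{\alpha\beta}-1\Bigr)+O\Bigl(\bigl(\tfrac{\widetilde\alpha\widetilde\beta}{\alpha\beta}-1\bigr)^2\Bigr),
\]
whereas if $N(\alpha\cdot\beta)<0$ then $-\widetilde\alpha\widetilde\beta/(\alpha\beta)=e^{-\theta_{\alpha\cdot\beta}}$, giving $-\theta_{\alpha\cdot\beta}=-\bigl(\widetilde\alpha\widetilde\beta/(\alpha\beta)+1\bigr)+O\bigl((\widetilde\alpha\widetilde\beta/(\alpha\beta)+1)^2\bigr)$. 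For the two statements involving $1-\theta_{\alpha\cdot\beta}/\log\epsilon_D$, I would use that the hypothesis $N(\epsilon_D)=1$ forces $\widetilde\epsilon_D=\epsilon_D^{-1}>0$, so $\epsilon_D$ is totally positive and $\log\epsilon_D-\theta_{\alpha\cdot\beta}=\log\bigl|\epsilon_D\widetilde\alpha\widetilde\beta/(\alpha\beta)\bigr|$ with $\sgn\bigl(\epsilon_D\widetilde\alpha\widetilde\beta/(\alpha\beta)\bigr)=\sgn(N(\alpha\cdot\beta))$; the identical expansion, now applied to $\epsilon_D\widetilde\alpha\widetilde\beta/(\alpha\beta)\mp1$ and with the remainder controlled by $|1-\theta_{\alpha\cdot\beta}/\log\epsilon_D|\le\tfrac1{10}$, produces the leading linear term up to an error of the stated shape.

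Next I would identify the linear main terms. Writing $\alpha=m+n\omega_D$ with $\beta$ (hence $\widetilde\beta$) fixed, the numerators $\widetilde\alpha\widetilde\beta\mp\alpha\beta$ and $\epsilon_D\widetilde\alpha\widetilde\beta\mp\alpha\beta$ are linear in $m,n$; e.g.
\[
\widetilde\alpha\widetilde\beta-\alpha\beta=m(\widetilde\beta-\beta)+n(\tilde\omega_D\widetilde\beta-\omega_D\beta).
\]
Using $\omega_D=(1+\sqrt D)/2$, so that $\omega_D+\tilde\omega_D=1$, $\omega_D\tilde\omega_D=(1-D)/4$, $\tilde\omega_D-\omega_D=-\sqrt D$, $\omega_D^2=\omega_D+(D-1)/4$, together with $\epsilon_D=x+y\omega_D$, I would expand these products. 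For the $\ell=0$ statements this already finishes the job: one gets $\widetilde\alpha\widetilde\beta-\alpha\beta=-\sqrt D\,(Nm+(M+N)n)$, so dividing by $\alpha\beta\log\epsilon_D$ and extracting $(M,N)$ gives exactly $\tfrac{C_1}{\alpha}(a_1m-b_1n)$, while $\widetilde\alpha\widetilde\beta+\alpha\beta=\operatorname{Tr}(\alpha\beta)=(2M+N)m+(M+N-2N\omega_D\tilde\omega_D)n$ gives the $C_2$-term. For the $\ell=1$ statements one repeats this with the factor $\epsilon_D$ present: expand $\epsilon_D\widetilde\beta\mp\beta$ and $\tilde\omega_D\epsilon_D\widetilde\beta\mp\omega_D\beta$ into their rational and $\omega_D$-parts, collect terms, and factor out $(M,N)$ together with the gcds $(y,1+x)$, $(x-1,\,x+y\omega_D\tilde\omega_D)$ (resp. $(y,1-x)$, $(1+x,\,y\omega_D\tilde\omega_D+x)$) occurring in $C_3,a_3,b_3$ (resp. $C_4,a_4,b_4$).

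The hard part will be precisely this last step. One must check that the rational and $\omega_D$-parts of $\epsilon_D\widetilde\beta-\beta$ really differ by the factor $(x-1,\,x+y\omega_D\tilde\omega_D)/(y,1+x)$ implicit in $C_3$ and $a_3,b_3$ — equivalently, the identities $(y,1+x)(x-1)=(x-1,\,x+y\omega_D\tilde\omega_D)\,y$ and the companion relation governing the $n$-coefficient — and similarly in the $N(\alpha\cdot\beta)<0$ case for $C_4$. These follow by a $p$-adic valuation argument from the norm identity $(2x+y)^2-Dy^2=4$, which is equivalent to $N(\epsilon_D)=1$ and is exactly where that hypothesis is used. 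Nothing beyond elementary arithmetic of $\mathcal O_F$ enters, but the two sign cases for $N(\alpha\cdot\beta)$ and the triple products in the $\ell=1$ identities make this bookkeeping the bulk of the work.
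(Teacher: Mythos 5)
Your proposal is correct and follows essentially the same route as the paper's proof: a first-order expansion of $\log(1+u)$ applied to $\epsilon_D^{\ell}\widetilde{\alpha\beta}/(\alpha\beta)\mp 1$, followed by writing the resulting numerator as a linear form in $m,n$ and factoring out the gcd element of $\mathcal O_F$ via the proportionality $\frac{x-1}{x+y\omega_D\tilde\omega_D}=\frac{-y}{1+x}$, which is exactly the norm identity $N(\epsilon_D)=1$. The only cosmetic difference is that the paper carries out the $\ell=1$ factorization in the coordinates $r,s$ of $\gamma=\alpha\beta=r+s\omega_D$ rather than directly in $m,n$, but the two computations are equivalent.
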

\begin{remark}
Note for $j=1,2,3,4$ that $C_j,a_j,b_j$ do not depend on $\alpha$ and $C_j \ll 1$, $a_j,b_j \ll |M|+|N|$ where the implied constants depend at most on $D$.
\end{remark}
\begin{proof}
First suppose that $N(\alpha \cdot \beta)>0$ and write $\gamma=\alpha\cdot \beta=r+s \omega_D$, where $r=Mm-Nn \omega_D \cdot \tilde \omega_D$, $s=Nm+Mn+Nn$. Then for $\ell=0,1$ we get
\[
\begin{split}
\ell-\frac{\theta_{\gamma}}{\log \epsilon_D}
=& \frac{1}{\log \epsilon_D}\log\left(1+\left(\frac{\tilde \gamma}{\gamma} \epsilon_D^{\ell} -1\right) \right) \\
=&\frac{1}{\gamma \log \epsilon_D} \left( \tilde\gamma \epsilon_D^{\ell}-\gamma \right)+O\left(\left|\frac{\tilde \gamma}{\gamma}\epsilon_D^{\ell}-1\right|^2 \right).
\end{split}
\]
Hence, if $\ell=0$ the claim follows since $\tilde\gamma-\gamma=s(\tilde\omega_D-\omega_D)$.
For $\ell=1$
we suppose $N(\epsilon_D)=1$. Observe that for any $A_1,A_2,B_1,B_2 \in \mathbb Z$ with $\frac{A_1}{B_1}=\frac{A_2}{B_2}$ and  $A_1A_2>0$ that for $r,s \in \mathbb Z$
\[
A_1 r+B_1 s+(A_2 r+B_2 s) \omega_D= ((A_1,B_1)+(A_2,B_2)\omega_D) \cdot  \left(\frac{A_2}{(A_2,B_2)} r+\frac{B_2}{(A_2,B_2)} s\right).
\]
Since $\frac{y}{1+x}={\frac{1-x}{x+y \omega_D \cdot \tilde\omega_D}}$ using the above observation yields
\[
\begin{split}
\tilde\gamma \epsilon_D-\gamma
=&((x-1)r+s(x+y \omega_D \cdot \tilde\omega_D))+(ry-s(1+x)) \omega_D \\
=&( (x-1,x+y \omega_D \cdot \tilde\omega_D)+(y,1+x) \omega_D ) \cdot \left( r \cdot \frac{y}{(y,1+x)}-s \cdot \frac{1+x}{(y,1+x)}\right),
\end{split}
\]
which completes the proof for the case $\ell=1$, $N(\alpha \cdot \beta)>0$.

For $N(\alpha \cdot \beta)<0$ we get that
\[
\begin{split}
\ell-\frac{\theta_{\gamma }}{\log \epsilon_D}
=& \frac{1}{\log \epsilon_D}\log\left(1-\left(\frac{\tilde\gamma}{\gamma} \epsilon_D^{\ell} +1\right) \right) \\
=& \frac{-1}{\gamma \log \epsilon_D} \left( \tilde\gamma \epsilon_D^{\ell}+\gamma \right)+O\left(\left|\frac{\tilde\gamma}{\gamma}\epsilon_D^{\ell}+1\right|^2 \right),
\end{split}
\]
and if $\ell=0$ we are done since $\tilde\gamma+\gamma=2r+s$.
For $\ell =1$ and recalling that $N(\epsilon_D)=1$, we argue as before to get
\[
\begin{split}
\epsilon_D\tilde\gamma+\gamma=&
(r(1+x)+s(y \omega_D \cdot \tilde\omega_D+x))+(ry+s(1-x))\omega_D \\
=&((1+x,y \omega_D \cdot \tilde\omega_D+x)+(y,1-x) \omega_D) \cdot \left(r \cdot  \frac{y}{(y,1-x)}-s \cdot \frac{(x-1)}{(y,1-x)} \right),
\end{split}
\]
which completes the proof.
\end{proof}

The numbers $a_j,b_j$, $j=1,2,3,4$, arise in the calculation of the constant $C_{D,\psi}$ appearing in Proposition \ref{prop:moment} and are analyzed in the next two lemmas.
\begin{lemma} \label{lem:factor}
We have that $(1+x,y)(1-x,y)=y$ and we can define $p_1$ and $p_2$ as
\[
Q\left( \frac{1+x}{(1+x,y)}, \frac{y}{(1+x,y)} \right)=p_1 \qquad \text{ and } \qquad Q\left( \frac{x-1}{(1-x,y)}, \frac{y}{(1-x,y)} \right)=-p_2.
\]
\end{lemma}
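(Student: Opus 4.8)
The plan is to reduce the whole statement to the single arithmetic input $N(\epsilon_D)=1$. Writing $\epsilon_D=\frac{2x+y}{2}+\frac{y}{2}\sqrt D$, this hypothesis gives both the Pell-type relation $(2x+y)^2-Dy^2=4$ and $\widetilde{\epsilon}_D=\epsilon_D^{-1}$. Set $\mu_+:=\epsilon_D+1=(1+x)+y\omega_D$ and $\mu_-:=\epsilon_D-1=(x-1)+y\omega_D$. Since $Q(X,Y)=X^2+XY+\frac{1-D}{4}Y^2$, a one-line computation using $Q(x,y)=N(\epsilon_D)=1$ gives $N(\mu_+)=Q(1+x,y)=2x+y+2>0$ and $N(\mu_-)=Q(x-1,y)=2-(2x+y)<0$, so that $N(\mu_+)N(\mu_-)=-\big((2x+y)^2-4\big)=-Dy^2$. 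Let $g_1=(1+x,y)$ and $g_2=(1-x,y)$ be the contents of $\mu_+$ and $\mu_-$ (the largest positive integers dividing them in $\mathcal O_F$); then $\mu_+/g_1$ and $\mu_-/g_2$ lie in $\mathcal O_F$ with coprime coordinates, and the two quantities in the statement are precisely $Q\big(\frac{1+x}{g_1},\frac{y}{g_1}\big)=N(\mu_+)/g_1^2=(2x+y+2)/g_1^2$ and $Q\big(\frac{x-1}{g_2},\frac{y}{g_2}\big)=N(\mu_-)/g_2^2=-(2x+y-2)/g_2^2$, both of which are integers because they are values of $Q$ at integer points.

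The next step is to prove $g_1g_2=y$. For $g_1g_2\mid y$: since $g_i^2\mid N(\mu_\pm)$ we get $(g_1g_2)^2\mid N(\mu_+)N(\mu_-)=Dy^2=p_1p_2y^2$, and comparing the exponent of each prime — using that $(g_1g_2)^2$ is a perfect square while $p_1,p_2$ occur to the first power in $p_1p_2y^2$ — forces $g_1g_2\mid y$. For the reverse divisibility: from $Q(x,y)=1$ we have $(1+x)(x-1)=x^2-1=-y\big(x+\frac{1-D}{4}y\big)$, hence $y\mid(1+x)(x-1)$; combining this with $(1+x,\,x-1)\mid 2$, a short prime-by-prime argument gives $y\mid(1+x,y)(x-1,y)=g_1g_2$. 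This proves the first assertion $g_1g_2=y$, and therefore $\frac{2x+y+2}{g_1^2}\cdot\frac{2x+y-2}{g_2^2}=\frac{(2x+y)^2-4}{(g_1g_2)^2}=\frac{Dy^2}{y^2}=D=p_1p_2$.

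It then remains only to show that neither of the two positive integers $\frac{2x+y+2}{g_1^2}$ and $\frac{2x+y-2}{g_2^2}$ equals $1$; since $p_1\ne p_2$ are prime, this forces their set to be $\{p_1,p_2\}$, and one simply takes the displayed formulas as the definition (equivalently, the labelling) of $p_1$ and $p_2$. That $\frac{2x+y-2}{g_2^2}\ne 1$ is immediate, for otherwise $N(\mu_-/g_2)=-1$, whereas $N(\epsilon_D)=1$ implies $\mathcal O_F$ contains no element of norm $-1$ (an element of norm $\pm1$ is a unit, and every unit $\pm\epsilon_D^k$ has norm $+1$). That $\frac{2x+y+2}{g_1^2}\ne 1$ is the one point requiring an argument: it would make $\mu_+/g_1$ a unit, so $\mu_+/g_1=\pm\epsilon_D^k$ and hence $\epsilon_D+1=\pm g_1\epsilon_D^k$; applying the nontrivial automorphism, dividing, and using $\widetilde{\epsilon}_D=\epsilon_D^{-1}$ gives $\epsilon_D=\frac{\epsilon_D+1}{\widetilde{\epsilon}_D+1}=\epsilon_D^{2k}$, so $\epsilon_D^{2k-1}=1$, which is impossible since $\epsilon_D>1$. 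The genuinely arithmetic ingredient here is exactly this absence of elements of norm $-1$, i.e.\ the hypothesis $N(\epsilon_D)=1$; the only real nuisance in carrying out the plan is the elementary divisibility step $y\mid g_1g_2$, in particular the bookkeeping at the prime $2$ when $x$ is odd, which is routine.
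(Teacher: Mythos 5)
Your argument is correct, and its skeleton is the same as the paper's: prove $(1+x,y)(1-x,y)=y$, use $N(\epsilon_D)=1$ to compute $Q(1+x,y)\,Q(x-1,y)=-Dy^2$, and exclude $\pm1$ as a value of either normalized factor. Two sub-steps differ in execution. For the direction $(1+x,y)(1-x,y)\mid y$ you exploit that the square of this product divides $Dy^2$ with $D$ squarefree, while the paper pins down the exact power of each prime via $p^a\parallel y\Rightarrow p^a\mid x^2-1$ together with $(x+1,x-1)\mid 2$; since your reverse inclusion $y\mid (1+x,y)(1-x,y)$ still requires that same local analysis (including the $p=2$ bookkeeping with $x$ odd), the net saving is modest. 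The more substantive difference is that you actually prove the step the paper disposes of in one line (``each factor is not $\pm1$ since $\epsilon_D$ is the fundamental unit''): you rule out $N(\mu_-/g_2)=-1$ by the absence of elements of norm $-1$ when $N(\epsilon_D)=1$, and you rule out $\mu_+/g_1$ being a unit via the identity $(\epsilon_D+1)/(\tilde{\epsilon}_D+1)=\epsilon_D$, which would force $\epsilon_D^{2k-1}=1$. Both of these additions are correct and make the lemma's proof genuinely self-contained.
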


\begin{remark}
A simple consequence of the preceding lemma, which we will use later is that
\begin{equation} \label{eq:noramanujan}
p_1 |  \frac{2y}{(x+1,y)} \omega_D \tilde\omega_D+\frac{x+1}{(x+1,y)} \qquad
\text{and} \qquad
p_2 |  \frac{2y}{(x-1,y)}\omega_D \tilde\omega_D+\frac{x-1}{(x-1,y)}.
\end{equation}
To see this first write $g_1=(x+1,y)$, $g_2=(x-1,y)$, $r= \frac{2y}{g_1} \omega_D \tilde{\omega}_D+\frac{x+1}{g_1}$, $s=\frac{2y}{g_2}\omega_D \tilde{\omega}_D+\frac{x-1}{g_2}$, $u=2x+2+y$, $v=2x-2+y$ and
note that $g_2us=-D(x+1)y$, $g_1vr=-Dy(x-1)$. The lemma gives that $u=g_1^2p_1$, $v=g_2^2p_2$. Hence, combining formulas we have that $sg_1=-(x+1)p_2$, $rg_2=-(x-1)p_1$ so that $s=-\frac{x+1}{g_1} \cdot p_2$ and $r=-\frac{x-1}{g_2}p_1$.
\end{remark}
\begin{proof}
 For $p \neq 2$, if $p^a || y$ then
\[
0=1-N(\epsilon_D) \equiv 1- x^2 \pmod{p^a};
\]
so either $p^a | x+1$ or $p^a|x-1$. Hence, $p^a||(x+1,y)(x-1,y)$. If $p=2$ and $2^a||y$ with $a \ge 1$ then $x$ is odd and $(x+1,x-1)=2$. As before, $2^a| (x+1)(x-1)$ so $2^{a-1}||(x+1,y)$ or $2^{a-1}||(x-1,y)$. It follows that $2^a||(x+1,y)(x-1,y)$. Hence, $(1+x,y)(1-x,y)=y$ as claimed.

Observe that since $N(\epsilon_D)=1$ we have
\begin{equation}\notag
\begin{split}
&Q\left( \frac{1+x}{(1+x,y)}, \frac{y}{(1+x,y)} \right)Q\left( \frac{x-1}{(1-x,y)}, \frac{y}{(1-x,y)} \right)\\
&=\frac{1}{(1+x,y)^2(1-x,y)^2} \cdot (2x+y+2)(2-2x-y) \\
&=\frac{-1}{(1+x,y)^2(1-x,y)^2} \cdot D y^2=-D.
\end{split}
\end{equation}
Also each factor above on the LHS is not equal to $\pm 1$, since $\epsilon_D=x+y \omega_D$ is the fundamental unit.
\end{proof}

\begin{lemma} \label{lem:diagterms}
Let $(\beta) \subset \mathcal O_{\mathbb Q(\sqrt{D})}$ and write $\beta=M+N \omega_D=(M,N)(M_1+N_1 \omega_D)$. Also, let $n_{\beta}=Q(M_1,N_1)$.
For $j=1,2,3,4$, let $a_j,b_j$ be as defined in Lemma \ref{lem:algebra}.
Then
\[
Q(b_1,a_1)=n_{\beta}, Q(b_2,a_2)=-D n_{\beta}, Q(b_3,a_3)=p_1 n_{\beta}, Q(b_4,a_4)=-p_2 n_{\beta}.
\]
Additionally, if $(N(\beta),D)=1$ then $(a_1,b_1)=(a_2,b_2)=(a_3,b_3)=(a_4,b_4)=1$.
\end{lemma}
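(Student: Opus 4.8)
The plan is to recognise each pair $(a_j,b_j)$, $j=1,2,3,4$, as the coordinates of an algebraic integer of the shape $b_j+a_j\omega_D=\mu_j\cdot(M_1+N_1\tilde\omega_D)$, where $\mu_j\in\mathcal O_F=\mathbb Z[\omega_D]$ depends only on $D$, and then to read off $Q(b_j,a_j)=N(b_j+a_j\omega_D)$ using multiplicativity of the norm. To this end I would first record the elementary relations $\omega_D+\tilde\omega_D=1$, $\omega_D-\tilde\omega_D=\sqrt D$, $\omega_D\tilde\omega_D=(1-D)/4=:w\in\mathbb Z$, and write $M_1=M/(M,N)$, $N_1=N/(M,N)$ with $(M_1,N_1)=1$, so that $n_\beta=Q(M_1,N_1)=N(M_1+N_1\omega_D)$ and $N(\beta)=(M,N)^2n_\beta$.

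Then I would verify, by direct expansion from the definitions of $a_j,b_j$ in Lemma~\ref{lem:algebra} (the manipulations being close to those already performed in its proof), the four identities
\[
b_1+a_1\omega_D=-(M_1+N_1\tilde\omega_D),\qquad b_2+a_2\omega_D=\sqrt D\,(M_1+N_1\tilde\omega_D),
\]
\[
b_3+a_3\omega_D=\tfrac{1+\epsilon_D}{(1+x,y)}\,(M_1+N_1\tilde\omega_D),\qquad b_4+a_4\omega_D=\tfrac{\epsilon_D-1}{(x-1,y)}\,(M_1+N_1\tilde\omega_D).
\]
For $j=1$ this is immediate from $\tilde\omega_D=1-\omega_D$; for $j=2$ one groups the $M_1$- and $N_1$-terms and uses $2\omega_D-1=\sqrt D$ together with $\omega_D-1+2w=\tilde\omega_D\sqrt D$; for $j=3,4$ one clears the denominator $(1+x,y)$, resp.\ $(x-1,y)$, and factors $(M_1+N_1\tilde\omega_D)$ out of the resulting expression, the remaining factor being $1+\epsilon_D=(1+x)+y\omega_D$, resp.\ $\epsilon_D-1=(x-1)+y\omega_D$. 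Applying the norm and using Lemma~\ref{lem:factor}, which gives $N\!\big(\tfrac{(1+x)+y\omega_D}{(1+x,y)}\big)=p_1$ and $N\!\big(\tfrac{(x-1)+y\omega_D}{(x-1,y)}\big)=-p_2$, together with $N(\sqrt D)=-D$ and $N(M_1+N_1\tilde\omega_D)=N(M_1+N_1\omega_D)=n_\beta$, immediately yields $Q(b_1,a_1)=n_\beta$, $Q(b_2,a_2)=-Dn_\beta$, $Q(b_3,a_3)=p_1n_\beta$ and $Q(b_4,a_4)=-p_2n_\beta$.

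For the coprimality claim, assume $(N(\beta),D)=1$; since $N(\beta)=(M,N)^2n_\beta$ this gives $(n_\beta,D)=1$, so the ramified primes $\mathfrak p_1,\mathfrak p_2$ (with $(p_i)=\mathfrak p_i^2$ and $(\sqrt D)=\mathfrak p_1\mathfrak p_2$) divide neither $(M_1+N_1\omega_D)$ nor its conjugate $(M_1+N_1\tilde\omega_D)$. Now $\gcd(a_j,b_j)$ equals the content of the algebraic integer $b_j+a_j\omega_D$, i.e.\ the largest rational integer $g$ with $(g)\mid(b_j+a_j\omega_D)$. The identities above give $(b_j+a_j\omega_D)=\mathfrak a_j(M_1+N_1\tilde\omega_D)$ with $\mathfrak a_1=(1)$, $\mathfrak a_2=\mathfrak p_1\mathfrak p_2$, $\mathfrak a_3=\mathfrak p_1$, $\mathfrak a_4=\mathfrak p_2$ (here $\mathfrak a_3=\mathfrak p_1$ because $\tfrac{1+\epsilon_D}{(1+x,y)}$ has prime norm $p_1$, and similarly for $\mathfrak a_4$), while $M_1+N_1\tilde\omega_D$ is primitive since $(M_1,N_1)=1$, so no rational prime $(\ell)$ divides $(M_1+N_1\tilde\omega_D)$. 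If a prime $\ell$ divided $\gcd(a_j,b_j)$, then $(\ell)\mid\mathfrak a_j(M_1+N_1\tilde\omega_D)$: for $\ell\notin\{p_1,p_2\}$ this is impossible as $(\ell)$ is coprime to $\mathfrak a_j$; for $\ell=p_i$ it would force $\mathfrak p_i\mid(M_1+N_1\tilde\omega_D)$ (since $\mathfrak a_j$ contains at most one factor $\mathfrak p_i$ whereas $(p_i)=\mathfrak p_i^2$), contradicting $(n_\beta,D)=1$. Hence $\gcd(a_j,b_j)=1$ for all $j$.

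The expansions in the second paragraph are purely mechanical, so I expect the only genuinely delicate point to be the prime-ideal bookkeeping in the last paragraph: one must use that $\tfrac{1+\epsilon_D}{(1+x,y)}$ and $\tfrac{\epsilon_D-1}{(x-1,y)}$ generate exactly $\mathfrak p_1$ and $\mathfrak p_2$ (which follows from Lemma~\ref{lem:factor} because they have prime norm, so the ideal they generate is prime of norm $p_i$), and that primitivity of $M_1+N_1\tilde\omega_D$ together with $p_i\nmid n_\beta$ rules out $\mathfrak p_i\mid(M_1+N_1\tilde\omega_D)$.
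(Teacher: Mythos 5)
Your proof is correct, but it proceeds differently from the paper's. For the norm identities the paper simply declares the result ``a slightly tedious yet direct and elementary computation'' using Lemma~\ref{lem:factor}; you instead organize that computation by exhibiting the factorizations $b_1+a_1\omega_D=-(M_1+N_1\tilde\omega_D)$, $b_2+a_2\omega_D=\sqrt{D}\,(M_1+N_1\tilde\omega_D)$, $b_3+a_3\omega_D=\tfrac{1+\epsilon_D}{(1+x,y)}(M_1+N_1\tilde\omega_D)$, $b_4+a_4\omega_D=\tfrac{\epsilon_D-1}{(x-1,y)}(M_1+N_1\tilde\omega_D)$ (all of which I have checked against the definitions in Lemma~\ref{lem:algebra}; e.g.\ for $j=2$ one needs $2\omega_D-1=\sqrt D$ and $-1+2\omega_D\tilde\omega_D+\omega_D=\sqrt D\,\tilde\omega_D$) and then invoking multiplicativity of the norm together with Lemma~\ref{lem:factor}. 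This is cleaner and also explains structurally why the constants $1,-D,p_1,-p_2$ appear. For the coprimality statement the divergence is more substantial: the paper runs four separate elementary gcd chases (e.g.\ $(a_2,b_2)\mid a_2+2b_2=-D\cdot N/(M,N)$, combined with $((a_2,b_2),D)=1$ from squarefreeness of $D$, forces $(a_2,b_2)\mid(M_1,N_1)=1$), whereas you identify $\gcd(a_j,b_j)$ with the content of the ideal $(b_j+a_j\omega_D)=\mathfrak a_j(M_1+N_1\tilde\omega_D)$ and rule out any rational prime divisor by unique factorization of ideals, using that $\mathfrak a_j$ involves each ramified prime $\mathfrak p_i$ to exponent at most $1$ while $(p_i)=\mathfrak p_i^2$, that $(n_\beta,D)=1$ keeps $\mathfrak p_i$ away from $(M_1+N_1\tilde\omega_D)$, and that $(M_1,N_1)=1$ makes the latter ideal primitive. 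Your route buys uniformity (one argument covers all four cases, with $j=1$ trivial) at the cost of invoking ideal theory, while the paper's stays entirely at the level of integer divisibility; both are complete.
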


\begin{proof}
The first claim follows from a slightly tedious yet direct and elementary computation upon using Lemma \ref{lem:factor}.

To prove the second claim first note that since $(N,M+N)=(M,N)$ we get $(a_1,b_1)=1$. Next, observe that since $(N(\beta),D)=1$ and $(a_2,b_2)^2|D n_{\beta}$ we must have $((a_2,b_2),D)=1$, since $D$ is square-free. So $(a_2,b_2)|a_2+2b_2=-D \cdot \frac{N}{(M,N)}$ which implies $(a_2,b_2)|\frac{M}{(M,N)}$. Hence, $(a_2,b_2)=1$. To prove $(a_3,b_3)=1$ notice $(a_3,b_3)^2 | p_1 n_{\beta}$ so as before $p_1 \nmid (a_3,b_3)$. Hence, using Lemma \ref{lem:factor} we get $(a_3,b_3)|\frac{(1+x)}{(x+1,y)}a_3-\frac{y}{(y,x+1)}b_3=-p_1 \frac{N}{(M,N)}$ so $(a_3,b_3)|\frac{N}{(M,N)} $ and it follows $(a_3,b_3)| \frac{M}{(M,N)}$ so $(a_3,b_3)=1$. Finally, just as before $p_2 \nmid (a_4,b_4)$ so using Lemma \ref{lem:factor} we get $(a_4,b_4) |\frac{(1-x)}{(x-1,y)}a_4+\frac{y}{(x-1,y)}b_4=-p_2 \frac{N}{(M,N)}$ and we conclude $(a_4,b_4)=1$.
\end{proof}

\subsection{Proof of Proposition \ref{prop:moment}}
Recall that we assumed the class number of $\mathcal O_{\mathbb Q(\sqrt{D})}$ equals $1$.
Let $I=[-\frac13,2\log \epsilon_D-\frac13)$.
Given $\mathfrak a \subset \mathcal O_{\mathbb Q(\sqrt{D})}$, let $\theta_{\mathfrak a}=I \cap \{ \log |\frac{\alpha}{\tilde\alpha}| : (\alpha)=\mathfrak a\}$ be the angle of $\mathfrak{a}$. Note that
\[
\Xi_k(\mathfrak a)=e\left(k \cdot \frac{\theta_{\mathfrak a}}{2 \log \epsilon_D} \right).
\]

Let $\mathfrak b=(\beta) \in \mathcal O_{\mathbb Q(\sqrt{D})}$ and write $\beta=M+N\omega_D=(M,N)(M_1+N_1 \omega_D)$.  Let $n_{\beta}=Q(M_1,N_1)$ and $\epsilon = \tmop{sgn}(N(\beta))$.
 We will prove the following result, which implies Proposition \ref{prop:moment}.

\begin{proposition} \label{prop:moment2} Assume the GRC. Suppose $\eta_{\psi}(D)=1$. Also,
let $(\beta)\subset \mathcal O_{\mathbb Q(\sqrt{D})}$ with $(N(\beta),D)=1$. Then there exists $A_0 >0$ such that
\[
\sum_{k \in \mathbb Z} L(\tfrac12, \psi \times \phi_{2k}) \cdot  \Xi_{2k}((\beta)) \, \phi\left( \frac{k}{K} \right)= \tilde{\phi}(1) \cdot C_{D,\psi} \cdot  \frac{\vartheta(n_{\beta})}{\sqrt{n_{\beta}}} \cdot K +O((P|N(\beta)|)^{A_0} \cdot K^{\frac12+\vartheta+\varepsilon}),
\]
where $C_{D,\psi}$ is as in \eqref{eq:constdef}.
\end{proposition}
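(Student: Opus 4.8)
The plan is to expand $L(\tfrac12,\psi\times\phi_{2k})$ by the approximate functional equation, rewrite the Hecke eigenvalue $\lambda_{2k}(n)$ of the dihedral form via the Gr\"ossencharacter $\Xi_{2k}$, carry out the sum over $k$ by Poisson summation, and then split the result into a diagonal main term and an off-diagonal remainder. (One may assume $P,|N(\beta)|\le K^{\delta}$ for some small $\delta>0$, as otherwise the estimate is trivial by the convexity bound for $L(\tfrac12,\psi\times\phi_{2k})$.) First I would apply Lemma~\ref{lem:AFE} together with the hypothesis $\eta_\psi(D)=1$ to get $L(\tfrac12,\psi\times\phi_{2k})=2\sum_{n\ge1}\lambda_{2k}(n)\lambda_\psi(n)n^{-1/2}W(n/k^2)$. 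Since $\phi_{2k}$ is the theta lift of $\Xi_{2k}$, comparing Euler factors---including at the primes $p\mid D$, which are ramified in $\mathcal O_{\mathbb Q(\sqrt D)}$ and each carry a single prime ideal of norm $p$---gives $\lambda_{2k}(n)=\sum_{\N(\mathfrak a)=n}\Xi_{2k}(\mathfrak a)$, hence $\lambda_{2k}(n)\Xi_{2k}((\beta))=\sum_{\N(\mathfrak a)=n}\Xi_{2k}(\mathfrak a(\beta))$. Substituting this and recalling $\Xi_{2k}(\mathfrak c)=e(k\theta_{\mathfrak c}/\log\epsilon_D)$, the twisted moment equals $2\sum_{\mathfrak a}\lambda_\psi(\N(\mathfrak a))\N(\mathfrak a)^{-1/2}\sum_k F(k;K,\N(\mathfrak a))\,e(k\theta_{\mathfrak a(\beta)}/\log\epsilon_D)$, the outer sum over integral ideals $\mathfrak a$ and $F$ as in~\eqref{eq:Fdef}. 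Poisson summation in $k$ turns the inner sum into $\sum_{\ell\in\mathbb Z}\widehat F(\ell-\theta_{\mathfrak a(\beta)}/\log\epsilon_D;K,\N(\mathfrak a))$; by the decay of $\widehat F$ from Lemma~\ref{lem:IBP1} and the normalization $\theta_{\mathfrak a(\beta)}\in[-\tfrac13,2\log\epsilon_D-\tfrac13)$, only $\ell\in\{0,1\}$ contribute up to a negligible error, and among those only the ideals with $|\theta_{\mathfrak a(\beta)}-\ell\log\epsilon_D|\ll PK^{-1+\varepsilon}$ survive, the remaining ranges being removed by a dyadic decomposition in this quantity.

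For the diagonal---the ideals $\mathfrak c=\mathfrak a(\beta)$ with $\theta_{\mathfrak c}\in\{0,\log\epsilon_D\}$---I would split further according to the sign $\epsilon(\mathfrak c)$ of the norm of a generator, obtaining four families. Since the class number is $1$: $\theta_{\mathfrak c}=0$ with $\epsilon=+1$ forces $\mathfrak c=(c)$, $c\in\mathbb Z_{>0}$; $\theta_{\mathfrak c}=0$ with $\epsilon=-1$ forces $\mathfrak c=(c\sqrt D)$; and $\theta_{\mathfrak c}=\log\epsilon_D$ with $\epsilon=\pm1$ forces $\mathfrak c=(c)\mathfrak p_1$ resp.\ $\mathfrak c=(c)\mathfrak p_2$, where $\mathfrak p_1,\mathfrak p_2$ are the fixed prime ideals of norms $p_1,p_2$ singled out by Lemma~\ref{lem:factor}. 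In each family, solving $(\beta)\mid\mathfrak c$ and using $(N(\beta),D)=1$ shows $\N(\mathfrak a)=a_{(j)}g^2$ for $g\in\mathbb Z_{>0}$, where $a_{(1)}=n_\beta$, $a_{(2)}=Dn_\beta$, $a_{(3)}=p_1n_\beta$, $a_{(4)}=p_2n_\beta$---this is where Lemmas~\ref{lem:factor} and~\ref{lem:diagterms} enter, identifying $|Q(b_j,a_j)|$ with these numbers. Summing over $g$ and applying Lemma~\ref{lem:diagonal} with $a=\Lambda_2=a_{(j)}$ gives a diagonal contribution of $\frac{2K\tilde\phi(1)}{\zeta_D(2)}L(1,\tmop{sym}^2\psi)L(1,\chi_D)\sum_{j=1}^{4}\vartheta(a_{(j)})a_{(j)}^{-1/2}$ with error $O(|N(\beta)|^{O(1)}K^{1/2+\varepsilon})$; since $\vartheta$ is multiplicative, $(n_\beta,D)=1$, and $\vartheta(p^b)=\lambda_\psi(p^b)$ for $p\mid D$ (the remark after Lemma~\ref{lemma:local}), the inner sum collapses to $\vartheta(n_\beta)n_\beta^{-1/2}\bigl(1+\lambda_\psi(p_1)p_1^{-1/2}+\lambda_\psi(p_2)p_2^{-1/2}+\lambda_\psi(D)D^{-1/2}\bigr)$, so the diagonal equals exactly $\tilde\phi(1)\,C_{D,\psi}\,\vartheta(n_\beta)n_\beta^{-1/2}\,K$ with $C_{D,\psi}$ as in~\eqref{eq:constdef}.

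For the off-diagonal, write $\mathfrak a=(\alpha)$ with $\alpha=m+n\omega_D$, so $\theta_{\alpha\beta}$ is close to, but not equal to, $0$ or $\log\epsilon_D$. Lemma~\ref{lem:algebra} expresses $-\theta_{\alpha\beta}/\log\epsilon_D$ (resp.\ $1-\theta_{\alpha\beta}/\log\epsilon_D$), in each of the four regimes determined by $\ell\in\{0,1\}$ and the sign of $N(\alpha\beta)$, as $\tfrac{C_j}{\alpha}(a_jm-b_jn)+O((\cdots)^2)$ with $C_j\ll1$ and $a_j,b_j$ depending only on $\beta$. Fixing the nonzero integer $h=a_jm-b_jn$ and using $\gcd(a_j,b_j)=1$ from Lemma~\ref{lem:diagterms} (valid as $(N(\beta),D)=1$), the lattice points $(m,n)$ on the resulting line are parametrized by a single integer $r$ through a matrix $\gamma\in\SL_2(\mathbb Z)$ with $\lVert\gamma\rVert_1\ll|N(\beta)|^{O(1)}$, and $N(\alpha)=Q(m,n)=Q^{\gamma}(r,h)$, whose leading coefficient $a_\gamma=\pm Q(b_j,a_j)$ is, up to sign and a routine substitution when negative, one of the $a_{(j)}$. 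Since $\theta_{\alpha\beta}/\log\epsilon_D\approx C_jh/\alpha\asymp C_jh/\sqrt{|N(\alpha)|}\asymp C_jh/(\sqrt{|a_\gamma|}\,r)$, after the approximations in~\eqref{eq:Fest} the contribution of each such line collapses to a sum $\sum_r\lambda_\psi(Q^{\gamma}(r,h))\,r^{-1}\,\widehat F(\Lambda\, h/r;K,a_\gamma r^2)$ with $\Lambda\ne0$ and $|\Lambda|+|\Lambda|^{-1}\ll|N(\beta)|^{O(1)}$, which is exactly the quantity bounded in Lemma~\ref{lem:off}---whose hypothesis ``$a_\gamma\mid b_{\gamma,h}$ or GRC holds'' is met since GRC is assumed---namely by $\ll(P|h|\lVert\gamma\rVert_1)^{A_0}K^{1/2+\vartheta+\varepsilon}$. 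Summing over the $O(PK^\varepsilon)$ relevant $h$, the four regimes, and the dyadic scales (absorbed via the decay of $\widehat F$ and the repulsion $|\theta_{\alpha\beta}|\gg|N(\alpha)|^{-1/2}$ that holds when $h\ne0$) yields an off-diagonal total of $O((P|N(\beta)|)^{A_0}K^{1/2+\vartheta+\varepsilon})$; combining with the diagonal completes the proof.

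I expect the genuine obstacle to be the off-diagonal step: arranging the explicit lattice-point parametrization so that $\gamma$, the quadratic polynomial $Q^{\gamma}(\cdot,h)$, and the argument of $\widehat F$ all line up as needed to invoke Lemma~\ref{lem:off} verbatim, and then keeping the accumulated errors---from the $O((\cdots)^2)$ terms of Lemma~\ref{lem:algebra}, from the approximations~\eqref{eq:Fest}, and from the dyadic tails in $h$ and in the angle---within the stated bound. A more routine but still somewhat intricate matter is verifying in the diagonal step that the four families are disjoint and exhaust $\{\mathfrak c:\theta_{\mathfrak c}\in\{0,\log\epsilon_D\}\}$, and that solving $(\beta)\mid\mathfrak c$ indeed produces $\N(\mathfrak a)=a_{(j)}g^2$.
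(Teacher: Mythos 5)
Your proposal is correct and follows essentially the same route as the paper: approximate functional equation plus Poisson summation in $k$, restriction to $\ell\in\{0,1\}$ via the decay of $\widehat F$, the lattice-point change of variables built from Lemma~\ref{lem:algebra} and Lemma~\ref{lem:diagterms}, the diagonal evaluated by Lemma~\ref{lem:diagonal} to produce the four terms $n_\beta, Dn_\beta, p_1n_\beta, p_2n_\beta$ assembling into $C_{D,\psi}\vartheta(n_\beta)n_\beta^{-1/2}$, and the off-diagonal bounded by Lemma~\ref{lem:off} under GRC. The only difference is presentational: you identify the diagonal ideals geometrically (exact angle $0$ or $\log\epsilon_D$) before parametrizing, whereas the paper performs one unified $\SL_2(\mathbb Z)$ change of variables (its Lemma~\ref{lem:cleaned}) and reads off the diagonal as the $h=0$ terms — these are the same decomposition.
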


\begin{remark} \label{rem:1}
The number $n_{\beta}$ is independent of choice of generator of $(\beta)$, which can be seen by an elementary argument as follows.
% {\tt Here is the argument, which I'm not sure whether to include.}
%To see that $n_{\beta}$ is independent of choice of generator of $(\beta)$,
Let $\alpha=\varepsilon^n \beta$ and write $\alpha=a+b\omega_D$, $\varepsilon^n=x_n+y_n\omega_D$.
First note that $$a=Mx_n-Ny_n \omega_D \tilde{\omega}_D, b=N(x_n+y)+My_n.$$
Write $g=(a,b) $. Clearly $(M,N)|g$. Also
\begin{equation*} %\label{eq:justified}
\begin{split}
    Mx_ny_n \equiv& Ny_n^2 \omega_D \tilde{\omega}_D \pmod g \\
     -Mx_ny_n \equiv& N(x_n^2+x_ny_n) \pmod g.
\end{split}
\end{equation*}
Hence, $g|N$. This implies $g|Mx_n$ and $g|My_n$, so that $g|M$ since $(x_n,y_n)=1$. Thus $(M,N)=(a,b)$.
\end{remark}

\begin{remark}\label{remark:prop}
  If $p$
is ramified in $\mathcal O_{\mathbb Q(\sqrt{D})}$ with $(\pi^2)=(p)$ then $\Xi_k(\pi)=1$, since $(\pi)=(\tilde\pi)$. Hence, for any $(\beta)\subset \mathcal O_{\mathbb Q(\sqrt{D})}$ there exist $\beta_1,\beta_2$ with $\beta_1 \cdot \beta_2=\beta$ and $(N(\beta_1),D)=1$ and $\Xi_k((\beta))=\Xi_k((\beta_1))$. Hence Proposition \ref{prop:moment} follows from Proposition \ref{prop:moment2}.
\end{remark}

We will now proceed to prove Proposition \ref{prop:moment2}, let $F$ and $\widehat F$ be as defined in \eqref{eq:Fdef}.
Using Lemma \ref{lem:AFE}, applying Poisson summation we get that
\begin{multline} \label{eq:moment1}
\sum_{k \in \mathbb Z} L(\tfrac12, \psi \times \phi_{2 k}) \cdot \Xi_{2k}(\mathfrak b) \, \phi\left( \frac{k}{K}\right)
= 2 \sum_{r \ge 1} \frac{\lambda_{\psi}(r)}{\sqrt{r}} \sum_{\substack{\mathfrak a \in \mathcal O_{\mathbb Q(\sqrt{D})} \\ N(\mathfrak a)=r}} \sum_{k \in \mathbb Z}  e\left( k \frac{\theta_{\mathfrak a \cdot \mathfrak b}}{\log \epsilon_D}\right) F\left(k;K,r\right)\\
=2\sum_{\ell=0,1} \sum_{r \ge 1} \frac{\lambda_{\psi}(r)}{\sqrt{r}}  \sum_{\substack{\mathfrak a \in \mathcal O_{\mathbb Q(\sqrt{D})} \\ N(\mathfrak a)=r}} \widehat F\left(\ell-\frac{\theta_{\mathfrak a \cdot \mathfrak b}}{\log \epsilon_D};K,r \right)+O\left( K^{-10}\right),
\end{multline}
where in the last step we used Lemma \ref{lem:IBP1} to bound the contribution from the terms with $\ell \neq 0,1$ (here we use the bound $|\ell -\frac{\theta_{\mathfrak a \cdot \mathfrak b}}{\log \epsilon_D}| \gg |\ell|$). Given $\alpha=m+n\omega_D$, we let $\theta(m,n)=\log |\frac{\alpha \beta}{\widetilde{(\alpha \beta)}}|$. For each $\mathfrak a \in \mathcal O_{\mathbb Q(\sqrt{D})}$ there exists a unique $\alpha =m+n\omega_D>0$ such that $\mathfrak a=(\alpha)$ and $\theta(m,n) \in I$.
For other $\alpha>0$ with $(\alpha)=\mathfrak a$ and $\theta(m,n) \notin I$ we have $|\ell-\frac{\theta(m,n)}{\log \epsilon_D}| \gg 1$, for $\ell=0,1$. Hence, using Lemma \ref{lem:IBP1} and noting that $N(\mathfrak a)=|Q(m,n)|$ the RHS of \eqref{eq:moment1} equals
\begin{equation} \label{eq:expand}
2 \sum_{\ell=0,1}\sum_{\substack{m,n \in \mathbb Z  \\ m+n \omega_D>0 }} \frac{\lambda_{\psi}(Q(m,n))}{\sqrt{|Q(m,n)|}} \widehat F\left(\ell- \frac{\theta(m,n)}{\log \epsilon_D}; K, |Q(m,n)| \right)+O\left( K^{-10}\right),
\end{equation}
where we use the convention $\lambda_{\psi}(-n)=\lambda_{\psi}(n)$ (note that $\psi$ is even).

We first require the following technical estimate. Let $\epsilon=\tmop{sgn}(N(\beta))$.
\begin{lemma} \label{lem:cleaned}
Let $(\beta)  \in \mathcal O_{\mathbb Q(\sqrt{D})}$ with $\beta=M+N \omega_D$.
For each $j=1,2,3,4$ let $a_j$, $b_j$ and $C_j$ be as in Lemma \ref{lem:algebra}. Also, let $\overline{a_j}$ and $\overline{b_j}$ be integers such that $a_j \overline{a_j}-b_j\overline{b_j}=1$ and $|\overline a_j| \le |b_j|, |\overline b_j | \le |a_j|$. Let
\[
\gamma_j=\begin{pmatrix}
b_j & a_j \\
-\overline{a_j} & -\overline{b_j}
\end{pmatrix}.
\]
Also, let $\ell_1=\ell_2=0$ and $\ell_3=\ell_4=1$ and $\widetilde C_j=-C_j/(b_j+a_j \omega_D)$.
Then for each $j=1,2,3,4$
\begin{equation} \label{eq:cleaned}
\begin{split}
&\sum_{\substack{m,n \in \mathbb Z \\ (-1)^{j-1} \cdot \epsilon \cdot Q(m,n) >0  \\ m+n \omega_D>0 }} \frac{\lambda_{\psi}(Q(m,n))}{\sqrt{|Q(m,n)|}} \widehat F\left(\ell_j- \frac{\theta(m,n)}{\log \epsilon_D}; K, |Q(m,n)| \right)\\
&= \sum_{\substack{r,h \in \mathbb Z, \, r \neq 0 \\ (-1)^{j-1} \cdot \epsilon \cdot Q^{\gamma_j}(r,h) >0 \\ (b_j+a_j \omega_D)r>(\overline{a_j}+\overline{b_j} \omega_D)h}}
\frac{\lambda_{\psi}(Q^{\gamma_j}(r,h))}{\sqrt{|Q(b_j,a_j)|} \cdot |r|}
\widehat F\left(\widetilde C_j \cdot \frac{h}{r}; K, |Q(b_j,a_j)|r^2 \right)+O\left( (P|\beta|\lVert \gamma_j \rVert_1)^{10} \cdot K^{1/2+\varepsilon}\right).
\end{split}
\end{equation}
\end{lemma}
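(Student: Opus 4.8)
The plan is to establish \eqref{eq:cleaned} for each fixed $j\in\{1,2,3,4\}$ by a unimodular change of variables in the lattice sum on the left. Put $\binom{m}{n}=\binom{b_j r-\overline{a_j}h}{a_j r-\overline{b_j}h}$. Since $a_j\overline{a_j}-b_j\overline{b_j}=1$, the matrix $\left(\begin{smallmatrix}b_j&-\overline{a_j}\\a_j&-\overline{b_j}\end{smallmatrix}\right)$ lies in $\SL_2(\mathbb Z)$, so $(r,h)\mapsto(m,n)$ is a bijection of $\mathbb Z^2$. By the definition of $Q^{\gamma_j}$ one has $Q(m,n)=Q^{\gamma_j}(r,h)$, with leading coefficient $Q^{\gamma_j}(1,0)=Q(b_j,a_j)$; moreover $m+n\omega_D=(b_j+a_j\omega_D)r-(\overline{a_j}+\overline{b_j}\omega_D)h=\alpha$, so $\{m+n\omega_D>0\}$ corresponds to $\{(b_j+a_j\omega_D)r>(\overline{a_j}+\overline{b_j}\omega_D)h\}$, and the sign condition $(-1)^{j-1}\epsilon Q(m,n)>0$ becomes $(-1)^{j-1}\epsilon Q^{\gamma_j}(r,h)>0$ — exactly the constraints on the right of \eqref{eq:cleaned}. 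The crucial point is that the linear form of Lemma \ref{lem:algebra} collapses: $a_j m-b_j n=-(a_j\overline{a_j}-b_j\overline{b_j})h=-h$. Hence, after the substitution, the left side of \eqref{eq:cleaned} is the same expression with summand $\frac{\lambda_\psi(Q^{\gamma_j}(r,h))}{\sqrt{|Q^{\gamma_j}(r,h)|}}\,\widehat F\!\big(\ell_j-\tfrac{\theta(m,n)}{\log\epsilon_D};K,|Q^{\gamma_j}(r,h)|\big)$, summed over all $r,h\in\mathbb Z$ (including $r=0$) obeying those two constraints.

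Next I would truncate and insert the asymptotics. By Lemma \ref{lem:IBP1} the summand is negligible unless $|Q^{\gamma_j}(r,h)|\ll K^{2+\varepsilon}$ (so that there are $\ll K^{2+\varepsilon}$ relevant pairs) and $|\ell_j-\tfrac{\theta(m,n)}{\log\epsilon_D}|\ll PK^{\varepsilon-1}$; since $C_j,\widetilde C_j$ and $\overline{a_j}+\overline{b_j}\omega_D$ are all $\gg(|\beta|\lVert\gamma_j\rVert_1)^{-O(1)}$ and $\alpha=(b_j+a_j\omega_D)r-(\overline{a_j}+\overline{b_j}\omega_D)h$, this forces, up to a total error $O(K^{-10})$, that $r\ne0$ and $|h|\le\eta|r|$ for a threshold $\eta=K^{-1+\delta_0}$ with $\delta_0$ a small fixed constant; the corresponding tails of the right-hand sum of \eqref{eq:cleaned} are negligible for the same reason. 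On the surviving range we have $|\ell_j-\tfrac{\theta(m,n)}{\log\epsilon_D}|\le\tfrac1{10}$, so Lemma \ref{lem:algebra} applies, and using $a_j m-b_j n=-h$ together with $\alpha^{-1}=(b_j+a_j\omega_D)^{-1}r^{-1}\big(1+O(|h/r|\lVert\gamma_j\rVert_1)\big)$ it gives
\[
\ell_j-\frac{\theta(m,n)}{\log\epsilon_D}=\widetilde C_j\,\frac{h}{r}+O\!\Big(|h/r|^2\,(|\beta|\lVert\gamma_j\rVert_1)^{O(1)}\Big),\qquad
|Q^{\gamma_j}(r,h)|=|Q(b_j,a_j)|\,r^2\Big(1+O(|h/r|\lVert\gamma_j\rVert_1)\Big),
\]
where $Q(b_j,a_j)\ne0$ and $|Q(b_j,a_j)|\asymp n_\beta$ by Lemma \ref{lem:diagterms}. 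I would then apply the Lipschitz estimates \eqref{eq:Fest} to replace, inside $\widehat F$, the argument $\ell_j-\tfrac{\theta(m,n)}{\log\epsilon_D}$ by $\widetilde C_j h/r$ and $|Q^{\gamma_j}(r,h)|$ by $|Q(b_j,a_j)|r^2$, and to replace the outer $|Q^{\gamma_j}(r,h)|^{-1/2}$ by $(|Q(b_j,a_j)|^{1/2}|r|)^{-1}$; this turns the surviving part of the left side of \eqref{eq:cleaned} into its right side.

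Finally I would collect the replacement errors. Per term they are $O\big(K|h/r|^2(\cdots)\big)$, $O\big(|hr|(\cdots)/K\big)$ and $O\big(|h/r|(\cdots)/(n_\beta^{1/2}|r|)\big)$ with $(\cdots)=(P|\beta|\lVert\gamma_j\rVert_1)^{O(1)}$; summing over $r\ne0$ with $|Q(b_j,a_j)|r^2\ll K^{2+\varepsilon}$ and $|h|\le\eta|r|$, using $\lambda_\psi(Q^{\gamma_j}(r,h))\ll(n_\beta r^2)^{\vartheta+\varepsilon}$ and $\eta=K^{-1+\delta_0}$, each of these is $O\big((P|\beta|\lVert\gamma_j\rVert_1)^{O(1)}K^{1/2+\varepsilon}\big)$, which gives \eqref{eq:cleaned}. (As in Lemma \ref{lem:off}, one may assume $P|\beta|\lVert\gamma_j\rVert_1\le K^{\delta}$ with $\delta$ small, since otherwise the claimed bound is trivial; then any $\delta_0>\delta+\varepsilon$ is admissible.) I expect the only genuine difficulty to be bookkeeping: carrying the polynomial dependence on $|\beta|$ and $\lVert\gamma_j\rVert_1$ through both error sources — the quadratic remainder in Lemma \ref{lem:algebra}, which is small only because we have truncated to $|\ell_j-\theta/\log\epsilon_D|$ small, and the $N$-shift in \eqref{eq:Fest} — and calibrating the single cut-off $\eta$ so that simultaneously the discarded tails are $O(K^{-10})$ and the replacement errors do not exceed $K^{1/2+\varepsilon}$.
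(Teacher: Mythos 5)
Your proposal is correct and follows essentially the same route as the paper's proof: the unimodular substitution $(m,n)=(b_jr-\overline{a_j}h,\,a_jr-\overline{b_j}h)$ exploiting $a_jm-b_jn=-h$, truncation via Lemma \ref{lem:IBP1}, linearization of the angle by Lemma \ref{lem:algebra}, and replacement of the arguments of $\widehat F$ via \eqref{eq:Fest} with the same error accounting. The only (immaterial) difference is the order of operations — the paper linearizes the angle in the $(m,n)$ variables before changing variables, whereas you change variables first.
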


\begin{proof}
Assume $N(\beta)>0$ (the case $N(\beta)<0$ is similar).
By Lemma \ref{lem:IBP1} we can restrict the range of summation in the sum on the LHS of \eqref{eq:cleaned} to $m,n$ with $|\ell_j- \frac{\theta(m,n)}{\log \epsilon_D}| \le P K^{-1+\varepsilon}$, $j=1,2,3,4$,
at the cost of an error of size $O(K^{-10})$.
Hence, applying Lemmas \ref{lem:IBP1} and \ref{lem:algebra} along with \eqref{eq:Fest} we get that the LHS of \eqref{eq:cleaned} equals
\begin{equation} \label{eq:step1}
\sum_{\substack{m,n \in \mathbb Z \\ (-1)^{j-1}Q(m,n) >0  \\ m+n \omega_D>0 }} \frac{\lambda_{\psi}(Q(m,n))}{\sqrt{|Q(m,n)|}} \widehat F\left(\frac{C_j}{m+n \omega_D} \cdot (a_j\cdot m-b_j \cdot n); K, |Q(m,n)| \right)+O(P^2 K^{1/2}).
\end{equation}
Since $\gamma_j \in \tmop{SL}_2(\mathbb Z)$, for each $(m,n) \in \mathbb Z^2$ there exists a unique $(r,h)\in \mathbb Z^2$ with
$\begin{pmatrix}
m & n \\
\end{pmatrix}=\begin{pmatrix}
r& h \\
\end{pmatrix} \cdot \gamma_j$.
So making the the change of variables $m=b_jr-\overline{a_j} h$ and $n=a_j r -\overline{b_j} h$ the above sum equals
\begin{equation} \label{eq:step2}
\sum_{\substack{r,h \in \mathbb Z, \, r \neq 0 \\ (-1)^{j-1}Q^{\gamma_j}(r,h) >0  \\ (b_j+a_j \omega_D)r>(\overline{a_j}+\overline{b_j} \omega_D)h }} \frac{\lambda_{\psi}(Q^{\gamma_j}(r,h))}{\sqrt{|Q^{\gamma_j}(r,h)|}} \widehat F\left(\widetilde {C}_j' \cdot \frac{h}{r}; K, |Q^{\gamma_j}(r,h)| \right)+O(K^{-10})
\end{equation}
where $\widetilde C_j'=-C_j/((b_j+a_j \omega_D)-(\overline{a_j}+\overline{b_j} \omega_D)h/r)$ and we have bounded the contribution from the sum over $h$ with $r=0$ by using Lemma \ref{lem:IBP1}.

By the rapid decay of $\widehat F$ established in Lemma \ref{lem:IBP1} the sum in \eqref{eq:step2} effectively restricted to $|h| \le K^{\varepsilon} |\beta| P \lVert \gamma_j \rVert^2$
and $ \frac{K^{1-\varepsilon}}{\lVert \gamma_j \rVert^2 |\beta| P}\le r \le K^{1+\varepsilon}$.
%\marginpar[left]
Also, for $|r| \ge |h|$ we have $\widetilde C_j'=\widetilde C_j+O(\lVert \gamma_j \rVert_1 |\frac{h}{r}|)$ and $Q^{\gamma_j}(r,h)=Q(b_j,a_j)r^2+O(\lVert \gamma_j \rVert_1^2 |hr|)$. Using these estimates
and Lemma \ref{lem:IBP1} along with \eqref{eq:Fest}
we get that \eqref{eq:step2} equals the RHS of \eqref{eq:cleaned}
 plus an error term of size
\[
\begin{split}
\ll  K^{1+\varepsilon} \cdot \lVert \gamma_j \rVert_1^{3} \cdot \sum_{1 \le |h| \le K^{\varepsilon} |\beta| P \lVert \gamma_j \rVert^2} |h| \sum_{\frac{K^{1-\varepsilon}}{\lVert \gamma_j \rVert^2 |\beta| P}\le r \le K^{1+\varepsilon}} \frac{1}{|r|^{3/4}} \cdot   \frac{1}{|r|} \ll (P|\beta|\lVert \gamma_j \rVert_1)^{10} \cdot K^{1/2}.
\end{split}
\]
This completes the proof.
\end{proof}

In the sum on the RHS of \eqref{eq:expand} split the sum over $m,n$ into two sums depending on whether $\epsilon \cdot Q(m,n)>0$ (i.e. whether $N(\alpha \cdot \beta)>0$).
Combining \eqref{eq:expand} and Lemma \ref{lem:cleaned}
it follows that the RHS of \eqref{eq:moment1}
equals
\begin{multline} \label{eq:keysum}
2 \sum_{j=1}^4 \sum_{\substack{r,h \in \mathbb Z \\ (-1)^{j-1} \cdot \epsilon \cdot Q^{\gamma_j}(r,h) >0 \\ (b_j+a_j \omega_D)r>(\overline{a_j}+\overline{b_j} \omega_D)h}}
\frac{\lambda_{\psi}(Q^{\gamma_j}(r,h))}{\sqrt{|Q(b_j,a_j)|} \cdot |r|}
\widehat F\left(\widetilde C_j \cdot \frac{h}{r}; K, |Q(b_j,a_j)|r^2 \right) \\
+O\left(  \max_{1 \le j \le 4} (P|\beta|\lVert \gamma_j \rVert_1)^{10} \cdot K^{1/2+\varepsilon} \right).
\end{multline}

\subsubsection{Diagonal terms}
The main terms in Proposition \ref{prop:moment} arise from the terms with $h=0$ in the inner sum in \eqref{eq:keysum}. By Lemma \ref{lem:diagterms}, for each $j=1,2,3,4$ we get $(-1)^{j-1} \cdot \epsilon \cdot Q^{\gamma_j}(r,0) =(-1)^{j-1}\cdot \epsilon \cdot Q(b_j,a_j)>0$. Applying Lemma \ref{lem:diagonal} the $h=0$ term in the inner sum in \eqref{eq:keysum} equals (replacing $r$ with $-r$ if necessary)
\begin{multline} \label{eq:evaluated}
\frac{1}{\sqrt{|Q(b_j,a_j)|}}\sum_{ r \ge 1} \frac{\lambda_{\psi}(Q(b_j, a_j) r)}{r} \widehat F\left(0; K, |Q(b_j,a_j)|r^2 \right) \\
 = \frac{\vartheta(|Q(b_j,a_j)|)}{\sqrt{|Q(b_j,a_j)|}} \cdot \frac{1}{\zeta_D(2)}\tilde\phi(1)L(1, \tmop{sym}^2 \psi)L(1, \chi_D) \cdot K +O\left(K^{1/2+\varepsilon} \right).
\end{multline}

\subsubsection{Off-diagonal terms}\label{subsec:offdiagonal}

We next consider the terms with $h \neq 0$ in the inner sum in \eqref{eq:keysum}. Using Lemma \ref{lem:IBP1} for each $j=1,2,3,4$ the sum is effectively restricted to $1 \le |h| \le P |\beta| \lVert \gamma_j \rVert_1^2 K^{\varepsilon}$. Also, for $|r| \ge K^{1/2+\varepsilon} \lVert \gamma_j \rVert$, $j=1,2,3,4$, the condition $(-1)^{j-1} \cdot \epsilon \cdot Q^{\gamma_j}(r,h)>0$ is satisfied by Lemma \ref{lem:diagterms}, since $(-1)^{j-1}\cdot \epsilon \cdot Q^{\gamma_j}(r,h)=(-1)^{j-1} \cdot \epsilon \cdot Q(b_j,a_j)r^2+O(|h r| \lVert \gamma_j \rVert_1^2)$.
Let $\epsilon_j=\tmop{sgn}(b_j+a_j \omega_D)$. Hence, using Lemma \ref{lem:IBP1} we can bound the $h \neq 0$ terms in \eqref{eq:keysum} by
\begin{equation} \label{eq:offest1}
\ll \frac{1}{\sqrt{|Q(b_j,a_j)|}} \sum_{1 \le |h| \le P \lVert \gamma_j \rVert_1^2 |\beta| K^{\varepsilon}}
\bigg|\sum_{r \ge 1} \frac{\lambda_{\psi}(Q^{\gamma_j}(\epsilon_j r,h))}{r} \widehat F\left(\widetilde C_j \cdot \frac{\epsilon_j \cdot h}{r}; K, |Q(b_j,a_j)|r^2 \right) \bigg|+K^{-10}.
\end{equation}
Let $Q^{\gamma}(X,h)=a_{\gamma}X^2+b_{\gamma,h}X+c_{\gamma,h}$.
For $\beta=1$, we have that $a_{\gamma_j}|b_{\gamma_j,h}$ by \eqref{eq:noramanujan}, $j=3,4$ and this also holds for $j=1,2$ by inspection. It follows that if $\beta=1$ or assuming GRC if $\beta \neq 1$ we can apply Lemma \ref{lem:off} (replacing $\gamma_j$ with $\begin{pmatrix} -b_j & -a_j \\ -\overline{a_j} & -\overline{b_j} \end{pmatrix}$ if $\epsilon_j=-1$). Hence, the inner sum in \eqref{eq:offest1} is
\begin{equation} \notag
\ll (P(|h|+\lVert \gamma_j \rVert^2 |\beta|) \lVert \gamma_j \rVert_1)^{A_0}  K^{\frac12+\vartheta+\varepsilon},
\end{equation}
unconditionally for $\beta=1$ and assuming GRC for $\psi$ for $(\beta) \subset \mathcal O_{\mathbb Q(\sqrt{D})}$. Therefore, the sum over $h$ in \eqref{eq:offest1} is
\begin{equation} \label{eq:offest2}
\ll (P |\beta| \lVert \gamma_j \rVert_1)^{A_0} K^{\frac12+\vartheta+\varepsilon}
\end{equation}

\subsubsection{Completion of the proof of Propositions \ref{prop:moment1}, \ref{prop:moment}, \ref{prop:moment2}} \label{sec:offdiag}

We now choose the generator of $(\beta)$ so that $\theta_{\beta} \in [0,2\log \epsilon_D)$ and consequently $N((\beta)) \asymp M^2+N^2$, so $|\beta| \lVert \gamma_j \rVert_1 \ll N((\beta))$.
In \eqref{eq:keysum} we apply the estimates \eqref{eq:evaluated} along with \eqref{eq:offest1} and \eqref{eq:offest2}. In the resulting formula we then apply Lemma \ref{lem:diagterms} to evaluate $Q(b_j,a_j)$, which completes the proof of Propositions \ref{prop:moment1} and \ref{prop:moment2}. Finally, recall that Proposition \ref{prop:moment} follows from Proposition \ref{prop:moment2} (see Remark \ref{remark:prop}).\qed

\subsection{Proof of Corollary \ref{cor:weighted1moment}}

We first give a short Dirichlet polynomial approximation to $\frac{1}{L(1,\phi_{2k})^2}$.
Coleman has established a zero free region for Hecke $L$-functions which is analogous to Vinogradov--Korobov's result for the Riemann zeta-function. More precisely, \cite[Theorem 2]{coleman} shows that $L(s, \phi_{2k}) \neq 0$ for
\begin{equation} \label{eq:zerofree}
\tmop{Re}(s) \ge 1- \frac{c}{(\log (k+|\tmop{Im}(s)|+1))^{2/3} (\log \log (k+|\tmop{Im}(s)|+1))^{1/3}}
\end{equation}
where $c>0$ is an absolute constant. Consequently,
in this region
\begin{equation} \label{eq:l1bd}
\frac{1}{L(s,\phi_{2k})} \ll (\log (k+|\tmop{Im}(s)|))^{2/3} (\log \log (k+|\tmop{Im}(s)|))^{1/3}
\end{equation}
(see \cite[Lemma 11]{huang2019}).
These estimates allow us to
quickly derive a short Dirichlet polynomial approximation of $1/L(1,\phi_{2k})^2$, by a contour integration argument (see \cite[Lemma 3]{huang2018quantum} for a similar result).

\begin{lemma} \label{lem:zerofree}
For $k \ge 3$ and $x \ge \exp((\log k)^{2/3}(\log \log k)^2)$ we have that
\[
\frac{1}{L(1, \phi_{2k})^2}=\sum_{n \le x} \frac{(\mu_{2k} \ast \mu_{2k})(n)}{n}+O\bigg( \exp\bigg(\frac{-c_0\log x}{\log(k+x)^{2/3} \log \log (k+x)^{1/3}} \bigg)  \bigg),
\]
where $c_0>0$ is an absolute constant and $\mu_{2k}$ is the multiplicative function with
\begin{equation} \label{eq:mudef}
\begin{split}
\mu_{2k}(p)=&-\lambda_{2k}(p) \\
\mu_{2k}(p^2)=&\chi_D(p)
\end{split}
\end{equation}
and $\mu_{2k}(p^j)=0$ if $j \ge 3$.
\end{lemma}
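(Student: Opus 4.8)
The plan is to invert the Euler product of $L(s,\phi_{2k})$ and then extract the partial sum by a standard contour-integration argument, using the Vinogradov--Korobov-type zero-free region \eqref{eq:zerofree} together with the bound \eqref{eq:l1bd}.

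First I would record the Dirichlet series identity. Since $\phi_{2k}$ is a dihedral newform, its Hecke eigenvalues are sums of a unitary Hecke character over ideals of given norm, so they satisfy the Ramanujan bound $|\lambda_{2k}(n)| \le d(n)$ unconditionally. From the Euler product $L(s,\phi_{2k}) = \prod_p (1-\lambda_{2k}(p)p^{-s}+\chi_D(p)p^{-2s})^{-1}$ one gets, for $\Re(s)>1$,
\[
\frac{1}{L(s,\phi_{2k})^2} = \prod_p \left(1-\lambda_{2k}(p)p^{-s}+\chi_D(p)p^{-2s}\right)^2 = \sum_{n\ge 1}\frac{(\mu_{2k}\ast\mu_{2k})(n)}{n^s},
\]
where $\mu_{2k}$ is the multiplicative function with the local values in \eqref{eq:mudef}, and the coefficients satisfy $(\mu_{2k}\ast\mu_{2k})(n) \ll_{\varepsilon} n^{\varepsilon}$ uniformly in $k$.

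Next I would apply the effective (truncated) Perron formula to $\sum_{n\le x}(\mu_{2k}\ast\mu_{2k})(n)/n$, viewing it as the partial sum of the coefficients of $\frac{1}{L(s+1,\phi_{2k})^2}$: with $c = 1/\log x$ and a truncation height $T$ to be chosen,
\[
\sum_{n\le x}\frac{(\mu_{2k}\ast\mu_{2k})(n)}{n} = \frac{1}{2\pi i}\int_{c-iT}^{c+iT}\frac{1}{L(s+1,\phi_{2k})^2}\,\frac{x^s}{s}\,ds + O\!\left(\frac{x^{\varepsilon}}{T}\right).
\]
I would then shift the contour to the broken line $\Re(s) = -\eta(|\Im(s)|)$ tracing the boundary of \eqref{eq:zerofree} (translated by $1$), crossing the simple pole of the integrand at $s=0$, whose residue is exactly $1/L(1,\phi_{2k})^2$. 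On the shifted vertical contour and on the two horizontal segments at height $T$, the bound \eqref{eq:l1bd} gives $1/L(s+1,\phi_{2k})^2 \ll (\log(k+T))^{O(1)}$, so those contributions are $\ll x^{-\eta(T)}(\log(k+T))^{O(1)} + x^{\varepsilon}/T$ together with the integral along the shifted vertical segment, which is $\ll (\log(k+x))^{O(1)}\int_{-T}^{T} x^{-\eta(|u|)}\frac{du}{1+|u|}$.

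Finally I would optimize the parameters: choosing $T$ of moderate size (e.g.\ so that $x^{\varepsilon}/T$ is comfortably small), the vertical integral is dominated by the range $|u|$ bounded, contributing $\ll (\log(k+x))^{O(1)}\, x^{-c'/((\log(k+x))^{2/3}(\log\log(k+x))^{1/3})}$. The hypothesis $x \ge \exp((\log k)^{2/3}(\log\log k)^2)$ guarantees that $\log x$ dominates the relevant powers of $\log k$, so the $(\log(k+x))^{O(1)}$ factor and the truncation error are both absorbed, yielding a final bound of the claimed shape $\exp(-c_0\log x/(\log(k+x)^{2/3}\log\log(k+x)^{1/3}))$. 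The only genuinely delicate step is this last bookkeeping: one must choose the truncation height and the precise shape of the shifted contour so that each error term (Perron truncation, horizontal segments, tails of the vertical integral) is no larger than the claimed main saving, and it is exactly here that the lower bound on $x$ relative to $k$ is needed.
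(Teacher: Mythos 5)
Your proposal is correct and follows essentially the same route as the paper: truncated Perron applied to $1/L(s+1,\phi_{2k})^2$, a contour shift into Coleman's zero-free region \eqref{eq:zerofree} picking up the residue $1/L(1,\phi_{2k})^2$ at $s=0$, and the bound \eqref{eq:l1bd} to control the shifted and horizontal contours, with the hypothesis on $x$ absorbing the polylogarithmic losses. The only cosmetic differences are your choice of abscissa $c=1/\log x$ (the paper takes $c=1$, error $O(x^2/T)$, and $T=k+x^3$) and your curved contour versus the paper's straight line at $\Re(s)=-\delta$ with $\delta$ depending on $T$.
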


\begin{remark}
For $n=r^2s$ where $s$ is square-free it follows that
\begin{equation} \label{eq:muformula}
\mu_{2k}(n)= \chi_D(r)\mu^2(r) \mu(s) \lambda_{2k}(s) 1_{(r,s)=1}.
\end{equation}
\end{remark}

\begin{proof}
Using Perron's formula, it follows that for $x>0$ with $x \notin \mathbb Z$
\[
\sum_{n \le x} \frac{(\mu_{2k} \ast \mu_{2k})(n)}{n} =\frac{1}{2\pi i} \int_{1-iT}^{1+iT} \frac{1}{L(s+1,\phi_{2k})^2} \frac{x^s}{s} \,ds +O\left(\frac{x^2}{T} \right).
\]
where $T>3$.
Let $\delta=c/(\log(k+T)^{2/3} \log \log (k+T)^{1/3})$.
We now shift the contour of integration to the linear path $-\delta-iT$ to $-\delta+iT$, which is justified since $1/L(1+s,\phi_{2k})^2$ is analytic in this region by \eqref{eq:zerofree}. Collecting a simple pole at $s=0$ with residue $1/L(1,\phi_{2k})^2$ and using \eqref{eq:l1bd} to bound the horizontal contours as $O( (\log (k+T))^2 x/T)$ it follows that
\[
\sum_{n \le x} \frac{(\mu_{2k} \ast \mu_{2k})(n)}{n}=\frac{1}{L(1,\phi_{2k})^2} +\frac{1}{2\pi i} \int_{-\delta-iT}^{-\delta+iT} \frac{1}{L(1+s,\phi_{2k})^2} \frac{x^s}{s} \, ds+O\left(\frac{x^2 (\log(k+T))^2}{T} \right).
\]
Applying \eqref{eq:l1bd} the integral above is
\[
\ll (\log (k+T))^{3} \exp\left( \frac{-c \log x}{(\log(k+T)^{2/3} \log \log (k+T)^{1/3})}\right).
\]
Choosing $T=k+x^3$ completes the proof for $x \notin \mathbb Z$. If $x \in \mathbb Z$ the $n=x$ term in the sum is absorbed by the error term, so the result holds in this case as well.
\end{proof}

\begin{proof}[Proof of Corollary \ref{cor:weighted1moment}]
Applying Lemma \ref{lem:zerofree}  with $x=\exp((\log K)^{3/4})$ we have for $K < k \le 2K$ that
\begin{equation} \label{eq:approxl1}
\frac{1}{L(1,\phi_{2k})^2}= \sum_{r \le x} \frac{(\mu_{2k}\ast\mu_{2k})(r)}{r}+O(e^{-(\log K)^{1/15}}),
\end{equation}
where $\mu_{2k}(r)$ is as defined in \eqref{eq:mudef}. Note that $\mu_{2k}(r)$ is multiplicative, $D=p_1p_2$ and for $e|D$, $(\mu_{2k}\ast \mu_{2k})(e)=\sum_{ab=e}\mu(a)\mu(b)=:(\mu \ast \mu)(e)$. By these observations and noting that $D$ is squarefree
\begin{equation} \label{eq:decomp}
\sum_{r \le x} \frac{(\mu_{2k}\ast\mu_{2k})(r)}{r}=\sum_{e|D} \frac{(\mu \ast \mu)(e)}{e} \sum_{\substack{s \le x/e \\ (s,D)=1 }} \frac{(\mu_{2k}\ast\mu_{2k})(s)}{s}.
\end{equation}

Let $$g(n)=\sum_{\substack{ab=n \\ a=r_1^2 s_1, b=r_2^2s_2 \\ (r_1,s_1)=(r_2,s_2)=1}} \chi_D(r_1r_2) \mu^2(r_1)\mu^2(r_2) \mu(s_1)\mu(s_2) \sum_{d|(s_1,s_2)} h\left(\frac{s_1s_2}{d^2}\right) \chi_D(d).$$ Note that both $g$ is a multiplicative function and that
\begin{align} \label{eq:gbd}
g(p)=&-2h(p),  &g(p^2)=&3 \chi_D(p)+h(p^2), \\ g(p^3)=&-2 \chi_D(p) h(p),   &g(p^4)=&\chi_D(p)^2 , \notag
\end{align}
$g(p^j)=0$ for $j \ge 5$. Also for $(p,D)=1$ we have $h(p)=r_D(p)\frac{\vartheta(p)}{\sqrt{p}}$.
Using the Hecke relation
\[
\lambda_{2k}(a)\lambda_{2k}(b)=\sum_{d|(a,b)} \lambda_{2k}(ab/d^2) \chi_D(d)
\]
it follows from Proposition \ref{prop:moment} along with \eqref{eq:muformula} that
\begin{equation} \label{eq:weight1}
\begin{split}
&\sum_{k \in \mathbb Z}
\sum_{\substack{s \le x/e \\ (s,D)=1 }} \frac{(\mu_{2k}\ast\mu_{2k})(s)}{s}  \cdot L(\tfrac12, \psi \times \phi_{2k}) \, \phi\left( \frac{k}{K}  \right)\\
& =
\sum_{\substack{s \le x/e \\ (s,D)=1}}\frac{1}{s} \sum_{\substack{ ab=s \\a=r_1^2 s_1, b=r_2^2s_2  \\ (r_1,s_1)=(r_2,s_2)=1}} \chi_D(r_1r_2) \mu^2(r_1)\mu^2(r_2) \mu(s_1)\mu(s_2) \sum_{d|(s_1,s_2)} \chi_D(d)
\\
& \hskip 90pt \cdot \sum_{k \in \mathbb Z} \lambda_{2k}\left(\frac{s_1s_2}{d^2}\right)  L(\tfrac12, \psi \times \phi_{2k}) \phi\left( \frac{k}{K}  \right)\\
&  = \tilde{\phi}(1) \cdot C_{D,\psi} \cdot K \cdot  \sum_{\substack{s \le x/e \\ (s,D)=1 }} \frac{g(s)}{s}+O\left(P^{A_0}K^{\frac12+\vartheta+\varepsilon} \right).
\end{split}
\end{equation}
Using \eqref{eq:gbd}, the sum on the RHS equals
\begin{equation} \label{eq:weight2}
\prod_{(p,D)=1}\left(1-\frac{2\vartheta(p)r_D(p)}{p^{3/2}}+\frac{3\chi_D(p)+h(p^2)}{p^3}+\frac{2\vartheta(p)r_D(p) \chi_D(p)}{p^{5/2}}+\frac{1}{p^5} \right)+O(x^{-1/4}).
\end{equation}
Combine \eqref{eq:approxl1}, \eqref{eq:decomp}, \eqref{eq:weight1} and \eqref{eq:weight2} and use Proposition \ref{prop:moment} to estimate the error term which arises from applying \eqref{eq:approxl1} to complete the proof (we know that $L(\tfrac12, \psi \times \phi_{2k}) \ge 0$ by Lapid \cite{Lapid}).
\end{proof}

\section{Estimate of the quantum variance}\label{sec:variance}
In this section, we prove Theorem \ref{thm:QV} using the first moment computed in Section \ref{sec:twisted1moment}
along with the Watson--Ichino formula, which we will state below.

\subsection{The \textit{L}-functions} Let $\Xi_k$ be a Hecke Gr\"{o}ssencharacter $\tmop{mod} 1$ of frequency $k\geq1$.
Define its Hecke $L$-function
\begin{equation}\label{eqn:L-DS}
  L(s,\phi_k) := \sum_{0\neq\mathfrak{a}\subset \mathcal{O}} \frac{\Xi_k(\mathfrak{a})}{\N(\mathfrak{a})^s} = \prod_{\mathfrak{p}} \left(1-\frac{\Xi_k(\mathfrak{p})}{\N(\mathfrak{p})^s}\right)^{-1},
  \quad \textrm{$\Re(s)>1$}.
\end{equation}
We have
\[
  L(s,\phi_k) = \sum_{n=1}^{\infty} \frac{\lambda_k(n)}{n^s},
  \quad \mathrm{with}\
  \lambda_k(n)=\sum_{\N(\mathfrak{a})=n} \Xi_k(\mathfrak{a}),
  \quad \textrm{for $\Re(s)>1$}.
\]
It is easy to see that $\lambda_k(n)$ are real.
The complete $L$-function is defined by
\[
  \Lambda(s,\phi_k)=D^{s/2}\gamma(s,\phi_k)L(s,\phi_k)
  \quad \textrm{ with } \quad
  \gamma(s,\phi_k) = \pi^{-s} \Gamma\left(\frac{s+it_k}{2}\right)\Gamma\left(\frac{s-it_k}{2}\right).
\]

Let $D>0$ be an odd fundamental discriminant and $D_1|D$.
We now turn to the $L$-functions for $\psi$, a Hecke--Maass cuspidal newform of level $D_1$. Assume $\psi$ is even. We have that
\begin{align*}
  \Lambda(s,\psi) & = D_1^{s/2} \gamma(s,\psi) L(s,\psi), \\
  \Lambda(s,\psi\times \chi_D) & = D^s \gamma(s,\psi\times \chi_D) L(s,\psi\times \chi_D), \\
  \Lambda(s,\operatorname{sym}^2 \psi ) & = D_1^s \gamma(s,\operatorname{sym}^2 \psi ) L(s,\operatorname{sym}^2 \psi ),
\end{align*}
where
\begin{align*}
  \gamma(s,\psi) &=
  \gamma(s,\psi\times \chi_D) = \pi^{-s} \Gamma\left(\frac{s+it_\psi}{2}\right)\Gamma\left(\frac{s-it_\psi}{2}\right) , \\
  \gamma(s,\operatorname{sym}^2 \psi ) & = \pi^{-3s/2} \Gamma\left(\frac{s+2it_\psi}{2}\right)\Gamma\left(\frac{s}{2}\right)\Gamma\left(\frac{s-2it_\psi}{2}\right).
\end{align*}
Recall that $\chi_D$ is an even real primitive Dirichlet character modulo $D$. We have
\[
  \Lambda(s,\chi_D) = D^{s/2} \gamma(s,\chi_D) L(s,\chi_D),
  \quad \textrm{ with } \quad
  \gamma(s,\chi_D) = \pi^{-s/2} \Gamma(s/2).
\]
We also need Rankin--Selberg $L$-functions.
By \cite[\S A.2 \& \S A.3]{Humphries-Khan}, we know the conductor and root number of $\psi\times \phi_{2k}$ are given by
\[
  \mathfrak{q}(\psi\times \phi_{2k}) = D^2 D_1, \quad
  \epsilon(\psi\times \phi_{2k}) = \eta_\psi(D_1),
\]
and the $L$-function of $\psi\times \phi_{2k}$ is given by
\[
  L(s,\psi\times \phi_{2k}) = L(2s,\chi_D) \sum_{n=1}^{\infty} \frac{\lambda_\psi(n) \lambda_{2k}(n)}{n^s}.
\]
Hence, the functional equation is
\begin{equation} \label{eq:functional-eqn}
  \Lambda(s,\psi\times \phi_{2k}) = (D^2D_1)^{s/2} \gamma(s,\psi\times \phi_{2k}) L(s,\psi\times \phi_{2k})
  = \eta_\psi(D_1) \Lambda(1-s,\psi\times \phi_{2k}),
\end{equation}
where for even $\psi$, we have
\begin{equation} \label{eq:gamma-factor}
  \gamma(s,\psi\times \phi_{2k}) = \pi^{-2s}
  \prod_{\pm_1}\prod_{\pm_2} \Gamma\left(\frac{s \pm_1 it_\psi \pm_2 i t_{2k}}{2} \right).
\end{equation}

\subsection{The Watson--Ichino formula}

By comparing Euler products, it is easy to see the following factorizations for $L$-functions of dihedral Maass forms, that is,
\[
  \Lambda(s,\operatorname{ad} \phi_k) = \Lambda(s,\chi_D) \Lambda(s,\phi_{2k}),
\]
and
\[
  \Lambda(s,\psi\times \operatorname{ad} \phi_k)  =  \Lambda(s,\psi\times \chi_D) \Lambda(s,\psi\times \phi_{2k}).
\]
Combining the above observation along with the Watson--Ichino formula due to Humprhies-Khan \cite[Proposition 1.16]{Humphries-Khan} we get the following result.
\begin{lemma}\label{lemma:WI}
  Let $D\equiv 1 \pmod {4}$ be a positive squarefree fundamental discriminant and let $\chi_D$ be the primitive quadratic character modulo $D$. Let $D_1\mid D$. Let $\mu_k(\cdot)$ be as in \eqref{eq:measdef}.
  Then for $\psi$ an $L^2$-normalized, even Hecke--Maass cuspidal newform of level $D_1$ with trivial nebentypus we have that
  \[
    |\mu_k(\psi)|^2 = \frac{1}{8 \sqrt{D}\nu(D/D_1)} \frac{\Lambda(\tfrac12,\psi)\Lambda(\tfrac12,\psi\times \chi_D) \Lambda(\tfrac12,\psi\times \phi_{2k})}{\Lambda(1,\operatorname{sym}^2 \psi )\Lambda(1,\chi_D)^2 \Lambda(1,\phi_{2k})^2 },
  \]
  where $\nu(n) = n\prod_{p\mid n}(1+p^{-1})$. If $\psi$ is odd, then $|\mu_k(\psi)|^2=0$.
\end{lemma}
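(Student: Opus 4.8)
The plan is to combine the explicit triple product formula of Humphries--Khan \cite[Proposition 1.16]{Humphries-Khan} with the dihedral structure of $\phi_k$, which forces the relevant triple product $L$-function to factor into exactly the pieces recorded just above the statement.

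First I would observe that each dihedral form $\phi_k$ in \eqref{eq:dihedraldef} is even: its Fourier expansion is symmetric under $x\mapsto -x$, so $\phi_k(-\bar z)=\phi_k(z)$ and hence $|\phi_k|^2$ is an even function on $\mathbb X$. Consequently $\mu_k(\psi)=\langle\psi,|\phi_k|^2\rangle$ vanishes whenever $\psi$ is odd, which gives the last assertion; the remainder of the argument assumes $\psi$ even. I would also note that $\phi_k$ has real Hecke eigenvalues, so it is self-dual, and its nebentypus is $\chi_D$.

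Next I would apply the Watson--Ichino formula, in the normalization of Humphries--Khan, to the triple $(\psi,\phi_k,\overline{\phi_k})$. By self-duality, $L(s,\phi_k\times\overline{\phi_k})$ factors through $\zeta_D(s)L(s,\operatorname{ad}\phi_k)$, so the triple product $L$-function splits as
\[
\Lambda(\tfrac12,\psi\times\phi_k\times\overline{\phi_k})=\Lambda(\tfrac12,\psi)\,\Lambda(\tfrac12,\psi\times\operatorname{ad}\phi_k)=\Lambda(\tfrac12,\psi)\,\Lambda(\tfrac12,\psi\times\chi_D)\,\Lambda(\tfrac12,\psi\times\phi_{2k}),
\]
using the factorization $\Lambda(s,\psi\times\operatorname{ad}\phi_k)=\Lambda(s,\psi\times\chi_D)\Lambda(s,\psi\times\phi_{2k})$ recorded above, which follows from comparing Euler products since $\operatorname{ad}\phi_k$ is automorphically induced from $\chi_D$ and $\phi_{2k}$. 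Likewise the denominator produced by the formula involves $\Lambda(1,\operatorname{ad}\phi_k)^2=\Lambda(1,\chi_D)^2\Lambda(1,\phi_{2k})^2$ and $\Lambda(1,\operatorname{ad}\psi)=\Lambda(1,\operatorname{sym}^2\psi)$. Substituting these factorizations into Humphries--Khan's identity yields the displayed ratio of completed $L$-functions.

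The hard part will be pinning down the constant: the explicit factor in Humphries--Khan's formula is a priori a product of local terms at $\infty$ and at the primes dividing $D$, and one must show it collapses to $\tfrac{1}{8\sqrt D\,\nu(D/D_1)}$ with $\nu(n)=n\prod_{p\mid n}(1+p^{-1})$. At $\infty$ all three representations are spherical, so the archimedean triple product integral contributes a universal constant already absorbed into the completed $L$-functions. The arithmetic content is at $p\mid D$: there $\phi_k$ is dihedral and ramified while $\psi$ has level $D_1$, and one must reconcile Humphries--Khan's choice of local test vectors and Haar measure with ours and evaluate the resulting local integrals; this is where the factor $\nu(D/D_1)$, reflecting the discrepancy between the levels of $\psi$ and $\operatorname{ad}\phi_k$, enters. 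That local bookkeeping is the only step that is not purely formal; everything else is the factorization above together with the standard identification $\Lambda(s,\operatorname{ad}\psi)=\Lambda(s,\operatorname{sym}^2\psi)$ on $\operatorname{GL}_2$.
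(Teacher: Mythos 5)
Your proposal is correct and follows essentially the same route as the paper: cite the explicit Watson--Ichino formula of Humphries--Khan \cite[Proposition 1.16]{Humphries-Khan} and then factor $\Lambda(s,\psi\times \operatorname{ad}\phi_k)=\Lambda(s,\psi\times\chi_D)\Lambda(s,\psi\times\phi_{2k})$ and $\Lambda(s,\operatorname{ad}\phi_k)=\Lambda(s,\chi_D)\Lambda(s,\phi_{2k})$ by comparing Euler products, using that $\phi_k$ is dihedral. The only clarification worth making is that the ``local bookkeeping'' you flag as the hard part is exactly what Humphries--Khan's Proposition 1.16 already supplies --- their statement is specific to dihedral Maass forms and gives the constant $\tfrac{1}{8\sqrt{D}\,\nu(D/D_1)}$ explicitly --- so nothing beyond the factorization remains to be checked.
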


\subsection{Proof of Theorem \ref{thm:QV}}

Let $\psi$ be an even Hecke--Maass cuspidal newform of level $D$ and trivial nebentypus.
By Lemma \ref{lemma:WI}, we have that
\begin{multline*}
  Q^{\mathrm{h}}(\psi,\psi;K;\Phi)
  =   \frac{1}{8 \sqrt{D}}  \frac{\Lambda(\tfrac12,\psi)\Lambda(\tfrac12,\psi\times \chi_D) }{\Lambda(1,\operatorname{sym}^2 \psi )\Lambda(1,\chi_D)^2}
  \sum_{k\in \mathbb{Z}} L(1,\phi_{2k})^2  \frac{ \Lambda(\tfrac12,\psi\times \phi_{2k})}{ \Lambda(1,\phi_{2k})^2 } \Phi\left(\frac{k}{K}\right) \\
   =   \frac{\pi^2 }{4 D^{2}}  \frac{|\Gamma(\tfrac14+it_\psi/2)|^4} {2\pi |\Gamma(\tfrac12+it_\psi)|^2}
   \frac{L(\tfrac12,\psi) L(\tfrac12,\psi\times \chi_D)}
   {L(1,\operatorname{sym}^2 \psi ) L(1,\chi_D)^2}
   \\  \cdot
  \sum_{k\in \mathbb{Z}}
   \frac{|\Gamma\left(\frac{\frac12 + it_\psi + i t_{2k}}{2} \right)|^2 |\Gamma\left(\frac{\frac12 - it_\psi + i t_{2k}}{2} \right)|^2} { |\Gamma(\frac{1+it_{2k}}{2})|^4 }
  L(\tfrac12,\psi\times \phi_{2k}) \Phi\left(\frac{k}{K}\right).
\end{multline*}
By Stirling's formula, we get that
\begin{equation*}
  Q^{\mathrm{h}}(\psi,\psi;K;\Phi)
  =   \frac{\pi \log \epsilon_D}{4 D^{2}}   V(\psi)
   \frac{L(\tfrac12,\psi) L(\tfrac12,\psi\times \chi_D)}
   {L(1,\operatorname{sym}^2 \psi ) L(1,\chi_D)^2}
  \sum_{k\in \mathbb{Z}}
  L(1/2,\psi\times \phi_{2k}) \frac{1}{k}\Phi\left(\frac{k}{K}\right) + O(1/K).
\end{equation*}
By Proposition \ref{prop:moment1} with $\phi(y)=\Phi(y)/y$, we prove the first claim in Theorem \ref{thm:QV}. The proof of the second claim in Theorem \ref{thm:QV} follows by a similar argument, only now we assume GRC so that we may apply Corollary \ref{cor:weighted1moment}.  \qed

\section{Bound for the covariance} \label{sec:covariance}

The purpose of this section is to show that $Q(\psi_1,\psi_2;K;\Phi)\rightarrow 0$ as $K \rightarrow \infty$ conditionally under  GRH, thereby proving Theorem \ref{thm:cov}. This follows from Lemma \ref{lemma:WI} together with the following result and noting that under GRH we have $L(1,\phi_{2k})^{-1} \ll \log \log k$.
% \marginpar{perhaps I should use the term orthogonal rather than distinct here?}

\begin{proposition} \label{prop:moments}
Assume GRH. Let $n \ge 1$. Also, let $\psi_1, \ldots, \psi_n$ be pairwise orthogonal Hecke--Maass cuspidal newforms on $\Gamma_0(D)$ with trivial nebentypus. Then for any real numbers $\ell_1, \cdots, \ell_n >0$ we have that
\[
\sum_{K < k \le 2K} L(\tfrac12, \psi_1 \times \phi_{2k}
)^{\ell_1} \cdots L(\tfrac12, \psi_n \times \phi_{2k})^{\ell_n} \ll K \cdot (\log K)^{\frac{\ell_1(\ell_1-1)}{2}+\cdots +\frac{\ell_n(\ell_n-1)}{2}+\varepsilon}.
\]
\end{proposition}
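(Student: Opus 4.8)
The plan is to prove this by Soundararajan's conditional method for upper bounds on moments of $L$-functions \cite{Sound-moments}, in the form adapted to an average over a family (compare \cite[\S 5]{lester-radziwill} and \cite[Theorem 1.5]{milinovich-TB}). Two structural facts drive the argument. First, $L(\tfrac12,\psi_i\times\phi_{2k})\ge 0$ by Lapid \cite{Lapid}, so one may work with $\log L(\tfrac12,\psi_i\times\phi_{2k})$. Second, for $p\nmid D$ one has $\lambda_{\psi_i\times\phi_{2k}}(p)=\lambda_{\psi_i}(p)\lambda_{2k}(p)$, and since $\lambda_{2k}(p)=0$ whenever $p$ is inert in $\mathbb{Q}(\sqrt D)$, the relevant prime sum is essentially supported on split primes; for split $p=\mathfrak p\tilde{\mathfrak p}$ one has $\Xi_{2k}((p))=1$, which produces the ``diagonal''. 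The analytic conductor of $\psi_i\times\phi_{2k}$ is $\asymp k^2$.

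The first step is the pointwise bound: under GRH, for $2\le x\le K$ and $K<k\le 2K$,
\begin{multline*}
\log L(\tfrac12,\psi_i\times\phi_{2k})\le \Re\sum_{p\le x}\frac{\lambda_{\psi_i}(p)\lambda_{2k}(p)}{\sqrt p}\,\frac{\log(x/p)}{\log x}\\
+\tfrac12\sum_{p\le\sqrt x}\frac{b_i(p^2)}{p}\,\frac{\log(x/p^2)}{\log x}+O\!\left(\frac{\log K}{\log x}\right),
\end{multline*}
where $b_i(p^2)=2\lambda_{\psi_i\times\phi_{2k}}(p^2)-\lambda_{\psi_i\times\phi_{2k}}(p)^2$ is the second coefficient of $\log L(s,\psi_i\times\phi_{2k})$ and the higher prime-power terms are $O(1)$ since their Dirichlet series converge absolutely by the Hecke bound (no appeal to GRC is needed here). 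I would then split $b_i(p^2)$ into its $k$-average $\langle b_i(p^2)\rangle$ (which is $0$ at split primes and equals $2(\lambda_{\psi_i}(p^2)-1)$ at inert primes) and the remaining oscillating part. Raising to the power $\ell_i$, multiplying over $i$, choosing $x=K^{1/B}$ with $B=B(\ell_1,\dots,\ell_n)$ large enough that $\tfrac{\log K}{\log x}=O_{\vec\ell}(1)$, and pulling out the deterministic factor $\exp\big(\sum_i \tfrac{\ell_i}{2}\sum_{p\le\sqrt x}\frac{\langle b_i(p^2)\rangle}{p}w_2(p)\big)\asymp(\log x)^{-\frac12\sum_i\ell_i+O(1)}$, the problem reduces to showing
\[
\sum_{K<k\le 2K}\exp\!\left(\Re\sum_{p\le x}\frac{\big(\sum_i\ell_i\lambda_{\psi_i}(p)\big)\lambda_{2k}(p)}{\sqrt p}\,w(p)+\sum_{p\le x}\frac{c_p(k)}{p}\right)\ll K\,(\log K)^{\frac12\sum_i\ell_i^2+\varepsilon},
\]
where $w(p)=\frac{\log(x/p)}{\log x}$ and $c_p(k)$ is bounded and mean-zero in $k$. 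Following \cite{Sound-moments}, one splits the prime range into blocks and, on the set of $k$ where every block sum is of its normal size, replaces the exponential by a Dirichlet polynomial of length $K^{\varepsilon}$ in the Hecke-character values $\Xi_{2k}(\mathfrak a)$, $\mathfrak a\subset\mathcal O_{\mathbb{Q}(\sqrt D)}$; the exceptional $k$ form a sparse set whose contribution is controlled by the measure estimates for large block sums together with the conditional Lindel\"of bound $L(\tfrac12,\psi_i\times\phi_{2k})\ll_\varepsilon k^\varepsilon$. It is here — and only here — that the stated $\varepsilon$-loss enters.

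For the main term one evaluates $\sum_{K<k\le 2K}\Xi_{2k}(\mathfrak a)$ for ideals of norm $\le K^{\varepsilon}$. Writing $\Xi_{2k}(\mathfrak a)=e\big(k\theta_{\mathfrak a}/\log\epsilon_D\big)$, this sum equals $K\cdot\mathbf 1[\theta_{\mathfrak a}\in(\log\epsilon_D)\mathbb Z]+O\big(\min(K,\|\theta_{\mathfrak a}/(2\log\epsilon_D)\|^{-1})\big)$, where $\|\cdot\|$ is the distance to the nearest integer. For $N(\mathfrak a)\le K^\varepsilon$ with $\theta_{\mathfrak a}\notin(\log\epsilon_D)\mathbb Z$ one has $|\theta_{\mathfrak a}|\gg N(\mathfrak a)^{-2}$, so, discarding the thin set of $\mathfrak a$ whose angles lie abnormally near $(\log\epsilon_D)\mathbb Z$, the off-diagonal is $\ll K^{\varepsilon}$ per ideal, hence $o(K)$ in total. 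The diagonal — the ideals with $\theta_{\mathfrak a}\in(\log\epsilon_D)\mathbb Z$, i.e. the rational ideals $(n)$ together with multiples of $\mathfrak p_1,\mathfrak p_2$ (the two families of diagonal terms seen also in Propositions~\ref{prop:moment1}--\ref{prop:moment2}) — gives an Euler product over $p\le x$: at a generic split prime, $\frac1K\sum_k\lambda_{2k}(p)^2=2+o(1)$ and $\frac1K\sum_k\lambda_{2k}(p)=o(1)$, so the factor is $1+\frac1p\big(\sum_i\ell_i\lambda_{\psi_i}(p)\big)^2+O(p^{-3/2})$; the oscillating $p^2$-part contributes only $O(1)$ to the moment (its variance is $\ll\sum_p p^{-2+\varepsilon}$). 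Since $\sum_{p\le x,\ \chi_D(p)=1}\frac{\lambda_{\psi_i}(p)^2}{p}=\tfrac12\log\log x+O(1)$ and $\sum_p\frac{\lambda_{\psi_i}(p)\lambda_{\psi_j}(p)}{p}=O(1)$ for $i\ne j$ by orthogonality of the $\psi_i$ (so the cross terms are negligible), the Euler product is $\ll(\log x)^{\frac12\sum_i\ell_i^2+\varepsilon}$. Multiplying by the deterministic factor $(\log x)^{-\frac12\sum_i\ell_i}$ pulled out earlier, by $K$, and using $\log x\asymp\log K$, completes the proof. (The finitely many ramified primes $p\mid D$ and the base ideals $\mathfrak p_1,\mathfrak p_2$ perturb only a bounded number of Euler factors and contribute $O_{\vec\ell}(1)$.)

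The main obstacle is the passage, in the second step, from the exponential of a prime sum over $p\le K^{1/B}$ to a genuine short Dirichlet polynomial (of length $K^{\varepsilon}$) on a set of $k$ of density close to $1$, so that the off-diagonal error stays $o(K)$; this is the technical heart of \cite{Sound-moments}, and it is the source of the $\varepsilon$-loss. The second, more arithmetic, point is the bookkeeping of the diagonal Euler product: one must use the split/inert/ramified dichotomy for primes in $\mathbb{Q}(\sqrt D)$ and the explicit shape of $\Xi_{2k}$ to see that the inert primes, through the $k$-independent part of the $p^2$-coefficient, supply a factor $(\log x)^{-\frac12\sum_i\ell_i}$ that cuts the naive exponent $\tfrac12\sum_i\ell_i^2$ down to $\tfrac12\sum_i\ell_i(\ell_i-1)$.
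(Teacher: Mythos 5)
Your proposal is correct in substance and follows the same overall strategy as the paper: Soundararajan's method, started from Chandee's GRH bound, with the two arithmetic inputs you single out (the vanishing of $\lambda_{2k}(p)$ at inert primes, so that the inert part of the $p^2$-coefficient supplies a deterministic factor $(\log x)^{-\frac12\sum_i\ell_i}$; and the diagonal analysis of $\sum_k\Xi_{2k}(\mathfrak a)$ via angles of ideals, which yields the Gaussian moment bound with variance $\frac12\sum_i\ell_i^2\log\log x$ after Selberg orthogonality kills the cross terms). Two implementation differences are worth flagging. First, for the oscillating part of the $p^2$-terms you propose a variance/exceptional-set treatment; the paper instead proves a \emph{pointwise} bound $\sum_{p\le x}\lambda_{\psi}(p^2)\lambda_{4k}(p)/p=O(\log\log\log k)$ for every $k$, by applying GRH to $L(s,\operatorname{sym}^2\psi\times\phi_{4k})$ and Littlewood's contour argument (Lemma \ref{lem:somesums}). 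This is cleaner, since it folds into the error term of Lemma \ref{lem:chandee} uniformly and avoids another layer of exceptional-set bookkeeping; your route works but note that ``$c_p(k)$ bounded'' is not literally available without GRC ($\lambda_\psi(p^2)\ll p^{2\vartheta}$ unconditionally), though the relevant second-moment sums still converge since $4\vartheta<1$. Second, for the final assembly the paper uses the distribution-function version of Soundararajan's method — a large-deviation estimate for $\mathcal A_K(V;x)$ obtained from the $2r$-th moments of the split/long prime polynomials (Lemmas \ref{lem:moments} and \ref{lem:largedev}), followed by integrating $e^V$ against $\mathcal B_K(V)$ — rather than your Radziwi\l\l--Soundararajan/Harper-style replacement of the exponential by a short Dirichlet polynomial. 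Both routes give the same $\varepsilon$-loss; the paper's avoids having to invoke the conditional Lindel\"of bound on the exceptional set. Your repulsion claim $|\theta_{\mathfrak a}|\gg N(\mathfrak a)^{-2}$ for non-diagonal small ideals is correct in the qualitative (polynomial) form needed, though in the paper this step is bypassed entirely: the Schwartz majorant $F$ is chosen with $\widehat F$ of compact support, so that for $N(\mathfrak a)<K^{1/5}$ the sum $\sum_k\Xi_{2k}(\mathfrak a)F(k/K)$ vanishes identically unless $\alpha=u\tilde\alpha$ for a unit $u$, which for products of distinct split primes forces the exact diagonal $f_j=e_j/2$.
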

% {\tt  $L(1/2, ...) \ge 0$ seems to follow by the Watson-Ichino formula and Katok-Sarnak?}

\begin{remark} \label{rem:expectedvalue}
  Assume GRH. We have $1/L(1,\phi_{2k})\ll \log\log k$. By \eqref{eqn:EV}, Lemma \ref{lemma:WI} and Proposition \ref{prop:moments}, we get
  \begin{align*}%\label{eqn:EV}
    \mathbb{E}(\psi;K) & \ll  \frac{1}{K} \sum_{k\asymp K}  |\mu_k(\psi)|  \ll \frac{\log\log K}{K^{3/2}} \sum_{k\asymp K} L(1/2,\psi\times \phi_{2k})^{1/2}
    \ll K^{-1/2} (\log K)^{-1/8+\varepsilon}.
  \end{align*}
\end{remark}

\begin{remark} In the proof of Proposition \ref{prop:moments} we assume that GRH holds for the $L$-functions $L(s,\psi_j \times \phi_{2k})$ and $L(s,\tmop{sym}^2 \psi_j\times \phi_{2k})$ for all $j=1,\ldots,n$ and $K < k \le 2K$.
\end{remark}

Let $\alpha_{\psi},\beta_{\psi}$ and $\alpha_{2k}, \beta_{2k}$ denote the Satake parameters for $\psi$ and $\phi_{2k}$, respectively. We have that
\begin{itemize}
    \item $\alpha_{2k}(p)=\Xi_{2k}(\mathfrak p)$, $\beta_{2k}(p)=\overline{\alpha_{2k}(p)}$,  if $\chi_D(p)=1$ where $\mathfrak p$ is a prime in $\mathcal O_{\mathbb Q(\sqrt{D})}$ which lies above $p$;
    \item $\alpha_{2k}(p)=1$, $\beta_{2k}(p)=-1$, if $\chi_D(p)=-1$;
    \item  $\alpha_{2k}(p)=\Xi_{2k}(\mathfrak p)$, $\beta_{2k}(p)=0$, if $p|D$;
\end{itemize}
(see Appendix A of \cite{Humphries-Khan}).
Additionally, for $(p, D)=1$, let
\[
\Lambda_{\psi \times \phi_{2k}}(p^n)=(\alpha_{\psi}(p)^n+\beta_{\psi}(p)^n)(\alpha_{2k}(p)^n +\beta_{2k}(p)^n).
\]
In particular, we have for $(p,D)=1$ that
\begin{equation} \label{eq:squares}
\Lambda_{\psi \times \phi_{2k}}(p^2)=(\lambda_{\psi}(p^2)-1)(\lambda_{4k}(p)+1-\chi_D(p)).
\end{equation}

\begin{lemma} \label{lem:chandee}
Assume GRH. Let $\psi$ be a Hecke--Maass cuspidal newform on $\Gamma_0(D)$ with trivial nebentypus. Then for $x>10$
\[
\log
L(\tfrac12, \psi \times \phi_{2k}) \le \sum_{\substack{p^n \le x \\ p \nmid D}} \frac{\Lambda_{\psi\times \phi_{2k}}(p^n)}{np^{n(\frac12+\frac{1}{\log x})}}  \frac{\log \frac{x}{p^n}}{\log x}+O\left( \frac{\log K}{\log x} +1\right)
\]
where the implied constant depends on $D$ and $\psi$.
\end{lemma}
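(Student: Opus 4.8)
The plan is to obtain this as an instance of the standard upper bound expressing $\log|L(\tfrac12+it,\cdot)|$ in terms of a short Dirichlet polynomial, which underlies Soundararajan's method \cite{Sound-moments} and applies to any $L$-function with an Euler product and a functional equation for which GRH is known; we apply it to $f:=\psi\times\phi_{2k}$, a degree-four Rankin--Selberg $L$-function. Two features of $f$ are what we need. First, GRH for $L(s,f)$ places all non-trivial zeros on $\Re s=\tfrac12$ and makes $L(s,f)$ zero-free (so $\log L(s,f)$ single-valued and holomorphic) for $\Re s>\tfrac12$. Second, the analytic conductor of $f$ at a bounded height is $\ll K^{O(1)}$: the arithmetic conductor $D^2D_1$ is bounded, while the archimedean parameters $\pm it_\psi\pm it_{2k}$ appearing in \eqref{eq:gamma-factor} satisfy $|t_{2k}|=2\pi k/\log\epsilon_D\asymp K$ and $t_\psi\ll1$, so $\log\mathfrak q(f,T)\ll\log(K+|T|)$; this is the source of the $\frac{\log K}{\log x}$ in the error term.

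I would carry out the argument in two steps. \emph{Step 1 (off the critical line).} Put $\sigma_0=\tfrac12+\tfrac1{\log x}$. Starting from the Hadamard factorisation of the completed $L$-function $\Lambda(s,f)$, using GRH to see that $\log|1-(\sigma_0+it)/\rho|\le\log|1-(\tfrac12+it)/\rho|$ for every non-trivial zero $\rho$ (they have $\Re\rho=\tfrac12$, so $|\rho-(\sigma_0+it)|\ge|\rho-(\tfrac12+it)|$), and Stirling's formula for $\log|\gamma(\sigma+it,f)|$ on $\tfrac12\le\sigma\le\sigma_0$, one gets
\[
\log\big|L(\tfrac12+it,f)\big|\le\log\big|L(\sigma_0+it,f)\big|+O\Big(\frac{\log K}{\log x}+1\Big),
\]
and in the application $t=0$. \emph{Step 2 (truncating the Dirichlet series).} For $\Re s>1$ one has the absolutely convergent expansion $\log L(s,f)=\sum_{p\nmid D}\sum_{n\ge1}\frac{\Lambda_{\psi\times\phi_{2k}}(p^n)}{n\,p^{ns}}+G(s)$, where $G(s)$ collects the Euler factors at $p\mid D$; since for $p\mid D$ the only non-zero Satake parameter of $f$ at $p$ has modulus $p^{-1/2}$ (see the list of Satake parameters above and the fact that $\psi$ has squarefree level), $G(s)$ is holomorphic and $O(1)$ for $\Re s>-\tfrac12$. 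Inserting the Mellin identity $\max\!\big(0,1-\tfrac{\log m}{\log x}\big)=\tfrac1{2\pi i}\int_{(c)}\tfrac{x^z}{z^2\log x}m^{-z}\,dz$ into the Dirichlet series and shifting the contour slightly to the left of $\Re z=0$ (permissible under GRH, since $\log L(\sigma_0+z,f)$ is holomorphic for $\Re z\ge-\tfrac1{2\log x}$), one collects the double pole at $z=0$ and obtains $\Re\log L(\sigma_0+it,f)$ as the truncated prime sum
\[
\Re\sum_{\substack{p^n\le x\\ p\nmid D}}\frac{\Lambda_{\psi\times\phi_{2k}}(p^n)}{n\,p^{n(\sigma_0+it)}}\frac{\log(x/p^n)}{\log x}
\]
plus a leftover coming from $\tfrac1{\log x}\tfrac{L'}{L}(\sigma_0+it,f)$, from the shifted integral, and from $G$. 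Using GRH together with the zero-density bound $N(f;T+1)-N(f;T-1)\ll\log\mathfrak q(f,T)\ll\log(K+|T|)$ and Stirling for the $\Gamma$-factors, this leftover is $\ll\frac{\log K}{\log x}+1$. Combining Steps 1 and 2 proves the lemma.

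The main difficulty is exactly this last estimate: controlling, uniformly in $k$ and $x$, the contribution of the non-trivial zeros (and of the archimedean factors) and showing it is $\ll\frac{\log K}{\log x}+1$ rather than merely $\ll\log K$. This is where GRH is indispensable: it forbids zeros with $\Re\rho>\tfrac12$, which could otherwise force $\log|L|$ to exceed the prime sum, and --- via the zero-density estimate --- bounds the number of zeros within distance $O(1/\log x)$ of any given height by $O\big(\tfrac{\log K}{\log x}+1\big)$. The remaining inputs (Stirling for the four $\Gamma$-factors of $f$, discarding the finitely many ramified primes, and the passage from $s=\tfrac12$ to $s=\sigma_0$) are routine.
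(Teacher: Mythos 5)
Your argument is correct and is essentially the same as the paper's: the paper simply invokes Chandee's Theorem 2.1, whose proof is precisely the Soundararajan-style two-step argument you sketch (Hadamard factorisation plus GRH to move to $\sigma_0=\tfrac12+\tfrac1{\log x}$, then Mellin truncation with the zero-density bound $\ll\log\mathfrak q(f,T)\ll\log(K+|T|)$ controlling the leftover), applied to the degree-four $L$-function $\psi\times\phi_{2k}$ with analytic conductor $K^{O(1)}$. The only blemish is a reversed inequality sign in Step 1 (from $|\rho-(\sigma_0+it)|\ge|\rho-(\tfrac12+it)|$ one gets $\log|1-(\sigma_0+it)/\rho|\ge\log|1-(\tfrac12+it)/\rho|$, which is the direction actually needed for your conclusion), but the conclusion you draw from it is the correct one.
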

\begin{proof}
This follows from \cite[Theorem 2.1]{chandee}.
\end{proof}

\begin{lemma} \label{lem:somesums}
 Assume GRH. For $x \ge 2$ we have
 \[
 \sum_{p \le x} \frac{\lambda_{\psi}(p^2) \lambda_{4k}(p)}{p}=O(\log \log \log k)
 \]
 and
 \[
 \sum_{p \le x} \frac{\lambda_{4k}(p)}{p}=O(\log \log \log k).
 \]
\end{lemma}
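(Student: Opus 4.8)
The plan is to recognise each of the two sums as, up to $O(1)$, the real part of the logarithm of an \emph{entire} $L$-function evaluated just to the right of $s=1$, and then to invoke GRH (in the classical Littlewood form $\log L(\sigma,g)\ll\log\log\log\mathfrak q(g)$ for $\sigma\ge 1$) to bound that value by $O(\log\log\log k)$, together with a short separate treatment of small $x$.

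First consider $\sum_{p\le x}\lambda_{4k}(p)/p$. From the list of Satake parameters of $\phi_{2k}$ recorded above, $\lambda_{4k}(p)=\alpha_{2k}(p)^2+\beta_{2k}(p)^2$ is exactly the $p$-th Dirichlet coefficient of the Hecke $L$-function $L(s,\phi_{4k})$, a $\GL_2$ cusp form of analytic conductor $\asymp k^2$ whose $L$-function is entire. Since dihedral forms satisfy the Ramanujan bound, $|\lambda_{4k}(p^j)|\le 2$ for all $j$, so the prime-power terms of $\log L(s,\phi_{4k})$ converge absolutely for $\Re(s)\ge 3/4$, and splitting the difference between $\sum_{p\le x}\lambda_{4k}(p)/p$ and $\Re\log L(1+\tfrac1{\log x},\phi_{4k})$ at $p=x$ and using Mertens' theorem (for $p\le x$) and a comparison with $\int_x^\infty t^{-1-1/\log x}\,dt/\log t=O(1)$ (for $p>x$) shows this difference is $O(1)$, uniformly for $x\ge 2$. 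Then GRH for $L(s,\phi_{4k})$ gives $\log L(1+\tfrac1{\log x},\phi_{4k})\ll\log\log\log k$ uniformly in $x$, so this sum is $O(\log\log\log k)$; for the range $2\le x<(\log k)^5$ one may instead argue trivially, $\sum_{p\le x}|\lambda_{4k}(p)|/p\le 2\log\log x+O(1)=O(\log\log\log k)$.

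For the second sum I would use the Hecke relation $\lambda_\psi(p^2)=\lambda_\psi(p)^2-1=\lambda_{\tmop{sym}^2\psi}(p)$ to identify $\lambda_\psi(p^2)\lambda_{4k}(p)$ with the $p$-th Dirichlet coefficient of the Rankin--Selberg $L$-function $L(s,\tmop{sym}^2\psi\times\phi_{4k})$; since $\psi$ is not dihedral, $\tmop{sym}^2\psi$ is cuspidal on $\GL_3$ and is not a twist of $\phi_{4k}$, so this degree-$6$ $L$-function is entire, of analytic conductor bounded by a fixed power of $k$. Its prime-power terms converge absolutely for $\Re(s)\ge 3/4$ (using $|\lambda_\psi(p^j)|\ll p^{j\vartheta}$ with $\vartheta\le 7/64$ and Ramanujan for $\phi_{4k}$), so again $\sum_{p\le x}\lambda_\psi(p^2)\lambda_{4k}(p)/p=\Re\log L(1+\tfrac1{\log x},\tmop{sym}^2\psi\times\phi_{4k})+(\text{tail error})$. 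Here the coefficients are not bounded, so the tail $\sum_{p>x}\lambda_\psi(p^2)\lambda_{4k}(p)p^{-1-1/\log x}$ must be controlled by GRH rather than trivially: the explicit formula gives $\sum_{p^j\le y}\lambda_{\tmop{sym}^2\psi\times\phi_{4k}}(p^j)\log p\ll\sqrt y(\log(ky))^2$, hence by partial summation the tail is $\ll(\log(kx))^2x^{-1/2}=o(1)$ once $x\ge(\log k)^5$. Combined with the GRH bound $\log L(1+\tfrac1{\log x},\tmop{sym}^2\psi\times\phi_{4k})\ll\log\log\log k$ this settles $x\ge(\log k)^5$. For $2\le x<(\log k)^5$ I would bound $|\lambda_\psi(p^2)\lambda_{4k}(p)|\le 2(\lambda_\psi(p)^2+1)$ and use $\sum_{p\le x}\lambda_\psi(p)^2/p=\log\log x+O(1)$ — which follows from $L(s,\psi\times\psi)=\zeta(s)L(s,\tmop{sym}^2\psi)$ together with GRH for the \emph{fixed} form $\tmop{sym}^2\psi$ (giving $\sum_{p\le x}\lambda_{\tmop{sym}^2\psi}(p)/p=O(1)$ for all $x\ge 2$) — to conclude $\sum_{p\le x}|\lambda_\psi(p^2)\lambda_{4k}(p)|/p\ll\log\log x=O(\log\log\log k)$.

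The main obstacle is exactly this interplay between uniformity in $x$ and the missing Ramanujan bound for $\psi$: for the Rankin--Selberg sum the tail of the prime sum cannot be estimated elementarily and has to be absorbed using GRH via the explicit formula, while near the lower end of the $x$-range one must use the automorphic decomposition of $\psi\times\psi$ and GRH for the fixed symmetric-square $L$-function in order to keep the final bound at the level of $\log\log\log k$ rather than $\log\log k$.
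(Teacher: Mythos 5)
Your proposal is correct in its essential ingredients and lands on the same two key inputs as the paper: GRH for the degree-six $L$-function $L(s,\tmop{sym}^2\psi\times\phi_{4k})$ to control the large primes, and a trivial/AM--GM bound on the primes $p\le(\log k)^{O(1)}$ (reducing to $\sum_{p\le y}\lambda_\psi(p^2)^2/p$ via the Hecke relations and the analytic properties of $\tmop{sym}^2\psi$ and $\tmop{sym}^4\psi$), which is where the $\log\log\log k$ actually comes from. The organization differs in one instructive way. The paper applies Perron's formula to $\log L(s+1,\tmop{sym}^2\psi\times\phi_{4k})$, shifts to $\Re(s)=-1/2+1/\log x$, and then \emph{differences} the resulting identity at two values of $x$ to conclude that $\sum_{(\log k)^3<p\le z}\lambda_\psi(p^2)\lambda_{4k}(p)/p\ll 1$; the unknown value $\log L(1,\tmop{sym}^2\psi\times\phi_{4k})$ cancels and never has to be bounded. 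You instead bound $\log L(1+\tfrac1{\log x},\tmop{sym}^2\psi\times\phi_{4k})$ directly by $\log\log\log k$, citing ``the classical Littlewood form.'' That is the one soft spot: the off-the-shelf Littlewood bound $\log L(1,g)\ll\log\log\log\mathfrak q(g)$ is proved for $L$-functions satisfying the Ramanujan bound, and its proof is exactly ``truncate at $p\le(\log\mathfrak q)^{2+\varepsilon}$ using GRH, then bound the short sum trivially.'' For $\tmop{sym}^2\psi\times\phi_{4k}$ the coefficients are only known to satisfy $\ll p^{2\vartheta}$, so the trivial bound on the short sum fails and you must re-insert your own AM--GM plus Rankin--Selberg argument to control $\sum_{p\le(\log k)^{O(1)}}|\lambda_\psi(p^2)\lambda_{4k}(p)|/p$ --- which you do have, in your treatment of the range $x<(\log k)^5$, so the argument closes, but the citation as stated is not legitimate as a black box. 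The same caveat applies to your claim that the Mertens-type comparison $\sum_{p\le x}|\lambda_\psi(p^2)\lambda_{4k}(p)|\tfrac{\log p}{p\log x}\ll1$ holds ``uniformly'': with unbounded coefficients this again needs Cauchy--Schwarz against $\sum_p\lambda_\psi(p^2)^2\log p/p\ll\log x$ rather than Mertens alone. Your use of the explicit formula for the tail $p>x$, and your treatment of the second (Ramanujan, dihedral) sum via $L(s,\phi_{4k})$, are both fine; the paper's differencing trick simply lets it avoid all of these uniformity issues at once.
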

\begin{proof}
We will only establish the first bound, since the second follows from a similar yet simpler argument. Note that $L(s, \tmop{sym}^2 \psi \times \phi_{4k})$ has an analytic continuation  to the complex plain and satisfies a functional equation.  Assuming GRH for $L(s, \tmop{sym}^2 \psi \times \phi_{4k})$, it follows that $\log L(s, \tmop{sym}^2 \psi \times \phi_{4k}) $ is analytic in the region $\tmop{Re}(s) \ge \frac12+\frac{1}{\log x}$. Moreover, by repeating a classical argument of Littlewood (see Titchmarsh \cite[(14.2.2)]{Titchmarsh}) we have the bound $ |\log L(s, \tmop{sym}^2 \psi \times \phi_{4k})| \ll (\tmop{Re}(s)-\frac12)^{-1}\log (k+|\tmop{Im}(s)|)$ in this region. By Perron's formula, we have for $x \ge 2$ that
\[
\begin{split}
 \sum_{p \le x} \frac{\lambda_{\psi}(p^2) \lambda_{4k}(p)}{p} =&\frac{1}{2\pi i} \int_{1-ix\log (k+x)}^{1+i x \log (k+x)} \log L(s+1, \tmop{sym}^2 \psi \times \phi_{4k}) \,  x^s \frac{\dd s}{s} +O(1).
 \end{split}
\]
Shifting contours to $\tmop{Re}(s)=-1/2+1/\log x$ we collect a simple pole at $s=0$ with residue $\log L(1,\tmop{sym}^2 \psi \times \phi_{4k})$. The upper horizontal contour is bounded by
\[
\begin{split}
 \ll & \frac{1}{x \log (k+x)} \int_{-\frac12+\frac{1}{\log x}+ix\log(k+x)}^{1+ix\log(k+x)} |\log L(1,\tmop{sym}^2 \psi \times \phi_{4k})| |x^s| |\dd s| \\
 \ll & \frac{\log x \log (k+x\log(k+x))}{x \log (k+x)} \int_{-1/2}^1 x^{u} \, \dd u \ll 1
 \end{split}
\]
and the lower horizontal contour is also $O(1)$. Hence we have for $x \ge 2$ that
\[
 \sum_{p \le x} \frac{\lambda_{\psi}(p^2) \lambda_{4k}(p)}{p}=\log L(1,\tmop{sym}^2 \psi \times \phi_{4k})+O\left(\frac{\log x}{x^{1/2}} \int_{-x \log(k+x)}^{x\log(k+x)} \frac{\log(k+u)}{1+|u|} \, \dd u \right).
\]
Applying the above estimate twice we have for $z \ge (\log k)^3$  that
\begin{equation} \label{eq:symsquarebd1}
\bigg|\sum_{(\log k)^3 < p \le z} \frac{\lambda_{\psi}(p^2) \lambda_{4k}(p) }{p} \bigg|\ll 1.
\end{equation}
Next, using the bound $|\lambda_{4k}(p)| \le 2$, along with the elementary inequality $2|ab| \le |a|^2+|b|^2$, we have for $y \le (\log k)^3$ that
\begin{equation} \label{eq:symbsqaurebd2}
\bigg|\sum_{p \le y } \frac{\lambda_{\psi}(p^2) \lambda_{4k}(p)}{p}\bigg| \ll \log \log \log k+\sum_{p \le y } \frac{\lambda_{\psi}(p^2)^2}{p}.
\end{equation}
Using the Hecke relations, $\lambda_{\psi}(p^2)^2=1+\lambda_{\psi}(p^2)+\lambda_{\psi}(p^4)$. Also,
\begin{equation} \label{eq:symbsqaurebd3}
\bigg|
\sum_{p \le y} \frac{\lambda_{\psi}(p^2)}{p} \bigg| \ll 1 \qquad \text{and} \qquad \bigg|\sum_{p \le y} \frac{\lambda_{\psi}(p^4)}{p}\bigg| \ll 1 ,
\end{equation}
(see Blomer et. al. \cite[Section 2.3.4-2.4]{BFKMMS}). Combining \eqref{eq:symsquarebd1},\eqref{eq:symbsqaurebd2} and \eqref{eq:symbsqaurebd3} completes the proof.
\end{proof}

 \begin{lemma} \label{lem:moments}
 Let $r\in \mathbb N$. Then for $x \le K^{1/(10 r)}$ and real numbers $a_p$ with $a_p \ll p^{1/2-\delta}$ for some $\delta>0$, we have that
 \[
 \frac1K \sum_{K < k \le 2K} \Bigg( \sum_{\substack{p \le x \\ p \nmid D}} \frac{a_p \lambda_{2k}(p)}{p^{1/2}}  \Bigg)^{2r} \ll \frac{(2 r)!}{2^r r!} \Bigg( 2 \sum_{\substack{p \le x \\ \chi_D(p)=1}}\frac{a_p^2}{p} \Bigg)^r.
 \]
 \end{lemma}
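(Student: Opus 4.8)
The plan is to expand the $2r$-th power, use the multiplicativity of the Gr\"ossencharacter $\Xi_{2k}$ to reduce the $k$-sum to a geometric sum, and separate ``diagonal'' from ``off-diagonal'' ideals.

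First I would observe that $\lambda_{2k}(p)=0$ whenever $p$ is inert (there is no ideal of norm $p$) or ramified (excluded by $p\nmid D$), so only split primes $p$ with $\chi_D(p)=1$ contribute; for each such $p$ fix a prime ideal $\mathfrak p_p$ above $p$ and write $\lambda_{2k}(p)=\Xi_{2k}(\mathfrak p_p)+\overline{\Xi_{2k}(\mathfrak p_p)}$, where $|\Xi_{2k}(\mathfrak p_p)|=1$ and $\Xi_{2k}(\widetilde{\mathfrak p_p})=\overline{\Xi_{2k}(\mathfrak p_p)}$. Since $\Xi_{2k}$ is a character with $\Xi_{2k}(\mathfrak a)=e(k\,\theta_{\mathfrak a}/\log\epsilon_D)$, expanding gives
\[
\sum_{K<k\le 2K}\Big(\sum_{\substack{p\le x\\ p\nmid D}}\frac{a_p\lambda_{2k}(p)}{\sqrt p}\Big)^{2r}
=\sum_{p_1,\dots,p_{2r}}\frac{a_{p_1}\cdots a_{p_{2r}}}{\sqrt{p_1\cdots p_{2r}}}\sum_{\varepsilon\in\{\pm1\}^{2r}}\ \sum_{K<k\le 2K}\Xi_{2k}(\mathfrak a_{\mathbf p,\varepsilon}),
\]
where $\mathfrak a_{\mathbf p,\varepsilon}=\prod_j\mathfrak q_j$ with $\mathfrak q_j\in\{\mathfrak p_{p_j},\widetilde{\mathfrak p_{p_j}}\}$ chosen according to $\varepsilon_j$, so $N(\mathfrak a_{\mathbf p,\varepsilon})=p_1\cdots p_{2r}\le x^{2r}\le K^{1/5}$. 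The inner sum over $k$ equals $K+O(1)$ when $\theta_{\mathfrak a}/\log\epsilon_D\in\mathbb Z$, and is $\ll \lVert\theta_{\mathfrak a}/\log\epsilon_D\rVert^{-1}$ otherwise.

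The diagonal consists of those $(\mathbf p,\varepsilon)$ for which $\mathfrak a_{\mathbf p,\varepsilon}$ is a rational ideal; by unique factorization of ideals this happens exactly when every prime occurs equally often as $\mathfrak p_p$ and as $\widetilde{\mathfrak p_p}$, in particular with even total multiplicity $2m_p$, $\sum_p m_p=r$. Organising these by the underlying multiset, the number of admissible position/sign patterns is $\frac{(2r)!}{\prod_p(2m_p)!}\prod_p\binom{2m_p}{m_p}=\frac{(2r)!}{\prod_p(m_p!)^2}$, and each contributes $\prod_p(a_p^2/p)^{m_p}\cdot(K+O(1))$; since $1/(m_p!)^2\le 1/m_p!$, the multinomial theorem bounds the diagonal by $\frac{(2r)!}{r!}(K+O(1))\big(\sum_p a_p^2/p\big)^r=\frac{(2r)!}{2^rr!}(K+O(1))\big(2\sum_p a_p^2/p\big)^r$, which is the claimed main term. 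For the off-diagonal I would use the separation of zero $\lVert\theta_{\mathfrak a}/\log\epsilon_D\rVert\gg_D N(\mathfrak a)^{-1/2}\ge K^{-1/10}$ (proved below), so the $k$-sum is $\ll_D N(\mathfrak a)^{1/2}$; then $\frac{\prod|a_{p_j}|}{\sqrt{\prod p_j}}\cdot N(\mathfrak a)^{1/2}=\prod|a_{p_j}|$, so the whole off-diagonal is $\ll_r \big(\sum_{p\le x}|a_p|\big)^{2r}\le x^{2r}\big(\sum_{p\le x}a_p^2/p\big)^r\le K^{1/5}\big(\sum_p a_p^2/p\big)^r$ by Cauchy--Schwarz (crudely $\sum_{p\le x}p\le x^2$). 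Dividing through by $K$ makes the off-diagonal negligible against the main term, which finishes the proof.

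The main obstacle is the separation of zero. Taking a totally positive generator $\alpha=M+N\omega_D$ of $\mathfrak a$ with $\theta_\alpha=\log(\alpha/\widetilde\alpha)\in[0,2\log\epsilon_D)$, one has $\alpha\asymp_D\widetilde\alpha\asymp_D N(\mathfrak a)^{1/2}$; when $\theta_\alpha$ is near $0$ or $2\log\epsilon_D$ one uses $\alpha-\widetilde\alpha=N\sqrt D$ with $N\neq0$ (otherwise $\mathfrak a$ is rational), and when $\theta_\alpha$ is near $\log\epsilon_D$ one uses that $\epsilon_D\widetilde\alpha-\alpha$ is a nonzero algebraic integer with $N(\epsilon_D\widetilde\alpha-\alpha)=-\epsilon_D^{-1}(\epsilon_D\widetilde\alpha-\alpha)^2$ — here $N(\epsilon_D)=1$ is essential — so $|\epsilon_D\widetilde\alpha-\alpha|\ge\epsilon_D^{1/2}$; in each case $\log(\alpha/\widetilde\alpha)$ is bounded away from the relevant multiple of $\log\epsilon_D$ by $\gg_D N(\mathfrak a)^{-1/2}$. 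A small side point needing care: there is no ``second'' diagonal at $\theta_{\mathfrak a}=\log\epsilon_D$ here, since such an $\mathfrak a$ would be Galois-stable, hence rational, forcing $\epsilon_D^{2m}=\epsilon_D$, which is impossible; this is why the factor $2$ in the bound comes purely from the cross-terms $\Xi_{2k}(\mathfrak p_p)\overline{\Xi_{2k}(\mathfrak p_p)}$ rather than from a second family of diagonal ideals.
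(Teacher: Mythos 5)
Your argument is correct, and at the structural level it is the same as the paper's: expand the $2r$-th power, write $\lambda_{2k}(p)=\Xi_{2k}(\mathfrak p)+\Xi_{2k}(\widetilde{\mathfrak p})$ for split $p$ (inert primes dropping out), identify the diagonal with the Galois-stable ideals $\prod(\mathfrak q_j\widetilde{\mathfrak q_j})^{m_j}$, and run exactly the same multinomial count $\frac{(2r)!}{\prod (m_p!)^2}\le \frac{(2r)!}{r!}\prod\frac{1}{m_p!}$ to land on $\frac{(2r)!}{2^r r!}\bigl(2\sum a_p^2/p\bigr)^r$. Where you genuinely diverge is in the treatment of the $k$-sum. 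The paper replaces the sharp cutoff $K<k\le 2K$ by a Schwartz majorant $F\ge 1_{[1,2]}$ whose Fourier transform has compact support, applies Poisson summation, and then the off-diagonal terms \emph{vanish identically} because $\widehat F\bigl(K(\ell-\theta_{\mathfrak a}/\log\epsilon_D)\bigr)=0$ once $K\lVert\theta_{\mathfrak a}/\log\epsilon_D\rVert$ exceeds the support radius; you instead keep the sharp cutoff, bound the geometric sum by $\lVert\theta_{\mathfrak a}/\log\epsilon_D\rVert^{-1}$, and absorb the off-diagonal via Cauchy--Schwarz and $N(\mathfrak a)\le K^{1/5}$. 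Note that both routes require the same quantitative repulsion $\lVert\theta_{\mathfrak a}/\log\epsilon_D\rVert\gg_D N(\mathfrak a)^{-1/2}$ — the paper uses it implicitly in asserting that only $\alpha=\tilde\alpha$ or $|\alpha/\tilde\alpha|=\epsilon_D$ survive, whereas you prove it, correctly isolating the role of $N(\epsilon_D)=1$ through $N(\epsilon_D\tilde\alpha-\alpha)=-\epsilon_D^{-1}(\epsilon_D\tilde\alpha-\alpha)^2$. Your observation that the potential second diagonal at $\theta_{\mathfrak a}=\log\epsilon_D$ is empty (a Galois-stable ideal is rational, so $|\alpha/\tilde\alpha|$ is an \emph{even} power of $\epsilon_D$) is also consistent with the paper, which reaches the same conclusion by noting that distinctness of the prime ideals forces $f_j=e_j/2$. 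The only costs of your version are the extra $O(1)$ per diagonal term from the sharp cutoff and an $r$-dependent constant $4^r K^{-4/5}$ in the off-diagonal, both of which you correctly check are harmless under $x\le K^{1/(10r)}$; what you gain is a self-contained proof that does not invoke a Beurling--Selberg-type majorant.
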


 \begin{proof}
 For $p$ with $\chi_D(p)=1$ we have
 \[
 \lambda_{2k}(p)=\Xi_{2k}(\mathfrak p) + \overline{\Xi_{2k}(\mathfrak p)}.
 \]
 Also let $a_n=\prod_{p^j || n} a_p^j$. Note $\lambda_{2k}(p)=0$ if $\chi_D(p)=-1$.
 Let $F$ be a Schwartz function which majorizes $1_{[1,2]}$ with $\widehat F$ compactly supported.
 We have
 \begin{equation} \label{eq:expand2}
 \begin{split}
 &\sum_{k \in \mathbb Z}
 \left( \sum_{p \le x} \frac{a_p \lambda_{2k}(p)}{p^{1/2}}  \right)^{2r} F\left(\frac{k}{K} \right)\\
 &=\sum_{n} \frac{a_n}{n^{1/2}} \sum_{\substack{\mathfrak p_1, \ldots, \mathfrak p_r \subset \mathcal O_{\mathbb Q(\sqrt{D})} \\ N ( \mathfrak p_1 \cdots \mathfrak p_{2r})=n \\ N ( \mathfrak p_j) \le x, j=1, \ldots, 2r \\ \chi_{D}(N \, \mathfrak p_j)=1, j=1, \ldots, 2r } }  \sum_{k \in \mathbb Z} \prod_{j=1}^{2r} (\Xi_{2k}(\mathfrak p_j)+\Xi_{2k}(\widetilde {\mathfrak p_j})) F\left(\frac{k}{K} \right),
 \end{split}
 \end{equation}
 where the summation is over prime ideals.
Next, write $n=q_1^{e_1} \cdots q_s^{e_s}$ with $N (\mathfrak q_j)=q_j$ and $q_1,\ldots,q_s$ are distinct primes. The innermost sum equals
 \begin{equation} \label{eq:harmonics}
 \sum_{0 \le f_1 \le e_1} \cdots  \sum_{0 \le f_s \le e_s} \binom{e_1}{f_1} \cdots \binom{e_s}{f_s} \sum_{k \in \mathbb Z}  \Xi_{2k}( \mathfrak q_1^{f_1} \widetilde{\mathfrak q_1}^{e_1-f_1} \cdots \mathfrak q_s^{f_s}  \widetilde{\mathfrak q_s}^{e_s-f_s} )  F\left(\frac{k}{K} \right).
\end{equation}
%   Let $\imath$ be the identity map and write $\pi^{\imath}=\pi$.
%   The inner sum consists of terms of the form
%   \begin{equation} \label{eq:harmonics}
% \sum_{k \in \mathbb Z} \Xi_{2k}(\pi_1^{\epsilon_1} \cdots \pi_{2r}^{\epsilon_{2r}}) F\left(\frac{k}{K} \right)
%   \end{equation}
%   where $\epsilon_j \in \{\imath, \sigma\}$ for $j=1, \ldots, 2r$. Let $\mathfrak a=\pi_1^{\epsilon_1} \cdots \pi_{2r}^{\epsilon_{2r}}$, $\mathfrak a=(\alpha)$, and $\alpha=m+n \omega_D$.
Let $(\alpha)=\mathfrak q_1^{f_1} \widetilde{\mathfrak q_1}^{e_1-f_1} \cdots \mathfrak q_s^{f_s}  \widetilde{\mathfrak q_s}^{e_s-f_s}$. Also, let $\theta_{\alpha}=\log | \frac{\alpha}{\widetilde{\alpha}} |$ with $\theta_{\alpha} \in [0,2 \log \epsilon_D)$.
Applying Poisson summation we have that
\begin{equation} \label{eq:primepoisson}
\begin{split}
\sum_{k \in \mathbb Z} \Xi_{2k}((\alpha)) F\left(\frac{k}{K} \right)=&\sum_{k \in \mathbb Z} e\left( \frac{k \theta_{\alpha}}{\log \epsilon_D}\right) F\left(\frac{k}{K} \right) \\
=& K \sum_{\ell\in \mathbb Z} \widehat F\left( K \left( \ell-\frac{\theta_{\alpha}}{\log \epsilon_D}\right)\right).
\end{split}
\end{equation}
 Since $\widehat F$
 has compact support
 and $ N((\alpha))< K^{1/5}$ it follows that the sum above equals zero unless $\alpha=\widetilde \alpha$ or $|\frac{\alpha}{\widetilde \alpha}|=\epsilon_D$.
%  Using \eqref{eq:elementary1} and \eqref{elementary2} with $M=1,N=1$ we must have that $a_jm=b_jn$ for some $j=1,2,3,4$ for $a_j,b_j$ as given in Lemma \ref{lem:factor}. Since $(N(\alpha), D)=1$ it follows from Lemma \ref{lem:diagterms} that $a_1m=b_1n$, so that $\alpha=\overline\alpha$.
Hence, $\alpha=u\widetilde{\alpha}$ for some unit $u$. Therefore, since the primes $\mathfrak q_j$ are distinct the inner sum in \eqref{eq:harmonics} is zero unless $f_j=e_j/2$ for each $j=1, \ldots, s$ in which case the sum on the RHS of \eqref{eq:primepoisson} equals $K \widehat F(0)$.
  Let $g$ be the multiplicative function with $g(p^a)=0$ if $a$ is odd and $g(p^a)=\binom{a}{a/2}$ if $a$ is even, and $\nu$ be defined by $\nu(p^a)=1/a!$.
  Thus, the LHS of \eqref{eq:expand2} equals
  \[
 (2r)! K \widehat F(0) \sum_{\substack{p|n \Rightarrow p \le x \text{ and } \chi_D(p)=1 \\ \Omega(n)=2r }} \frac{a_n}{\sqrt{n}} g(n) \nu(n) \ll \frac{(2r)!}{ r!}\, K \, \Bigg( \sum_{\substack{p \le x \\ \chi_D(p)=1}} \frac{a_p^2}{p}\Bigg)^r,
  \]
  where the last bound follows since the sum on the LHS is supported on squares.
  This completes the proof.
 \end{proof}

 \begin{lemma} \label{lem:selberg}
 Let $\psi_1, \psi_2$ be two distinct Hecke--Maass cusp forms. Then for $x \ge 2$
 \[
 \sum_{p \le x} \frac{\lambda_{\psi_1}(p)\lambda_{\psi_2}(p)}{p}=O(1).
 \]
 \end{lemma}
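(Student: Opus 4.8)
The plan is to recognize the partial sum $\sum_{p\le x}\lambda_{\psi_1}(p)\lambda_{\psi_2}(p)/p$ as being, up to $O(1)$, equal to $\log L(1,\psi_1\times \psi_2)$, and to make this rigorous via a prime number theorem for the Rankin--Selberg $L$-function $L(s,\psi_1\times \psi_2)$. Since $\psi_1$ and $\psi_2$ are distinct Hecke--Maass cuspidal newforms they generate non-isomorphic cuspidal automorphic representations of $\GL_2$, so $L(s,\psi_1\times \psi_2)$ is entire, has an Euler product of degree $4$, and satisfies a functional equation relating $s$ to $1-s$; comparing Euler products gives $\Lambda_{\psi_1\times \psi_2}(p)=\lambda_{\psi_1}(p)\lambda_{\psi_2}(p)\log p$ for $p\nmid D$, where $-\tfrac{L'}{L}(s,\psi_1\times \psi_2)=\sum_n \Lambda_{\psi_1\times \psi_2}(n)n^{-s}$.

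First I would record the classical de la Vall\'ee-Poussin type zero-free region $\Re(s)>1-c/\log(|\Im s|+3)$ for $L(s,\psi_1\times \psi_2)$, allowing a possible exceptional real zero whose location depends only on $\psi_1,\psi_2$; combined with the standard explicit-formula / contour-shift argument this yields $\sum_{n\le x}\Lambda_{\psi_1\times \psi_2}(n)\ll_{\psi_1,\psi_2} x/(\log x)^{2}$, which by partial summation gives $\sum_{n\le x}\Lambda_{\psi_1\times \psi_2}(n)/n=O_{\psi_1,\psi_2}(1)$, the point being that $\int_2^\infty \dd t/(t(\log t)^2)$ converges (so even a saving of two logarithms suffices, and an exceptional zero only affects the implied constant). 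Next I would strip off the prime powers $p^k$ with $k\ge 2$ and the finitely many $p\mid D$: by the Kim--Sarnak bound $\lambda_{\psi_i}(p^k)\ll p^{k\vartheta}$ one has $\Lambda_{\psi_1\times \psi_2}(p^k)\ll p^{2k\vartheta}\log p$, and since $\vartheta<1/4$ the sum of these contributions over all $p$ and all $k\ge 2$ converges absolutely. This leaves $\sum_{p\le x}\lambda_{\psi_1}(p)\lambda_{\psi_2}(p)(\log p)/p=O(1)$, and one final partial summation (against $\dd t/(t(\log t)^2)$, using that these partial sums are uniformly bounded) removes the weight $\log p$ and finishes the proof.

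The only genuinely nontrivial ingredient is the zero-free region and prime number theorem for $L(s,\psi_1\times \psi_2)$; this is where the hypothesis $\psi_1\not\cong\psi_2$ enters (ensuring $L(s,\psi_1\times \psi_2)$ has no pole at $s=1$), and it is standard — obtainable from the Rankin--Selberg method together with the Hadamard--de la Vall\'ee-Poussin three-lines trick applied to a suitable auxiliary product of $L$-functions (in the spirit of Hoffstein--Lockhart), or simply quoted from the literature on Rankin--Selberg $L$-functions. Everything else is elementary partial summation, and I expect no real obstacle beyond being careful that all implied constants are allowed to depend on $\psi_1$ and $\psi_2$.
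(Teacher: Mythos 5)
Your argument is correct. The paper itself gives no argument here: it simply quotes Corollary~1.5 of Liu--Wang--Ye, i.e.\ Selberg's orthogonality for automorphic $L$-functions, of which this lemma is the $\GL_2\times\GL_2$ special case. What you have written is essentially a direct proof of that cited result in this special case: the key inputs (entirety of $L(s,\psi_1\times\psi_2)$ for $\psi_1\not\cong\psi_2$, a classical zero-free region with at most a fixed exceptional real zero, the resulting bound $\sum_{n\le x}\Lambda_{\psi_1\times\psi_2}(n)\ll x/(\log x)^2$, removal of higher prime powers via Kim--Sarnak since $4\vartheta-2<-1$, and two partial summations) are all sound, and the bookkeeping is right — in particular you correctly note that a fixed exceptional zero only perturbs the implied constant, and that a saving of two logarithms suffices for the convergence of $\int_2^{\infty}\dd t/(t(\log t)^2)$. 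The only difference from the paper is one of economy: the authors outsource the entire content to the literature, whereas you reconstruct it; the trade-off is that your route commits you to the zero-free region for the Rankin--Selberg $L$-function (standard for $\GL_2\times\GL_2$, though you should cite a specific source if this were to be written out), while Liu--Wang--Ye's formulation is designed to prove orthogonality with weaker analytic inputs. Either way the lemma stands.
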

 \begin{proof}
 This follows from  \cite[Corollary 1.5]{Liu-Wang-Ye}.
 \end{proof}
%  \begin{proof}
%  Since $\psi_1,\psi_2$ are distinct Hecke Maa\ss\, forms the Rankin-Selberg $L$-function $L(s,\psi_1\otimes \psi_2)$ is holomorphic and non-zero in the half-plane $\tmop{Re}(s) \ge 1$.
%  \end{proof}
Given a Hecke--Maass cuspform $\psi$ with trivial nebentypus and primitive character $\chi$
recall that $\psi \times \chi$ is a Hecke--Maass cusp form (see the proof of Theorem 7.4 of \cite{iwaniec1997topics}) and that $\tmop{sym}^2 \psi \times \chi$ is cuspidal and has a standard zero free region (\cite[Theorem 3.3.7]{kim-shahidi}, \cite[Section 2.3.4-2.4]{BFKMMS}). Using Lemma \ref{lem:selberg} and the Hecke relations we get that for $2 \le y \le x$, $\ell_1,\ldots,\ell_n>0$ and distinct Hecke--Maass cusp forms $\psi_1, \ldots,\psi_n$ that
 \begin{equation} \label{eq:variance}
 \begin{split}
 \sum_{\substack{y< p \le x \\ \chi_{D}(p)=1}} \frac{(\ell_1 \lambda_{\psi_1}(p) + \cdots + \ell_n \lambda_{\psi_n}(p))^2}{p}=& \frac12  \sum_{\substack{y< p \le x \\ p \nmid D}} \frac{(\ell_1 \lambda_{\psi_1}(p) + \cdots + \ell_n \lambda_{\psi_n}(p))^2}{p} (\chi_D(p)+1) \\
 =& \frac{1}{2} \sum_{j=1}^n \ell_j^2 \sum_{y< p \le x} \frac{\lambda_{\psi_j}(p)^2}{p}+O(1) \\
 =& \frac{1}{2} \left(  \sum_{j=1}^n \ell_j^2 \right)\log \frac{\log x}{\log y}+O(1).
\end{split}
 \end{equation}

 Before stating the next lemma let us introduce the following notation. For $2 \le y \le x$, let
 \[
 \mathcal P(k;x,y)=\sum_{\substack{p\le y \\ p \nmid D}}
 \frac{(\ell_1 \lambda_{\psi_1}(p)+ \cdots + \ell_n \lambda_{\psi_n}(p)) \lambda_{2k}(p)}{p^{\frac12+\frac{1}{\log x}}} \frac{\log x/p}{\log x}
 \]
 and
 $\mathcal A_K(V;x)=\# \{ K <k \le 2K :
 \mathcal P(k;x,x)> V\}$. Also, define
\[
 \sigma(K)^2= (\ell_1^2+\cdots + \ell_n^2) \log \log K.
\]
 \begin{lemma} \label{lem:largedev}
 Let $C \ge 1$ be fixed and $\epsilon>0$ be sufficiently small.
 In the above notation, we have for
 $\sqrt{\log \log K} \le V \le C \log K/\log \log K$ that
 \[
 \mathcal A_K(V; K^{\frac{1}{\epsilon V}})  \ll K \left(
 e^{-\frac{V^2}{2 \sigma(K)^2}(1-2\epsilon)}  + e^{-\frac{\epsilon}{11} \cdot V \log V}\right).
 \]

 \end{lemma}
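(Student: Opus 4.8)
The plan is to run Soundararajan's large-deviation argument \cite{Sound-moments}, using Lemma~\ref{lem:moments} as the moment input and \eqref{eq:variance} as the variance input. First I would set $x=K^{1/(\epsilon V)}$ and
\[
a_p=\bigl(\ell_1\lambda_{\psi_1}(p)+\cdots+\ell_n\lambda_{\psi_n}(p)\bigr)\,p^{-1/\log x}\,\frac{\log(x/p)}{\log x},
\]
so that $\mathcal P(k;x,y)=\sum_{p\le y,\,p\nmid D}a_p\lambda_{2k}(p)p^{-1/2}$ for every $y\le x$. By the Kim--Sarnak bound \cite{KS} one has $a_p\in\mathbb R$ and $a_p\ll p^{1/2-\delta}$ for some $\delta>0$, so Lemma~\ref{lem:moments} applies to any subsum over a range of primes. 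The engine is that for a positive integer $r$, any $2\le y_1<y_2\le x$ with $y_2\le K^{1/(10r)}$, and any $T>0$, Chebyshev's inequality, Lemma~\ref{lem:moments} (for the subsum over $(y_1,y_2]$), Stirling in the form $(2r)!/(2^rr!)\ll(2r/e)^r$, and \eqref{eq:variance} give
\[
\#\Bigl\{K<k\le 2K:\ \Bigl|\sum_{y_1<p\le y_2,\,p\nmid D}\frac{a_p\lambda_{2k}(p)}{\sqrt p}\Bigr|>T\Bigr\}\ \ll\ K\Bigl(\frac{2r\,\Sigma}{eT^2}\Bigr)^{r},
\]
where $\Sigma\ll(\ell_1^2+\cdots+\ell_n^2)\log\bigl(\tfrac{\log y_2}{\log y_1}\bigr)+1$ by \eqref{eq:variance}.

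I would then split $\mathcal P(k;x,x)=\mathcal P(k;x,z)+\mathcal P'(k)$ at $z:=\min\bigl(x,K^{1/(10r^\ast)}\bigr)$, where $r^\ast$ is an integer of size $\asymp V^2/\sigma(K)^2$ chosen just above $V^2(1-\epsilon)^2/(2\sigma(K)^2)$, and $\mathcal P'(k)$ is the part over $z<p\le x$. If $\mathcal P(k;x,x)>V$ then $\mathcal P(k;x,z)>(1-\epsilon)V$ or $\mathcal P'(k)>\epsilon V$. For the first event I would apply the engine with $(y_1,y_2)=(2,z)$, $T=(1-\epsilon)V$, $r=r^\ast$: here $\Sigma\le\sigma(K)^2+O(1)$ since $\log\log z\le\log\log K$, the base of the $r^\ast$-th power is $\le e^{-1}(1+o(1))$, and $r^\ast\ge V^2(1-\epsilon)^2/(2\sigma(K)^2)$, so this event contributes $\ll K\exp\bigl(-\tfrac{V^2}{2\sigma(K)^2}(1-2\epsilon)\bigr)$ --- the first term. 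For the second event one has $\log(\log x/\log z)\ll_\epsilon\log V$, hence $\Sigma\ll_\epsilon(\ell_1^2+\cdots+\ell_n^2)\log V+1$ on $(z,x]$; applying the engine with $T=\epsilon V$ and $r=\lfloor\epsilon V/10\rfloor$ (the largest value permitted by $x=K^{1/(\epsilon V)}$) and using $V\gg\log V$, the base is $\ll_\epsilon\log V/(\epsilon V)$, so this event contributes $\ll K\exp\bigl(-\tfrac{\epsilon V}{10}(\log V-\log\log V+O_\epsilon(1))\bigr)\ll K\exp\bigl(-\tfrac{\epsilon}{11}V\log V\bigr)$ --- the second term. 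When $z=x$, i.e.\ $V\ll_\epsilon\sigma(K)^2$, the range $(z,x]$ is empty and the first estimate alone suffices. Summing the two contributions gives the lemma. (The variance input \eqref{eq:variance} over short prime ranges is itself routine, resting on Lemma~\ref{lem:selberg} for the cross terms $\sum_p\lambda_{\psi_i}(p)\lambda_{\psi_j}(p)/p$ and on standard zero-free regions for $L(s,\tmop{sym}^2\psi_j\otimes\chi)$.)

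The one genuinely delicate point, which I expect to be the main obstacle, is the constant chase in the first event: one must produce an \emph{integer} $r^\ast$ lying both above $V^2(1-\epsilon)^2/(2\sigma(K)^2)$ (so that $e^{-r^\ast}$ dominates the target Gaussian factor) and below roughly $(1+\epsilon)V^2(1-\epsilon)^2/(2\sigma(K)^2)$ (so that, even after crudely replacing $\Sigma$ by $\sigma(K)^2$ and rounding $r^\ast$ upward, the base of the $r^\ast$-th power stays below $e^{-1}(1+\epsilon)$). Such an $r^\ast$ exists only once $V/\sqrt{\log\log K}$ is large in terms of $\epsilon$; for $V$ within a bounded multiple of $\sqrt{\log\log K}$ both asserted exponential factors are bounded below by a positive constant, so the claimed inequality is trivial because $\mathcal A_K(V;x)\le K$. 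Tracking how the slack $\epsilon$ threads through both moment estimates, and checking the rapid-decay truncations of the prime sums, is where the real work lies; the rest of the argument is entirely standard.
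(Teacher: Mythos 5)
Your argument is correct and is essentially the paper's own proof: split the prime sum into a head and a tail, bound each event by Chebyshev with the $2r$-th moment from Lemma~\ref{lem:moments} together with the variance computation \eqref{eq:variance}, optimize $r\approx V_1^2/(2\sigma(K)^2)$ for the head and take $r=\lfloor \epsilon V/10\rfloor$ for the tail. The only (immaterial) difference is the location of the cut --- the paper fixes $z=x^{1/\log\log K}$, so the tail variance is $\ll\log\log\log K\ (\le 2\log V)$ rather than your $\ll_\epsilon\log V$ --- and your remarks about the integrality of $r^\ast$ and the triviality of the bound for $V\asymp\sqrt{\log\log K}$ are the right way to handle the small-$V$ regime.
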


 \begin{proof}
 We assume throughout that $\sqrt{\log \log K} \le V \le C \frac{\log K}{\log \log K}$.
 Set $x=K^{\frac{1}{\epsilon V}}$ and
 let $z=x^{1/\log \log K}$. Write $\mathcal P(k;x,x)=\mathcal P_1(k)+\mathcal P_2(k)$ where $\mathcal P_1(k)=\mathcal P(k;x,z)$. Also, let $ V_1=(1-\epsilon) V$ and $V_2=\epsilon V$. If $\mathcal P(k;x,x)>V$ then
 \[
 i) \quad \mathcal P_1(k) > V_1 \qquad \text{ or }  \qquad ii) \quad \mathcal P_2(k) > V_2.
 \]
 Using Lemma \ref{lem:moments} and \eqref{eq:variance} we have for $r \le \frac{\epsilon V}{10} \log \log K$ that the number of $K < k \le 2K$ for which $i)$ holds is bounded by
 \[
 \begin{split}
 \frac{1}{V_1^{2r}} \sum_{K < k \le 2K} \mathcal P_1(k)^{2r} \ll& K \frac{(2r)!}{V_1^{2r} 2^r r!} (\sigma(K)(1+o(1))^{2r} \\
 \ll& K \left(\frac{ 2r\sigma(K)^2 (1+o(1))}{V_1^2 e} \right)^r
 \end{split}
 \]
where in the second step we applied Stirling's formula.  In the range $V \le \frac{\epsilon}{10} \sigma^2(K) \log \log K$ we set $r= \lfloor \frac{V_1^2}{2 \sigma(K)^2}\rfloor$ and for larger $V$ we set $r=\lfloor \epsilon V/10 \rfloor$. Hence,
\begin{equation} \label{eq:cheby1}
 \# \{ K < k \le 2K : \mathcal P_1(k) > V_1\} \ll K\left( e^{-\frac{V^2}{2 \sigma(K)^2}(1-2 \epsilon)}+e^{-\frac{\epsilon}{11} \cdot V \log V}\right).
\end{equation}
%  \[
%  \# \{ K < k \le 2K : \mathcal P_1(k) > V_1\} \ll  \begin{cases}
%  e^{-\frac{V^2}{2 \sigma(K)^2}(1+o(1))}  & \text{ if } V \le \frac{\varepsilon}{10} \sigma^2(K) \log \log K \\
%  e^{-\frac{\varepsilon}{11} \cdot V \log V} & \text{ if } \frac{\varepsilon}{10} \sigma^2(K) \log \log K < V \le C \frac{\log K}{\log \log K}.
%  \end{cases}.
%  \]
 It remains to bound the number of $K < k \le 2K$ for which $ii)$ holds. Take $r=\lfloor \frac{\epsilon V}{10} \rfloor$.  As before, we use Lemma \ref{lem:moments} and \eqref{eq:variance} to bound this quantity by
 \begin{equation} \label{eq:cheby2}
 \begin{split}
 \frac{1}{V_2^2} \sum_{K < k \le 2K} \mathcal P_2(k)^{2r}
 \ll& K \frac{(2r)!}{2^r r!} \left( \frac{C}{V_2^2} \log \log \log K  \right)^r \\
 \ll& K \left(\frac{C'}{V_2^2} r \log \log \log K\right)^r  \ll e^{-\frac{\epsilon}{11}\cdot V \log V},
 \end{split}
 \end{equation}
 where $C,C'$ are constants that depend at most on $\epsilon$, $\ell_1,\ldots, \ell_n$, $\psi_1, \ldots, \psi_n$ and $D$. Combining \eqref{eq:cheby1} and \eqref{eq:cheby2} completes the proof.

 \end{proof}

 \begin{proof}[Proof of Proposition \ref{prop:moments}]
Using \eqref{eq:squares} and bounding the sum over $p^n \le x$ with $n \ge 3$ we get that
 \[
 \begin{split}
 \sum_{\substack{p^n \le x \\ p \nmid D}} \frac{\Lambda_{\psi\times \phi_{2k}}(p^n)}{np^{n(\frac12+\frac{1}{\log x})}}  \frac{\log \frac{x}{p^n}}{\log x}=&\sum_{\substack{p \le x \\ p \nmid D}} \frac{\lambda_{\psi}(p)\lambda_{2k}(p)}{p^{\frac12+\frac{1}{\log x}}}  \frac{\log \frac{x}{p}}{\log x}\\
 &+ \frac12 \sum_{\substack{p\le \sqrt{x} \\ p \nmid D}} \frac{(\lambda_{\psi}(p^2)-1)(\lambda_{4k}(p)+1-\chi_D(p))}{p^{1+\frac{2}{\log x}}}  \frac{\log \frac{x}{p^2}}{\log x}+O(1),
 \end{split}
 \]
Hence, using Lemma \ref{lem:somesums}
the second sum above equals
\begin{equation}\label{eq:square}
\begin{split}
-\frac12\log \log x+O(\log \log \log k).
\end{split}
\end{equation}

Let $\mu(K)=(-\frac12+\epsilon)(\ell_1+\cdots+\ell_n) \log \log K$. Also, define
\[
\mathcal L(K)=L(\tfrac12, \psi_1 \times \phi_{2k}
)^{\ell_1} \cdots L(\tfrac12, \psi_n \times \phi_{2k})^{\ell_n}\]
and $\mathcal B_K(V)=\# \{ K <k \le 2K :  \log \mathcal L(k)> V\}$.
 Clearly,
\[
\sum_{K < k \le 2K} \mathcal L(k) =- \int_{\mathbb R} e^V  \, \dd \mathcal B_K(V)=\int_{\mathbb R} e^V  \mathcal B_K(V) \, \dd V=e^{\mu(K)} \int_{\mathbb R} e^V \mathcal B_K(V+\mu(K)) \, \dd V.
\]
Note that $\log \mathcal L(k) \le C \log K/\log \log K$ for some $C>1$, which follows from using Lemma \ref{lem:chandee}. So we only need to consider $\sqrt{\log \log K} \le V \le C \frac{\log K}{\log \log K}$ in the integral above (for smaller $V$ use the trivial bound $\mathcal B_K(V) \le K$).
Let $x=K^{1/(\epsilon V)}$. For $\sqrt{\log \log K} \le V \le (\log \log K)^4$
\[
-\frac12 (\ell_1+\cdots+\ell_n) \log \log x+O(\log \log \log k) \le \mu(K).
\]
It follows from Lemma \ref{lem:chandee} and \eqref{eq:square} that
\[
\mathcal B_K(V+\mu(K)) \le \mathcal A_K(V(1-2\epsilon);x)
\]
provided $\sqrt{\log \log K} \le V \le (\log \log K)^4$. For $V> (\log \log K)^4$ the above inequality is also true since in this range $V+\mu(K)=V(1+o(1))$.  Hence, combining estimates and applying Lemma \ref{lem:largedev} there exists an absolute constant $C>0$ so that
\[
\begin{split}
\sum_{K < k \le 2K} \mathcal L(k) \ll & K e^{\mu(K)} \int_{\sqrt{\log \log K}}^{C\frac{\log K}{\log \log K}} e^V \left(  e^{-\frac{V^2}{2 \sigma(K)^2}(1-  \varepsilon)}  + e^{- \varepsilon \cdot V \log V} \right) \, \dd V \\
\ll &K (\log K)^{\varepsilon}  e^{\mu(K)+\frac{\sigma^2(K)}{2}} \ll K (\log K)^{\frac{\ell_1(\ell_1-1)}{2}+\cdots +\frac{\ell_n(\ell_n-1)}{2}+\varepsilon},
\end{split}
\]
where in the last step we used the identity
\[
\int_{\mathbb R} e^{-\frac{x^2}{2\sigma^2}+ x} \, \dd x=\sqrt{2 \pi} \,  \sigma \, e^{\sigma^2/2}.
\]
This completes the proof.
 \end{proof}

\appendix

\section{The triple product estimate}\label{appendix:Sarnak}

\subsection{Introdution}
For a Dirichlet character $\chi$ mod $r$ and an integer $t\geq1$ we have $\theta_{\chi,t}(z)=y^{1/4+\nu/2}\sum_{n\in\mathbb{Z}} \chi(n) n^\nu e(n^2 z)\in \mathbf{H}_{\kappa}( 4r^2 t ,\chi_\nu)$ with $\kappa=1/2+\nu$, $\chi_\nu(n)=\chi(n) \left(\frac{-1}{n}\right)^\nu$, and $\nu=0,1$ depending on $\chi(-1)=(-1)^\nu$.
Recall that $\Psi \in L^2_{\mathrm{cusp}}(\Gamma_{0}(4aN)\backslash \mathbb{H})$ as in \S\ref{subsec:wt0}.
We take $M=\lcm[4aN,4r^2t]$ and throughout this appendix we write $\Gamma=\Gamma_0(M)$.
Let $\{f_j\}$ be a complete orthonormal system of $\textbf{H}_{\kappa}(M,\chi_\nu)$ where each $f_{j}$ is an eigenfunction of $\Delta_{\kappa}$ with eigenvalue $\lambda_j = \frac{1}{4} + t_j^2$.
Both $\overline{\Psi} \theta_{\chi,t}$ and  $f_j$ are in $\mathbf{H}_{\kappa} (M,\chi_\nu)$. In this appendix, we will follow \cite{sarnak1994integrals} (see also \cite[Appendix 2]{biro2011relation}) to estimate the triple product $\langle f_j , \overline{\Psi} \theta_{\chi,t} \rangle$ in terms of $t_j$ as $t_j\rightarrow\infty$.
\begin{lemma}\label{lemma:triple}
  With the notation as above.
  Then there are constants $A,C$ such that
  \[
    \langle f_j , \overline{\Psi} \theta_{\chi,t} \rangle
    \ll M^A (1+|t_j|)^C e^{-\frac{\pi}{2} |t_j|},
  \]
  where the implied constant depends on $\psi$.
\end{lemma}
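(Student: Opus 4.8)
The plan is to follow Sarnak's argument from \cite{sarnak1994integrals} (cf.\ \cite[Appendix~2]{biro2011relation}) for bounding integrals of products of automorphic forms one of whose factors has a large spectral parameter. Spectrally, $\overline{\Psi}\theta_{\chi,t}$ is a fixed weight-$\kappa$ form whose ``spectral content'' is concentrated near the bounded parameters $t_\psi$ and $i/4$ (the latter being the parameter of the theta series, attached to the exceptional eigenvalue $3/16$), while $f_j$ has parameter $t_j\to\infty$; since the triangle inequality $|t_j|\le |t_\psi|+|i/4|+\cdots$ then fails by $\asymp |t_j|$, one expects decay of order $e^{-\frac{\pi}{2}|t_j|}$, in line with the classical asymptotics $|\Gamma(\sigma+it_j)|\asymp |t_j|^{\sigma-1/2}e^{-\frac{\pi}{2}|t_j|}$. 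For bounded $t_j$ the asserted bound is trivial, so only the regime $|t_j|\to\infty$ carries content. Before starting I would record that $h:=\overline{\Psi}\theta_{\chi,t}$ decays rapidly at every cusp of $\Gamma=\Gamma_0(M)$: $\Psi$ is a cusp form, so it decays exponentially there, and this dominates the at most polynomial growth of $\theta_{\chi,t}$ (indeed at $\infty$ the character $\chi$ forces the constant term of $\theta_{\chi,t}$ to vanish). Hence no boundary terms appear in the integrations by parts below.

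The core step is to exploit the eigenvalue equation $(\Delta_\kappa+\lambda_j)f_j=0$ with $\lambda_j=\tfrac14+t_j^2$, together with self-adjointness of $\Delta_\kappa$ on $\textbf{H}_\kappa(M,\chi_\nu)$. Since $f_j\in L^2$ and $h$ is smooth and rapidly decaying, Green's identity gives, for every integer $N\ge 0$,
\[
  \langle f_j,\overline{\Psi}\theta_{\chi,t}\rangle
  = (-\lambda_j)^{-N}\,\big\langle f_j,\Delta_\kappa^{\,N}h\big\rangle ,
\]
and therefore $|\langle f_j,\overline{\Psi}\theta_{\chi,t}\rangle|\le \lambda_j^{-N}\,\|\Delta_\kappa^{\,N}h\|_{L^2(\Gamma\backslash\mathbb{H})}$. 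Next I would establish a Gevrey-type bound $\|\Delta_\kappa^{\,N}h\|_{L^2(\Gamma\backslash\mathbb{H})}\ll M^A (C_0N)^{2N}$ for a suitable constant $C_0$, using the explicit Fourier data: $\Psi$ is a sum of terms $W_{0,it_\psi}(16\pi a|n|y)e(4anx)$ and $\theta_{\chi,t}$ a sum of terms $y^{1/4+\nu/2}e(\ell^2 z)$, and each application of $\Delta_\kappa$ produces, on each Fourier mode, a factor polynomial in $N$ times the same type of exponentially decaying function; summing over modes (via $\lambda_\psi(n)\ll n^{1/2}$ and convergence of the theta sum) keeps the dependence on $M=\lcm[4aN,4r^2t]$ polynomial, absorbing $\varrho_\psi$, $\vol(\Gamma\backslash\mathbb{H})$, the level $4r^2t\le M$, and the number of cusps. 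Optimizing over $N$ — taking $N\asymp |t_j|/(eC_0)$ — then yields $\lambda_j^{-N}(C_0N)^{2N}\ll (1+|t_j|)^{C}e^{-c|t_j|}$ for some $c>0$, giving the shape of the claimed estimate.

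The main obstacle is to obtain the \emph{sharp} constant $\tfrac{\pi}{2}$ in the exponent rather than merely some positive rate $c$. This is exactly the delicate point in Sarnak's method: one must pin down the width $\pi/2$ of the strip into which $\overline{\Psi}\theta_{\chi,t}$ extends real-analytically (equivalently, control the precise growth of an explicit $y$-integral of Whittaker functions after shifting the contour into the complex domain up to, but not past, the barrier at distance $\pi/2$), and then either optimize the Gevrey estimate with the correct $C_0$ or argue directly by contour shift. A secondary technical matter is maintaining the polynomial dependence $M^A$ uniformly, since $M$ grows with $a$, $N$, $r$ and $t$; this requires keeping explicit track of all level-dependent constants (Fourier coefficient normalizations, Sobolev constants on $\Gamma_0(M)\backslash\mathbb{H}$, and the partition of the fundamental domain into a bounded-geometry compact part and cuspidal strips) throughout the two steps above.
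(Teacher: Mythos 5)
Your architecture (use self-adjointness to write $\langle f_j,h\rangle=(-\lambda_j)^{-N}\langle f_j,\Delta_\kappa^N h\rangle$ with $h=\overline{\Psi}\theta_{\chi,t}$, bound $\|\Delta_\kappa^N h\|_2$ in a Gevrey class, optimize over $N$) is coherent and is a genuinely different mechanism from the paper's: there one integrates $\overline{\Psi}\theta_{\chi,t}$ against the automorphic kernel $K(z,w)$ built from $h(\tau)=e^{-(\tau-T)^2}+e^{-(\tau+T)^2}$, isolates a single $t_j\approx T$ via $\|\mathfrak{I}\|_2$, unfolds, passes to geodesic polar coordinates, and shifts the resulting contour to $\Im z=\pm(\pi/2-1/T)$. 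Your preliminary observations (rapid decay of $h$ at all cusps because $\Psi$ is cuspidal, triviality for bounded $t_j$) are correct.

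The gap is exactly the point you name and then defer: the constant $\tfrac{\pi}{2}$. The lemma asserts $e^{-\frac{\pi}{2}|t_j|}$, not $e^{-c|t_j|}$; in your scheme the optimization $N\asymp|t_j|/(eC_0)$ yields $c=2/(eC_0)$, so the whole proof reduces to establishing $\|\Delta_\kappa^N h\|_2\ll M^A(C_0N)^{2N}$ with the specific constant $C_0=4/(e\pi)$, i.e.\ a sharp Cauchy-type estimate reflecting analytic continuation of the relevant data into a strip of width exactly $\pi/2$. The term-by-term differentiation of the Fourier expansions you describe does not obviously produce this: iterating $\Delta_\kappa$ on a \emph{product} generates cross terms with up to $2N$ derivatives falling on the Whittaker and theta factors, and bounding $\sup_y|(y\partial_y)^k\bigl(W_{0,it_\psi}(cy)e^{-c'y}\bigr)|$ with the correct Gevrey constant already presupposes the angular width $\pi/2$ of the sector in which $K_{it_\psi}$ decays — which is the very fact one is trying to exploit. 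So the step you label as "the delicate point" and leave to "either optimize the Gevrey estimate with the correct $C_0$ or argue directly by contour shift" is not a secondary technicality but the entire content of the lemma; as written, your argument proves only the weaker bound $e^{-c|t_j|}$ for some unspecified $c>0$. For comparison, in the paper this is precisely where the work goes: one shows the angular average $B(\rho)$ over geodesic circles, and hence $H(z)=\int_0^z B(\tanh(r/2))\sinh r\,(\cosh z-\cosh r)^{-1/2}\,\dd r$, extends holomorphically to $|\Im z|<\pi/2$ with the bound $B(z)\ll(1-|z|)^{-1}(v^{1/4}+v^{-1/4})$ — the new input being local $L^2$ bounds on geodesic circles for the non-cuspidal $\theta_{\chi,t}$, replacing the sup-norm bound of Sarnak's original argument — and only then extracts $e^{-\frac{\pi}{2}T}$ from the contour shift.
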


\begin{remark}
  We mainly use ideas from \cite{sarnak1994integrals}. However, in our case we need to bound $\theta_{\chi,t}(z)$ and cannot use the $L^\infty$-norm bound as in the proof of \cite[Lemma 1]{sarnak1994integrals}, since $\theta_{\chi,t}(z)$ may not be a cusp form. To get around this issue, we estimate the $L^2$-norm instead. Our method can also be used to give another proof of  \cite[Lemma A2]{biro2011relation}.
\end{remark}

\begin{remark}
  Together with the sup-norm bounds for $\psi$, we can prove the dependence on $\psi$ is polynomially on $t_\psi$ and $N$.
\end{remark}

%In this appendix, we will follow \cite{sarnak1994integrals} (see also \cite[Appendix 2]{biro2011relation}) and \cite[\S 4.5]{Templier-Tsimerman} to estimate the triple product $\langle f_j , \overline{\Psi} \theta_{\chi_1,a^2} \rangle$ in terms of $t_j$ as $t_j\rightarrow\infty$, depending on $\theta$ being cuspidal or residual.
% In this appendix, we will follow \cite{sarnak1994integrals} (see also \cite[Appendix 2]{biro2011relation}) to estimate the triple product $\langle f_j , \overline{\Psi} \theta_{\chi,t} \rangle$ in terms of $t_j$ as $t_j\rightarrow\infty$.
 Before commencing with the proof let us introduce some further notation and record some well-known facts. For reference see Iwaniec \cite{iwaniec2002spectral} and Roelcke \cite{Roelcke}.
 Let
  \[
    h(\tau) = e^{-(\tau-T)^2}+ e^{-(\tau+T)^2},
  \]
  with a fixed large real $T>0$.
  Set
  \begin{equation}\label{eqn:g}
    g(r) = \frac{1}{2\pi} \int_{-\infty}^{+\infty} h(\tau) e^{ir\tau} \dd \tau = \frac{\cos(rT)}{\sqrt{\pi}} e^{-r^2/4}
  \end{equation}
  and
  \begin{equation}\label{eqn:k&q}
    k(u) = -\frac{1}{\pi} \int_{u}^{+\infty} \frac{\dd q(v)}{\sqrt{v-u}}, \quad
    q(v) = \frac{1}{2} g(2\log(\sqrt{v+1}+\sqrt{v})).
  \end{equation}
  Here $k(\cdot)$ is the Selberg transform of $h(\cdot)$ (see e.g. \cite[eq. (1.64)]{iwaniec2002spectral}).
  Let
  \[
	K(z,w) = \sum_{\gamma \in \Gamma} \chi_\nu(\gamma)
    J(\gamma,w)^{2\kappa} (( z, \gamma w))^{\kappa/2} k(z, \gamma w)
  \]
  be the automorphic kernel of weight $\kappa$ and character $\chi_\nu$,
  where
  \[
    k(z,w) = k\left(u(z,w)\right), \quad
    u(z,w) = \frac{|z-w|^2}{4\Im z \Im w}.
  \]
Here we have used the notation of \cite{patterson1975laplacian},
  \[
	((z,w)) = \frac{w-\bar{z}}{z-\bar{w}} . %=-\frac{(w-\bar{z})^{2}}{|w-\bar{z}|^{2}}.
  \]
  Note $k(z,w)$ is a point pair invariant function, that is $k(\gamma z, \gamma w)=k(z,w)$ for all $\gamma \in \Gamma$.
  The spectral expansion (\cite{Roelcke}) of $K(z,w)$ is
  \begin{align*}\label{eqn:spectralExpansion}
    K(z,w) &=\sum_{j} h(t_j) f_j(z) \overline{f_j(w)}
    + \sum_{\mathfrak{a} \text{ cusp}  }
    \frac{1}{4\pi} \int_{\mathbb{R}} h(t) E_{\mathfrak{a},\kappa,\chi}^{(M)} (z, \tfrac12+ it) \overline{ E_{\mathfrak{a},\kappa,\chi}^{(M)} (w, \tfrac12+ it) } \dd t.
  \end{align*}
  Let us also introduce the Maass raising operator
  \[
    K_k := (z-\bar{z})\frac{\partial}{\partial z} + k = iy\frac{\partial}{\partial x}+ y \frac{\partial}{\partial y} +k.
  \]
For a Maass form $f$ of weight $\kappa$, define, for each $n \ge 0$,
  \begin{equation}\label{eqn:f_n}
    f_n(w) = \overline{\overline{f}_{-n}(w)}
    = \frac{1}{n!} K_{\kappa/2+n-1}\cdots K_{\kappa/2+1} K_{\kappa/2} f(w).
  \end{equation}
  Finally, recall that the geodesic polar coordinates of the point $w \in \mathbb H$ are given by
  \begin{equation} \label{eq:polar}
    \frac{z-w}{z-\bar{w}} = \tanh\left(\frac{r}{2}\right) e^{i\varphi}, \quad
    \rho = \tanh\left(\frac{r}{2}\right)
    \quad  \textrm{and} \quad
    u=\frac{\rho^2}{1-\rho^2}.
    %u=\frac{\tanh^2\left(\frac{r}{2}\right)} {1-\tanh^2\left(\frac{r}{2}\right)}.
  \end{equation}

\subsection{Proof of Lemma \ref{lemma:triple} }
%  \begin{equation}\label{eqn:k(y)}
%    k(y) = \frac{1}{4\pi} \int_{\mathbb{R}} F\Big(\frac{1}{2}-i\tau,\frac{1}{2}+i\tau;1; - y\Big) h(\tau) \tanh(\pi \tau) \tau \dd \tau
%  \end{equation}
  %where $\mathcal V = \vol(\Gamma\backslash \mathbb{H})$.
%  For sake of brevity, throughout the proof we will allow our implied constants to include a factor of $M^{A}$, for some sufficiently large but fixed $A >0$.
 Let
  \[
    \mathfrak{I}(w) := \int_{\Gamma\backslash \mathbb{H}} \Psi(z) \overline{\theta_{\chi,t}(z)} K(z,w) \dd\mu(z).
  \]
  Then by the Parseval identity we have
  \[
    \mathfrak{I}(w) = \sum_{j} h(t_j) \langle f_j, \overline{\Psi}\theta_{\chi,t} \rangle \overline{f_j(w)} + \mathrm{cont.}
  \]
  and
  \[
    \| \mathfrak{I} \|_2^2 = \sum_{j} | h(t_j) \langle f_j, \overline{\Psi}\theta_{\chi,t} \rangle |^2 + \mathrm{cont.}.
  \]
  Now it is  not hard to see that for $T\leq t_j \leq T+1$ we have
  \begin{equation}\label{eqn:3prod<<L2}
    | \langle f_j, \overline{\Psi}\theta_{\chi,t} \rangle |
    \ll \| \mathfrak{I} \|_2.
  \end{equation}

  We now consider $\mathfrak{I}(w)$.
  Since the quantity $((z,w))$ satisfies
  \[
    ((\gamma z, \gamma w)) = J(\gamma,z)^{4} J(\gamma,w)^{-4} ((z,w)),
  \]
  by unfolding, we have
  \[
    \mathfrak{I}(w) = 2 \int_{\mathbb{H}} \Psi(z) \overline{\theta_{\chi,t}(z) \left(\frac{z-\bar{w}}{w-\bar{z}}\right)^{\kappa/2}} k(z,w) \dd\mu(z).
  \]
  Using the geodesic polar coordinates of the point $w \in \mathbb H$, as in \eqref{eq:polar}, we get
  \begin{equation}\label{eqn:I2B}
    \mathfrak{I}(w) = 2 \int_{0}^{\infty} k(u) B(\rho) \dd u,
  \end{equation}
  where
  \[
    B(\rho) := \int_{0}^{2\pi} \Psi(z) \overline{\theta_{\chi,t}(z) \left(\frac{z-\bar{w}}{w-\bar{z}}\right)^{\kappa/2}} \dd \varphi.
  \]

  By \cite[Theorems 1.1 \& 1.2]{Fay}, we have
  \begin{equation}\label{eqn:theta-expansion}
    \theta_{\chi,t}(z) \left(\frac{z-\bar{w}}{w-\bar{z}}\right)^{\kappa/2}
    = \sum_{\ell=0}^{\infty} \rho^\ell
    \left(1-\rho^2\right)^{\kappa/2}
    (\theta_{\chi,t})_{\ell}(w) e^{i\ell \varphi}
  \end{equation}
  and
  \[
    \Psi(z) = \sum_{m\in\mathbb{Z}} \Psi_m(w) P_{1/2+it_\psi,0}^{m}(z,w) e^{im\varphi},
  \]
  where
  \[
    P_{s,0}^m (z,w) = \rho^{|m|}
    \left(1-\rho^2\right)^{s}
    F\left(s,s+|m|;1+|m|;\rho^2\right).
  \]
  Hence we can write
  \begin{equation}\label{eqn:B(rho)}
    B(\rho) = 2\pi \sum_{\ell=0}^{\infty}
    \Psi_\ell(w) \overline{(\theta_{\chi,t})_{\ell}(w)} \rho^{2\ell}
    \left(1-\rho^2\right)^{\kappa/2+1/2+it_\psi}
    F\left(\frac{1}{2}+it_\psi,\frac{1}{2}+it_\psi+\ell; 1+\ell;\rho^2\right).
  \end{equation}
%  \begin{align*}
%    B(\rho)
%    & = 2\pi \sum_{\ell=0}^{\infty}
%    \Psi_\ell(w) \overline{(\theta_{\chi_1,a^2})_{\ell}(w)} P_{1/2+it_\psi,0}^{\ell}(z,w) \rho^\ell
%    \left(1-\rho^2\right)^{\frac{1}{4}}
%     \\
%    & = 2\pi \sum_{\ell=0}^{\infty}
%    \Psi_\ell(w) \overline{(\theta_{\chi_1,a^2})_{\ell}(w)} \rho^{2\ell}
%    \left(1-\rho^2\right)^{\frac{3}{4}+it_\psi}
%    F\left(\frac{1}{2}+it_\psi,\frac{1}{2}+it_\psi+\ell; 1+\ell;\rho^2\right).
%  \end{align*}

  Similar to \cite[Lemma 2]{sarnak1994integrals}, by $\int_{1}^{2}\int_{0}^{2\pi} |\Psi(z)|^2 \dd \varphi \dd \xi \ll 1$ where the integral is on the geodesic annulus with center $w$ and radius $r$ with $\tanh^2(r/2) = 1-\xi/L$  we can prove
  \begin{equation}\label{eqn:sum_Psi<<}
    \sum_{0\leq \ell\leq L} (1+\ell) |\Psi_\ell(w)|^2 \ll L^2.
  \end{equation}
  Now we deal with the theta function $\theta_{\chi,t}(z)$. We consider the integral $\int_{1}^{2}\int_{0}^{2\pi} |\theta_{\chi,t}(z)|^2 \dd \varphi \dd \xi$. Write $w=u+iv$. Then for $z=x+iy$ in the geodesic annulus with center $w$ and radius $r$ with $\tanh^2(r/2) = 1-\xi/L\in[1-2/L,1-1/L]$, we have $1-4e^{-r}+O(e^{-2r})=1-\xi/L$ and $r=\log(L/4\xi)+O(\xi/L)$. Hence $v/L  \leq y \leq Lv$. Hence
  \[
    |\theta_{\chi,t}(z)| \leq y^{1/4} \sum_{n\in\mathbb{Z}} e^{-2\pi tn^2 y}
    \ll y^{1/4}+y^{-1/4}t^{-1/2}
    \ll L^{1/4} \big( v^{1/4}+v^{-1/4}t^{-1/2} \big),
  \]
  if $\chi$ is even, and
  \[
    |\theta_{\chi,t}(z)| \leq y^{3/4} \sum_{n\in\mathbb{Z}_{\neq0}} e^{-2\pi tn^2 y}
    \ll y^{1/4}t^{-1/2}
    \ll L^{1/4} v^{1/4}t^{-1/2} ,
  \]
  if $\chi$ is odd.
  Hence
  \[
    \int_{1}^{2}\int_{0}^{2\pi} |\theta_{\chi,t}(z)|^2 \dd \varphi \dd \xi
    \ll L^{1/2} \big( v^{1/4}+v^{-1/4} \big)^2.
  \]
  On the other hand, by \eqref{eqn:theta-expansion}, we have
  \[
    \sum_{\ell \leq L/100} |(\theta_{\chi,t})_\ell(w)|^2
    \int_{1}^{2} (1-\xi/L)^{\ell} (\xi/L)^{1/2} \dd \xi
    \ll \int_{1}^{2}\int_{0}^{2\pi} |\theta_{\chi,t}(z)|^2 \dd \varphi \dd \xi.
  \]
  So we get
  \begin{equation}\label{eqn:sum_theta<<}
    \sum_{\ell \leq L} |(\theta_{\chi,t})_\ell(w)|^2
    \ll L \big( v^{1/2}+v^{-1/2} \big).
  \end{equation}

  We claim that the function $B(\rho)$ extends to an even analytic function of $z=\rho$ in $|z|<1$ and satisfies $B(z)\ll (1-|z|)^{-1}( v^{1/4}+v^{-1/4})$.
  Indeed, by \cite[eq. (9.111)]{GR}, we have
  \begin{align*}
    & \Big|(1-z)^{1/2+it_\psi} F\left(\frac{1}{2}+it_\psi,\frac{1}{2}+it_\psi+\ell; 1+\ell;z\right)\Big| \\
     & = \Big|  \frac{(1-z)^{1/2+it_\psi} \Gamma(1+\ell)}{\Gamma(1/2+it_\psi)\Gamma(1/2+\ell-it_\psi)}
     \int_{0}^{1} x^{-1/2+it_\psi} (1-x)^{\ell-1/2-it_\psi} (1-xz)^{-1/2-\ell-it_\psi} \dd x \Big| \\
     & \ll (1+\ell)^{1/2} \int_{0}^{1} x^{-1/2} (1-x)^{\ell-1/2} |1-xz|^{-1/2-\ell} |1-z|^{1/2} \dd x.
  \end{align*}
  For $x\in[0,1]$ and $|z|<1$, we have $|1-xz|\geq 1-x$ and $|1-xz|^{-1}|1-z|\ll1$. Hence
  \begin{multline}\label{eqn:2F1<<}
    \Big|(1-z)^{1/2+it_\psi} F\left(\frac{1}{2}+it_\psi,\frac{1}{2}+it_\psi+\ell; 1+\ell;z\right)\Big|  \\
    \ll (1+\ell)^{1/2} \int_{0}^{1} x^{-1/2} (1-x)^{-1/2} |1-xz|^{-1/2} |1-z|^{1/2} \dd x
    \ll (1+\ell)^{1/2}.
  \end{multline}
  Since $(1-z)^{1/2+it_\psi} F\left(\frac{1}{2}+it_\psi,\frac{1}{2}+it_\psi+\ell; 1+\ell;z\right)$ is holomorphic in $|z|<1$, we have established the analytic continuation of $B(z)$ to $|z|<1$.
  % For the first factor $(1-z)^{1/2+it_\psi}$ we can use its Taylor expansion.
  Now, by \eqref{eqn:B(rho)}, \eqref{eqn:2F1<<}, we have
    \begin{multline*}%\label{eqn:B(z)<<}
    B(z) \ll \sum_{\ell=0}^{\infty}
    |\Psi_\ell(w)| |(\theta_{\chi,t})_{\ell}(w)| |z|^{2\ell}
    |1-z^2|^{\frac{\kappa}{2}} (1+\ell)^{1/2}
    \\
    \ll \sum_{\ell=0}^{\infty}
    |\Psi_\ell(w)| |(\theta_{\chi,t})_{\ell}(w)| |z|^{2\ell}
     (1+\ell)^{1/2}.
  \end{multline*}
  Using summation by parts, we obtain
  \begin{equation*}%\label{eqn:B(z)<<}
    B(z)\ll 1+ \lim_{X\rightarrow\infty} \int_{1}^{X} \Big(\sum_{\ell\leq x} |\Psi_\ell(w)| |(\theta_{\chi,t})_{\ell}(w)|
     (1+\ell)^{1/2}\Big) \dd  |z|^{2 x} .
  \end{equation*}
  By \eqref{eqn:sum_Psi<<}, \eqref{eqn:sum_theta<<}, and Cauchy, we get
  \begin{multline}\label{eqn:B(z)<<}
    B(z)
     \ll 1+ \lim_{X\rightarrow\infty} \int_{1}^{X} x^{3/2} \big( v^{1/4}+v^{-1/4}\big) \dd  |z|^{2 x}
     \ll 1+ \big( v^{1/4}+v^{-1/4}\big) \lim_{X\rightarrow\infty} \int_{1}^{X} x^{2} \dd  |z|^{2 x} \\
    \ll (\log|z|)^{-1} \big( v^{1/4}+v^{-1/4}\big)
    \ll (1-|z|)^{-1} \big( v^{1/4}+v^{-1/4}\big).
  \end{multline}

  By \eqref{eqn:I2B} and making a change of variable $\cosh(r)=1+2u$, we have
  \[
    \mathfrak{I}(w) %= 2\int_{0}^{\infty} k(y) B(\rho) \dd y
    = \int_{0}^{\infty} k\Big(\frac{\cosh(r)-1}{2}\Big) B\Big(\tanh\Big(\frac{r}{2}\Big)\Big) \sinh(r) \dd r.
  \]
  By the first identity in \eqref{eqn:k&q} and making a change of variable $v=\frac{\cosh(z)-1}{2}$, we have
  \begin{align*}
    \mathfrak{I}(w) &
    = \int_{0}^{\infty} \Big( -\frac{\sqrt{2}}{\pi} \int_{r}^{\infty} \frac{\dd q(\frac{\cosh(z)-1}{2})}{\sqrt{\cosh(z)-\cosh(r)}} \Big) B\Big(\tanh\Big(\frac{r}{2}\Big)\Big) \sinh(r) \dd r .
  \end{align*}
  By the second identity in \eqref{eqn:k&q}, for $z>0$, we have
  \[
    q\left(\frac{\cosh(z)-1}{2}\right) = \frac{1}{2} g\left(2\log \left(\sqrt{\frac{e^z+e^{-z}+2}{4}}+\sqrt{\frac{e^z+e^{-z}-2}{4}}\right)\right)
    = \frac{1}{2} g(z).
  \]
  By \eqref{eqn:g}, we get
  \begin{align*}
    \mathfrak{I}(w) &
    = \int_{0}^{\infty} \Big( \frac{1}{\sqrt{2}\pi^{3/2}} \int_{r}^{\infty} \frac{T \sin(zT)+ \frac{z}{2} \cos(zT)}{\sqrt{\cosh(z)-\cosh(r)}} e^{-z^2/4} \dd z \Big) B\Big(\tanh\Big(\frac{r}{2}\Big)\Big) \sinh(r) \dd r \\
    & = \frac{1}{\sqrt{2}\pi^{3/2}} \int_{0}^{\infty}
    \Big( \int_{0}^{z} \frac{B\left(\tanh\left(\frac{r}{2}\right)\right) \sinh(r)}{\sqrt{\cosh(z)-\cosh(r)}} \dd r \Big)
    \Big(T \sin(zT)+\frac{z}{2}\cos(zT) \Big) e^{-z^2/4} \dd z.
  \end{align*}
  We know the function
  \begin{equation}\label{eqn:H}
    H(z)=\int_{0}^{z} \frac{B\left(\tanh\left(\frac{r}{2}\right)\right) \sinh(r)}{\sqrt{\cosh(z)-\cosh(r)}} \dd r
    = \int_{0}^{1} \frac{B\left(\tanh\left(\frac{\xi z}{2}\right)\right) \sinh(\xi z)}{\sqrt{2\frac{\sinh((z(1+\xi)/2))}{z}\cdot \frac{\sinh((z(1-\xi)/2))}{z}}} \dd \xi
  \end{equation}
  is holomorphic in $|\Im(z)|<\pi/2$ and is odd (cf. \cite[p. 259]{sarnak1994integrals}).
  Indeed, we have
  \[
    |\tanh(\frac{x+iy}{2})|^2 = 1-\frac{4e^{x}\cos y}{e^{2x}+1+2e^x \cos y}
    \leq 1-\frac{1}{4 T e^{|x|}},
  \]
  if $x\in\mathbb{R}$ and $y\in[-\pi/2+1/T,\pi/2-1/T]$.
  Hence
  \begin{align*}
     \mathfrak{I}(w) & = \frac{1}{(2\pi)^{3/2}} \int_{-\infty}^{\infty} H(z) e^{-z^2/4} \Big(T \sin(zT)+\frac{z}{2}\cos(zT)\Big) \dd z \\
     & =\frac{1}{(2\pi)^{3/2}} \int_{-\infty}^{\infty} H(z) e^{-z^2/4} \Big(\frac{1}{2i} T e^{izT}+\frac{z}{4}e^{izT}\Big) \dd z
     \\
     & \hskip 30pt + \frac{1}{(2\pi)^{3/2}} \int_{-\infty}^{\infty} H(z) e^{-z^2/4} \Big(\frac{-1}{2i} T e^{-izT}+\frac{z}{4}e^{-izT}\Big) \dd z.
  \end{align*}
  Now we shift the contour of integration from the real axis $\tmop{Im}(z)=0$ to the line $\Im(z)=\pi/2-1/T$ in the first integral and to the line $\Im(z)=-\pi/2+1/T$ in the second integral. This shift is justified by the rapid decay of $e^{-z^2/4}$.
  We will only bound the first integral, since second one can be handled similarly and satisfies the same bound.
  We first bound $H(z)$ with $z=x+i(\pi/2-1/T)$. For $\xi\in(0,1)$, we have
  \[
    \sinh(\xi z) \ll e^{|x|}, \quad
    \sinh((z(1\pm\xi)/2)) \gg (|x|+1/T)(1\pm\xi).
  \]
  By \eqref{eqn:B(z)<<} and \eqref{eqn:H}, we get
  \[
    H(x+i(\pi/2-1/T)) \ll T e^{2|x|} \frac{1+|x|}{1/T+|x|} \big( v^{1/4}+v^{-1/4}\big).
  \]
  Thus we obtain
  \begin{multline*}
    \mathfrak{I}(w) \ll
    \int_{\mathbb{R}} T e^{2|x|} \frac{1+|x|}{1/T+|x|} \big( v^{1/4}+v^{-1/4}\big) e^{-x^2/4} (T+|x|) e^{-\frac{\pi}{2}T} \dd x \\
    \ll
    T^2 (\log T) e^{-\frac{\pi}{2}T} \big(v^{1/4}+v^{-1/4}\big).
  \end{multline*}
  Hence $\|\mathfrak{I}\|_2 \ll T^C e^{-\frac{\pi}{2}T} M^A$. Here one may use \cite[Proposition 2.5]{iwaniec1997topics} for an explicit choice of the representatives of $\Gamma_0(M)\backslash \SL_2(\mathbb{Z})$ to compute the integral of $w$.
%   \footnote{ Do you mean that we can choose the fundamental domain of $\Gamma$ to lie within $0 \le x \le 1$ and $y \ge c/M$ for $c$ sufficiently small then use the bound and integrate wrt hyperbolic measure? \textcolor{blue}{I think we can't do as what you said, since there are more than one cusp. I guess together with the Atkin-Lehner operator we may do this similar to you suggestion. For the cusp $\infty$ we have your argument with $y\gg1$. For other cusps, the idea is to use the scattering matrices to change the calculation near those cusps to $\infty$. This step will introduce some dependence on $M$. I guess such an argument may be standard, but I didn't find a good reference here. Here is a sketch on how to proceed:
%   $\iint_{\Gamma_0(M)\backslash\mathbb{H}}(v^{1/4}+v^{-1/4})\dd u\dd v/v^2 = \sum_{g\in \Gamma_0(M)\backslash\Gamma_0(1)} \iint_{g(\Gamma_0(1)\backslash\mathbb{H})}(v^{1/4}+v^{-1/4})\dd u\dd v/v^2 = \sum_{g\in \Gamma_0(M)\backslash\Gamma_0(1)} \iint_{\Gamma_0(1)\backslash\mathbb{H}}(v^{1/4}+v^{-1/4})\dd x\dd y/y^2$ with $u+iv=g(x+iy)$. One can check $(v^{1/4}+v^{-1/4}) \ll M^A (y^{1/4}+y^{-1/4})$ (\cite[Proposition 2.5]{iwaniec1997topics} is helpful here). }}
  Our lemma follows from \eqref{eqn:3prod<<L2} easily.
\qed

\section{Explicit residue of Eisenstein series}\label{appendix:Eisenstein}

Recall that
\begin{displaymath}
  E_{\mathfrak{a},\kappa}(z; s) = \sum_{\gamma \in \Gamma_{\mathfrak{a}}\backslash \Gamma_0(M)}  J(\sigma_{\mathfrak{a}}^{-1}\gamma, z)^{-2\kappa} \Im(\sigma_{\mathfrak{a}}^{-1}\gamma z)^s.
\end{displaymath}
We consider the case $\mathfrak{a}=\infty$ and $\kappa=1/2$.
%Write $E_{\mathfrak{a},\kappa}(z; s)=E(z; s)$.
In this appendix, we will prove the following lemma.
\begin{lemma}\label{lemma:res}
  Assume that  $M=2^{\beta_0} p_1^{\beta_1} p_2^{\beta_2}$ where $\beta_0\geq2$, $\beta_1\geq0$, $\beta_2\geq0$, and  $p_1\equiv p_2 \equiv 3\pmod{4}$ are two distinct primes. Write $p_0=2$. Then
  \begin{equation*}
  \Res\limits_{s=3/4} E_{\infty,1/2}(z; s)
  = \frac{\pi}{4 \zeta_{(M)}(1)\zeta_M(2)} \Big( \prod_{j=0}^{2} p_j^{-[\frac{\beta_j+1}{2}]} \Big) \theta_{\chi_1,t_M}(z),
  \end{equation*}
  where $t_M=2^{-2} \prod_{j=0}^{2} p_j^{2[\frac{\beta_j}{2}]}$.
  Here $[x]$ is the floor function, $\zeta_{(M)}(s)=\prod_{p\mid M} (1-p^{-s})^{-1}$, and $\zeta_{M}(s)=\prod_{p\nmid M} (1-p^{-s})^{-1}$ for $\Re(s)>1$.
\end{lemma}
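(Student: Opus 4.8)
The plan is to read off the Fourier expansion of $\Res_{s=3/4}E_{\infty,1/2}(z;s)$ from \eqref{eqn:FE-ES} and recognise it as a scalar multiple of $\theta_{\chi_1,t_M}$. First, whatever this residue is, it is a residual form of Laplace eigenvalue $s(1-s)\big|_{s=3/4}=3/16$, hence lies in $\mathbf{H}_{1/2}(M,\chi_1,i/4)$; as recalled before \eqref{eqn:KS}, this space consists of $y^{1/4}f$ with $f$ holomorphic of weight $1/2$, so by Serre--Stark \cite{Serre-Stark} the residue is a linear combination of theta series $\theta_{\omega,t}$, and in particular has no negative Fourier coefficients. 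To identify the combination it suffices to compute finitely many Fourier coefficients, and by \eqref{eqn:FE-ES} the $n$-th coefficient of $\Res_{s=3/4}E_{\infty,1/2}(\cdot;s)$ is an explicit gamma factor, evaluated at $s=3/4$, times $\Res_{s=3/4}\phi_\infty(n,s)$, where $\phi_\infty(n,s)=\phi_\infty(n,s,1/2)$. So everything reduces to the meromorphic continuation and the residue at $s=3/4$ of this Dirichlet series.

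Since $\sigma_\infty=I$ and $\chi$ is trivial with $\kappa=1/2$ (so $\epsilon_d^{2\kappa}=\epsilon_d$),
\[
\phi_\infty(n,s)=\sum_{\substack{c>0\\ M\mid c}}c^{-2s}\sum_{\substack{d\bmod c\\ (d,c)=1}}\Big(\tfrac{c}{d}\Big)\epsilon_d\,e\Big(\tfrac{nd}{c}\Big).
\]
Writing $c=2^a p_1^{b_1}p_2^{b_2}m$ with $a\ge\beta_0$, $b_j\ge\beta_j$ and $(m,2p_1p_2)=1$, the inner character sum factors through the Chinese Remainder Theorem into a $2$-adic piece, two pieces at $p_1,p_2$ and a generic piece, after invoking the theta-multiplier reciprocity $\big(\tfrac{c_1}{d}\big)\epsilon_d=\epsilon_{c_1}\big(\tfrac{d}{c_1}\big)$ for odd $c_1,d$ and the standard evaluation of quadratic Gauss sums. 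Summing the generic piece over $m$ produces a ratio of zeta-type Euler products carrying a single simple pole at $4s-2=1$, i.e.\ $s=3/4$ (the residual spectrum of $\Delta_{1/2}$ at $3/16$), while the three ramified pieces give explicit finite local factors; the continuation of $\phi_\infty(n,s)$ to $\Re(s)>1/2$ with only this pole then follows (alternatively one may invoke the general spectral theory, e.g.\ \cite{iwaniec2002spectral, Duke}).

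Carrying out the residue, the structural outcome is that $\Res_{s=3/4}\phi_\infty(n,s)=0$ unless $n/t_M$ is a perfect square, and when $n=t_M m^2$ it equals a constant independent of $m$; here $t_M=2^{2[\beta_0/2]-2}p_1^{2[\beta_1/2]}p_2^{2[\beta_2/2]}$ is forced to be the largest square dividing $M/4$, because each local character sum is non-degenerate only after extracting the maximal local square. (This is where $p_1\equiv p_2\equiv3\pmod4$ and $\beta_0\ge2$ enter: they fix the parities of the surviving residue classes in the odd and $2$-adic sums and make the relevant quadratic symbols even, so the weight $1/2$ coefficients do not vanish identically.) Such a Fourier pattern --- supported on $t_M\cdot(\text{squares})$, coefficient constant on that support --- is exactly that of a multiple of $\theta_{\chi_1,t_M}$ (using $W_{1/4,1/4}=W_{1/4,-1/4}$, so that $W_{1/4,1/4}(y)=y^{1/4}e^{-y/2}$ at $s-\tfrac12=\tfrac14$), and the vanishing at negative $n$ is consistent with the absence of negative coefficients noted above, so the residue is a single theta series. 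To fix the scalar, take $n=0$ in \eqref{eqn:FE-ES}: the term $y^s$ is regular at $s=3/4$ and the gamma prefactor $\tfrac{\pi4^{1-s}e(-\kappa/4)\Gamma(2s-1)}{\Gamma(s+\kappa/2)\Gamma(s-\kappa/2)}$ equals $\pi\sqrt2\,e(-1/8)$ there, so the residue of the constant term is $\pi\sqrt2\,e(-1/8)\,\Res_{s=3/4}\phi_\infty(0,s)\cdot y^{1/4}$; since the constant term of $\theta_{\chi_1,t_M}$ is $y^{1/4}$, this reads off the scalar once $\Res_{s=3/4}\phi_\infty(0,s)$ is evaluated from the Euler product. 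The pole is carried by the generic part, whose residue involves the residue $\tfrac14$ of $\zeta(4s-2)$ at $s=3/4$ and contributes $\zeta_M(2)^{-1}$, while the three ramified local factors evaluated at $s=3/4$ contribute $\zeta_{(M)}(1)^{-1}$ and $\prod_{j=0}^2 p_j^{-[(\beta_j+1)/2]}$; after cancelling the phase $e(-1/8)$ against the phases in the Gauss sums one arrives at the stated constant, which may be double-checked by repeating the computation with a nonzero $n=t_Mm^2$.

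The main obstacle I anticipate is the $2$-adic local computation. With only $2^{\beta_0}\parallel M$ and $\beta_0\ge2$, the reciprocity manipulations involving $\big(\tfrac{\cdot}{d}\big)\epsilon_d$ at the prime $2$, and the evaluation of $\sum_{d\bmod 2^a}\big(\tfrac{2^a}{d}\big)\epsilon_d\,e(nd/2^a)$, split into several cases according to the parity of $a$ and the $2$-adic valuation of $n$; tracking these cases cleanly is exactly what produces the floors $[\beta_0/2]$ in $t_M$ and $[(\beta_j+1)/2]$ in the constant, and is the one genuinely delicate step. By contrast the odd-prime local factors, the location and order of the pole, and the final bookkeeping are routine.
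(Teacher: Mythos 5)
Your proposal follows essentially the same route as the paper's proof: expand $E_{\infty,1/2}(z;s)$ in Fourier series, factor $\phi_\infty(n,s)$ via reciprocity and CRT into a generic Dirichlet series over moduli coprime to $M$ (whose $L$-function ratio carries the simple pole at $s=3/4$, giving the $\zeta_{(M)}(1)^{-1}\zeta_M(2)^{-1}$) times explicit ramified local Gauss-sum factors at $2,p_1,p_2$ (which enforce the support on $t_M\cdot(\text{squares})$ and produce the $\prod_j p_j^{-[(\beta_j+1)/2]}$), and then match the resulting coefficients against $\theta_{\chi_1,t_M}$ using $W_{1/4,1/4}(4\pi n y)=(4\pi ny)^{1/4}e^{-2\pi ny}$. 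The only minor imprecisions are bookkeeping: for nonzero square $n$ the pole comes from the factor at $2s-\tfrac12=1$ rather than $4s-2=1$ (the latter is the constant-term case), and the $\zeta_{(M)}(1)^{-1}$ arises from the missing Euler factors of the generic sum rather than from the ramified pieces; neither affects the validity of the argument.
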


\begin{proof}
Recall that we have the Fourier expansion (see \eqref{eqn:FE-ES})
\begin{multline} \label{eqn:E=sum}
  E_{\infty,1/2}(z; s)   = y^s + \frac{\pi 4^{1-s} e\left(-\frac{1}{8}\right) \Gamma(2s-1)}{\Gamma(s+1/4)\Gamma(s-1/4)} \phi(0, s)y^{1-s} \\
  + \sum_{n \not=0}
  \frac{\pi^s e\left(-\frac{1}{8}\right) |n|^{s-1}}{\Gamma(s+\sgn(n)\frac{1}{4})} \phi(n, s) W_{\sgn(n)\frac{1}{4}, s-\frac{1}{2}}(4\pi|n|y) e(nx),
\end{multline}
where
\begin{equation*}
  \phi(n, s)
  = \sum_{\substack{c\geq1 \\ M\mid c}} \sum_{\substack{1 \leq d \leq c\\ (d,c)=1}}
    \left(\frac{c}{d}\right) \epsilon_d \; e\left(\frac{nd}{c}\right) c^{-2s}.
\end{equation*}
Let $c=M' q$ where $(M',q)=1$ and $M \mid M'\mid M^{\infty}$, where the notation $M'|M^{\infty}$ means that $M'$ divides a sufficiently large power of $M$. By using quadratic reciprocity and the Chinese remainder theorem (see  \cite[Lemma 1]{sturm1980special}), we have
\begin{equation*}
  \sum_{\substack{1 \leq d \leq c\\ (d,c)=1}}
  \left(\frac{c}{d}\right) \epsilon_d \; e\left(\frac{nd}{c}\right) \\
%  =
%  \left(\frac{-M}{q}\right) \epsilon_q \left(\frac{M}{q}\right) \bar{\chi}(q) \sum_{d=1}^{q} \left(\frac{d}{q}\right) e\left(\frac{nd}{q}\right)
%  \sum_{d'=1}^{M'} \bar{\chi}(d') \epsilon_{d'} \left(\frac{M'}{d'}\right) e\left(\frac{nd'}{M'}\right)
  =
  \left(\frac{-1}{q}\right) \epsilon_q  \sum_{d=1}^{q} \left(\frac{d}{q}\right) e\left(\frac{nd}{q}\right)
  \sum_{d'=1}^{M'}  \epsilon_{d'} \left(\frac{M'}{d'}\right) e\left(\frac{nd'}{M'}\right).
\end{equation*}
Hence, we have
%\begin{multline*}
%  \phi(n, s)
%  = \sum_{q=1}^{\infty} \left(\frac{-M}{q}\right) \epsilon_q \left(\frac{M}{q}\right)\bar{\chi}(q) \sum_{d=1}^{q} \left(\frac{d}{q}\right) e\left(\frac{nd}{q}\right)q^{-2s}  \\
%  \cdot \sum_{\substack{M'\\ M\mid M'\mid M^{\infty}}}
%  \sum_{d'=1}^{M'} \bar{\chi}(d') \epsilon_{d'} \left(\frac{M'}{d'}\right) e\left(\frac{nd'}{M'}\right) (M')^{-2s},
%\end{multline*}
\begin{equation} \label{eqn:phi(n)=bc}
  \phi(n, s)
  =  b(n,s,\omega) c(n,s),
\end{equation}
where the character $\omega(\cdot) = \left(\frac{M}{\cdot}\right)$,
\[
  b(n,s,\omega) := \sum_{q=1}^{\infty} \left(\frac{-M}{q}\right) \epsilon_q \; \omega(q) q^{-2s} \sum_{d=1}^{q} \left(\frac{d}{q}\right) e\left(\frac{nd}{q}\right)
\]
and
\[
  c(n,s) := \sum_{\substack{M'\\ M\mid M'\mid M^{\infty}}}
  \left( \sum_{d=1}^{M'}  \epsilon_{d} \left(\frac{M'}{d}\right) e\left(\frac{nd}{M'}\right) \right) (M')^{-2s}.
\]
For $n=tm^2$ with a positive integer $m$ and square-free integer $t$, we have (see \cite[Proposition 1]{Shimura1975})
\begin{equation}\label{eqn:b(n)}
  b(n,s,\omega) = \frac{L_M(2s-1/2,\omega_1)}{L_M(4s-1,\omega_2)} \sum_{\substack{\ell_1,\ell_2\geq1 \\ (\ell_1\ell_2,M)=1,\; \ell_1\ell_2\mid m}} \mu(\ell_1) \omega_1(\ell_1) \omega_2(\ell_2) \ell_1^{-2s+1/2} \ell_2^{2-4s},
\end{equation}
where $\mu$ is the M\"{o}bius function, and (see \cite[eq. (3.9)]{Shimura1975})
\begin{equation}\label{eqn:b(0)}
  b(0,s,\omega) = \frac{L_M(4s-2,\omega_2)}{L_M(4s-1,\omega_2)}.
\end{equation}
Here $\omega_1$ and $\omega_2$ are primitive characters defined by
\begin{align*}
  \omega_1(\ell) & = \left(\frac{tM}{\ell}\right) \omega(\ell)
  = \left(\frac{t}{\ell}\right)  \qquad \textrm{for } (\ell,tM)=1, \\
  \omega_2(\ell) & = \omega(\ell)^2 = 1 \hskip 80pt \textrm{for } (\ell,M)=1,
\end{align*}
and $L_M(s,\omega) = \sum_{(\ell,M)=1} \omega(\ell) \ell^{-s}$.
Thus we have
\begin{equation} \label{eqn:Res_b}
  \Res_{s=3/4} b(0,s,\omega) = \frac{1}{4 \zeta_{(M)}(1) \zeta_{M}(2)}, \quad \textrm{and} \quad
  \Res_{s=3/4} b(n,s,\omega) = \frac{1}{2 \zeta_{(M)}(1) \zeta_{M}(2)},
\end{equation}
if $n=m^2$, and $\Res_{s=3/4} b(n,s,\omega)=0$ otherwise. Here we have used the fact
\[
  \sum_{\substack{\ell_1,\ell_2\geq1 \\ (\ell_1\ell_2,M)=1,\; \ell_1\ell_2\mid m}} \mu(\ell_1) \ell_1^{-1} \ell_2^{-1}
   = \sum_{\substack{\ell\geq1 \\ (\ell,M)=1,\; \ell\mid m}} \frac{1}{\ell}  \sum_{\ell_1\mid \ell}\mu(\ell_1) = 1.
\]

\medskip

Now we consider $c(n,s)$. Let $G_n(\chi')=\sum_{d=1}^{r} \chi'(d) e(dn/r)$ be the usual Gauss sum where $\chi'$ is a Dirichlet character mod $r$.
%\[
%   G_n\left(\bar{\chi}\left(\frac{M'}{\cdot}\right)\right) = \sum_{d=1}^{M'} \bar{\chi}(d) \left(\frac{M'}{d}\right) e\left(\frac{nd}{M'}\right) \quad \textrm{and} \quad
%  G_n\left(\bar{\chi}\left(\frac{-M'}{\cdot}\right)\right) = \sum_{d=1}^{M'} \bar{\chi}(d) \left(\frac{-M'}{d}\right) e\left(\frac{nd}{M'}\right).
%\]
Note that
\[
  \epsilon_d = \frac{1+i}{2} \chi_{-4}^0(d) + \frac{1-i}{2} \chi_{-4}(d),
\]
where $\chi_{-4}=(\frac{-4}{\cdot})$ is the primitive quadratic character mod $4$.   We have  that
\[
  \sum_{d=1}^{M'}  \epsilon_{d} \left(\frac{M'}{d}\right) e\left(\frac{nd}{M'}\right)
  = \frac{1+i}{2} G_n\left(\left(\frac{M'}{\cdot}\right)\right) + \frac{1-i}{2} G_n\left(\chi_{-4}\left(\frac{M'}{\cdot}\right)\right).
\]
By the Gauss sums as in \cite[Lemma 2]{sturm1980special}, we know that $c(n,s)$ is a finite Dirichlet series if $n\neq0$.
We have
\[
  c(n,s) = \sum_{\substack{M'\\ M\mid M'\mid M^{\infty}}}
  \left( \frac{1+i}{2} G_n\left(\left(\frac{M'}{\cdot}\right)\right) + \frac{1-i}{2} G_n\left(\chi_{-4}\left(\frac{M'}{\cdot}\right)\right) \right) (M')^{-2s}.
\]

Recall that $M=2^{\beta_0} p_1^{\beta_1} p_2^{\beta_2}$ where $\beta_0\geq2$, $\beta_1\geq0$, $\beta_2\geq0$, and  $p_1\equiv p_2 \equiv 3\pmod{4}$ are two distinct primes.
% We now argue depending on different value of $a$.
Let $M'=2^{k_0}p_1^{k_1} p_2^{k_2}$ with $k_0\geq \beta_0$, $k_1\geq \beta_1$ and $k_2\geq \beta_2$.  Define $M_0'=p_1^{k_1}p_2^{k_2}$ and $\overline{M_0'}$ the inverse of $M_0'$ mod $2^{k_0}$, and similarly $M_1'=2^{k_0} p_2^{k_2}$, $\overline{M_1'} M_1'\equiv 1 \pmod{p_1^{k_1}}$, and $M_2'=2^{k_0} p_1^{k_1}$ and $\overline{M_2'} M_2'\equiv 1 \pmod{p_2^{k_2}}$. By the Chinese remainder theorem we have
%      \[ G_n(\chi') = G_{nM_0}(\chi'_0) G_{nM_1}(\chi'_1) G_{nM_2}(\chi'_2),   \]
%      where $\chi'=\chi_0' \chi_1' \chi_2'$.
      \begin{align*}
        G_n\left(\left(\frac{M'}{\cdot}\right)\right) & = G_{n\overline{M_0'}}\left(\left(\frac{(-1)^{k_1+k_2}2^{k_0}}{\cdot}\right)\right) G_{n\overline{M_1'}}\left(\left(\frac{(-p_1)^{k_1}}{\cdot}\right)\right) G_{n\overline{M_2'}}\left(\left(\frac{(-p_2)^{k_2}}{\cdot}\right)\right) \\
        & =
        G_{n\overline{M_0'}}\left( \chi_{-4}^{k_1+k_2} \chi_{8}^{k_0}  \chi_{2^{k_0}}^0 \right) G_{n\overline{M_1'}}\left(\chi_{-p_1}^{k_1} \chi_{p_1^{k_1}}^0\right) G_{n\overline{M_2'}}\left(\chi_{-p_2}^{k_2} \chi_{p_2^{k_2}}^0\right) \\
        & =  \chi_{-4}^{k_1+k_2}(p_1^{k_1}p_2^{k_2}) \chi_{8}^{k_0} (p_1^{k_1}p_2^{k_2}) \chi_{-p_1}^{k_1}(2^{k_0} p_2^{k_2}) \chi_{-p_2}^{k_2}(2^{k_0} p_1^{k_1}) \\
        & \hskip 60pt \cdot
        G_{n}\left( \chi_{-4}^{k_1+k_2} \chi_{8}^{k_0}  \chi_{2^{k_0}}^0 \right) G_{n}\left(\chi_{-p_1}^{k_1} \chi_{p_1^{k_1}}^0\right) G_{n}\left(\chi_{-p_2}^{k_2} \chi_{p_2^{k_2}}^0\right) ,
      \end{align*}
      and
      \begin{align*}
        G_n\left(\left(\frac{-M'}{\cdot}\right)\right) & = G_{n\overline{M_0'}}\left( \chi_{-4}^{k_1+k_2+1} \chi_{8}^{k_0}  \chi_{2^{k_0}}^0 \right) G_{n\overline{M_1'}}\left(\chi_{-p_1}^{k_1} \chi_{p_1^{k_1}}^0\right) G_{n\overline{M_2'}}\left(\chi_{-p_2}^{k_2} \chi_{p_2^{k_2}}^0\right) \\
        & =  \chi_{-4}^{k_1+k_2+1}(p_1^{k_1}p_2^{k_2}) \chi_{8}^{k_0} (p_1^{k_1}p_2^{k_2}) \chi_{-p_1}^{k_1}(2^{k_0} p_2^{k_2}) \chi_{-p_2}^{k_2}(2^{k_0} p_1^{k_1}) \\
        & \hskip 60pt \cdot
        G_{n}\left( \chi_{-4}^{k_1+k_2+1} \chi_{8}^{k_0}  \chi_{2^{k_0}}^0 \right) G_{n}\left(\chi_{-p_1}^{k_1} \chi_{p_1^{k_1}}^0\right) G_{n}\left(\chi_{-p_2}^{k_2} \chi_{p_2^{k_2}}^0\right) .
      \end{align*}
      Here $\chi_1 =1$, $\chi_{-4}= \left(\frac{-4}{\cdot}\right)$, $\chi_{8}= \left(\frac{2}{\cdot}\right)$ and $\chi_{-8}= \left(\frac{-2}{\cdot}\right)$, and $\chi_{-p}=\left(\frac{-p}{\cdot}\right)$ if $p\equiv 3\pmod{4}$.
      We understand $\chi_{-4}^{k_1+k_2} \chi_{8}^{k_0}  \chi_{2^{k_0}}^0$ as a character modulo $2^{k_0}$. Similarly for other characters in the Gauss sums.
      Recall that $G_1(\chi_1)=1$, $G_1(\chi_{-4})=2i$, $G_1(\chi_{8})=2\sqrt{2}$ and $G_1(\chi_{-8})=2\sqrt{2}i$.
%      Assume first that $(n,2p_1p_2)=1$ and $n\equiv1 \pmod{4}$, then
%      \[
%        G_{n}\left(\left(\frac{2^{k_0}}{\cdot}\right)\right) = G_{n}\left(\left(\frac{-2^{k_0}}{\cdot}\right)\right) = 0 \quad  \textrm{ if $k_0\geq4$.}
%      \]
%      Hence $c(n,s,\chi) = 0$.
%%      \[
%%        G_{n}\left(\left(\frac{2^{k_0}}{\cdot}\right)\right) = G_n(\chi_8^{k_0}\chi_{2^{k_0}}^0) = 0, \quad  \textrm{ if $k_0\geq4$ and $2\mid k_0$,}
%%      \]
%%      \[
%%        G_{n}\left(\left(\frac{-2^{k_0}}{\cdot}\right)\right) = G_n(\chi_8^{k_0}\chi_{-4}\chi_{2^{k_0}}^0) = 0, \quad  \textrm{ if $k_0\geq4$ and $2\mid k_0$,}
%%      \]
%%      \[
%%        G_{n}\left(\left(\frac{2^{k_0}}{\cdot}\right)\right) = G_n(\chi_8^{k_0}\chi_{2^{k_0}}^0) = 0, \quad  \textrm{ if $k_0\geq5$ and $2\nmid k_0$,}
%%      \]
%%      \[
%%        G_{n}\left(\left(\frac{-2^{k_0}}{\cdot}\right)\right) = G_n(\chi_8^{k_0}\chi_{-4}\chi_{2^{k_0}}^0) = 0, \quad  \textrm{ if $k_0\geq5$ and $2\nmid k_0$,}
%%      \]
%      Assume $n = p_1^{2\alpha} n_0$ with $\alpha\geq0$ and $(n_0,p_1)=1$, then
%       \[
%         G_{n}\left(\left(\frac{p_1^{k_1}}{\cdot}\right)\right) = \left\{
%         \begin{array}{ll}
%           \varphi(p_1^{k_1}), & \textrm{if $2\mid k_1 \leq 2\alpha$,} \\
%           G_1\left(\left(\frac{p_1}{\cdot}\right)\right) p_1^{k_1-1} \left(\frac{p_1}{n_0}\right), & \textrm{if $k_1 = 2\alpha + 1$,} \\
%           0 , & \textrm{otherwise}.
%         \end{array} \right.
%       \]
       Assume $n=2^{2\alpha_0} p_1^{2\alpha_1} p_2^{2\alpha_2} n_0$, where $(n_0,2p_1p_2)=1$ and $n_0$ is a square. Then we have
       \begin{align}
         G_{n}\left( \chi_{8}^{k_0}  \chi_{2^{k_0}}^0 \right) & = \left\{
         \begin{array}{ll}
           \varphi(2^{k_0}), & \textrm{if $2\mid k_0$ and $k_0 \leq 2\alpha_0$,} \\
           2\sqrt{2} \cdot 2^{2\alpha_0}, & \textrm{if $k_0 = 2\alpha_0 + 3$,} \\
           0 , & \textrm{otherwise},
         \end{array} \right.
         \label{eqn:G8} \\
         G_{n}\left( \chi_{-4} \chi_{8}^{k_0}  \chi_{2^{k_0}}^0 \right) & = \left\{
         \begin{array}{ll}
           2i \cdot 2^{2\alpha_0}, & \textrm{if $k_0=2\alpha_0+2$,} \\
           2\sqrt{2}i \cdot 2^{2\alpha_0}, & \textrm{if $k_0 = 2\alpha_0 + 3$,} \\
           0 , & \textrm{otherwise},
         \end{array} \right.
         \label{eqn:G-8} \\
         G_{n}\left(\chi_{-p_j}^{k_j} \chi_{p_j^{k_j}}^0\right) & = \left\{
         \begin{array}{ll}
           \varphi(p_j^{k_j}), & \textrm{if $2\mid k_j \leq 2\alpha_j$,} \\
           i\sqrt{p_j} \cdot p_j^{2\alpha_j} %\left(\frac{p_1}{2^{2\alpha_0} p_2^{2\alpha_2} n_0}\right)
           , & \textrm{if $k_j = 2\alpha_j + 1$,} \\
           0 , & \textrm{otherwise},
         \end{array} \right. \label{eqn:Gp}
       \end{align}
       for $j=1,2$.
       Here we used the fact $G_1\left(\chi_{-p}\right)=i\sqrt{p}$ if $p\equiv 3\pmod{4}$. Hence
       \begin{align*}
         c(n,s) & =\frac{1+i}{2} \sum_{k_0\geq\beta_0} \sum_{k_1\geq\beta_1} \sum_{k_2\geq\beta_2} \frac{\chi_{-4}^{k_1+k_2}(p_1^{k_1}p_2^{k_2}) \chi_{8}^{k_0} (p_1^{k_1}p_2^{k_2}) \chi_{-p_1}^{k_1}(2^{k_0} p_2^{k_2}) \chi_{-p_2}^{k_2}(2^{k_0} p_1^{k_1}) } {(2^{k_0}p_1^{k_1}p_2^{k_2})^{2s}}
         \\
         & \hskip 90pt \cdot G_{n}\left( \chi_{-4}^{k_1+k_2} \chi_{8}^{k_0}  \chi_{2^{k_0}}^0 \right) G_{n}\left(\chi_{-p_1}^{k_1} \chi_{p_1^{k_1}}^0\right) G_{n}\left(\chi_{-p_2}^{k_2} \chi_{p_2^{k_2}}^0\right)
         \\
         & \hskip -10pt + \frac{1-i}{2} \sum_{k_0\geq\beta_0} \sum_{k_1\geq\beta_1} \sum_{k_2\geq\beta_2}
         \frac{ \chi_{-4}^{k_1+k_2+1}(p_1^{k_1}p_2^{k_2}) \chi_{8}^{k_0} (p_1^{k_1}p_2^{k_2}) \chi_{-p_1}^{k_1}(2^{k_0} p_2^{k_2}) \chi_{-p_2}^{k_2}(2^{k_0} p_1^{k_1})
        }{(2^{k_0}p_1^{k_1}p_2^{k_2})^{2s}} \\
        & \hskip 90pt \cdot
        G_{n}\left( \chi_{-4}^{k_1+k_2+1} \chi_{8}^{k_0}  \chi_{2^{k_0}}^0 \right) G_{n}\left(\chi_{-p_1}^{k_1} \chi_{p_1^{k_1}}^0\right) G_{n}\left(\chi_{-p_2}^{k_2} \chi_{p_2^{k_2}}^0\right) .
       \end{align*}
       If $2\alpha_0+3< \beta_0$, then $G_{n}\left( \chi_{8}^{k_0}  \chi_{2^{k_0}}^0 \right)=G_{n}\left( \chi_{-4} \chi_{8}^{k_0}  \chi_{2^{k_0}}^0  \right) =0$ for all $k_0\geq \beta_0$, and hence $c(n,s)=0$.
       Similarly, we have $c(n,s)=0$ if $2\alpha_j+1<\beta_j$ for either $j=1$ or $2$.
       Now we assume
       \begin{equation}
         \alpha_0\geq \frac{\beta_0-3}{2}, \quad
         \textrm{$\alpha_j\geq \frac{\beta_j-1}{2}$ for both $j=1$ and $2$}.
       \end{equation}

       We split the above sums to four cases depending on the parity of $k_1$ and $k_2$. We have
       \begin{equation}\label{eqn:c=cjj}
         c(n,s) = c_{11}(n,s)+c_{12}(n,s) +c_{21}(n,s)+c_{22}(n,s),
       \end{equation}
       where
by \eqref{eqn:G8} and \eqref{eqn:G-8} we have
\begin{align*} % k_1=2\alpha_1+1,  k_2=2\alpha_2+1
         c_{11}(n,s ) & = \frac{1+i}{2}\sqrt{p_1p_2} p_1^{2\alpha_1} p_2^{2\alpha_2}
         \sum_{\substack{\beta_0\leq k_0\leq 2\alpha_0+2 \\ 2\mid k_0}} \frac{1} {(2^{k_0}p_1^{2\alpha_1+1}p_2^{2\alpha_2+1})^{2s}}
         \varphi(2^{k_0})
         \\
         & \hskip 10pt
         +(1+i)\sqrt{p_1p_2} p_1^{2\alpha_1} p_2^{2\alpha_2}
         \frac{1} {(2^{2\alpha_0+3}p_1^{2\alpha_1+1}p_2^{2\alpha_2+1})^{2s}}
         2^{2\alpha_0+3/2} \\
         & = \frac{1+i}{4}
         \frac{p_1^{-1/2}}{(p_1^{2\alpha_1+1})^{2s-1}}
         \frac{p_2^{-1/2}}{(p_2^{2\alpha_2+1})^{2s-1}}
         \Big( \sum_{\substack{\beta_0\leq k_0\leq 2\alpha_0+2 \\ 2\mid k_0}} \frac{1} {(2^{k_0})^{2s-1}}
         +
         \frac{ 2^{1/2} } {(2^{2\alpha_0+3})^{2s-1}}\Big),
       \end{align*}
       \begin{align*} % k_1=2\alpha_1+1,  2 | k_2
         c_{12}(n,s ) & =
         \frac{1+i}{2}
         \sum_{\substack{ \beta_0\leq k_0 \leq 2\alpha_0+2 \\ 2\mid k_0}} \sum_{\substack{ \beta_2\leq k_2 \leq 2\alpha_2 \\ 2\mid k_2}}  \frac{1}{(2^{k_0}p_1^{2\alpha_1+1}p_2^{k_2})^{2s}}
        \varphi(2^{k_0})   \sqrt{p_1} \cdot p_1^{2\alpha_1} \varphi(p_2^{k_2})
         \\
         & \hskip 10pt + (1+i)
           \sum_{\substack{ \beta_2\leq k_2 \leq 2\alpha_2 \\ 2\mid k_2}} \frac{1} {(2^{2\alpha_0 + 3}p_1^{2\alpha_1+1}p_2^{k_2})^{2s}}
          2\sqrt{2} \cdot 2^{2\alpha_0} \sqrt{p_1} \cdot p_1^{2\alpha_1} \varphi(p_2^{k_2}) \\
         & =
         \frac{1+i}{4}
         \frac{p_1^{-1/2}}{(p_1^{2\alpha_1+1})^{2s-1}}
         \sum_{\substack{ \beta_2\leq k_2 \leq 2\alpha_2 \\ 2\mid k_2}}  \frac{1-\frac{1}{p_2}}{(p_2^{k_2})^{2s-1}} \Big( \sum_{\substack{\beta_0\leq k_0\leq 2\alpha_0+2 \\ 2\mid k_0}} \frac{1} {(2^{k_0})^{2s-1}}
         +
         \frac{ 2^{1/2} } {(2^{2\alpha_0+3})^{2s-1}}\Big),
       \end{align*}
  \begin{align*}  % 2 | k_1,   k_2 =2\alpha_2+1
    c_{21}(n,s ) & =\frac{1+i}{2}
    \sum_{\substack{ \beta_0 \leq k_0 \leq 2\alpha_0+2 \\ 2\mid k_0}} \sum_{\substack{ \beta_1 \leq k_1 \leq 2\alpha_1 \\ 2\mid k_1}} \frac{1} {(2^{k_0}p_1^{k_1}p_2^{2\alpha_2+1})^{2s}}
    \varphi(2^{k_0}) \varphi(p_1^{k_1}) \sqrt{p_2} \cdot p_2^{2\alpha_2}
         \\
    & \hskip 10pt + (1+i)
     \sum_{\substack{ \beta_1 \leq k_1 \leq 2\alpha_1 \\ 2\mid k_1}} \frac{1} {(2^{2\alpha_0+3}p_1^{k_1}p_2^{2\alpha_2+1})^{2s}}
    2\sqrt{2} \cdot 2^{2\alpha_0} \varphi(p_1^{k_1}) \sqrt{p_2} \cdot p_2^{2\alpha_2} \\
    & =
         \frac{1+i}{4}
         \sum_{\substack{ \beta_1 \leq k_1 \leq 2\alpha_1 \\ 2\mid k_1}}  \frac{1-\frac{1}{p_1}}{(p_1^{k_1})^{2s-1}}
         \frac{p_2^{-1/2}}{(p_2^{2\alpha_2+1})^{2s-1}}
          \Big( \sum_{\substack{\beta_0 \leq k_0\leq 2\alpha_0+2 \\ 2\mid k_0}} \frac{1} {(2^{k_0})^{2s-1}}
         +
         \frac{ 2^{1/2} } {(2^{2\alpha_0+3})^{2s-1}}\Big),
  \end{align*}
       \begin{align*}  % 2|k_1, 2|k_2
         c_{22}(n,s ) & =\frac{1+i}{2}
          \sum_{\substack{ \beta_0 \leq k_0 \leq 2\alpha_0+2 \\ 2\mid k_0}} \sum_{\substack{ \beta_1 \leq k_1 \leq 2\alpha_1 \\ 2\mid k_1}}
         \sum_{\substack{ \beta_2 \leq k_2 \leq 2\alpha_2 \\ 2\mid k_2}} \frac{1} {(2^{k_0}p_1^{k_1}p_2^{k_2})^{2s}}
         \varphi(2^{k_0}) \varphi(p_1^{k_1})\varphi(p_2^{k_2})
         \\
          & \hskip 10pt + (1+i)
         \sum_{\substack{ \beta_1 \leq k_1 \leq 2\alpha_1 \\ 2\mid k_1}}
         \sum_{\substack{ \beta_2 \leq k_2 \leq 2\alpha_2 \\ 2\mid k_2}} \frac{1} {(2^{2\alpha_0 + 3}p_1^{k_1}p_2^{k_2})^{2s}}
         2\sqrt{2} \cdot 2^{2\alpha_0} \varphi(p_1^{k_1})\varphi(p_2^{k_2}) \\
         & \hskip -30pt =\frac{1+i}{4}
         \sum_{\substack{ \beta_1 \leq k_1 \leq 2\alpha_1 \\ 2\mid k_1}}
         \frac{1-\frac{1}{p_1}} {(p_1^{k_1})^{2s-1}}
         \sum_{\substack{ \beta_2 \leq k_2 \leq 2\alpha_2 \\ 2\mid k_2}}
         \frac{1-\frac{1}{p_2}} {(p_2^{k_2})^{2s-1}}
         \Big( \sum_{\substack{\beta_0 \leq k_0\leq 2\alpha_0+2 \\ 2\mid k_0}} \frac{1} {(2^{k_0})^{2s-1}}
         +
         \frac{ 2^{1/2} } {(2^{2\alpha_0+3})^{2s-1}}\Big).
       \end{align*}
Thus we have (for $m=2^{\alpha_0} p_1^{\alpha_1} p_2^{\alpha_2}m_0$ with $\alpha_0\geq1$)
\begin{align*}\label{eqn:c(ns)}
  c(m^2,s )
  & = \frac{1+i}{4}  \Big( \sum_{\substack{\beta_0 \leq k_0\leq 2\alpha_0+2 \\ 2\mid k_0}} \frac{1} {(2^{k_0})^{2s-1}}
         +
         \frac{ 2^{1/2} } {(2^{2\alpha_0+3})^{2s-1}}\Big) \\
  & \hskip 90pt  \cdot \prod_{j=1}^{2}\Big( \sum_{\substack{ \beta_j\leq k_j \leq 2\alpha_j \\ 2\mid k_j}} \frac{1-\frac{1}{p_j}} {(p_j^{k_j})^{2s-1}} +
  \frac{p_j^{-1/2}}{(p_j^{2\alpha_j+1})^{2s-1}} \Big).
\end{align*}
Hence
\begin{equation}\label{eqn:cn3/4}
  c(m^2,3/4 ) = \frac{1+i}{2}  \frac{1}{2^{[\frac{\beta_0+1}{2}]} p_1^{[\frac{\beta_1+1}{2}]} p_2{[\frac{\beta_2+1}{2}]}}  .
\end{equation}

If $n=0$ then $G_n\left(\chi_{-4}\left(\frac{M'}{\cdot}\right)\right)=0$ since $\chi_{-4}\left(\frac{M'}{\cdot}\right)$ is odd. Hence
\begin{equation}\label{eqn:c(0)}
  c(0,s ) = \frac{1+i}{2} \sum_{\substack{M'\\ M\mid M'\mid M^{\infty}}}
  \left( \sum_{d=1}^{M'} \left(\frac{M'}{d}\right) \right)  (M')^{-2s}.
\end{equation}
Assume $M= \prod_{p\mid M} p^{\nu_p(M)}$ where $\nu_p(M)$ is defined as $p^{\nu_p(M)} \parallel M$. By the orthogonality of characters, we have $\sum_{d=1}^{M'}  \left(\frac{M'}{d}\right) = 0$ if $M'$ is not a square, and $\sum_{d=1}^{M'} \left(\frac{M'}{d}\right)=\varphi(M')= \varphi(M) M'/M$ if $M'$ is a square. Thus
\[
  c(0,s ) = \frac{1+i}{2} \prod_{\substack{p\mid M\\ 2\nmid \nu_p(M)}}  \sum_{k=0}^{\infty} \frac{\varphi(p^{\nu_p(M)+1+2k})}{p^{(\nu_p(M)+1+2k)2s}}
  \prod_{\substack{p\mid M\\ 2\mid \nu_p(M)}}  \sum_{k=0}^{\infty} \frac{\varphi(p^{\nu_p(M)+2k})}{p^{(\nu_p(M)+2k)2s}}.
\]
If  $M=2^{\beta_0} p_1^{\beta_1} p_2^{\beta_2}$, then we have
\begin{align*}
  c(0,s ) & = \frac{1+i}{2}
  \sum_{k_0=0}^{\infty} \frac{\varphi(2^{2[\frac{\beta_0+1}{2}]+2k_0})}{2^{(2[\frac{\beta_0+1}{2}]+2k_0)2s}} \sum_{k_1=0}^{\infty} \frac{\varphi(p_1^{2[\frac{\beta_1+1}{2}]+2k_1})}{p_1^{(2[\frac{\beta_1+1}{2}]+2k_1)2s}} \sum_{k_2=0}^{\infty} \frac{\varphi(p_2^{2[\frac{\beta_2+1}{2}]+2k_2})}{p_2^{(2[\frac{\beta_2+1}{2}]+2k_2)2s}}.
\end{align*}
Thus  ($p_0=2$)
\begin{equation}\label{eqn:c03/4}
  c(0,3/4 )  = \frac{1+i}{2} \prod_{j=0}^{2} \frac{1}{p_j^{[\frac{\beta_j+1}{2}]}}
   = e(1/8) 2^{-1/2} \prod_{j=0}^{2} \frac{1}{p_j^{[\frac{\beta_j+1}{2}]}}.
\end{equation}

Now we are ready to compute the residue of $E(z; s)$ at $s=3/4$.
Note that we have (\cite[eq. (9.222.1)]{GR})
\[
  W_{\frac{1}{4}, \frac{1}{4}}(4\pi m^2 y)
  = \frac{(4\pi m^2 y)^{3/4} e^{-2\pi m^2 y}}{\sqrt{\pi}} \int_{0}^{\infty} e^{-(4\pi m^2 y)t} t^{-1/2} \dd t
  = (4\pi m^2 y)^{1/4} e^{-2\pi m^2 y}.
\]
By \eqref{eqn:E=sum}, \eqref{eqn:phi(n)=bc}, \eqref{eqn:Res_b}, \eqref{eqn:cn3/4}, \eqref{eqn:c03/4}, we have
\begin{multline} \label{eqn:Res}
  \Res\limits_{s=3/4} E_{\infty,1/2}(z; s)   =
   \frac{\pi}{4\zeta_{(M)}(1)\zeta_M(2)}
   \Big( \prod_{j=0}^{2} p_j^{-[\frac{\beta_j+1}{2}]} \Big) y^{1/4} \\
  + \sum_{m\geq1}
   \frac{\pi^{3/4} e\left(-\frac{1}{8}\right) m^{-1/2}}{2\zeta_{(M)}(1)\zeta_M(2)} c(m^2,3/4 )
  W_{\frac{1}{4}, \frac{1}{4}}(4\pi m^2y) e(m^2 x) \\
%  = \frac{\pi}{4 \zeta_{(M)}(1)\zeta_M(2)} \Big( \prod_{j=0}^{2} p_j^{-[\frac{\beta_j+1}{2}]} \Big) y^{1/4}
%  + \sum_{\substack{m=\prod\limits_{j=0}^{2}p_j^{\alpha_j}m_0\geq1 \\ \alpha_0\geq\frac{\beta_0-3}{2} \\ \alpha_j\geq \frac{\beta_j-1}{2}, j=1,2 }} \frac{\pi }{2 \zeta_{(M)}(1)\zeta_M(2)} \Big( \prod_{j=0}^{2} p_j^{-[\frac{\beta_j+1}{2}]} \Big) y^{1/4} e(m^2 z) \\
  = \frac{\pi}{4 \zeta_{(M)}(1)\zeta_M(2)} \Big( \prod_{j=0}^{2} p_j^{-[\frac{\beta_j+1}{2}]} \Big) \theta_{\chi_1,t_M}(z),
\end{multline}
where $t_M=2^{2[\frac{\beta_0-2}{2}]}\prod_{j=1}^{2} p_j^{2[\frac{\beta_j}{2}]}$.
%Recall that
%\begin{multline}
%  E(z; s)  = y^s + \frac{\pi 4^{1-s} e\left(-\frac{1}{8}\right) \Gamma(2s-1)}{\Gamma(s+1/4)\Gamma(s-1/4)} \phi(0, s)y^{1-s} \\
%  + \sum_{n \not=0}
%  \frac{\pi^s e\left(-\frac{1}{8}\right) |n|^{s-1}}{\Gamma(s+\sgn(n)\frac{1}{4})} \phi(n, s) W_{\sgn(n)\frac{1}{4}, s-\frac{1}{2}}(4\pi|n|y) e(nx),
%\end{multline}
\end{proof}

\end{document}